\documentclass[12pt,leqno]{amsart}
\usepackage{latexsym,amsmath,amssymb,mathrsfs}
\usepackage{hyperref}
\usepackage{esint}
\usepackage{mathtools}
\DeclarePairedDelimiterX{\norm}[1]{\lVert}{\rVert}{#1}
\usepackage{enumitem}
\usepackage{arydshln}

\usepackage{tikz}

\title[Lipschitz mappings into metric spaces]{Differentiability of Lipschitz mappings into metric spaces: area and co-area formulas}

\author[B. Esmayli]{Behnam Esmayli}
\address{Behnam Esmayli: Department of Mathematics, Linköping University, SE-581 83 Linköping,
Sweden}
\email{\tt behnam.esmayli@liu.se}

\author[P. Goldstein]{Pawe\l{}  Goldstein}
\address{Pawe\l{} Goldstein, Institute of Mathematics, Faculty of Mathematics, Informatics and Mechanics, University of Warsaw, Banacha 2, 02-097 Warsaw, Poland}
\email{\tt P.Goldstein@mimuw.edu.pl}
\thanks{P.G.\ was supported by NCN grant no 2019/35/B/ST1/02030}

\author[P. Haj\l{}asz]{Piotr Haj\l{}asz}
\address{Piotr Haj{\l}asz: Department of Mathematics, University of Pittsburgh, 301
Thackeray Hall, Pittsburgh, PA 15260, USA}
\email{\tt hajlasz@pitt.edu}
\thanks{P.H. was supported by NSF grant  DMS-2452426.}

plus 6pt minus 12pt
\newtheorem{theorem}{Theorem}
\newtheorem{lemma}[theorem]{Lemma}
\newtheorem{corollary}[theorem]{Corollary}
\newtheorem{proposition}[theorem]{Proposition}
\newtheorem*{proposition*}{Proposition}

\theoremstyle{definition}
\newtheorem{remark}[theorem]{Remark}
\newtheorem{definition}[theorem]{Definition}
\newtheorem{example}[theorem]{Example}

\newcommand{\barint}{
\rule[.036in]{.12in}{.009in}\kern-.16in \displaystyle\int }

\newcommand{\barcal}{\mbox{$ \rule[.036in]{.11in}{.007in}\kern-.128in\int $}}

\newcommand{\bbbn}{\mathbb N}
\newcommand{\bbbz}{\mathbb Z}
\newcommand{\bbbr}{\mathbb R}
\newcommand{\bbbb}{\mathbb B}
\newcommand{\Bbbb}{\bar{\mathbb B}}
\newcommand{\qp}{{\mathbb Q}_+}

\newcommand{\Sph}{\mathbb S}

\newcommand{\eps}{\varepsilon}

\def\diam{\operatorname{diam}}
\def\card{\operatorname{card}}
\def\dist{\operatorname{dist}}

\def\H{{\mathcal H}}
\def\LL{{\mathcal L}}
\def\rank{{\rm rank\,}}
\def\lip{{\rm Lip\,}}

\def\md{\operatorname{md}}
\def\apmd{\operatorname{md_{\rm a}}}

\newcommand\mycom[2]{\genfrac{}{}{0pt}{}{#1}{#2}}

\def\mvint_#1{\mathchoice
          {\mathop{\vrule width 6pt height 3 pt depth -2.5pt
                  \kern -8pt \intop}\nolimits_{\kern -3pt #1}}%
          {\mathop{\vrule width 5pt height 3 pt depth -2.6pt
                  \kern -6pt \intop}\nolimits_{#1}}%
          {\mathop{\vrule width 5pt height 3 pt depth -2.6pt
                  \kern -6pt \intop}\nolimits_{#1}}%
          {\mathop{\vrule width 5pt height 3 pt depth -2.6pt
                  \kern -6pt \intop}\nolimits_{#1}}}

\numberwithin{theorem}{section} \numberwithin{equation}{section}

\begin{document}
\sloppy

\keywords{Lipschitz maps, metric spaces, geometric measure theory, Kirchheim-Rademacher theorem, area formula, co-area formula, metric implicit function theorem, metric Sard theorem}

\subjclass[2020]{28A75, 30L99, 49Q15}

\begin{abstract}
We give a self-contained exposition of metric differentiability for Lipschitz mappings from sets in Euclidean spaces into arbitrary metric spaces, together with the corresponding area and co-area formulas. The novelty is a new metric implicit function theorem for mappings into metric spaces, which is then used to give a direct and geometric proof of the co-area formula for Lipschitz mappings into metric spaces.
\end{abstract}

\maketitle

\section{Introduction}
The goal of this paper is to provide a self-contained introduction to the metric differentiability of Lipschitz mappings into metric spaces, an area initiated by the celebrated paper of Kirchheim~\cite{kir}. Most of the material presented here is known, although many of the proofs and results are new. However, the main novelty of the paper is a proof of the co-area formula for Lipschitz mappings into metric spaces (Theorem~\ref{INT5}), which is based on a new version of the implicit function theorem (Theorem~\ref{INT2}). We believe that this proof is simpler and more geometric than those available in the existing literature; see \cite{kar,reichel}.

We have chosen to present all the necessary details and to organize the material so that the paper can serve as lecture notes for a short graduate-level course. In particular, the exposition is aimed at graduate students who may benefit from a detailed and self-contained treatment of the subject. We put a lot of effort to make the exposition well motivated and easy to read. The paper has been reviewed repeatedly by GPT-6 Astra Max, and we believe it is free of typographical and mathematical errors.

The purpose of the Introduction is to give a brief overview of the paper’s content. For the sake of simplicity, some statements are presented here without full generality.

The Lebesgue measure $\LL^n$ in $\bbbr^n$ coincides with the Hausdorff measure $\H^n$ (Corollary~\ref{SJT2}). Accordingly, and for consistency of notation, we will use the symbol $\H^n$ to denote the Lebesgue measure throughout the paper.

While, according to the Rademacher theorem, Lipschitz mappings $f:\bbbr^n\supset\Omega\to \bbbr^m$ are differentiable a.e., Kirchheim~\cite{kir} introduced the notion of {\em metric derivative} of a map $f:\bbbr^n\supset\Omega\to X$ into any metric space and proved that Lipschitz mappings $f:\bbbr^n\supset\Omega\to X$ are metrically differentiable a.e. Here and in what follows $\Omega$ will denote an open set.

Let us first motivate the definition of the metric derivative.

If $f\colon \bbbr^n\to\bbbr^m$ is (Fr\'echet) differentiable at $x\in\bbbr^n$, then
$$
\lim_{y\to x}\frac{f(y)-f(x)-Df(x)(y-x)}{|y-x|}=0,
$$
and it follows from the triangle inequality that
$$
\lim_{y\to x} \frac{|f(y)-f(x)|-\sigma_x(y-x)}{|y-x|}=0,
\quad
\text{where}
\quad
\sigma_x(v)=|Df(x)v|.
$$
Since
\[
\begin{split}
& \frac{\big||f(z)-f(y)|-|Df(x)(z-y)|\big|}{|z-x|+|y-x|}\\
&\leq
\frac{|f(z)-f(x)-Df(x)(z-x)|}{|z-x|}+
\frac{|f(y)-f(x)-Df(x)(y-x)|}{|y-x|},
\end{split}
\]
it also follows that
$$
\lim_{(y,z)\to (x,x)}\frac{|f(z)-f(y)|-\sigma_x(z-y)}{|z-x|+|y-x|}=0.
$$
Note that $\sigma_x(v)=|Df(x)v|$ is a seminorm on $\bbbr^n$.

Recall that $\sigma\colon \bbbr^n\to [0,\infty)$ is a {\em seminorm} if
$\sigma(\lambda v)=|\lambda|\sigma(v)$ and $\sigma(v+w)\leq\sigma(v)+\sigma(w)$ for all $\lambda\in\bbbr$ and $v,w\in\bbbr^n$. Thus a seminorm is like a norm, but it may vanish on a non-trivial linear subspace of $\bbbr^n$:
\begin{equation}
\label{INeq6}
N_\sigma:=\ker \sigma=\{v\in \bbbr^n:\, \sigma(v)=0\}.
\end{equation}
\begin{definition}
\label{def101}
Let $f:\Omega\to X$ be a map between an open set $\Omega\subset\bbbr^n$ and a metric space $(X,d)$. We say that $f$ is {\em metrically differentiable} at $x\in\Omega$ if there is a seminorm $\sigma_x$ on $\bbbr^n$ such that
$$
\lim_{y\to x} \frac{d(f(y),f(x))-\sigma_x(y-x)}{|y-x|}=0.
$$
We say that $f$ is {\em strongly metrically differentiable} at $x\in\Omega$, if there is a seminorm $\sigma_x$ on $\bbbr^n$ such that
$$
\lim_{(y,z)\to (x,x)} \frac{d(f(z),f(y))-\sigma_x(z-y)}{|z-x|+|y-x|}=0.
$$
If $f$ is strongly metrically differentiable, then it is metrically differentiable (take $z=x$).

If $f$ is metrically differentiable at $x$, then the seminorm $\sigma_x$ is unique (easy exercise) and we denote it by
$\md (f,x)$, i.e.,
$$
\lim_{y\to x} \frac{d(f(y),f(x))-\md (f,x)(y-x)}{|y-x|}=0.
$$
\end{definition}
The estimates shown above along with the Rademacher theorem show that Lipschitz mappings $f:\bbbr^n\supset\Omega\to\bbbr^m$ are strongly metrically differentiable a.e.\ with $\md (f,x)(v)=|Df(x)v|$.

The next result is a celebrated theorem of Kirchheim~\cite[Theorem~2]{kir} known also as the Kirchheim-Rademacher theorem, see Theorems~\ref{T2} and Corollary~\ref{MDT4}.
\begin{theorem}[Kirchheim-Rademacher Theorem]
\label{INT1}
If $f:\bbbr^n\supset\Omega\to X$ is a Lipschitz mapping from an open set to an arbitrary metric space $X$, then $f$ is strongly metrically differentiable a.e.
\end{theorem}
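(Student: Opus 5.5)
The plan is Kirchheim's original strategy: reduce to Rademacher's theorem applied to countably many real-valued Lipschitz functions. I would carry it out in three stages.

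\textbf{Reduction to separable $X$.} Since $\Omega$ is separable and $f$ is continuous, $f(\Omega)$ is separable. Fix a countable dense set $\{p_k\}\subset f(\Omega)$. For any $a,b\in f(\Omega)$ we then have
$$
d(a,b)=\sup_k |d(a,p_k)-d(b,p_k)|,
$$
with equality obtained by taking $p_k\to a$. Define $f_k(x)=d(f(x),p_k)$. Each $f_k\colon\Omega\to\bbbr$ is $L$-Lipschitz, where $L=\lip f$. By Rademacher's theorem, off a null set $E$ every $f_k$ is differentiable with $|\nabla f_k|\le L$. Enlarging $E$ by another null set, we may also assume that every $x\notin E$ is a Lebesgue point of each $\nabla f_k$. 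Alternatively, one can use approximate continuity of $\nabla f_k$ restricted to a large compact set, via Lusin's theorem.

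\textbf{The candidate seminorm.} For $x\notin E$ put
$$
\sigma_x(v)=\sup_k |\nabla f_k(x)\cdot v|.
$$
This is a supremum of seminorms, bounded by $L|v|$, so it is a seminorm. The natural first test is the directional derivative of $t\mapsto d(f(x+tv),f(x))$, but this only yields metric differentiability along fixed directions. The real work is the strong, two-point version.

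\textbf{Strong differentiability.} Fix $\eps>0$. Since the functions $v\mapsto|\nabla f_k(x)\cdot v|$ are uniformly $L$-Lipschitz on the unit sphere, compactness gives finitely many indices $k_1,\dots,k_N$ with
$$
\sigma_x(v)\le \max_{i\le N}|\nabla f_{k_i}(x)\cdot v|+\eps|v| \quad\text{for all } v.
$$

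For the lower bound, by the differentiability of these finitely many $f_{k_i}$ at $x$,
$$
f_{k_i}(z)-f_{k_i}(y)=\nabla f_{k_i}(x)\cdot(z-y)+o(|z-x|+|y-x|).
$$
Together with $d(f(z),f(y))\ge |f_{k_i}(z)-f_{k_i}(y)|$, this gives
$$
d(f(z),f(y))\ge\sigma_x(z-y)-2\eps(|z-x|+|y-x|)
$$
for $y,z$ close to $x$.

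The upper bound is the main obstacle. It requires control of $\sup_k|f_k(z)-f_k(y)|$ uniformly in $k$, whereas differentiability at $x$ holds separately for each $k$. I would handle it with an averaging argument along segments. Write
$$
f_k(z)-f_k(y)=\int_0^1\nabla f_k(y+t(z-y))\cdot(z-y)\,dt
$$
for a.e.\ segment. The aim is to replace $\nabla f_k(y+t(z-y))$ by $\nabla f_k(x)$ up to an error that is uniform in $k$.

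To make this uniform, I would not use all of the $f_k$. Instead I would use the functions $g_w(x)=d(f(x),f(x+w))$ only implicitly. Concretely, for a fixed rational direction $v$ I would apply Rademacher and Lebesgue differentiation to the one-dimensional functions $t\mapsto d(f(x+sv),f(x+tv))$, which gives that $\mathrm{MD}(v)(x):=\lim_{t\to0}d(f(x+tv),f(x))/|t|$ exists a.e. and agrees with $\sigma_x(v)$ a.e. This is Kirchheim's lemma, proved by comparing the sup over $k$ with the one-dimensional metric derivative along lines.

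Then I would choose a compact $K\subset\Omega$ with small complement on which all $\mathrm{MD}(v)$, for $v$ in a finite $\eps$-net of directions, are continuous (Lusin). At density points $x$ of $K$, for given $y,z$ near $x$ I would pick a nearby segment parallel to a net direction that has most of its length in $K$; Fubini ensures that most segments do. Integrating the continuous $\mathrm{MD}(v)\approx\sigma_x(v)$ along it bounds $d(f(z),f(y))$ by $\sigma_x(z-y)+C\eps(|z-x|+|y-x|)$. The portions outside $K$, and the shift to the nearby segment, contribute only $L\cdot o(|z-x|+|y-x|)$ by density.

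Finally, letting $\eps\to0$ through a countable sequence and discarding the corresponding null sets yields strong metric differentiability a.e.
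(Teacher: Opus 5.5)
Your argument is correct in outline, but it is organized differently from the paper's. The paper first proves a metric Scorza--Dragoni theorem (Theorem~\ref{T11}). There, Lusin's theorem is applied to the Borel map $\Psi_f$ with values in $C(\Sph^{n-1}\times[0,1])$, which encodes the full difference quotients $d(f(x),f(x+tv))/t$. This produces a closed set $F_\eps$ on which $\md(f,\cdot)$ is continuous and on which the one-point expansion $d(f(x),f(y))=\md(f,x)(y-x)+o(|y-x|)$ holds uniformly for $x$ in compact subsets. Strong differentiability at a density point $x$ of $F_\eps$ then follows from a short shifting argument: move $y$ to a nearby $\tilde y\in F_\eps$, expand uniformly at $\tilde y$, and use continuity of $\md$ on $F_\eps$. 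Upper and lower bounds come out of this simultaneously.

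You instead separate the two bounds. Your lower bound holds at every point where all $f_k$ are differentiable, and it uses only finitely many coordinates. Your upper bound is a length estimate: you integrate the speed along a Fubini-selected segment that lies mostly in a Lusin set $K$ for finitely many real-valued functions. This needs less machinery (no measurability of $\Psi_f$, no Lusin theorem into a function space, no uniformity of difference quotients), at the price of a more hands-on segment selection. The paper's route, in exchange, yields Theorem~\ref{T11} as a by-product of independent interest.

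Two small points about your write-up. First, the detours through Lebesgue points of each $\nabla f_k$ and through the functions $g_w$ play no role. Even the ``Kirchheim lemma'' is dispensable: what you actually integrate is the inequality $d(f(a),f(a+tv))\le\int_0^t\sigma_{a+\tau v}(v)\,d\tau$ for a.e.\ $a$ (as in \eqref{Eq4}), after applying Lusin to $p\mapsto\sigma_p(v)$. Second, your $K$ only has small complement. For each $\eps$ you therefore need a sequence of such compact sets with complements of measure tending to zero (like the sets $F_{1/i}$ in the paper), so that a.e.\ $x$ is a density point of one of them.
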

A detailed proof of Theorem~\ref{INT1} appears also in \cite[Theorem~3.2]{reichel} while \cite[Theorem~3.2]{ambkir} proves only that Lipschitz mappings are metrically differentiable a.e., which is a weaker result.

Since metric spaces have no linear structure, the Kirchheim-Rademacher theorem may seem very surprising. However, by the Fr\'echet-Kuratowski Theorem~\ref{BT3}, every separable metric space admits an isometric embedding into $\ell^\infty$, the Banach space of bounded real sequences, and hence we can associate with any Lipschitz mapping $f:\bbbr^n\supset\Omega\to X$ a Lipschitz mapping into $\ell^\infty$. While we do not assume that $X$ is separable, the image $f(\Omega)\subset X$ is separable and we can assume that $f(\Omega)$ is a subset of $\ell^\infty$. Hence, we can view $f$ as a mapping
\begin{equation}
\label{INeq7}
f=(f_1,f_2,\ldots):\bbbr^n\supset\Omega\to f(\Omega)\subset\ell^\infty.
\end{equation}
Therefore, it suffices to prove Theorem~\ref{INT1} for $X=\ell^\infty$.

The coordinate functions $f_i$ are Lipschitz continuous so by Rademacher's theorem they are differentiable a.e.\ and this gives a weak notion of differentiability of Lipschitz mappings into metric spaces. This is the main idea used in the proof of the Kirchheim-Rademacher theorem. One should not be, however, deceived by this simple idea: the proof of the Kirchheim-Rademacher theorem is difficult.

The assumption that $f:\bbbr^n\supset\Omega\to X$ is defined on an open set is very strong and it is more reasonable to assume that $f:\bbbr^n\supset A\to X$ is a Lipschitz mapping defined on a measurable set. It turns out that the results of the paper easily generalize to that case. Indeed, if we embed $f(A)$ isometrically into $\ell^\infty$, then
$$
f:\bbbr^n\supset A\to f(A)\subset\ell^\infty
\quad
\text{has a Lipschitz extension}
\quad
F:\bbbr^n\to \ell^\infty,
$$
see Corollary~\ref{BT2}. Then we simply apply the results to $F$ and restrict them to $A$. One issue is that the definition of the metric derivative of $f$ has to be replaced by the {\em approximate metric derivative} which is discussed in Section~\ref{AD1}. For the sake of simplicity, in the rest of the Introduction we will assume that $f$ is defined on an open set.

If we know that $f=(f_1,f_2,\ldots):\bbbr^n\supset\Omega\to\ell^\infty$ is Fr\'echet differentiable at $x\in\Omega$, then it is strongly metrically differentiable at $x$ with
$$
\md(f,x)(v)=\sup_{i\in \bbbn}|Df_i(x)v|.
$$
It easily follows from a modification of the proof that Fr\'echet differentiability of $f:\bbbr^n\supset\Omega\to\bbbr^m$ implies strong metric differentiability, see Proposition~\ref{T4}. However, the main difficulty is that in general Lipschitz mappings into $\ell^\infty$ need not be Fr\'echet differentiable at any point, see Proposition~\ref{T5}.

If $\sigma:\bbbr^n\to [0,\infty)$ is a seminorm, then it may vanish on a linear subspace $N_\sigma:=\{v\in \bbbr^n:\, \sigma(v)=0\}$ and we define the {\em rank of the seminorm} as $\rank\sigma:=n-\dim N_\sigma$. That is, $\rank\sigma$ equals the maximum of dimensions of subspaces of $\bbbr^n$ on which $\sigma$ is a norm.
In particular we define the {\em rank of the metric derivative} as the rank of the seminorm $\md(f,x)$.

Note that if $f:\bbbr^n\supset\Omega\to\bbbr^m$ is differentiable at $x\in\Omega$, then $\md(f,x)(v)=|Df(x)v|$ and hence
$\rank\md(f,x)=\rank Df(x)$. Thus the notion of the rank of the metric derivative is consistent with the notion of the rank of the Fr\'echet derivative.

The classical Sard theorem for Lipschitz mappings (see Theorem~\ref{T15}) generalizes to the case of metric-valued mappings as follows, see Theorem~\ref{T17}.
\begin{theorem}[Metric Sard Theorem]
\label{INT6}
If $f:\mathbb{R}^n\supset\Omega\to X$ is Lipschitz and
$$
\operatorname{Crit}(f):=\{x\in\Omega:\, \rank\md (f,x)<n\},
$$
then $\mathcal{H}^n(f(\operatorname{Crit}(f)))=0$.
\end{theorem}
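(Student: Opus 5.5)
The plan is the standard covering argument from the Euclidean Sard theorem for Lipschitz maps, with the Jacobian replaced by the metric derivative. By the Kirchheim--Rademacher Theorem~\ref{INT1}, the set where $f$ is not metrically differentiable has $\H^n$-measure zero. Since $f$ is Lipschitz, $\H^n(f(E))\le (\lip f)^n\H^n(E)$ for every $E$, so the image of this null set is $\H^n$-null. It therefore suffices to prove $\H^n(f(C))=0$, where $C$ is the set of points $x\in\operatorname{Crit}(f)$ at which $f$ is metrically differentiable. Localizing, we may assume $C$ lies in a bounded set $Q\Subset\Omega$.

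Fix $\eps>0$ and let $x\in C$. Put $\sigma=\md(f,x)$ and $L=\lip f$, so that $\sigma(v)\le L|v|$. Since $\rank\sigma<n$, the kernel $N_\sigma$ contains a unit vector $e$. Let $P$ be the orthogonal projection onto $e^\perp$. Then $\sigma(v)=\sigma(Pv)\le L|Pv|$ for all $v$.

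By metric differentiability there is $r_x>0$ such that for all $y$ with $|y-x|\le r<r_x$ we have
$$
d(f(y),f(x))\le \sigma(y-x)+\eps r\le L|P(y-x)|+\eps r.
$$
The next step is to cover $f(B(x,r))$ efficiently, and this is where the main difficulty lies. Metric differentiability only controls distances from the single point $f(x)$, not mutual distances between the images. The estimate above therefore says only that $f(B(x,r))\subset B(f(x),(L+\eps)r)$, which is useless by itself.

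To get past this, I would use the Lipschitz bound together with the kernel direction. Cover the flat part $B(x,r)\cap\{|\langle y-x,e\rangle|\le \eps r/L\}$ by about $\eps^{-1}$ slabs in the $e$-direction? That is not quite the point. The right move is to cover $B(x,r)$ by cylinders of length $r$ along $e$ and cross-section $\delta r$. Along each segment in direction $e$ through $x$, the metric derivative gives $d(f(x+te),f(x))\le \eps r$. For other points, though, we only have control from $x$.

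The cleaner route is to use the isometric embedding into $\ell^\infty$ (Fr\'echet--Kuratowski) and strong metric differentiability. Theorem~\ref{INT1} gives strong differentiability a.e., so we may assume $f$ is strongly metrically differentiable at each $x\in C$. Then for all $y,z\in B(x,r)$ with $r<r_x$,
$$
d(f(y),f(z))\le \sigma(y-z)+2\eps r\le L|P(y-z)|+2\eps r.
$$
Now cover $B(x,r)$ by $N\lesssim \eps^{-(n-1)}$ sets of the form $P^{-1}(\text{cube of side }\eps r/L)\cap B(x,r)$. Each such set has image of diameter at most $C\eps r$. Hence $f(B(x,r))$ is covered by $N$ sets of diameter $C\eps r$, and
$$
\H^n_\infty(f(B(x,r)))\lesssim \eps^{-(n-1)}(\eps r)^n=\eps r^n\approx \eps\,\H^n(B(x,r)).
$$

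Finally, apply the Vitali covering theorem, which is earlier in the paper, to the fine family of balls $B(x,r)$ with $x\in C$ and $r<\min(r_x,\delta)$. This gives disjoint balls $B_i$ covering $C$ up to a null set, whose image is again null by the Lipschitz estimate. Summing over the $B_i$ gives
$$
\H^n_{C\delta}(f(C))\lesssim \eps\sum_i\H^n(B_i)\le \eps\,\H^n(Q).
$$
Letting $\eps\to0$ and then $\delta\to0$ yields $\H^n(f(C))=0$.

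The key obstacle is obtaining diameter control for the images of the thin pieces. This is exactly why strong metric differentiability, rather than plain metric differentiability, is required. Because strong differentiability holds a.e., the argument closes.
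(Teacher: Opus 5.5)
Your final argument is correct and is essentially the paper's proof: discard the null set where strong metric differentiability fails (its image is $\H^n$-null), use strong metric differentiability at a critical point to cut $B(x,r)$ into thin pieces elongated along $\ker\md(f,x)$ whose images have diameter $\lesssim\eps r$, and finish with a covering argument. The differences are minor. The paper uses the whole kernel, splitting by rank $k$ and getting $m^k$ pieces (Proposition~\ref{T14}); this sharper count is needed later for Metric Sard II, whereas your single kernel direction already suffices for $\H^n(f(\operatorname{Crit}(f)))=0$. The paper also uses the $5r$-covering lemma instead of Vitali, which avoids the Borel hypothesis in the appendix's Vitali theorem that your set $C$ is not obviously known to satisfy.
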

\begin{remark}
Adding to $\operatorname{Crit}(f)$ the set $Z$ where $\md(f,x)$ is not defined does not change anything, because $\H^n(Z)=0$ and hence $\H^n(f(Z))=0$.
\end{remark}

Again, the main difficulty is that the proof in the Euclidean case (see the proof of Theorem~\ref{T15}) uses Fr\'echet differentiability in an essential way which we cannot use now.

Later, we will prove a much deeper version of the Metric Sard Theorem, see Theorem~\ref{CT3}. We believe that Theorem~\ref{CT3} has not been known in such generality. As a corollary we conclude a general result about rectifiability of fibers of Lipschitz maps, see Theorem~\ref{8.9}:
\begin{theorem}
Let $0\leq m\leq n$, $m\in\bbbz$, $n\in\bbbn$, and let
$f:\bbbr^n\supset A\to X$ be a Lipschitz map from a measurable set to an arbitrary metric space. Then, for
$\mathcal{H}^m$-almost every $z\in X$, the fiber $f^{-1}(z)$ is
countably $\mathcal{H}^{n-m}$-rectifiable.
\end{theorem}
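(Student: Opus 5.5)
We reduce to the case of a Lipschitz map defined on all of $\bbbr^n$ with values in $\ell^\infty$. The image $f(A)$ is separable, so by the Fr\'echet--Kuratowski Theorem~\ref{BT3} we may regard $f(A)\subset\ell^\infty$. By Corollary~\ref{BT2}, $f$ then extends to a Lipschitz map $F:\bbbr^n\to\ell^\infty$. For $z\in f(A)$ the fiber $f^{-1}(z)=F^{-1}(z)\cap A$ is a subset of $F^{-1}(z)$, and subsets of countably $\H^{n-m}$-rectifiable sets are countably rectifiable. Also $\H^m$ computed in $X$ and in $\ell^\infty$ agree on $f(A)$, since the embedding is isometric. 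Hence it suffices to treat $F$. The cases $m=0$ and $m=n$ are degenerate and need a word each. For $m=0$, fibers are subsets of $\bbbr^n$, which is trivially countably $\H^n$-rectifiable. For $m=n$, fibers must have $\H^0$-rectifiable structure, i.e.\ be countable, for $\H^n$-a.e.\ $z$. This case follows from the area formula together with the Metric Sard Theorem~\ref{INT6}: on the set where $\rank\md(F,x)=n$, $F$ is locally bi-Lipschitz on countably many pieces, and the multiplicity function is finite $\H^n$-a.e.

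For general $m$, we split $\bbbr^n$ up to an $\H^n$-null set $Z$ (where $\md(F,\cdot)$ fails to exist) into
\[
G=\{x:\rank\md(F,x)\ge m\}
\quad\text{and}\quad
C=\{x:\rank\md(F,x)<m\}.
\]
The deeper Metric Sard Theorem~\ref{CT3} should give $\H^m(F(C\cup Z))=0$; indeed $\H^n(Z)=0$ already implies $\H^m(F(Z))=0$ by Lipschitz distortion. So for $\H^m$-a.e.\ $z$ the fiber $F^{-1}(z)$ lies in $G$ up to a set we can ignore. More precisely, we only need $F^{-1}(z)\cap(C\cup Z)=\emptyset$ for $z\notin F(C\cup Z)$, which holds trivially.

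It remains to show that $F^{-1}(z)\cap G$ is countably $\H^{n-m}$-rectifiable for every $z$. We use the metric implicit function theorem (Theorem~\ref{INT2}). At a point $x$ of strong metric differentiability with $\rank\md(F,x)\ge m$, it should produce the following data. We choose an $m$-dimensional subspace $V$ on which $\md(F,x)$ is a norm, and let $W=V^\perp$. Then there are a neighbourhood and a compact piece $K\ni x$ of positive density such that for each $w\in W$ near $P_W x$, the slice map $v\mapsto F(v+w)$ on $K$ is bi-Lipschitz onto its image. Consequently, $F^{-1}(z)\cap K$ meets each slice $w+V$ in at most one point. It is therefore the graph over a subset of $W\cong\bbbr^{n-m}$ of a function. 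That function is Lipschitz, by the bi-Lipschitz estimate combined with the uniform strong differentiability on $K$. This decomposition is done first via a Lusin/Egorov type argument. We cover $G\setminus Z$ by countably many sets $K_j$, obtained by choosing $V$ among a countable dense family of subspaces and rational constants in the differentiability estimates. On each $K_j$ the metric derivative is uniformly close to a fixed norm on $V$, and the difference quotients converge uniformly. Then every fiber intersected with $K_j$ is a Lipschitz graph over an $(n-m)$-dimensional set, so $F^{-1}(z)\cap G$ is countably $\H^{n-m}$-rectifiable for every $z$.

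The main obstacle is this uniform decomposition. We must show that uniform strong metric differentiability on $K_j$, with $\md(F,\cdot)$ close to a fixed norm on $V$, yields the two-sided estimate
\[
d(F(y),F(y'))\ge c\,|P_V(y-y')|-\eps|y-y'|
\quad\text{for nearby }y,y'\in K_j.
\]
We need this without linear structure in the target; it is exactly what Theorem~\ref{INT2} is designed to supply. If $F(y)=F(y')$, the estimate gives $|P_V(y-y')|\le(\eps/c)|y-y'|$. That makes the fiber a Lipschitz graph over $W$ with small constant, so it can be checked directly even without invoking the full implicit function theorem.
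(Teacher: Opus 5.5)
\textbf{Gap 1: the critical set and the non-differentiability set.} This step fails for $m<n$. Theorem~\ref{CT3} does not assert $\H^m(F(C))=0$. It asserts only that $\H^{n-m}(F^{-1}(z)\cap C)=0$ for $\H^m$-a.e.\ $z$, and for $m<n$ the image statement is false in general. Whitney's classical $C^1$ function on $\bbbr^2$ that is not constant on an arc of critical points has, with $m=1$, critical values filling an interval.

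Likewise, $\H^n(Z)=0$ gives $\H^n(F(Z))\le L^n\H^n(Z)=0$, not $\H^m(F(Z))=0$. For example, $F(x_1,x_2)=x_1+|x_2|$ is not metrically differentiable at any point of the line $\{x_2=0\}$, whose image is all of $\bbbr$. Image and fiber statements coincide only when $m=n$, where an $\H^0$-null set is empty. So the claim that $F^{-1}(z)\cap(C\cup Z)=\varnothing$ for $\H^m$-a.e.\ $z$ is unavailable.

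The repair is that countable $\H^{n-m}$-rectifiability tolerates an $\H^{n-m}$-null exceptional set. Use Theorem~\ref{CT3} exactly as stated for $C$. Use the co-area inequality, Theorem~\ref{CT1}, to get $\H^{n-m}(F^{-1}(z)\cap Z)=0$ for $\H^m$-a.e.\ $z$. This is precisely how the paper handles these sets.

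\textbf{Gap 2: the key estimate on $G$.} The estimate $d(F(y),F(y'))\ge c|P_V(y-y')|-\eps|y-y'|$ is false in general. It requires $W=V^\perp$ to lie almost inside $\ker\md(F,y)$. That fails whenever $\rank\md(F,y)>m$, and also for $V$ drawn from a fixed countable family. For instance, take $\md(F,y)(h)=|h_1+h_2|$, $V=\operatorname{span}\{e_1\}$, $h=(1,-1)$. Hence the fiber is not a graph with small constant.

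Theorem~\ref{INT2} cannot supply this estimate either. It requires $X$ to be $\H^m$-$\sigma$-finite and the rank to equal $m$ a.e., and neither holds here. The paper instead applies Theorem~\ref{6:corr2}. Its property (C) puts $f^{-1}(z)\cap K_i$ inside the $C^1$ submanifold $G_i^{-1}(\{x\}\times\bbbr^{n-m})$, and the co-area inequality handles the $\H^n$-null part not covered by the $K_i$.

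\textbf{How to repair your route.} Your direct idea does work once corrected, and it is then a genuinely more elementary route. Suppose $\eps<c$, $\md(F,y)(v)\ge c|v|$ for $v\in V$, and $|d(F(y),F(y'))-\md(F,y)(y'-y)|\le\eps|y'-y|$ for $|y'-y|<1/k$. Write $y'-y=v+u$ with $v\in V$, $u\in W$, and use $\md(F,y)(u)\le L|u|$. Then every $y'$ with $F(y')=F(y)$ and $|y'-y|<1/k$ satisfies the cone condition $(c-\eps)|v|\le(L+\eps)|u|$. So each piece of $F^{-1}(z)\cap K$ of diameter $<1/k$ is a Lipschitz graph over a subset of $W\cong\bbbr^{n-m}$, with constant of order $(L+\eps)/(c-\eps)$.

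Index the sets $K=K(V,c,\eps,k)$ by $V$ in a countable dense subset of the Grassmannian and rational $c,\eps$. These cover all of $G\setminus Z$ with no Lusin or Egorov step, so on that part you get the conclusion for every $z$. This uses only pointwise metric differentiability, rather than the $C^1$ approximation and Palais extension that underlie Theorem~\ref{6:corr2}.
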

This result is new. Previously, the same conclusion was known only under the additional assumption that $\H^m$ is $\sigma$-finite on $X$, or under a suitable rank assumption on the metric derivative of $f$, see
\cite[Corollary~1.8]{HZ}, \cite[Theorem~1.2]{kar}, \cite[Theorem~4.16]{reichel}.

One of the main features used in this paper is the fact that a Lipschitz function coincides with a $C^1$ function outside a set of arbitrarily small measure, see Theorem~\ref{BT5}. This makes it possible to apply the classical analysis of $C^1$ maps within the framework of Lipschitz mappings into metric spaces. This technique in the context of Lipschitz mappings into metric spaces has already been used in \cite{hajlaszma,HMZ,HZ}.

In particular, it leads to a generalization of the implicit function theorem to Lipschitz mappings with values in metric spaces; see Theorems~\ref{6:corr2} and~\ref{6:thm2}. We will state here only one version which is easier to state.

Let us begin with recalling the statement of the classical implicit function theorem. Let $n\geq m$ and let
$$
\pi:\bbbr^n=\bbbr^m\times\bbbr^{n-m}\to\bbbr^m,
\quad
\pi(x,y)=x
$$
be the orthogonal projection onto the first $m$ coordinates.

If $f:\bbbr^n\supset\Omega\to\bbbr^m$ is of class $C^1$ and $\rank Df(p)=m$, then there is a diffeomorphism $G$ in a neighborhood of $p$ such that
\begin{equation}
\label{INeq1}
(f\circ G^{-1})(x,y)=\pi(x,y)=x
\quad
\text{for all $(x,y)$ in a neighborhood of } G(p).
\end{equation}

\begin{theorem}[Metric Implicit Function Theorem]
\label{INT2}
Assume $1\leq m\leq n$ are integers and the Hausdorff measure $\H^m$ is $\sigma$-finite on a metric space $X$. Let $f\colon \bbbr^n\supset\Omega\to X$ be a Lipschitz map such that $\rank \md (f,p)=m$ for a.e.\ $p\in \Omega$.
Then there is a countable family $\{K_i\}$ of pairwise disjoint compact subsets of $\Omega$,  $\LL^n(\Omega\setminus \bigcup_i K_i)=0$, such that for each $i$ we have
\begin{itemize}
\item a diffeomorphism $G_i:\bbbr^n\to\bbbr^n=\bbbr^m\times\bbbr^{n-m}$
\item and a bi-Lipschitz $\phi_i:\pi(G_i(K_i))\to X$
\end{itemize}
such that
$$
(f\circ G_i^{-1})(x,y)=
(\phi_i\circ\pi)(x,y)=
\phi_i(x)
\quad
\text{for all }(x,y)\in G_i(K_i).
$$
\end{theorem}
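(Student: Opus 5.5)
Embed $f(\Omega)$ isometrically into $\ell^\infty$ via Theorem~\ref{BT3}, so $f=(f_1,f_2,\ldots)$. By Theorem~\ref{INT1}, $\md(f,p)$ exists for a.e.\ $p$, and by hypothesis it has rank $m$. The plan has three steps. First, find $m$ coordinate functions whose differentials already detect the rank. Second, apply the classical implicit function theorem to a $C^1$ approximation of those coordinates. Third, use the $\sigma$-finiteness of $\H^m$ on $X$ to make $f$ constant along the fibers $\{x\}\times\bbbr^{n-m}$.

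\textbf{Step 1: choosing coordinates.} At a.e.\ $p$ the functions $f_i$ are differentiable, and I would show $\md(f,p)(v)=\sup_i|Df_i(p)v|$ (this should follow from the proof of Theorem~\ref{INT1}). Then $N_{\md(f,p)}=\bigcap_i\ker Df_i(p)$. Since this kernel has dimension $n-m$, there are indices $i_1<\dots<i_m$ with $\rank D(f_{i_1},\dots,f_{i_m})(p)=m$. Countably many index tuples occur, so $\Omega$ splits, up to a null set, into measurable pieces $E_\alpha$, one for each tuple $\alpha=(i_1,\ldots,i_m)$. On each piece, Theorem~\ref{BT5} gives a $C^1$ map $g_\alpha:\bbbr^n\to\bbbr^m$ agreeing with $(f_{i_1},\dots,f_{i_m})$ off a set of small measure. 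On that coincidence set $Dg_\alpha=D(f_{i_1},\dots,f_{i_m})$ at density points, so $\rank Dg_\alpha=m$ there. Exhausting by compact sets and iterating with measures $2^{-k}$, I obtain a countable disjoint family of compacts $K$ of full measure. Each $K$ is small enough that the implicit function theorem gives a diffeomorphism $G$ (extended to all of $\bbbr^n$ by a cutoff) with $g_\alpha\circ G^{-1}=\pi$ near $G(K)$. Consequently $(f_{i_1},\dots,f_{i_m})\circ G^{-1}(x,y)=x$ on $G(K)$.

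\textbf{Step 2: defining $\phi$.} Define $\phi(x):=f(G^{-1}(x,y))$ for $(x,y)\in G(K)$. The lower bound is immediate: $d(\phi(x),\phi(x'))\ge\max_j|f_{i_j}(\cdot)-f_{i_j}(\cdot)|\ge|x-x'|/\sqrt m$. So once $\phi$ is well defined it is bi-Lipschitz, provided the upper Lipschitz bound also holds, which the independence statement below gives.

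\textbf{Step 3: the main obstacle.} What remains is to prove $f\circ G^{-1}$ does not depend on $y$, which I would derive from the coarea/Sard-type results. Setting $h=f\circ G^{-1}$, we have $\rank\md(h,\cdot)=m$ a.e., and the coordinates $x$ already realize the full seminorm, so $\md(h,q)$ vanishes on the $y$-directions. It therefore suffices to show that $h$ is constant on each slice $G(K)\cap(\{x\}\times\bbbr^{n-m})$. Directional derivatives vanishing a.e.\ only gives constancy along a.e.\ line, and $G(K)$ is not open. This is where $\sigma$-finiteness of $\H^m$ on $X$ enters.

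I would first try the following argument. Restrict to a set where $h$ is injective in $x$, and decompose $h(G(K))$ into countably many sets of finite $\H^m$ measure. Then apply a Fubini and Eilenberg-inequality argument to the map $y\mapsto h(x,y)$, whose metric derivative vanishes a.e. This shows that for a.e.\ $x$ the slice image is a single point, after discarding a null set of $K$ with small image. If that works, then, after shrinking $K$ once more by removing a null set and re-exhausting by compacts, $\phi$ is well defined and Lipschitz. I expect this step to need the most care and possibly a further countable decomposition.
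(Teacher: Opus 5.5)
Your Steps 1 and 2 are essentially the paper's Theorem~\ref{6:metric TFU}: coordinates chosen by row rank, a $C^1$ approximation, the implicit function theorem, and the lower bound $d(h(x,y),h(x',y'))\geq |x-x'|/\sqrt{m}$. The gap is in Step~3. Its target is that $h=f\circ G^{-1}$ is constant, up to null sets, on each slice $G(K)\cap(\{x\}\times\bbbr^{n-m})$. This is false in general, and nothing in your sketch uses that $K$ is small.

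Here is an example. Let $f:(0,1)^2\to\bbbr^2$ be $f(x,y)=(x,0)$ for $y\leq\frac13$ and $f(x,y)=(x,1)$ for $y\geq\frac23$. In between, $f$ contracts, climbs and expands:
$f=((1-s)x+\frac{s}{2},0)$ with $s=9y-3$ on $[\frac13,\frac49]$, $f=(\frac12,9y-4)$ on $[\frac49,\frac59]$, and $f=(\frac{1-s}{2}+sx,1)$ with $s=9y-5$ on $[\frac59,\frac23]$.
This $f$ is Lipschitz with $\rank Df=1$ a.e., and its image is a union of three segments, so it is $\H^1$-finite.

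Take the first Euclidean coordinate as one of the $\ell^\infty$ coordinates; adding a $1$-Lipschitz coordinate keeps the embedding isometric. Then $f_{i_1}=x$ on both outer strips and $G=\mathrm{id}$. The set $K=[\frac{1}{10},\frac{9}{10}]\times\bigl([\frac{1}{10},\frac15]\cup[\frac45,\frac{9}{10}]\bigr)$ satisfies everything your Step~3 uses. Yet $h$ takes the two values $(x,0)$ and $(x,1)$ on every vertical slice. The vertical metric derivative vanishes only on $G(K)$, so it says nothing across the gaps of a slice, where $\md(h,\cdot)$ can have a different kernel.

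What the Eilenberg inequality actually yields is weaker. Put $P=(f_{i_1},\ldots,f_{i_m})$. Since $P\circ h(x,y)=x$ on $G(K)$, the slice images are exactly the fibers of the $\sqrt{m}$-Lipschitz map $P$ on the metric space $h(G(K))$. Theorem~\ref{CT1} in dimension $m$ then gives $\int^*\H^0\bigl(h(G(K)\cap(\{x\}\times\bbbr^{n-m}))\bigr)\,d\H^m(x)\lesssim\H^m(h(G(K)))$. So a.e.\ slice carries finitely many values, not one.

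The missing idea comes from the proof of Theorem~\ref{6:thm2}. Instead of proving independence of $y$ on all of $G(K)$, you cut $G(K)$ into countably many pieces on each of which it holds.
\begin{itemize}
\item Reduce to $\H^m(X)<\infty$ and $G(K)$ contained in a unit cube.
\item Fubini gives $y_1$ with $\H^m(S_1)>\frac12\H^n(G(K))$, where $S_1=G(K)\cap(\bbbr^m\times\{y_1\})$. Put $B_1=h(S_1)$.
\item If $(x,y)\in G(K)\cap h^{-1}(B_1)$, then $h(x,y)=h(x',y_1)$ for some $(x',y_1)\in S_1$. Applying $P$ forces $x'=x$, so $h(x,y)=h(x,y_1)=:\phi_1(x)$ on this piece.
\item $\phi_1$ is bi-Lipschitz because $h$ is uniformly bi-Lipschitz on horizontal slices.
\item Remove $h^{-1}(B_1)$ and repeat on the remainder $A_2$, and so on.
\end{itemize}
The images $B_j$ are pairwise disjoint and $\H^m(B_j)\gtrsim\H^n(A_j)$. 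Hence $\sum_j\H^n(A_j)\lesssim\H^m(X)<\infty$, and the remainders become null. This exhaustion is where $\sigma$-finiteness really enters.

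The same construction also supplies the upper Lipschitz bound for $\phi$. Constancy on vertical slices alone would not give it, since $\phi(x)$ and $\phi(x')$ might be read off at very different heights.

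A minor point: a local diffeomorphism cannot in general be made global by a cutoff. Interpolating with the identity may destroy injectivity, and it cannot work for orientation-reversing maps. The paper uses Palais's Theorem~\ref{Pal2} instead.
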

One of the novelties here, even in the context of the classical implicit function theorem is that the diffeomorphisms $G_i$ are defined globally on $\bbbr^n$, while the diffeomorphism $G$ in the classical statement \eqref{INeq1} is defined locally. The existence of a global diffeomorphism follows from a beautiful argument of Palais (Theorem~\ref{Pal2}) that is not known enough, see also Lemma~\ref{TFU}.

While Theorem~\ref{INT2} seems new, other and related versions of the Metric Implicit Function Theorem have been studied in \cite{azzams,davids,HZ}.

Let us now state the classical area \cite{fed2} and co-area \cite{fed3} formulas due to Federer.

If $f:\bbbr^n\supset\Omega\to\bbbr^m$ is differentiable at $x_o\in\Omega$, we define the Jacobian of $f$ at $x_o$ by
\begin{equation}
\label{INeq2}
|J_n f|(x_o):=\sqrt{\det\big(Df(x_o)^T(Df)(x_o)\big)}
\quad
\text{ if } n\leq m,
\end{equation}
and
\begin{equation}
\label{INeq3}
|J_m f|(x_o):=\sqrt{\det\big(Df(x_o)Df(x_o)^T\big)}
\quad
\text{ if } n\geq m.
\end{equation}
Note that by the Jacobian we mean here its absolute value. When $n\neq m$, the Jacobian without the absolute value is not well defined, since the orientation of the corresponding
subspaces is not uniquely determined.
\begin{theorem}[Federer]
\label{INT3}
Let $f:\bbbr^n\supset\Omega\to\bbbr^m$ be Lipschitz. Then
\begin{itemize}
\item(Area formula) If $n\leq m$, then
$$
\int_\Omega |J_n f|(x)\, d\H^n(x)=\int_{\bbbr^m}\H^0(f^{-1}(y))\, d\H^n(y),
$$
\item (Co-area formula) If $n\geq m$, then
$$
\int_\Omega |J_m f|(x)\, d\H^n(x)=\int_{\bbbr^m}\H^{n-m}(f^{-1}(y))\, d\H^m(y).
$$
\end{itemize}
\end{theorem}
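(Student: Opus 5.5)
Both formulas will be proved by reducing to $C^1$ maps and then to linear maps, using the Lusin-type approximation (Theorem~\ref{BT5}) and the implicit function theorem.

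\textbf{Area formula ($n\le m$).} First, by Theorem~\ref{BT5}, decompose $\Omega$ up to a null set into countably many disjoint compact sets $K_i$ on which $f$ coincides with a $C^1$ map $g_i$. Throwing away a further null set, we may assume that at each point of $K_i$ the derivatives $Df=Dg_i$ agree and are density points. Next split each $K_i$ into two parts. On $\{|J_nf|=0\}$ the integrand vanishes, and $\H^n(f(\{J_nf=0\}))=0$ by the Sard Theorem~\ref{T15}, proved via covering by cubes on which $f$ is nearly linear of rank $<n$. On $\{|J_nf|>0\}$ the inverse function theorem makes $g_i$ locally injective. Hence we can further partition into pieces $E$ on which $f$ is injective and $Df$ is close to a fixed injective linear map $L$, i.e.\ $f\circ L^{-1}$ is $(1+\eps)$-bi-Lipschitz on $L(E)$. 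For linear $L$ one has $\H^n(L(A))=|J_nL|\,\H^n(A)$: by polar decomposition $L=OS$ with $O$ an isometry, so this is the change of variables for $S$ together with the invariance of $\H^n$ under isometries. Together with the bi-Lipschitz comparison this gives
$$(1+\eps)^{-2n}\int_E|J_nf|\le \H^n(f(E))\le(1+\eps)^{2n}\int_E|J_nf|.$$
Finally sum over the pieces, write $\sum_E \H^n(f(E))=\int \sum_E\chi_{f(E)}=\int\H^0(f^{-1}(y))\,d\H^n(y)$, and let $\eps\to0$, using monotone convergence.

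\textbf{Co-area formula ($n\ge m$).} The structure is the same. On the set $\{J_mf=0\}$ we need the Eilenberg inequality
$$\int^*\H^{n-m}(A\cap f^{-1}(y))\,d\H^m(y)\le C(\lip f)^m\H^n(A),$$
which settles null sets, together with the fact that the integral over the critical set vanishes. For the latter, perturb to $f_\delta(x,z)=f(x)+\delta z$ on $\bbbr^{n}\times\bbbr^m$, which is a submersion, and apply the regular case plus Eilenberg to get a bound $O(\delta)$. On $\{J_mf>0\}$ apply the classical implicit function theorem (the $C^1$ version, equation \eqref{INeq1}) to $g_i$ on pieces $K$ where $f\circ G^{-1}=\pi$ and $G$ is a diffeomorphism with $DG$ close to a fixed linear map. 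Fubini for $\pi$ gives the formula for $G(K)$, namely $\int\H^{n-m}(G(K)\cap\pi^{-1}(x))\,dx=\H^n(G(K))$. Transferring back through $G$ with the linear-algebra identity, the Jacobian of $G$ restricted to fibers times the coarea factor equals $J_mf/J_nG$, which yields the formula up to factors $(1+\eps)^{C}$.

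\textbf{Main obstacle.} I expect the main obstacle to be the critical-set part of the co-area formula. It requires the Eilenberg inequality, which needs $\H^{n-m}$ upper-integral estimates via coverings and Fatou, and the perturbation trick; measurability of $y\mapsto\H^{n-m}(f^{-1}(y)\cap A)$ also has to be checked, first for compact $A$ by upper semicontinuity and then by approximation.
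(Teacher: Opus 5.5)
Your argument is correct. The paper, however, never proves Theorem~\ref{INT3} on its own: it is the case $X=\bbbr^m$ of Theorems~\ref{AFT1} and~\ref{thm:gCoa}. Indeed $\md(f,x)(v)=|Df(x)v|$, Lemmas~\ref{SJT5} and~\ref{SJT4} give $|J_n(\md(f,x))|=|J_nf|(x)$ and $|J_m(\md(f,x))|=|J_mf|(x)$, and $\bbbr^m$ is $\H^m$-$\sigma$-finite.

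\textbf{Area formula.} Your argument has the same skeleton as the paper's: Sard on the critical set, countably many pieces on which $f$ is $(1+\eps)$-bi-Lipschitz with almost constant Jacobian, then summing $\chi_{f(E)}$. The difference is in how the pieces are built. You obtain them from the $C^1$ approximation of Theorem~\ref{BT5} and compare with a linear map via Lemma~\ref{AT8}. The paper, following Kirchheim, uses only pointwise metric differentiability with a uniform radius on the sets $D^j_k$. It compares with a fixed norm $\sigma_j$ and passes to Lebesgue measure through $\H^n_\sigma=|J_n(\sigma)|\H^n$ (Proposition~\ref{SJT1}). That is what lets it work for an arbitrary metric target, where no $C^1$ approximation is available. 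One point in your version: $Df$ close to $L$ on $E$ alone does not give the bi-Lipschitz bound. You need $Dg_i$ close to $L$ on a convex set containing $E$, which the $C^1$ property supplies. Injectivity then follows from the bi-Lipschitz estimate, so the inverse function theorem is not needed.

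\textbf{Co-area formula, regular part.} This is the paper's argument in Euclidean form: Corollary~\ref{6:corr1}, Fubini on $G(K)$, and the area formula on vertical slices. The paper avoids your $(1+\eps)^C$ bookkeeping by an exact change of variables and the pointwise identity of Lemma~\ref{lem:semi11}. That identity says the $(n-m)$-Jacobian of $D(G^{-1})$ restricted to $\{0\}\times\bbbr^{n-m}$ equals $|J_mf|/|J_nG|$. So it is $G^{-1}$ on vertical slices that has this Jacobian; $G$ restricted to the fibres has the reciprocal, as your sentence currently reads.

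\textbf{Co-area formula, critical set.} This is the genuine difference. You use Federer's perturbation $f(x)+\delta z$, whose Jacobian lies between $\delta^m$ and $C\delta$ on $\operatorname{Crit}_m(f)$. You combine it with the regular case on $\bbbr^{n+m}$ and Eilenberg's inequality for $(x,z)\mapsto z$. The paper instead proves the Metric Sard Theorem~II (Corollary~\ref{CT4}). It does so from the refined inequality of Theorem~\ref{CT2} and the covering estimate of Proposition~\ref{T14}, which rests on the Kirchheim--Rademacher theorem.

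Your route is cheaper in $\bbbr^m$, since it needs only the classical co-area inequality (Theorem~\ref{CT1}). But adding $\delta z$ requires a linear target. The paper's argument is intrinsic: it gives a fibrewise Sard theorem for arbitrary metric targets (Theorem~\ref{CT3}) and, as a consequence, rectifiability of fibres (Theorem~\ref{8.9}).
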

\begin{remark}
The Lebesgue measure in $\bbbr^n$ coincides with the Hausdorff measure $\H^n$ (Corollary~\ref{SJT2}) and for consistency, we prefer to use Hausdorff measure notation for the integration instead of the Lebesgue one, because we measure the size of the preimages using the Hausdorff measure anyway.

Here $\H^0$ stands for the counting measure  meaning that $\H^0(f^{-1}(y))$ is the cardinality of the set $f^{-1}(y)$. We prefer to use the Hausdorff measure notation $\H^0$ for the counting measure to be consistent with the statement of the co-area formula.
\end{remark}

Let us now explain the geometric meaning of the Jacobians defined in \eqref{INeq2} and \eqref{INeq3}. First, observe that if $m=n$, then both definitions give the same value and the area and the co-area formulas coincide in this case.

{\bf Case: $n\leq m$.} If $\rank Df(x_o)<n$, then $|J_n f|(x_o)=0$. Thus assume that $\rank Df(x_o)=n$, so $Df(x_o)$ is an isomorphism between $\bbbr^n$ and an $n$-dimensional subspace of $\bbbr^m$, and the Jacobian measures how the linear mapping $Df(x_o)$ changes measures of subsets between these spaces.

The linear map $Df(x_o)$ maps the set
$$
\{v:\, \md(f,x_o)(v)\leq 1\}=\{v:\, |Df(x_o)v|\leq 1\}
$$
onto the unit ball in the $n$-dimensional space $Df(x_o)(\bbbr^n)$. Therefore,
\begin{equation}
\label{INeq4}
|J_n f|(x_o)=\frac{\omega_n}{\H^n(\{v:\, \md(f,x_o)(v)\leq 1\})}\, ,
\end{equation}
where $\omega_n$ denotes the volume of the unit ball in $\bbbr^n$.

{\bf Case: $n\geq m$.} If $\rank Df(x_o)<m$, then $|J_m f|(x_o)=0$. Thus assume that $\rank Df(x_o)=m$.
Let
$N=\ker Df(x_o)=\ker \md(f,x_o)$, so
$Df(x_o)$ is an isomorphism of the orthogonal complement $N^\perp$ onto $\bbbr^m$.
Linear algebra shows that $|J_m f|(x_o)$ equals the Jacobian of the linear map:
\begin{equation}
\label{INeq5}
Df(x_o):N^\perp\to\bbbr^m,
\quad
\text{i.e.,}
\quad
|J_m f|(x_o)=\frac{\omega_m}{\H^m(\{v\in N^\perp:\, \md(f,x_o)(v)\leq 1\})}\, .
\end{equation}

In the case of Lipschitz mappings into a metric space $X$, the metric derivative is a general seminorm in $\bbbr^n$ and in order to generalize the area and the co-area formulas to the case of Lipschitz mappings into metric spaces, we need to define Jacobians of a general seminorm in $\bbbr^n$. Formulations \eqref{INeq4} and \eqref{INeq5} representing the classical Jacobians suggest the following definitions.

\begin{definition}
\label{INd2}
Let $\sigma$ be a seminorm on $\bbbr^n$. Recall that $N_\sigma=\ker\sigma$, see \eqref{INeq6}. We define
$$
\rank\sigma=n-\dim N_\sigma=\dim N_\sigma^\perp.
$$
Let $1\leq m\leq n$. If $\rank\sigma<m$, we set $|J_m(\sigma)|=0$. If $\rank\sigma=m$, we define
$$
|J_m(\sigma)|=\frac{\omega_m}{\H^m(\{v\in N_\sigma^\perp:\, \sigma(v)\leq 1\})}\, .
$$
\end{definition}
Note that we do not define $|J_m(\sigma)|$ for seminorms $\sigma$ with rank greater than $m$. This will be justified in a moment.

The following area formula is due to Kirchheim \cite[Theorem~7]{kir}; see Theorem~\ref{AFT1}.
\begin{theorem}[Metric Area Formula]
\label{INT4}
Let $f:\bbbr^n\supset\Omega\to X$ be a Lipschitz mapping from an open set into any metric space $X$. Then
$$
\int_\Omega|J_n(\md(f,x))|\, d\H^n(x)=\int_X \H^0(f^{-1}(y))\, d\H^n(y).
$$
\end{theorem}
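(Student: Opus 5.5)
The plan is to reduce the Metric Area Formula to a decomposition of $\Omega$ into countably many pieces on which $f$ is bi-Lipschitz and almost isometric to a fixed normed space. On such pieces the Euclidean area formula for bi-Lipschitz maps between normed spaces applies, and the critical set is handled by the Metric Sard Theorem~\ref{INT6}.

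First, by the Kirchheim--Rademacher Theorem~\ref{INT1}, $f$ is strongly metrically differentiable at a.e.\ $x\in\Omega$. Split $\Omega$ into three parts:
\begin{itemize}
\item a null set $Z$ where $\md(f,x)$ does not exist;
\item the set $C$ where $\rank\md(f,x)<n$;
\item the set $R$ where $\md(f,x)$ is a norm.
\end{itemize}
On $Z\cup C$ the left side vanishes, since $|J_n|=0$ on $C$. The right side restricted to $Z\cup C$ also vanishes: $\H^n(f(Z))=0$ because $f$ is Lipschitz, and $\H^n(f(C))=0$ by Theorem~\ref{INT6}. Since the counting function is additive over disjoint pieces, it suffices to prove the formula with $\Omega$ replaced by suitable measurable pieces of $R$.

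Second, fix $\eps>0$ and a countable dense family $\{\sigma_j\}$ of norms on $\bbbr^n$. Using strong metric differentiability together with an Egorov/Lusin-type argument, decompose $R$ up to a null set into countably many pairwise disjoint measurable sets $E_k$, each with a norm $\sigma_k$, such that
\[
(1-\eps)\sigma_k(z-y)\leq d(f(z),f(y))\leq(1+\eps)\sigma_k(z-y)\quad\text{for all } y,z\in E_k,
\]
and $(1-\eps)\sigma_k\le \md(f,x)\le (1+\eps)\sigma_k$ for all $x\in E_k$. To build the $E_k$, consider the sets of points $x$ for which the strong differentiability quotient is at most $\eps$ on the ball $B(x,1/i)$ and $\md(f,x)$ is $\eps$-close to $\sigma_j$. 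Intersect these with small balls of diameter less than $1/i$ and with the uniform bound $\md(f,x)\ge c|\cdot|$, then disjointify. This is the step I expect to be the main obstacle. One needs the two-point estimate uniformly on a whole set, not only at a single base point, and that is exactly where \emph{strong} rather than plain metric differentiability is used. Measurability of the sets involved also has to be checked, most easily by working with a separable image and with $\md$ as a measurable function into the space of seminorms.

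Third, on each $E_k$ the map $f$ is injective, and $f\circ\iota^{-1}$ is $(1\pm\eps)$-bi-Lipschitz from $(\iota(E_k),\|\cdot\|)$ onto $f(E_k)$, where $\iota:(\bbbr^n,\sigma_k)\to(\bbbr^n,\sigma_k)$ is the identity. Hausdorff measures scale by at most $(1\pm\eps)^n$ under such maps. For the normed space $(\bbbr^n,\sigma_k)$ we have $\H^n_{\sigma_k}=\frac{\omega_n}{\H^n(\{\sigma_k\le1\})}\H^n$. This is a Haar-measure uniqueness argument, combined with the isodiametric fact that $\H^n$ of the unit ball in the normed space equals the Euclidean $\omega_n$ normalisation; I would prove it via the Kirchheim-style identity that $\H^n$ of a normed ball equals $\omega_n$ when $\H^n$ is normalised by diameters. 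Hence
\[
\H^n(f(E_k))=\bigl(1+O(\eps)\bigr)\,|J_n(\sigma_k)|\,\H^n(E_k)=\bigl(1+O(\eps)\bigr)\int_{E_k}|J_n(\md(f,x))|\,dx,
\]
using the continuity of $|J_n|$ in the norm.

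Finally, summing over $k$, the right side becomes $\int_X\sum_k\chi_{f(E_k)}\,d\H^n=\int_X\H^0(f^{-1}(y)\cap R)\,d\H^n$. Letting $\eps\to0$ then gives the formula.
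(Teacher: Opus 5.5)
Your proposal is correct and is essentially the paper's proof: discard the set where $\md(f,x)$ fails to exist and the critical set (via Kirchheim--Rademacher and the Metric Sard Theorem), split the rest into countably many measurable pieces of diameter $<1/k$ on which $\md(f,\cdot)$ is pinched between multiples of a single norm $\sigma_j$ from a countable dense family, deduce that $f$ is nearly isometric on $(E,\sigma_j)$, and finish with $\H^n_{\sigma_j}=|J_n(\sigma_j)|\,\H^n$, summation, and $\eps\to0$. One correction to your diagnosis of the main obstacle: the two-point estimate on a piece needs only \emph{plain} metric differentiability, since for $y,z$ in a piece of diameter $<1/k$ you may take $y$ itself as the base point (the paper's sets $D^j_k$ only require $|d(f(x),f(y))-\md(f,x)(y-x)|\le\eps c_j|y-x|$ for $y\in B(x,1/k)$, with the tolerance scaled by the lower constant $c_j$ of $\sigma_j$, and your fixed tolerance $\eps$ must be scaled the same way), so strong differentiability enters only through the Sard theorem.
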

For applications of this result see for example \cite{ambkir2,ambkir,DV,EH2,lytchakw,wenger,wengery}.

The co-area formula, however, does not hold for mappings into an arbitrary metric space and it requires that the target space has $\sigma$-finite Hausdorff measure $\H^m$, see Example~\ref{ex:4.11}. In that case we simply say that {\em $X$ is $\H^m$-$\sigma$-finite}. It turns out (see Corollary \ref{IT1}) that for such $X$ the metric derivative of a Lipschitz map $f:\bbbr^n\supset\Omega\to X$ has rank almost everywhere at most $m$. Therefore we need to define the $m$-Jacobian $|J_m(\sigma)|$ only when $\rank\sigma\leq m$.

The metric co-area formula was proved independently by Karmanova \cite{kar} and by Reichel \cite{reichel}.  In the case $X$ is a subset of a Euclidean space, the result was proved by Ohtsuka \cite{ohtsuka}. See also Theorem~\ref{thm:gCoa}.
\begin{theorem}[Metric Co-area Formula]
\label{INT5}
Assume that a metric space $X$ is $\H^m$-$\sigma$-finite. If $1\leq m\leq n$ and $f:\bbbr^n\supset \Omega\to X$ is a Lipschitz mapping defined on an open set, then
$$
\int_\Omega |J_m (\md(f,x))|\, d\H^n(x)=\int_X\H^{n-m}(f^{-1}(y))\, d\H^m(y).
$$
\end{theorem}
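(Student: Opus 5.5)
We will reduce the Metric Co-area Formula to the classical Euclidean co-area formula of Theorem~\ref{INT3}, using the Metric Implicit Function Theorem~\ref{INT2} to straighten the fibers. First we split $\Omega$ into $Z\cup C\cup R$. Here $Z$ is the null set where $\md(f,x)$ fails to exist. The set $C$ is where $\rank\md(f,x)<m$. The set $R$ is where $\rank\md(f,x)=m$. By Corollary~\ref{IT1}, the rank is at most $m$ a.e., since $X$ is $\H^m$-$\sigma$-finite, so these sets exhaust $\Omega$ up to a null set.

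On $Z\cup C$ the left-hand side vanishes, because $|J_m(\md(f,x))|=0$ on $C$. So we must show that
$$
\int_X\H^{n-m}(f^{-1}(y)\cap(Z\cup C))\,d\H^m(y)=0.
$$
We plan to get this from an Eilenberg-type inequality
$$
\int^*_X\H^{n-m}(E\cap f^{-1}(y))\,d\H^m(y)\le c\int_E(\text{rank-}m\text{ content of }\md f),
$$
or directly from the stronger Metric Sard theorem, Theorem~\ref{CT3}, which we expect says precisely that for $\H^m$-a.e.\ $y$ the fiber $f^{-1}(y)\cap C$ is $\H^{n-m}$-null. The set $Z$ is handled by the Eilenberg inequality $\int^*\H^{n-m}(f^{-1}(y)\cap Z)\,d\H^m\le c\,\mathrm{Lip}(f)^m\H^n(Z)=0$, which is proved by a covering argument.

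On $R$ we apply Theorem~\ref{INT2} (after restricting to $R$ and passing to a Lipschitz extension into $\ell^\infty$, as in the Introduction). This gives disjoint compact sets $K_i$ covering $R$ up to a null set, diffeomorphisms $G_i$, and bi-Lipschitz maps $\phi_i$ with $f\circ G_i^{-1}=\phi_i\circ\pi$ on $G_i(K_i)$. Both sides of the formula are countably additive over the $K_i$: the left side trivially, and the right side by monotone convergence, using the null-set estimate once more for the leftover part. It therefore suffices to prove the formula on a single $K=K_i$.

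Fix such a $K$ and write $g=\pi\circ G$, which is $C^1$ and maps to $\bbbr^m$. Then $f=\phi\circ g$ on $K$ with $\phi$ bi-Lipschitz, so $f^{-1}(y)\cap K=g^{-1}(\phi^{-1}(y))\cap K$. The classical co-area formula for $g$ gives
$$
\int_K|J_mg|\,d\H^n=\int_{\bbbr^m}\H^{n-m}(g^{-1}(x)\cap K)\,d\H^m(x).
$$
Changing variables $y=\phi(x)$ by the metric area formula (Theorem~\ref{INT4}, in its version for Lipschitz maps on measurable sets, with $\phi$ injective) turns this into
$$
\int_X\H^{n-m}(f^{-1}(y)\cap K)\,d\H^m(y)=\int_{\bbbr^m}\H^{n-m}(g^{-1}(x)\cap K)\,|J_m(\md(\phi,x))|\,d\H^m(x).
$$
Applying the classical co-area formula again, now with the weight $h(x)=|J_m(\md(\phi,x))|$ (the weighted version follows by approximating $h$ with simple functions), this equals
$$
\int_K|J_mg|(z)\,|J_m(\md(\phi,g(z)))|\,d\H^n(z).
$$

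It remains to verify the pointwise chain rule
$$
|J_m(\md(f,z))|=|J_mg|(z)\cdot|J_m(\md(\phi,g(z)))|\qquad\text{for a.e. } z\in K.
$$
At density points $z$ of $K$ where $\phi$ is metrically differentiable at $g(z)$, we have $\md(f,z)(v)=\md(\phi,g(z))(Dg(z)v)$. The a.e.\ claim on the $\phi$ side needs care: we need that $g$ does not map a positive-measure part of $K$ onto a null set where $\phi$ fails to be differentiable. This follows from the classical co-area formula, since $|J_mg|>0$ on $K$. Given the identity, $\ker\md(f,z)=\ker Dg(z)=N$, and $Dg(z)$ maps $N^\perp$ isomorphically onto $\bbbr^m$, carrying the unit ball of $\md(f,z)$ in $N^\perp$ onto the unit ball of $\md(\phi,g(z))$. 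Definition~\ref{INd2} together with the linear change-of-variables factor $|J_mg|(z)$ then gives the product formula.

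We expect the main obstacle to be the null-set part, namely showing that the fibers over $C$ contribute nothing. This is where the full strength of Theorem~\ref{CT3} (or a careful Eilenberg inequality) is needed, and it is the only step not reduced to Euclidean theory. A secondary subtlety is the measurability of $y\mapsto\H^{n-m}(f^{-1}(y)\cap K)$. On each compact $K$ we get it from the bi-Lipschitz reparametrization by $\phi$, and in general from the countable decomposition.
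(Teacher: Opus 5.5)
Your proof is correct and shares the paper's global architecture. The critical set is handled by the Metric Sard Theorem; the paper uses the $\sigma$-finite special case, Corollary~\ref{CT4}, because Theorem~\ref{CT2} is proved there only for $\sigma$-finite targets. Null sets are handled by the co-area inequality, and the rank-$m$ part by the Metric Implicit Function Theorem.

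One citation needs fixing. After discarding $Z$ and $C$ you are on a measurable set $R$, not an open one, so the right tool is Theorem~\ref{6:thm2} applied to $f|_R$. Theorem~\ref{INT2} as stated needs an open domain on which the rank is $m$ a.e., and with target $X$. Neither $R$ nor a Lipschitz extension of $f|_R$ into $\ell^\infty$ provides that: the extension has no rank control off $R$, and its target is not known to be $\H^m$-$\sigma$-finite.

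Where you genuinely differ from the paper is the computation on a single compact piece $K$. You factor $f=\phi\circ g$ with the $C^1$ submersion $g=\pi\circ G$. You then run Federer's Euclidean co-area formula (Theorem~\ref{INT3}), once with the weight $|J_m(\apmd(\phi,\cdot))|\circ g$. You combine this with the metric area formula for $\phi$ and the identity $|J_m(\sigma\circ L)|=\sqrt{\det LL^T}\,|J_m(\sigma)|$ of Lemma~\ref{SJT4}.

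The paper instead works in straightened coordinates via $\Psi=G^{-1}$. It shows directly that $\apmd(f\circ\Psi,(x,y))$ has kernel $\{0\}\times\bbbr^{n-m}$ and restricts to $\apmd(\phi,x)$ on horizontal vectors. It converts Jacobians using Lemma~\ref{lem:chain} and Lemma~\ref{lem:semi11}, which carry the factors $|\det D\Psi|$ and $|\det(D\Psi|_{\{0\}\times\bbbr^{n-m}})|$. After that it uses only the change of variables for diffeomorphisms, Fubini, the area formula on vertical slices, and Corollary~\ref{gum14}.

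The paper's route never invokes Theorem~\ref{INT3}, which it states but does not prove. So its argument stays self-contained, and the case $X=\bbbr^m$ recovers Federer's co-area formula rather than presupposing it. Your route is shorter and more transparent: metric co-area becomes Euclidean co-area for a submersion plus a bi-Lipschitz reparametrization of the base. Nothing essential is lost, since for $g=\pi\circ G$ the Euclidean co-area formula is itself just Fubini after the change of variables by $G$.

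Your handling of the measure-theoretic points is right. Because $|J_mg|>0$, the set $K\cap g^{-1}(N)$ is null for every null $N\subset\bbbr^m$, and this gives both measurability of the weight and the a.e.\ validity of the chain rule $\md(f,z)=\apmd(\phi,g(z))\circ Dg(z)$. You should state explicitly that $\apmd(\phi,x)$ is a norm, which follows from the bi-Lipschitz lower bound on $\phi$ or from $\rank\md(f,z)=m$. That is what gives $\ker\md(f,z)=\ker Dg(z)$.
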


For applications of this result see for example \cite{volume, hajlaszkk,KL}.

\subsection*{Structure of the paper}
In Section~\ref{LF} we collect basic and well known facts regarding Lipschitz functions used later.

In Section~\ref{sec:metricdiff} we first prove that Lipschitz mappings $f:\bbbr^n\supset\Omega\to X$ are metrically differentiable a.e., see Proposition~\ref{MDT2} and Corollary~\ref{MDT3}. This is a weaker version than the strong metric differentiability in the Kirchheim-Rademacher Theorem~\ref{INT1}, which is presented in Section~\ref{SMD}, see Theorem~\ref{T2}. This result is a consequence of Theorem~\ref{T11} which can be regarded as a version of Scorza-Dragoni theorem. We conclude Section~\ref{sec:metricdiff} with extending the results to the case of Lipschitz mappings $f:\bbbr^n\supset A\to X$ defined on measurable sets. This requires the notion of approximate metric derivative.

In Section~\ref{MST} we first prove the classical Sard Theorem~\ref{T15}, and then we prove the Metric Sard Theorem~\ref{INT6}, see Theorem~\ref{T17}. The Metric Sard Theorem is not a simple adaptation of the proof of Theorem~\ref{T15} and it requires the strong metric differentiability, Theorem~\ref{INT1}.

In Section~\ref{MIFT} we prove two versions of of the implicit function theorem for Lipschitz mappings $f:\bbbr^n\supset\Omega\to X$, see Theorem~\ref{6:corr2} and Theorem~\ref{6:thm2}. While Theorem~\ref{6:corr2} is similar to the main result in \cite{HZ}, Theorem~\ref{6:thm2} is new and it plays a crucial role in the proof of the metric co-area formula.

Section~\ref{SJ} is devoted to the study of seminorms and Jacobians of seminorms defined in Definition~\ref{INd2}.

The metric area formula, Theorem~\ref{INT4} is proved in Section~\ref{AF}, see Theorem~\ref{AFT1}. The proof follows the original argument of Kirchheim. Let us emphasize that the material of Section~\ref{AF} is independent from Section~\ref{MIFT} so those who are interested only in the proof of Theorem~\ref{INT4} may skip Section~\ref{MIFT}.

The classical co-area inequality of Federer is stated in Section~\ref{coarea}; see Theorem~\ref{CT1}.
It follows from the stronger Theorem~\ref{CT2}, proved in \cite{EH1}, which is used here to establish a new version of the Metric Sard Theorem~\ref{CT3}. This version is stronger than Theorem~\ref{INT6} and plays a central role in the proof of the Metric Co-area Theorem~\ref{INT5}.
Theorem~\ref{CT2} is difficult. Although a complete proof can be found in \cite{EH1}, we include here a proof in the special case in which the Hausdorff measure $\H^m$ on the target space $Y$ is $\sigma$-finite. This special case is sufficient for the proof of the Metric Co-area Theorem~\ref{INT5}, where the same $\sigma$-finiteness assumption is imposed and necessary.

Section~\ref{MCOF} is devoted to the proof of the Metric Co-area Theorem~\ref{thm:gCoa}.

Section~\ref{ap:sec3} compares metric differentiability, strong metric differentiability, and classical Fr\'echet differentiability for Lipschitz mappings $f:\bbbr^n\supset\Omega\to\bbbr^m$. This section is independent of the rest of the paper and could be read immediately after the Introduction. We place it at the end because its results are not used elsewhere; their purpose is to clarify, through examples, the strengths and limitations of the notions of metric and strong metric differentiability. The main result proved there is Proposition~\ref{MDT1}; see Proposition~\ref{1:prop1}.

The paper concludes with an appendix, Section~\ref{APPE}, which collects several auxiliary results from linear algebra, measure theory, and the geometry of convex sets. Among other things, it contains
the Palais Extension Theorem \ref{Pal2},
a less standard version of Lusin's theorem, Theorem~\ref{TH5}, the asymptotic isodiametric inequality, Theorem~\ref{AT1}, the proof that $\H^n=\LL^n$ in $\bbbr^n$, Corollary~\ref{SJT2}, and the isodiametric inequality in finite-dimensional Banach spaces, Theorem~\ref{AT3}. All of these results are used in the paper.

\subsection*{Notation}
Since the paper is long, we have included most of the notation used throughout. This should be helpful especially for readers who do not read the paper from beginning to end, but instead wish to consult specific sections independently.

By $C$ we will denote a positive constant whose value may change in a single
string of estimates. If $A$ and $B$ are non-negative quantities, we write
$A \lesssim B$ and $A \simeq B$ if there is a constant $C \geq 1$ such that
$A \leq CB$ and $C^{-1}B \leq A \leq CB$, respectively. If the constant depends
on parameters $m,n,\ldots$, we write $A \lesssim_{m,n,\ldots} B$ and
$A \simeq_{m,n,\ldots} B$.

The symmetric difference of sets will be denoted by $A\triangle B=(A\setminus B)\cup (B\setminus A)$.

Throughout the paper, $\Omega$ will always denote an open set. If $\Omega\subset\bbbr^n$, then by saying that $E\subset\Omega$ is closed, we mean that $E$ is closed in $\bbbr^n$.
By $C_0^\infty(\Omega)$ we will denote the space of smooth functions with compact support in $\Omega$.

The set of positive integers will be denoted by $\mathbb N$; $\qp$ will denote the set of non-negative rational numbers. The Euclidean norm in
$\mathbb R^n$ will be denoted by $|\cdot|$, and the scalar product by
$x\cdot y$. Open and closed balls in a metric space will be denoted by
$B(x,r)$ and $\bar{B}(x,r)$. If $B=B(x,r)$ is a ball and $\lambda>0$,
then $\lambda B:=B(x,\lambda r)$.
If the ball is considered in $\mathbb R^n$,
we may write $B^n(x,r)$ and $\bar{B}^n(x,r)$. We write
$\bbbb^n=B^n(0,1)$ and $\Sph^{n-1}=\partial \bbbb^n$. The diameter of a set $A$ will be
denoted by $\operatorname{diam} A$.
The closure of $A$ will be denoted by $\overline{A}$.

The $s$-dimensional Hausdorff measure will be denoted by $\mathcal H^s$, and
$\mathcal H^s_\delta$ will denote the corresponding $\delta$-Hausdorff content, see Section~\ref{HAU}.
Note that $\H^s_\infty(A)=0$ if and only if $\H^s(A)=0$.
The Hausdorff measure is always computed with respect to the metric of the
ambient space. In $\mathbb R^n$, $\mathcal H^n$ agrees with the Lebesgue
measure $\LL^n$ (Corollary~\ref{SJT2}); we will also write $|A|$ for the Lebesgue measure of a measurable set
$A\subset \mathbb R^n$. The measure $\mathcal H^0$ is the counting measure.
If $\mathcal H^s$ is $\sigma$-finite on a metric space $X$, we say that $X$ is
$\mathcal H^s$-$\sigma$-finite. The phrase ``almost everywhere'' will always
refer to the measure clear from the context, usually $\LL^n=\H^n$ in
$\mathbb R^n$. The characteristic function of a set $A$ will be denoted by
$\chi_A$.

If $A\subset\bbbr^n$ is measurable, then $x\in\bbbr^n$ is a {\em density point of $A$} if
$$
\lim_{r\to 0}\frac{|B(x,r)\cap A|}{|B(x,r)|}=1.
$$
According to the Lebesgue differentiation theorem, almost every point $x\in A$ is a density point of $A$.

The volume of the Euclidean unit ball in $\mathbb R^n$ will be denoted by
$\omega_n$. More generally, for $s\geq 0$ we set
\[
        \omega_s=\frac{\pi^{s/2}}{\Gamma(s/2+1)} .
\]

Metric spaces will usually be denoted by $X$ and $Y$, and their metrics by
$d$, or by $d_X$ and $d_Y$ if more than one metric is involved. If
$f:X\to Y$ is Lipschitz, then $\operatorname{Lip}(f)$ denotes the least
Lipschitz constant of $f$. A map is called $L$-Lipschitz if
$\operatorname{Lip}(f)\leq L$.

By $\ell^\infty$ we denote the Banach space of bounded real sequences
$x=(x_i)_{i\in\mathbb N}$ with the norm
\[
        \|x\|_\infty=\sup_{i\in\mathbb N}|x_i|.
\]
The closed subspace of sequences converging to zero will be denoted by $c_0$.
If $f:\Omega\to \ell^\infty$, we write
$f=(f_1,f_2,\ldots)$ for its coordinate functions.

If $f:\bbbr^n\supset\Omega\to \mathbb R^m$ is differentiable at $x$, then
$Df(x)$ denotes its Fr\'echet derivative. If $f:\Omega\to\ell^\infty$,
$f=(f_1,f_2,\ldots)$, and all functions $f_i$ are differentiable at $x$, then
$D_c f(x)$ denotes the componentwise derivative (Definition~\ref{D1}),
\[
        D_c f(x)v=(\nabla f_1(x)\cdot v,\nabla f_2(x)\cdot v,\ldots);
\]
for a Lipschitz $f$ obviously $D_c f(x):\mathbb R^n\to\ell^\infty$.

If $f:\bbbr^n\supset\Omega\to X$ is metrically differentiable at $x$, its
metric derivative will be denoted by $\md(f,x)$ (Definition~\ref{def101}). If $f:A\to X$ is defined on a measurable set $A\subset\mathbb R^n$ and is approximately metrically
differentiable at $x\in A$, its approximate metric derivative will be denoted
by $\apmd(f,x)$, see Definition~\ref{def1}.

A seminorm on $\mathbb R^n$ will usually be denoted by $\sigma$ or $\tau$.
Its kernel will be denoted by
\[
        N_\sigma:=\ker\sigma=\{v\in\mathbb R^n:\sigma(v)=0\},
\]
and $N_\sigma^\perp$ will denote the orthogonal complement of $N_\sigma$ in
$\mathbb R^n$. The rank of $\sigma$ is
\[
        \operatorname{rank}\sigma
        :=n-\dim N_\sigma=\dim N_\sigma^\perp .
\]
The rank of $\md(f,x)$, or of $\apmd(f,x)$, is understood as the rank of the
corresponding seminorm. The space of seminorms on $\mathbb R^n$ will be
equipped with the distance
\[
        d_\infty(\sigma,\tau)
        :=\sup_{|v|=1}|\sigma(v)-\tau(v)|.
\]

If $\sigma$ is a norm on $\mathbb R^n$, then $\mathbb R^n_\sigma$ or
$(\mathbb R^n,\sigma)$ denotes $\mathbb R^n$ equipped with the metric
$d(x,y)=\sigma(x-y)$. Balls in this metric will be denoted by
$B^n_\sigma(x,r)$ and $\bar{B}^n_\sigma(x,r)$. For
$A\subset\mathbb R^n_\sigma$ we write
\[
        \operatorname{diam}_\sigma A
        :=\sup\{\sigma(x-y):x,y\in A\}.
\]
The Hausdorff measure and Hausdorff content in $\mathbb R^n_\sigma$ will be
denoted by $\mathcal H^s_\sigma$ and $\mathcal H^s_{\sigma,\delta}$,
respectively.

Let $1\leq m\leq n$. If $\sigma$ is a seminorm on $\mathbb R^n$, its
$m$-dimensional Jacobian is denoted by $|J_m(\sigma)|$. Thus
$|J_m(\sigma)|=0$ if $\operatorname{rank}\sigma<m$, while for
$\operatorname{rank}\sigma=m$,
\[
        |J_m(\sigma)|
        =
        \frac{\omega_m}
        {\mathcal H^m(\{v\in N_\sigma^\perp:\sigma(v)\leq 1\})}.
\]
If $L:\mathbb R^n\to\mathbb R^m$ is linear, then
\[
        |J_n L|=\sqrt{\det(L^T L)} \quad \text{when } n\leq m,
\]
and
\[
        |J_m L|=\sqrt{\det(LL^T)} \quad \text{when } n\geq m.
\]
For a differentiable map $f$, $|J_k f|(x)$ means the corresponding Jacobian of
$Df(x)$.

The Grassmannian of $m$-dimensional linear subspaces of $\mathbb R^n$ will be
denoted by $\operatorname{Gr}(m,n)$.

For a map $f:A\to X$ and $y\in X$, the multiplicity function, also called the
Banach indicatrix, is
\[
        N(f,A,y):=\operatorname{card}(f^{-1}(y)\cap A)
        =\mathcal H^0(f^{-1}(y)\cap A).
\]
For a Lipschitz map $f:A\to X$, where $A\subset\mathbb R^n$ is measurable, we
write
\[
        \operatorname{Crit}_m(f)
        :=
        \{x\in A:\operatorname{rank} \apmd(f,x)<m\}.
\]
If $A=\Omega\subset\bbbr^n$ is open and the ordinary metric derivative is used, then
\[
        \operatorname{Crit}(f)
        :=
        \{x\in\Omega:\operatorname{rank} \md(f,x)<n\}.
\]

If $(X,\mu)$ is a measure space and $f:X\to[0,\infty]$ is arbitrary, the upper
integral of $f$ will be denoted by
$\int_X^* f\,d\mu$.
It is defined as the infimum of $\int_X \phi\,d\mu$ over all measurable functions
$\phi:X\to[0,\infty]$ such that $f\leq \phi$ $\mu$-a.e. See Section~\ref{coarea}.

\section*{Acknowledgements}
Paweł Goldstein appreciates the hospitality of the University of Pittsburgh; his visits there were partially supported by grants from the IDUB programme and the IMAI Centre of the University of Warsaw.

This work was partially supported by the Simons Foundation grant (award no. SFI-MPS-T-Institutes-00010825) and by State Treasury funds as part of a task commissioned by the Minister of Science and Higher Education under the project ``Organization of the Simons Semesters at the Banach Center - New Energies in 2026-2028'' (agreement no. MNiSW/2025/DAP/491). The authors appreciate the hospitality of IM PAN during the Simons Semester in Geometric Analysis.

OpenAI’s ChatGPT was used solely to assist in identifying errors and typographical mistakes; the manuscript itself was written in its entirety before the widespread availability of generative AI.
The work of Piotr Hajłasz was supported in part by a grant of access to OpenAI models through the ChatGPT for Academic Researchers program.
\section{Lipschitz functions}
\label{LF}
In this introductory section we will collect basic properties of Lipschitz functions that will be used in the paper. Since all results collected here are well known, the presentation will be rather short.

Recall that a function $f:X\to Y$ between metric spaces is {\em Lipschitz continuous} (or, simply, \emph{Lipschitz}) if there is a constant $L\geq 0$ such that
\begin{equation}
\label{Beq1}
d_Y(f(x),f(y))\leq Ld_X(x,y)
\quad
\text{for all } x,y\in X.
\end{equation}
We say then that $f$ is {\em $L$-Lipschitz} and the smallest constant $L$ satisfying \eqref{Beq1} is called the Lipschitz constant of $f$ and is denoted by $\lip(f)$.

The next result is an important extension property of Lipschitz functions.
\begin{theorem}[McShane]
\label{BT1}
If $A\subset X$ is a subset of a metric space and $f:A\to \bbbr$ is $L$-Lipschitz, then there is an $L$-Lipschitz function $F:X\to\bbbr$ such that $F(x)=f(x)$ for all $x\in A$.
Moreover if $|f|\leq M$ on $A$, we can guarantee that $|F|\leq M$ on $X$.
\end{theorem}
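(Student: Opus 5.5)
The extension will be given by the explicit McShane formula
$$
F(x)=\inf_{a\in A}\big(f(a)+L\,d(x,a)\big),\qquad x\in X,
$$
and the bounded case will be handled by truncating. If $A=\emptyset$ the statement is trivial, since we may take $F\equiv 0$. So assume $A\neq\emptyset$.

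First I will check that $F$ is finite everywhere. Fix $a_0\in A$. For any $a\in A$ the Lipschitz condition gives $f(a)\geq f(a_0)-L\,d(a,a_0)$. Combining this with $d(a,a_0)\le d(a,x)+d(x,a_0)$ yields
$$
f(a)+L\,d(x,a)\ge f(a_0)-L\,d(x,a_0)>-\infty .
$$
Hence the infimum is a real number.

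Next I will check that $F=f$ on $A$. For $x\in A$, the choice $a=x$ in the infimum gives $F(x)\le f(x)$. Conversely, $f(a)+L\,d(x,a)\ge f(x)$ for every $a\in A$ by the Lipschitz condition, so $F(x)\ge f(x)$.

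Then I will show that $F$ is $L$-Lipschitz. For $x,y\in X$ and any $a\in A$,
$$
F(x)\le f(a)+L\,d(x,a)\le f(a)+L\,d(y,a)+L\,d(x,y).
$$
Taking the infimum over $a\in A$ gives $F(x)\le F(y)+L\,d(x,y)$. Exchanging the roles of $x$ and $y$ gives $|F(x)-F(y)|\le L\,d(x,y)$. In words, $F$ is an infimum of $L$-Lipschitz functions that is finite everywhere, and such an infimum is again $L$-Lipschitz.

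For the bound, suppose $|f|\le M$ on $A$. I will replace $F$ by
$$
\tilde F=\max\big(-M,\min(F,M)\big).
$$
The maps $t\mapsto \max(-M,\min(t,M))$ are $1$-Lipschitz on $\bbbr$, so $\tilde F$ is still $L$-Lipschitz. It agrees with $f$ on $A$, because there $F=f\in[-M,M]$. By construction $|\tilde F|\le M$ on $X$.

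There is no real obstacle in this argument. The only point that needs care is the finiteness of the infimum, which is why the first step uses a fixed base point $a_0\in A$.
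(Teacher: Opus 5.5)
Your proof is correct and follows the paper's route exactly: the McShane formula $\inf_{a\in A}(f(a)+L\,d(x,a))$, then truncation at levels $\pm M$. You also write out the finiteness of the infimum and the Lipschitz and extension checks, which the paper leaves as ``easy to verify''.
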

\begin{proof}
It is easy to verify that the function $\tilde{F}(x):=\inf_{y\in A}(f(y)+Ld(x,y))$ is $L$-Lipschitz and $\tilde{F}(x)=f(x)$ for $x\in A$ so we can take $F=\tilde{F}$. If in addition $|f|\leq M$ on $A$, we define $F$ as a standard truncation of $\tilde{F}$:
$$
F(x)=
\begin{cases}
M & \text{if } \tilde{F}(x)\geq M\\
\tilde{F}(x) & \text{if } -M\leq \tilde{F}(x)\leq M\\
-M & \text{if } \tilde{F}(x)\leq -M.
\end{cases}
$$
\end{proof}
By $\ell^\infty$ we will denote the Banach space of bounded real sequences $x=(x_i)_{i\in\bbbn}$ with the norm $\Vert x\Vert_\infty=\sup_{i\in\bbbn}|x_i|$.

As a consequence of Theorem~\ref{BT1} we obtain
\begin{corollary}
\label{BT2}
If $A\subset X$ is a subset of a metric space and $f:A\to \ell^\infty$ is $L$-Lipschitz, then there is an $L$-Lipschitz function $F:X\to\ell^\infty$ such that $F(x)=f(x)$ for all $x\in A$.
\end{corollary}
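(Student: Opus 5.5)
We apply McShane's Theorem~\ref{BT1} to each coordinate of $f=(f_1,f_2,\ldots)$ and then check that the resulting sequence of extensions defines an $L$-Lipschitz map into $\ell^\infty$. The only point requiring care is that $F(x)$ must be a bounded sequence for every $x\in X$, not merely a sequence of reals.

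First, each coordinate is $L$-Lipschitz. For $x,y\in A$ and every $i$,
$$
|f_i(x)-f_i(y)|\leq \Vert f(x)-f(y)\Vert_\infty\leq L\,d(x,y).
$$
So by Theorem~\ref{BT1} we can extend each $f_i$ to an $L$-Lipschitz function $F_i:X\to\bbbr$.

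We set $F(x)=(F_1(x),F_2(x),\ldots)$. If $A=\emptyset$ the statement is trivial (take $F\equiv 0$), so we fix $a\in A$. Then for each $x\in X$ and each $i$,
$$
|F_i(x)|\leq |f_i(a)|+L\,d(x,a)\leq \Vert f(a)\Vert_\infty+L\,d(x,a).
$$
This bound does not depend on $i$, so $F(x)\in\ell^\infty$. The point of fixing the base point $a$ is exactly this uniformity in $i$. (Alternatively, one could first subtract the constant $f(a)$ and use the bounded version of Theorem~\ref{BT1} on bounded pieces, but the direct estimate above suffices.)

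Finally,
$$
\Vert F(x)-F(y)\Vert_\infty=\sup_i|F_i(x)-F_i(y)|\leq L\,d(x,y),
$$
and $F=f$ on $A$ coordinatewise, hence $F$ is an $L$-Lipschitz extension of $f$.
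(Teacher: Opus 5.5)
Your proof is correct and follows the paper's argument: apply McShane's theorem to each coordinate $f_i$ and assemble $F=(F_1,F_2,\ldots)$. The paper leaves as ``easy to verify'' that $F(x)\in\ell^\infty$, and you supply that check with the base-point bound $|F_i(x)|\le \Vert f(a)\Vert_\infty+L\,d(x,a)$. Drop the parenthetical alternative via the bounded version of McShane: it is unnecessary, and it would not apply directly when $A$ is unbounded.
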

\begin{proof}
If $f=(f_1,f_2,\ldots):A\to\ell^\infty$, applying the McShane lemma to each of the functions $f_i$ gives an $L$-Lipschitz function $F_i:X\to\bbbr$ and it is easy to verify that the function $F=(F_1,F_2,\ldots):X\to\ell^\infty$ satisfies the claim.
\end{proof}

The $\ell^\infty$ space plays an important role in the theory of metric spaces, because every separable metric space can be {\em isometrically} embedded into $\ell^\infty$. This equips a metric space with a linear structure and that will play a crucial role in the study of differentiability properties of Lipschitz mappings into metric spaces.
\begin{theorem}[Kuratowski-Fr\'echet]
\label{BT3}
Any separable metric space admits an isometric embedding into $\ell^\infty$.
\end{theorem}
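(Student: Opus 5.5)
The plan is the classical Fréchet embedding built from a countable dense set. Let $(X,d)$ be separable and fix a countable dense subset $\{x_i\}_{i\in\bbbn}\subset X$; if $X$ is finite we simply repeat points. Fix also a base point; for convenience take $x_1$. Define
$$
\Phi:X\to\ell^\infty,\qquad \Phi(x)=\big(d(x,x_i)-d(x_1,x_i)\big)_{i\in\bbbn}.
$$
The subtraction of $d(x_1,x_i)$ is what makes the sequence bounded: by the triangle inequality $|d(x,x_i)-d(x_1,x_i)|\leq d(x,x_1)$ for every $i$, so $\Phi(x)\in\ell^\infty$. If $X$ is bounded the normalization is unnecessary, but we keep it to cover the general case.

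Next we show that $\Phi$ is $1$-Lipschitz. For $x,y\in X$ the $i$-th coordinate of $\Phi(x)-\Phi(y)$ equals $d(x,x_i)-d(y,x_i)$. By the triangle inequality this has absolute value at most $d(x,y)$, hence
$$
\Vert\Phi(x)-\Phi(y)\Vert_\infty\leq d(x,y).
$$

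For the reverse inequality we use density. Given $\eps>0$, pick $x_i$ with $d(y,x_i)<\eps$. Then
$$
|d(x,x_i)-d(y,x_i)|\geq d(x,x_i)-d(y,x_i)\geq d(x,y)-2d(y,x_i)>d(x,y)-2\eps.
$$
Taking the supremum over $i$ and letting $\eps\to0$ gives $\Vert\Phi(x)-\Phi(y)\Vert_\infty\geq d(x,y)$. Hence $\Phi$ is an isometry onto its image.

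No step is a genuine obstacle. The only point needing care is ensuring that $\Phi$ takes values in $\ell^\infty$ when $X$ is unbounded, and this is exactly what the base-point normalization handles.
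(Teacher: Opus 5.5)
Your proposal is correct and is the same argument the paper gives: the Fréchet map $x\mapsto(d(x,x_i)-d(x_o,x_i))_i$ built from a countable dense set and normalized at a base point. You also write out the two triangle-inequality estimates, which the paper leaves as an easy verification.
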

\begin{proof}
Fix $x_o\in X$ and let $\{x_i\}_{i\in\bbbn}$ be a dense subset of $X$. It is easy to verify that the mapping $\kappa\colon X\to\ell^\infty$,
$$
x\mapsto\kappa(x):=(d(x,x_i)-d(x_o,x_i))_{i\in\bbbn},
$$
is an isometric embedding, meaning that $\Vert\kappa(x)-\kappa(y)\Vert_\infty=d(x,y)$ for all $x,y\in X$. We subtract the distance to $x_o$ to make sure that the sequence of components of $\kappa(x)$ is bounded.
\end{proof}
The next theorem is a fundamental result about the differentiability properties of Lipschitz functions.
\begin{theorem}[Rademacher]
\label{BT4}
If $f:\Omega\to\bbbr$ is a Lipschitz continuous function defined on an open set $\Omega\subset\bbbr^n$, then $f$ is (Fr\'echet) differentiable a.e.
\end{theorem}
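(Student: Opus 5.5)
The plan is the classical route: first prove differentiability along lines in the coordinate directions using the one-dimensional theory, then upgrade to full Fréchet differentiability using a countable dense set of directions together with the Lipschitz bound.

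\emph{Step 1 (one dimension).} A Lipschitz function $g:\bbbr\to\bbbr$ is absolutely continuous. Hence it is differentiable a.e. by Lebesgue's differentiation theorem for monotone, or BV, functions, and $g(b)-g(a)=\int_a^b g'$.

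\emph{Step 2 (directional derivatives).} Fix $v\in\Sph^{n-1}$. The set $E_v$ of points $x\in\Omega$ where $D_vf(x)=\lim_{t\to0}(f(x+tv)-f(x))/t$ fails to exist is Borel, since it can be written through $\limsup$ and $\liminf$ over rational $t$, using continuity. By Step 1 it meets every line parallel to $v$ in a null set, so Fubini gives $|E_v|=0$. In particular the partial derivatives exist a.e., and we set $\nabla f=(\partial_1f,\dots,\partial_nf)$.

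\emph{Step 3 (the directional derivative is linear in $v$).} We show $D_vf=\nabla f\cdot v$ a.e. For $\varphi\in C_0^\infty(\Omega)$, a change of variables in the difference quotients gives
$$
\int \frac{f(x+tv)-f(x)}{t}\varphi(x)\,dx=-\int f(x)\frac{\varphi(x)-\varphi(x-tv)}{t}\,dx .
$$
The quotients are bounded by $\lip(f)$, so dominated convergence yields $\int D_vf\,\varphi=-\int f\,D_v\varphi$. Since $D_v\varphi=\sum_i v_i\partial_i\varphi$, the same identity for each $\partial_i$ gives $\int D_vf\,\varphi=\int(\nabla f\cdot v)\varphi$ for all $\varphi$, hence $D_vf=\nabla f\cdot v$ a.e.

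\emph{Step 4 (Fréchet differentiability).} Let $\{v_k\}$ be a countable dense subset of $\Sph^{n-1}$. Let $A$ be the full-measure set where $\nabla f$ exists and, for every $k$, $D_{v_k}f$ exists and equals $\nabla f\cdot v_k$. Fix $x\in A$ and set
$$
Q(v,t)=\frac{f(x+tv)-f(x)}{t}-\nabla f(x)\cdot v .
$$
Each $Q(\cdot,t)$ is $(\lip f+|\nabla f(x)|)$-Lipschitz in $v$, with a constant independent of $t$.

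Given $\eps>0$, choose finitely many $v_1,\dots,v_N$ forming an $\eps$-net of the sphere. Then choose $\delta>0$ so that $|Q(v_k,t)|<\eps$ for all $k\le N$ and all $0<|t|<\delta$. For any $v\in\Sph^{n-1}$ pick $v_k$ with $|v-v_k|<\eps$; the equi-Lipschitz bound gives
$$
|Q(v,t)|\le \eps+(\lip f+|\nabla f(x)|)\eps .
$$
This estimate is uniform in $v$, which is exactly Fréchet differentiability at $x$ with $Df(x)v=\nabla f(x)\cdot v$.

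The main obstacle is Step 3, the linearity of the directional derivative. Existence of partial derivatives alone does not imply differentiability, so this is where the integration-by-parts argument with the distributional derivative is essential. The uniformity argument in Step 4 is routine once the Lipschitz bound is available.
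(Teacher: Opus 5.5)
Your proposal is correct and follows essentially the paper's route: the paper does not prove Theorem~\ref{BT4} itself but refers to \cite[Theorem~3.2]{evans}, whose proof has exactly your three steps. These are a.e.\ existence of directional derivatives via the one-dimensional case and Fubini, the identity $D_vf=\nabla f\cdot v$ a.e.\ via integration by parts against test functions, and the $\eps$-net argument using the Lipschitz bound in $v$, which is uniform in $t$. The only cosmetic difference is that one may first extend $f$ to all of $\bbbr^n$ by McShane's Theorem~\ref{BT1}, since differentiability is local; this removes the bookkeeping that $x+tv\in\Omega$ for small $|t|$.
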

For a proof see e.g., \cite[Theorem~3.2]{evans}.

Let $X$ be a separable metric space and let $f:\bbbr^n\supset\Omega\to X$ be Lipschitz. According to Theorem~\ref{BT3} we can assume that $X\subset\ell^\infty$, so we can write
$f=(f_1,f_2,\ldots):\Omega\to X\subset \ell^\infty$. Since each of the functions $f_i$ is Lipschitz continuous it follows from the Rademacher theorem that at almost every point $x\in\Omega$ all functions $f_i$ are differentiable at $x$. This observation will play a central role in our investigation.

Another important observation is Federer's theorem (\cite[p.\ 442]{fedsa}, \cite[Theorem~3]{whitney2}) according to which a Lipschitz function coincides with a $C^1$ function outside a set of arbitrarily small measure (Theorem~\ref{BT5}). This will allow us to regard Lipschitz functions essentially as $C^1$ functions and, in particular, we will be able to use the inverse function theorem even in the context of Lipschitz mappings into metric spaces $f=(f_1,f_2,\ldots):\Omega\to X\subset \ell^\infty$.
\begin{theorem}[Federer]
\label{BT5}
If $f:\Omega\to\bbbr$ is a Lipschitz continuous function defined on an open set $\Omega\subset\bbbr^n$, then for every $\eps>0$ there is an $f_\eps\in C^1(\bbbr^n)$
and a closed set $E\subset\Omega$ such that $|\Omega\setminus E|<\eps$ and
$$
f(x)=f_\eps(x)\text{ and } Df(x)= Df_\eps(x) \text{ for all } x\in E.
$$
Moreover, if $|f|\leq M$ in $\Omega$, then we can take $f_\eps$ with $|f_\eps|\leq 2M$ in $\bbbr^n$.
\end{theorem}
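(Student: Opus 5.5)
The plan follows Whitney's extension theorem combined with Lusin-type approximation of the derivative.

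First, by Rademacher's theorem (Theorem~\ref{BT4}), $f$ is differentiable on a set $D\subset\Omega$ with $|\Omega\setminus D|=0$, and $Df$ is measurable on $D$ (it is a pointwise limit of difference quotients) and bounded by $L=\lip(f)$. For $k\in\bbbn$ consider
$$
\eta_k(x):=\sup_{0<|y-x|<1/k,\ y\in\Omega}\frac{|f(y)-f(x)-Df(x)(y-x)|}{|y-x|},\qquad x\in D.
$$
Taking the supremum over $y$ in a countable dense set (which is allowed by continuity), each $\eta_k$ is measurable. Moreover $\eta_k\to 0$ pointwise on $D$ by differentiability.

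Given $\eps>0$, Lusin's theorem (Theorem~\ref{TH5}) and Egorov's theorem give a closed set $E\subset D$ with $|\Omega\setminus E|<\eps$ on which $Df$ is continuous and $\eta_k\to0$ uniformly. We may also intersect $E$ with the compact exhaustion sets $K_j=\{x\in\Omega:\ |x|\le j,\ \dist(x,\partial\Omega)\ge 1/j\}$, handling the pieces $E\cap(K_{j+1}\setminus \operatorname{int}K_j)$ separately and gluing with a partition of unity. Alternatively, we can simply note that it suffices to check Whitney's condition locally uniformly on compact subsets of $E$.

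The key step, which I expect to be the main obstacle, is verifying the $C^1$ Whitney condition on $E$. For each compact $K\subset E$ we need
$$
\frac{|f(y)-f(x)-Df(x)(y-x)|}{|y-x|}\to0
$$
uniformly as $|x-y|\to0$ with $x,y\in K$, together with uniform continuity of $Df$ on $K$. Both follow from the uniform convergence of $\eta_k$ and the continuity of $Df$ on $E$. Whitney's extension theorem then gives $f_\eps\in C^1(\bbbr^n)$ with $f_\eps=f$ and $Df_\eps=Df$ on $E$. The point needing care is that Whitney's theorem is usually stated for closed sets with locally uniform estimates. Since $E$ is closed in $\bbbr^n$ and contained in $\Omega$, only compact pieces matter, and the local version suffices.

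Finally, for the bound, suppose $|f|\le M$. Compose $f_\eps$ with a smooth $\psi:\bbbr\to[-2M,2M]$ such that $\psi(t)=t$ for $|t|\le M$. Then $\psi\circ f_\eps\in C^1$, it still agrees with $f$ on $E$ since $|f|\le M$ there, and its derivative also agrees because $\psi'=1$ on $[-M,M]$. This gives $|f_\eps|\le 2M$.
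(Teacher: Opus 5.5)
Your argument is correct in substance and uses the same ingredients as the paper: Rademacher, Lusin, Egorov applied to the functions $\eta_k$, Whitney's extension theorem, and truncation by a smooth $\psi$. Your choice $\psi(t)=t$ on $[-M,M]$ works as well as the paper's.

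The real difference is in the localization. The paper states Whitney's theorem only for compact sets (Theorem~\ref{BT6}), so it first treats $f$ with compact support in a ball $B$. There Lusin gives a compact $K'$, the supremum defining $\eta_k$ runs only over $y\in K'$, Egorov gives a compact $K$, and the Whitney extension is multiplied by a cutoff. A general open $\Omega$ is then handled by applying this to the functions $\varphi_i f$ from the partition of unity of Lemma~\ref{WT1-T3}, summing, and restricting to a closed set $A\subset\Omega$ of almost full measure.

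You instead use the form of Whitney's theorem for closed subsets of $\bbbr^n$, where the Whitney condition is required only uniformly on compact subsets. This removes the cutoff and the partition of unity, and it lets you take the supremum in $\eta_k$ over all $y\in\Omega$; your measurability argument through a countable dense set is fine. Your route is shorter, while the paper's is self-contained relative to the compact version of Whitney's theorem it quotes.

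Two steps need more care.

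\emph{Egorov on infinite measure.} Egorov's theorem needs finite measure. If $|\Omega|=\infty$ you cannot expect $\eta_k\to0$ uniformly on a single set $E$ with $|\Omega\setminus E|<\eps$.

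\emph{Closedness in $\bbbr^n$.} Theorem~\ref{TH5} applied in $\Omega$ gives a set that is only relatively closed in $\Omega$, whereas you later use that $E$ is closed in $\bbbr^n$. This is not a formality. Take $f(x)=x\sin(\log x)$ on $\Omega=(0,1)$ and $E=\Omega$. Then $Df$ is continuous on $E$ and $\eta_k\to0$ uniformly on compact subsets of $E$. Yet $f'(x)=\sin(\log x)+\cos(\log x)$ has no limit as $x\to0^+$, so no $f_\eps\in C^1(\bbbr)$ can have $Df_\eps=Df$ on $E$.

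\emph{The fix.} Apply Lusin and Egorov on the bounded pieces $\Omega\cap\{j-1\le|x|\le j\}$ with $\eps 2^{-j}$, choose compact sets $C_j$ there, and set $E=\bigcup_j C_j$. Every bounded set meets only finitely many $C_j$, so $E$ is closed in $\bbbr^n$ and $Df|_E$ is continuous. Moreover $\eta_k\to0$ uniformly on each compact subset of $E$, which is exactly what the closed-set Whitney theorem needs. The pieces $K_{j+1}\setminus\operatorname{int}K_j$ of your exhaustion would not give closedness, because they accumulate at $\partial\Omega$.
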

In the proof we will need the classical result of Whitney \cite{whitney}.
\begin{theorem}[Whitney]
\label{BT6}
Let $K\subset\bbbr^n$ be a compact set and let $f:K\to\bbbr$, $L:K\to\bbbr^n$
be continuous functions. Then there is a function $F\in C^1(\bbbr^n)$ such that
$$
F|_K=f
\qquad
\mbox{and}
\qquad
DF|_K=L
$$
if and only if
$$
\lim_{{\scriptstyle x,y\in K,\, x\neq y}\atop {\scriptstyle |x-y|\to 0}}
\frac{|f(y)-f(x)-L(x)(y-x)|}{|y-x|} = 0.
$$

\end{theorem}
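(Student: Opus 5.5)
The necessity direction is the easy one, and I would dispose of it first. If $F\in C^1(\bbbr^n)$ with $F|_K=f$ and $DF|_K=L$, then for $x,y\in K$ the mean value theorem along the segment $[x,y]$ gives
$$
|f(y)-f(x)-L(x)(y-x)|=\Big|\int_0^1\big(DF(x+t(y-x))-DF(x)\big)(y-x)\,dt\Big|\leq \omega(|y-x|)\,|y-x|,
$$
where $\omega$ is the modulus of continuity of $DF$ on a compact neighborhood of $K$. This yields the limit condition. Continuity of $f$ and $L$ is automatic.

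For sufficiency I will follow Whitney's classical construction.
\begin{enumerate}
\item Set $R(x,y):=f(y)-f(x)-L(x)(y-x)$ and $\rho(\delta):=\sup\{|R(x,y)|/|y-x|:\, x,y\in K,\ 0<|x-y|\leq\delta\}$. By hypothesis $\rho(\delta)\to0$ as $\delta\to0$, and $\rho$ is bounded for $\delta\leq \diam K$ since $f,L$ are bounded on $K$.
\item Decompose the open set $\bbbr^n\setminus K$ into Whitney cubes $Q_j$ with $\diam Q_j\simeq\dist(Q_j,K)$ and bounded overlap of the dilates $Q_j^*=\frac{9}{8}Q_j$.
\item Take a smooth partition of unity $\{\varphi_j\}$ subordinate to $\{Q_j^*\}$, with $\sum_j\varphi_j=1$ on $\bbbr^n\setminus K$ and $|D\varphi_j|\lesssim_n (\diam Q_j)^{-1}$.
\item For each $j$ pick $p_j\in K$ with $|p_j-Q_j|=\dist(Q_j,K)$, and define
$$
F(x)=\sum_j\varphi_j(x)\big(f(p_j)+L(p_j)(x-p_j)\big)\ \ (x\notin K),\qquad F=f\text{ on }K.
$$
(If $K=\emptyset$ there is nothing to prove.) Away from $K$, $F$ is $C^\infty$ because the sum is locally finite.
\end{enumerate}

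The main step, and the main obstacle, is to show that $F$ is differentiable at each $a\in K$ with $DF(a)=L(a)$, and that $DF(x)\to L(a)$ as $x\to a$. Continuity of $DF$ on $K$ itself then follows from continuity of $L$.
\begin{itemize}
\item For $x\notin K$ near $a$, I use $\sum_j\varphi_j=1$ to write
$$
F(x)-f(a)-L(a)(x-a)=\sum_j\varphi_j(x)\big[R(a,p_j)+(L(p_j)-L(a))(x-p_j)\big].
$$
Only cubes with $x\in Q_j^*$ contribute, and for those $|p_j-x|\lesssim\dist(x,K)\leq|x-a|$, hence also $|p_j-a|\lesssim|x-a|$. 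Each bracket is then $\lesssim\big(\rho(C|x-a|)+\omega_L(C|x-a|)\big)|x-a|$, where $\omega_L$ is the modulus of continuity of $L$ on $K$. This gives $o(|x-a|)$.
\item For $x\in K$ the estimate is just the hypothesis. Together these give $DF(a)=L(a)$.
\item For the derivative off $K$, I differentiate and use $\sum_j D\varphi_j=0$:
$$
DF(x)-L(a)=\sum_j\varphi_j(x)\big(L(p_j)-L(a)\big)+\sum_j D\varphi_j(x)\big(f(p_j)+L(p_j)(x-p_j)-f(a)-L(a)(x-a)\big).
$$
The first sum tends to $0$ by continuity of $L$. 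In the second sum, $|D\varphi_j|\lesssim 1/\dist(x,K)$, while the bracket is again $o(|x-a|)$.
\item The trouble is that $|x-a|$ can be much larger than $\dist(x,K)$, so the naive bound does not close. To fix this, I replace $a$ by a nearest point $\bar a\in K$ to $x$. Then $|x-\bar a|=\dist(x,K)$ and $|\bar a-a|\leq 2|x-a|$. Rerunning the computation with $\bar a$ in place of $a$ bounds the second sum by $C\big(\rho(C\dist(x,K))+\omega_L(C\dist(x,K))\big)$, which tends to $0$.
\item Finally, $|L(\bar a)-L(a)|\to0$, which completes the proof that $DF(x)\to L(a)$.
\end{itemize}
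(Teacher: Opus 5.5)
Your argument is correct. Necessity follows from the integral form of the mean value theorem. Sufficiency is the classical Whitney construction, and you handle its key point properly: replacing $a$ by a nearest point $\bar a\in K$ to $x$ compensates the factor $1/\operatorname{dist}(x,K)$ coming from $D\varphi_j$.

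The paper does not prove this theorem; it cites Evans--Gariepy. Their proof is essentially the same construction: affine pieces $f(p_j)+L(p_j)(x-p_j)$ glued by a partition of unity adapted to the distance to $K$. The only difference is cosmetic: they build the partition of unity from a Vitali-type covering by balls rather than from Whitney cubes.
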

For a direct proof see e.g. \cite[Theorem~6.10]{evans}.

\begin{proof}[Proof of Theorem~\ref{BT5}]
Let $L(x)=Df(x)$. By Rademacher's theorem, $L$ is defined a.e.\ and it is a measurable function.

Assume for a moment that $f:\bbbr^n\to\bbbr$ is defined on $\bbbr^n$ and that $f$ has compact support in an open ball $B$. According to the Lusin theorem there is a compact set $K'\subset B$ such that
$f$ is differentiable on $K'$,
$f|_{K'}$, $L|_{K'}$ are continuous, and $|B\setminus K'|<\eps/2$. Hence
\begin{equation}
\label{Beq2}
\lim_{{\scriptstyle K'\ni y\to x}\atop {\scriptstyle y\neq x}}
\frac{|f(y)-f(x)-L(x)(y-x)|}{|y-x|} =0
\quad
\mbox{for all $x\in K'$.}
\end{equation}
This condition is however, weaker than the one required in the Whitney theorem -- we need uniform
convergence over a compact set as $|x-y|\to 0$. Let
$$
R(x,y)=\frac{|f(y)-f(x)-L(x)(y-x)|}{|y-x|}
$$
and let
$$
\eta_k(x) = \sup\{R(x,y):\, K'\ni y\neq x,\ |x-y|<1/k\}
\quad
k=1,2,\ldots
$$
Note that the $\eta_k$ are measurable. Also, condition \eqref{Beq2} means that for every $x\in K'$,
$\eta_k(x)\to 0$ as $k\to\infty$.  According to Egorov's theorem
there is another compact set $K\subset K'$ such that $|K'\setminus K|<\eps/2$
and
$$
\eta_k\rightrightarrows 0
\quad
\text{uniformly on $K$ as $k\to\infty$.}
$$
Hence
$$
\lim_{{\scriptstyle x,y\in K, x\neq y}\atop {\scriptstyle |x-y|\to 0}}
\frac{|f(y)-f(x)-L(x)(y-x)|}{|y-x|} =0.
$$
According to the Whitney Theorem~\ref{BT6}, there is $F\in C^1(\bbbr^n)$ such that $F|_K=f|_K$,
$DF|_K=Df|_K$, $|B\setminus K|<\eps$. Multiplying $F$ by a function $\varphi\in C_0^\infty(B)$ that is equal to $1$ in a neighborhood of $K$
we may further assume that $F$ has compact support in $B$. Since both functions $f$ and  $F$ and their derivatives
vanish outside $B$, we have that $F=f$ and $DF=Df$ on $\bbbr^n$ except for a set of measure less than $\eps$.

The general case follows from a partition of unity argument.
\begin{lemma}[Partition of unity]
\label{WT1-T3}
Let $\Omega\subset\bbbr^n$ be open. Then there is a family of balls $B(x_i,r_i)\subset\Omega$, $r_i\in (0,1)$,
$i=1,2,\ldots$ and a family of functions $\varphi_i\in C_0^\infty(B(x_i,2r_i))$ such that
\begin{enumerate}
\item
$\bigcup_{i=1}^\infty B(x_i,r_i)=\Omega$;
\item
$\bar{B}(x_i,2r_i)\subset\Omega$;
\item
Every compact set $K\subset\Omega$ intersects only finitely many balls $B(x_i,2r_i)$.
\item
$
\sum_{i=1}^\infty \varphi_i(x)=1
$
for every $x\in\Omega$.
\end{enumerate}
\end{lemma}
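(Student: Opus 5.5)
We construct the balls from a Whitney-type exhaustion of $\Omega$, then obtain the functions $\varphi_i$ by normalizing bump functions. If $\Omega=\bbbr^n$, every distance to the complement is infinite, so we use the convention $\dist(x,\partial\Omega)=\infty$ and truncate radii at $1/2$ throughout.

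\textbf{Choice of balls.} For each $x\in\Omega$ set
$$
r(x)=\tfrac14\min\{1,\dist(x,\partial\Omega)\},
$$
so that $r(x)\in(0,1)$ and $\bar B(x,2r(x))\subset\Omega$. We exhaust $\Omega$ by compact sets
$$
K_j=\{x\in\Omega:\ |x|\le j,\ \dist(x,\partial\Omega)\ge 1/j\},
$$
with $K_0=\emptyset$. Each compact annulus $K_j\setminus \operatorname{int}K_{j-1}$ is covered by finitely many balls $B(x,r(x))$ with centers $x$ in that annulus. Taking all these balls for all $j$ gives a countable family $B(x_i,r_i)$ that covers $\Omega$, which is property (1). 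Property (2) holds by the choice of $r(x)$.

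\textbf{Local finiteness.} This is the main point to check. Let $K\subset\Omega$ be compact, so $K\subset K_{j_0}$ for some $j_0$. Suppose a ball $B(x_i,2r_i)$ comes from the annulus at level $j$, so $x_i\notin\operatorname{int}K_{j-1}$. Then either $|x_i|\ge j-1$ or $\dist(x_i,\partial\Omega)<1/(j-1)$. Since $2r_i\le \tfrac12\dist(x_i,\partial\Omega)$ and $2r_i\le\tfrac12$, every point $y\in B(x_i,2r_i)$ satisfies one of the following:
\begin{itemize}
\item $|y|>j-\tfrac32$, or
\item $\dist(y,\partial\Omega)<\tfrac32\dist(x_i,\partial\Omega)<\tfrac{3}{2(j-1)}$.
\end{itemize}
For $j$ large enough in terms of $j_0$, both alternatives are incompatible with $y\in K_{j_0}$. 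Only finitely many levels $j$ remain, each contributing finitely many balls, which proves property (3).

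\textbf{The functions.} Choose $\psi_i\in C_0^\infty(B(x_i,2r_i))$ with $0\le\psi_i\le1$ and $\psi_i=1$ on $B(x_i,r_i)$. Let $\Psi=\sum_k\psi_k$. By property (3) this sum is locally finite, so $\Psi$ is smooth on $\Omega$. By property (1), every $x\in\Omega$ lies in some $B(x_i,r_i)$, so $\Psi\ge1$ on $\Omega$. Define $\varphi_i=\psi_i/\Psi$. Then $\varphi_i$ is smooth with $\operatorname{supp}\varphi_i=\operatorname{supp}\psi_i$, which is a compact subset of $B(x_i,2r_i)$, so $\varphi_i\in C_0^\infty(B(x_i,2r_i))$. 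Finally $\sum_i\varphi_i=\Psi/\Psi=1$ on $\Omega$, which is property (4).
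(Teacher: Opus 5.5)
The paper does not prove this lemma (it says explicitly ``We will not prove this lemma''), so there is no argument of the authors to compare with. Your proposal is the standard Whitney-type construction, and it is correct and complete. In particular:
\begin{itemize}
\item the sets $K_j\setminus\operatorname{int}K_{j-1}$ are compact and exhaust $\Omega$;
\item every compact $K\subset\Omega$ lies in some $K_{j_0}$;
\item the two alternatives you derive for points of $B(x_i,2r_i)$ rule out meeting $K_{j_0}$ as soon as $j\geq \tfrac32 j_0+2$;
\item normalizing by the locally finite sum $\Psi\geq 1$ gives (4).
\end{itemize}

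Only cosmetic corrections are needed.
\begin{enumerate}
\item Your radii satisfy $r(x)\leq 1/4$, so $2r(x)\leq 1/2$; they are not ``truncated at $1/2$''.
\item The condition $x_i\notin\operatorname{int}K_{j-1}$ only gives $\dist(x_i,\partial\Omega)\leq 1/(j-1)$, not a strict inequality. The level $j=1$ has to be set aside because of the division by $j-1$, but it is one of the finitely many remaining levels anyway.
\item The quotient $\varphi_i=\psi_i/\Psi$ is a priori defined only on $\Omega$, since $\Psi=0$ off $\Omega$. You should say that you extend it by zero; the extension is smooth because $\operatorname{supp}\psi_i$ is a compact subset of $\Omega$.
\item Smoothness of $\Psi$ on $\Omega$ comes from applying (3) to a small closed ball around each point of $\Omega$; it is worth stating this explicitly.
\end{enumerate}
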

We will not prove this lemma.

We now prove the general case in the proof of Theorem \ref{BT5}. Let $A\subset \Omega$ be a closed set such that
$|\Omega\setminus A|<\eps/2$.
Let $\{\varphi_i\}_{i=1}^{\infty}$ be the partition of unity from Lemma~\ref{WT1-T3}.
Let $I\subset\bbbn$ be the set of all indices $i$ such that $B(x_i,2r_i)\cap A\neq\varnothing$.

For $i\in I$, set $f_i=\varphi_i f$. Clearly, we can regard $f_i$ as a
Lipschitz function on $\mathbb R^n$ that vanishes outside $B(x_i,2r_i)$. Applying the compactly supported case to
$f_i$, we find $F_i\in C^1_0(B(x_i,2r_i))$ and measurable sets
$Z_i\subset \mathbb R^n$ such that
$\sum_{i\in I}|Z_i|<\eps/2$, and
\[
F_i=f_i, \text{ and } DF_i=Df_i
\quad\text{on } \mathbb R^n\setminus Z_i .
\]
Define $F=\sum_{i\in I}F_i$.
Since the sum is locally finite in $\mathbb R^n$, we have that $F\in C^1(\mathbb R^n)$.
Note that for $x\in A$,
$$
\sum_{i\in I}\varphi_i(x)= \sum_{i=1}^\infty\varphi_i(x)=1
\qquad
\text{and}
\qquad
\sum_{i\in I}D\varphi_i(x)= \sum_{i=1}^\infty D\varphi_i(x)=0.
$$
This easily implies that
$$
Df(x)=DF(x) \ \ \text{and}\ \ f(x)=F(x)
\ \
\text{on } A\setminus Z, \ \text{ where } \ Z=\bigcup_{i\in I} Z_i.
$$
Since $|\Omega\setminus(A\setminus Z)|\leq |\Omega\setminus A|+|Z|<\eps$, there is a closed set $E\subset A\setminus Z$ satisfying $|\Omega\setminus E|<\eps$. Thus $f_\eps=F$ satisfies the first part of the claim of Theorem \ref{BT5}.

Finally, assume that $|f|\leq M$. Let $\psi\in C^\infty(\bbbr)$ be such that $\psi(x)=x$ if $|x|\leq \tfrac{3}{2}M$, $\psi(x)=-2M$ if $x\leq -2M$ and $\psi(x)=2M$ if $x\geq 2M$. Let $F\in C^1(\bbbr^n)$ be the function constructed above, so that $F$ and $DF$ coincide with $f$ and $Df$, respectively, on a closed $E\subset \Omega$, with $|\Omega\setminus E|<\eps$. Set $f_\eps=\psi\circ F$. Clearly, $f_\eps=f$ and $D(f_\eps)=Df$ on $E$ and $|f_\eps|\leq 2M$.
\end{proof}

\section{Metric Differentiability}
\label{sec:metricdiff}
The aim of this section is to prove the Kirchheim–Rademacher Theorem~\ref{INT1}, see Theorem~\ref{T2}.

In Section~\ref{PMD} we discuss basic properties of the metric and strong metric derivatives.
In Section~\ref{MFD} we show that Fr\'echet differentiability of a mapping into $\ell^\infty$ easily implies strong metric differentiability, but then we  show an example that Lipschitz mappings into $\ell^\infty$ need not be Fr\'echet differentiable. This example shows that the proof of the Kirchheim-Rademacher theorem requires new ideas.

The proof of Theorem~\ref{INT1} starts in Section~\ref{MD1}. This is where we prove a weaker version of Theorem~\ref{T2} for mappings into $\ell^\infty$, where instead of strong metric differentiability, we prove a.e.\ metric differentiability. On the other hand we show an explicit formula for the metric derivative, see Proposition~\ref{MDT2}. Section~\ref{MSDT} is devoted to a proof of a version of the Scorza-Dragoni Theorem~\ref{T11} for metric valued maps. This result is used in the proof of the Kirchheim-Rademacher Theorem~\ref{INT1} that is presented in Section~\ref{SMD}. In the final Section~\ref{AD1} we extend the results to Lipschitz functions defined on measurable subsets of $\bbbr^n$ instead of open subsets. This requires the notion of the approximate metric derivative.


\subsection{Properties of the metric derivative}
\label{PMD}
Assume that $f:\bbbr^n\supset\Omega\to X$ is metrically differentiable at $x\in\Omega$; see Definition~\ref{def101}.
It follows that
\begin{equation}
\label{Eq16}
\lim_{t\to 0}\frac{d(f(x+tv),f(x))}{|t|}=\md (f,x)(v),
\qquad
\text{for all } v\in\bbbr^n.
\end{equation}
Thus the ``directional speed'' of $f$ exists at $x$ in every direction $v$, and it defines as a function of $v$, a seminorm on $\bbbr^n$.

If $f:\bbbr^n\supset\Omega\to X$ is strongly metrically differentiable at $x\in\Omega$, then taking $y=x+tv$ and $z=x+tw$ we get
\[
\lim_{t\to 0} \frac{d(f(x+tv),f(x+tw))}{|t|}=\md(f,x)(v-w),
\qquad
\text{for all } v,w\in\bbbr^n,
\]
which is a much stronger claim than the existence of the ``directional speed'' \eqref{Eq16}.

If $f$ is $L$-Lipschitz, \eqref{Eq16} gives
\begin{equation}
\label{Eq11}
\md(f,x)(v) \leq L|v|
\quad
\text{and}
\quad
|\md(f,x)(v)-\md(f,x)(w)|\leq  L|v-w|.
\end{equation}
The first estimate is an immediate consequence of \eqref{Eq16}, while the second one follows from \eqref{Eq16} and from the triangle inequality.

One immediately checks, using the fact that a seminorm on $\bbbr^n$ is continuous, that if $f$ is metrically differentiable at $x$, it is continuous there. One, however, needs to be aware that if $f:\bbbr^n\to\bbbr^m$, the metric differentiability is much weaker than the Fr\'echet differentiability, as the next example shows.
\begin{example}
\label{ex:abs}
If $\sigma$ is a seminorm on $\bbbr^n$, then $\sigma$, as a function, is metrically differentiable at $x=0$ and $\md(\sigma,0)(v)=\sigma(v)$. This follows immediately from the definition of the metric derivative. In particular,
$f:\bbbr\to\bbbr$, $f(x)=|x|$, is metrically differentiable at the origin and $\md (f,0)(v)=|v|$, for every $v$.
On the other hand, a non-zero seminorm on $\bbbr^n$ is {\em not} strongly metrically differentiable at $x=0$ (it suffices to take $z=-y$ in the definition).
\end{example}
The above example shows that the strong metric differentiability is indeed stronger than the metric differentiability.
However, if $f:\bbbr^n\to\bbbr^m$, $m>1$, is strongly metrically differentiable at $x_o\in\bbbr^n$, then $f$ need not be Fr\'echet differentiable at $x_o$:
$$
f:(-0.5,0.5)\to\bbbr^2,
\qquad
f(t)=\begin{cases}
\left(t\cos\sqrt{-\ln |t|},t\sin\sqrt{-\ln |t|}\right),&t\neq0,\\
(0,0),&t=0.
\end{cases}
$$
is Lipschitz, strongly metrically differentiable at $t=0$, but not Fr\'echet differentiable; for this and related results see Section~\ref{ap:sec3}.

Despite this example, it is still reasonable to think of strong metric differentiability as a good replacement for the Fr\'echet differentiability in the case of metric targets:
\begin{proposition}
\label{MDT1}
If $f:\bbbr^n\to\bbbr^m$ is strongly metrically differentiable at $p\in\bbbr^n$ and all partial derivatives $\partial_{x_i}f(p)$, $i=1,2,\ldots,n$ exist, then $f$ is Fr\'echet differentiable at $p$.
\end{proposition}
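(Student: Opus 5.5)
The plan is a blow-up argument: rescale $f$ at $p$, extract a limit map, show the limit is forced to be the linear map built from the partial derivatives, and conclude uniform convergence of the rescalings, which is Fréchet differentiability. Let $\sigma=\md(f,p)$, $a_i=\partial_{x_i}f(p)\in\bbbr^m$, and let $A:\bbbr^n\to\bbbr^m$ be the linear map with $Ae_i=a_i$. Set $g(h)=f(p+h)-f(p)$. By the definition of strong metric differentiability,
$$
\big||g(u)-g(v)|-\sigma(u-v)\big|=o(|u|+|v|)\quad\text{as } (u,v)\to(0,0).
$$
Fréchet differentiability at $p$ means $g(h)-Ah=o(|h|)$. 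I would argue by contradiction: suppose there are $h_k\to0$ with $|g(h_k)-Ah_k|\geq\eps|h_k|$. Put $r_k=|h_k|$ and consider the rescaled maps $g_k(v)=g(r_kv)/r_k$ on $\bar B(0,2)$.

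\emph{Compactness step.} This is the step I expect to be the main obstacle, since $f$ is not assumed Lipschitz. Rescaling the displayed estimate gives
$$
\big||g_k(u)-g_k(v)|-\sigma(u-v)\big|\leq \delta_k\to0\quad\text{uniformly for } u,v\in\bar B(0,2).
$$
Since $g_k(0)=0$ and $\sigma(w)\leq C|w|$, the family $\{g_k\}$ is uniformly bounded, and it is ``asymptotically equicontinuous'': $|g_k(u)-g_k(v)|\leq C|u-v|+\delta_k$. A diagonal argument on a countable dense set $D\subset\bar B(0,2)$ gives a subsequence converging pointwise on $D$. The limit is $C$-Lipschitz on $D$, so it extends to $\bar B(0,2)$, and the $\delta_k$-equicontinuity upgrades the convergence to uniform convergence on $\bar B(0,2)$, by the standard Arzelà–Ascoli argument with an extra $\delta_k$ term. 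The limit $G$ satisfies
$$
|G(u)-G(v)|=\sigma(u-v)\ \text{ for all } u,v,\qquad G(0)=0.
$$
Moreover, existence of the partial derivatives gives $g_k(te_i)=ta_i+o(1)$, hence $G(te_i)=ta_i$ for $|t|\leq 2$.

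\emph{Rigidity step.} For $u,v$ let $w=(u+v)/2$. Then
$$
|G(u)-G(w)|+|G(w)-G(v)|=\sigma\big(\tfrac{u-v}{2}\big)+\sigma\big(\tfrac{u-v}{2}\big)=\sigma(u-v)=|G(u)-G(v)|,
$$
and the two summands are equal. Equality in the triangle inequality in the strictly convex Euclidean space $\bbbr^m$ forces $G(w)=\tfrac12(G(u)+G(v))$. A continuous map preserving midpoints is affine (dyadic combinations followed by continuity), and $G(0)=0$, so $G$ is the restriction of a linear map. Since $G(e_i)=a_i$, we get $G=A$ on $\bar B(0,2)$.

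\emph{Conclusion.} Along the subsequence, $g_k\to A$ uniformly on $\bar B(0,2)$. In particular
$$
\frac{|g(h_k)-Ah_k|}{r_k}=\Big|g_k\Big(\frac{h_k}{r_k}\Big)-A\frac{h_k}{r_k}\Big|\to0,
$$
which contradicts the choice of $h_k$. Hence $f$ is Fréchet differentiable at $p$ with $Df(p)=A$. As a byproduct, $\sigma(v)=|Av|$.
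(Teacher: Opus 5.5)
Your proof is correct and takes essentially the same route as the paper. The paper also blows up $f$ at $p$, uses strong metric differentiability to extract locally uniformly convergent subsequences of blow-ups whose limits preserve the $\md(f,p)$-pseudometric (Proposition~\ref{lem:md l1.3}), invokes strict convexity of the Euclidean norm to force the limit to be linear, and pins it down through the partial derivatives. The differences are only cosmetic. You argue by contradiction along a fixed bad sequence and obtain linearity from midpoints plus continuity. The paper instead notes that every convergent subsequence has the same linear limit, and handles arbitrary convex combinations by intersecting two tangent balls.
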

For a proof, see Proposition~\ref{1:prop1}.

This result and the Kirchheim-Rademacher Theorem~\ref{INT1} immediately imply the classical Rademacher theorem. Since Lipschitz functions of one variable are differentiable a.e., Fubini's theorem implies that a Lipschitz function $f:\bbbr^n\to\bbbr^m$ has partial derivatives a.e.\ (this is a starting point of every proof of the Rademacher theorem), and Proposition~\ref{MDT1} implies that $f$ is Fr\'echet differentiable a.e.

Another reason to view strong metric differentiability as a counterpart of Fr\'echet differentiability is explained in Section~\ref{MST}. While the proof of the classical Sard Theorem~\ref{T15} employs Fr\'echet differentiability, Metric Sard Theorem~\ref{T17} follows from strong metric differentiability.

\subsection{Fr\'echet differentiability}
\label{MFD}
Let $f:\bbbr^n\supset\Omega\to\ell^\infty$, $f=(f_1,f_2,\ldots)$ be a Lipschitz map defined on an open set.
By Rademacher's theorem, each component $f_i$ is differentiable a.e. Since the union of countably many sets of measure zero has measure zero, there is a Borel set $N\subset\Omega$ of measure zero, $\H^n(N)=0$, such that for each $i\in\bbbn$ and all $x\in D:=\Omega\setminus N$, $f_i$ is differentiable at $x$.
\begin{definition}
\label{D1}
Let $f:\bbbr^n\supset\Omega\to\ell^\infty$, $f=(f_1,f_2,\ldots)$ be a Lipschitz map defined on an open set and let the set $D\subset\Omega$ be defined as above. For $x\in D$, the {\em componentwise derivative} of $f$ at $x$ is a linear map $D_{\rm c}f(x):\bbbr^n\to\ell^\infty$ defined by the $\infty\times n$ matrix
$$
D_{\rm c}f(x)=
\left\lceil
\begin{array}{cccc}
\partial_1f_1 & \partial_2f_1 & \ldots & \partial_nf_1\\
\partial_1f_2 & \partial_2f_2 & \ldots & \partial_nf_2\\
\partial_1f_3 & \partial_2f_3 & \ldots & \partial_nf_3\\
\vdots & \vdots & \ldots & \vdots
\end{array}
\right\rceil:\bbbr^n\to\ell^\infty,
$$
so for $v\in\bbbr^n$,
$$
D_{\rm c}f(x)v=\big(\nabla f_1(x)\cdot v,\nabla f_2(x)\cdot v,\ldots\big).
$$
\end{definition}

In this section we will discuss a special case of metric differentiability which is very easy to prove.
\begin{proposition}
\label{T4}
If a Lipschitz map $f:\bbbr^n\supset\Omega\to\ell^\infty$ is Fr\'echet differentiable at $x\in\Omega$, then the componentwise derivative $D_{\rm c}f(x)$ is well-defined and equals the Fr\'echet derivative. Moreover, $f$ is strongly metrically differentiable at $x$ and
\begin{equation}
\label{MDeq1}
\md(f,x)(v)=\|D_{\rm c}f(x)(v)\|_\infty=\sup_{i\in\bbbn} |\nabla f_i(x)\cdot v|
\quad
\text{for all } v\in\bbbr^n.
\end{equation}
\end{proposition}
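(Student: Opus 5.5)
Let $L:=Df(x):\bbbr^n\to\ell^\infty$ be the Fr\'echet derivative, a bounded linear map. The plan is to identify $L$ with $D_{\rm c}f(x)$ by testing against coordinate functionals. Then we repeat the triangle-inequality computation from the Introduction, with $|\cdot|$ replaced by $\|\cdot\|_\infty$.

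\emph{Step 1: the componentwise derivative.} For each $i$, the coordinate functional $e_i^*:\ell^\infty\to\bbbr$, $e_i^*(a)=a_i$, is linear with norm $1$. From
$$
\|f(y)-f(x)-L(y-x)\|_\infty=o(|y-x|)
$$
we get $|f_i(y)-f_i(x)-e_i^*(L(y-x))|=o(|y-x|)$. Hence every $f_i$ is differentiable at $x$, and $\nabla f_i(x)\cdot v=e_i^*(Lv)$. So $x$ belongs to the set where all $f_i$ are differentiable, and $D_{\rm c}f(x)$ is defined there. (If one insists on the specific set $D$ of Definition~\ref{D1}, note that $x$ can be adjoined to $D$ without harm.) Finally, $D_{\rm c}f(x)v=(e_i^*(Lv))_i=Lv$, so $D_{\rm c}f(x)=Df(x)$.

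\emph{Step 2: strong metric differentiability.} Put $\sigma(v):=\|Lv\|_\infty=\sup_i|\nabla f_i(x)\cdot v|$. This is a seminorm on $\bbbr^n$, since $L$ is linear and $\|\cdot\|_\infty$ is a norm. Write $R(y)=f(y)-f(x)-L(y-x)$, so that $\|R(y)\|_\infty/|y-x|\to0$. Then
$$
f(z)-f(y)-L(z-y)=R(z)-R(y).
$$
By the reverse triangle inequality in $\ell^\infty$,
$$
\frac{\big|\|f(z)-f(y)\|_\infty-\sigma(z-y)\big|}{|z-x|+|y-x|}\le \frac{\|R(z)\|_\infty}{|z-x|}+\frac{\|R(y)\|_\infty}{|y-x|}
$$
whenever $y,z\neq x$. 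The terms with $y=x$ or $z=x$ are handled trivially, since $R(x)=0$. The right-hand side tends to $0$ as $(y,z)\to(x,x)$. This is exactly strong metric differentiability with seminorm $\sigma$. Taking $z=x$ shows metric differentiability, and by uniqueness of the metric derivative $\md(f,x)=\sigma$, which gives \eqref{MDeq1}.

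\emph{Main obstacle.} There is no real obstacle here. The only point needing care is the identification in Step 1: one must use the coordinate functionals, which are continuous, and not any finite-dimensional argument. Everything else is the Euclidean computation from the Introduction verbatim.
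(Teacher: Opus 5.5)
Your proposal is correct and follows the paper's proof essentially verbatim. The paper also reads off the differentiability of each $f_i$, with $\nabla f_i(x)=L_i$, from $\|\cdot\|_\infty=\sup_i|\cdot|$ (your coordinate functionals) to get $D_{\rm c}f(x)=L$, and then uses the same triangle-inequality estimate on the remainders at $z$ and $y$ to obtain strong metric differentiability and \eqref{MDeq1}.
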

\begin{proof}
Let a linear map $L=(L_1,L_2,\ldots):\bbbr^n\to\ell^\infty$ be the Fr\'echet derivative of $f$ at $x$. Then
$$
\frac{\Vert f(y)-f(x)-L(y-x)\Vert_\infty}{|y-x|}=\sup_{i\in\bbbn}\frac{|f_i(y)-f_i(x)-L_i(y-x)|}{|y-x|}\to 0
\quad
\text{as $y\to x$}.
$$
It follows that for each $i\in\bbbn$, $f_i$ is differentiable at $x$ and $\nabla f_i(x)=L_i$. Therefore, $D_{\rm c}f(x)=L$, the componentwise derivative of $f$ at $x$.
Since
$$
f(z)-f(y)-D_{\rm c}f(x)(z-y)=
\big(f(z)-f(x)-L(z-x)\big)-\big(f(y)-f(x)-L(y-x)\big),
$$
the triangle inequality yields
\[
\begin{split}
&\left|\frac{\Vert f(z)-f(y)\Vert_\infty-\Vert D_{\rm c}f(x)(z-y)\Vert_\infty}{|z-x|+|y-x|}\right|\\
&\leq
\frac{\Vert f(z)-f(x)-L(z-x)\Vert_\infty}{|z-x|}+
\frac{\Vert f(y)-f(x)-L(y-x)\Vert_\infty}{|y-x|}
\stackrel{(y,z)\to (x,x)}\longrightarrow 0,
\end{split}
\]
so $f$ is strongly metrically differentiable at $x$ and \eqref{MDeq1} is satisfied.
\end{proof}
Let us emphasize that Proposition~\ref{T4} is easy and it does not give a clue how to prove Theorem~\ref{T2}. In fact, there are Lipschitz mappings $f:\bbbr^n\supset\Omega\to\ell^\infty$ that are nowhere Fr\'echet differentiable. The next example is well known.
\begin{proposition}
\label{T5}
The Lipschitz mapping $f:(0,1)\to\ell^\infty$ defined by
$$
f(x)=(f_1(x),f_2(x),\ldots),
\quad
f_i(x)=\frac{\sin (ix)}{i},
$$
is nowhere Fr\'echet differentiable.
\end{proposition}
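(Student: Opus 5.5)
We first check that $f$ takes values in $\ell^\infty$ and is Lipschitz. Since $|f_i(x)|\leq 1/i\leq 1$, we have $f(x)\in\ell^\infty$. Since $|f_i'(x)|=|\cos(ix)|\leq 1$, each $f_i$ is $1$-Lipschitz, so $\|f(x)-f(y)\|_\infty=\sup_i|f_i(x)-f_i(y)|\leq |x-y|$.

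Now suppose, for contradiction, that $f$ is Fr\'echet differentiable at some $x\in(0,1)$. By Proposition~\ref{T4}, the Fr\'echet derivative must equal the componentwise derivative, $L=D_{\rm c}f(x)$, which is multiplication by the sequence $a=(\cos(ix))_{i\in\bbbn}\in\ell^\infty$. Fr\'echet differentiability then means
$$
\sup_{i\in\bbbn}\frac{|\sin(i(x+h))-\sin(ix)-ih\cos(ix)|}{i|h|}\longrightarrow 0
\quad\text{as } h\to 0.
$$
To reach a contradiction it suffices to find $h_k\to 0$ and indices $i_k$ such that the quotient for $i=i_k$ and $h=h_k$ stays bounded below by a positive constant.

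The natural choice is $h=h_k=\pi/k$ and $i=k$, so that $ih=\pi$. Then
$$
\sin(kx+\pi)-\sin(kx)-\pi\cos(kx)=-2\sin(kx)-\pi\cos(kx),
$$
and the quotient becomes $|2\sin(kx)+\pi\cos(kx)|/\pi$. Write $2\sin\theta+\pi\cos\theta=\sqrt{4+\pi^2}\,\sin(\theta+\alpha)$ for a suitable fixed angle $\alpha$. The quotient could only be small if $kx+\alpha$ lies near $\pi\bbbz$ for all large $k$.

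This is the one step needing care: the lower bound must hold for infinitely many $k$, not for every $k$. Fix $\delta>0$ small and suppose $kx+\alpha$ and $(k+1)x+\alpha$ both lie within $\delta$ of $\pi\bbbz$. Taking the difference, $x$ lies within $2\delta$ of $\pi\bbbz$. But $x\in(0,1)$ has distance $\min(x,\pi-x)=x>0$ from $\pi\bbbz$, so this fails once $2\delta<x$. Hence for each large $k$, at least one of $k$ or $k+1$ gives a quotient of at least $\sqrt{4+\pi^2}\,\sin\delta/\pi>0$.

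The alternative $h=h_k=\pi/(k+1)$ paired with $i=k+1$ still has $h\to 0$, so in either case we obtain a sequence $h\to 0$ along which the supremum stays bounded below by a positive constant. This contradicts Fr\'echet differentiability, so $f$ is nowhere Fr\'echet differentiable.
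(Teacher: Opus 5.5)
Your proof is correct, but it takes a different route from the paper. The paper's argument is soft. Every $f(y)$ lies in the closed subspace $c_0\subset\ell^\infty$, so all difference quotients $(f(x+t)-f(x))/t$ lie in $c_0$. The only candidate derivative, $Df(x)1=(\cos(ix))_{i\in\bbbn}$, lies outside $c_0$, so the quotients cannot converge to it in $\|\cdot\|_\infty$.

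You instead exhibit an explicit witness. Testing the $k$-th coordinate at $h=\pi/k$ gives an error quotient of exactly $\sqrt{4+\pi^2}\,|\sin(kx+\alpha)|/\pi$. Your consecutive-index trick, using $\operatorname{dist}(x,\pi\bbbz)=x>0$, then shows this is at least $\sqrt{4+\pi^2}\sin\delta/\pi$ for one of $k$, $k+1$. That step is the quantitative counterpart of a fact the paper leaves implicit, namely $(\cos(ix))_i\notin c_0$, which itself needs a line of justification (e.g.\ via $\cos(2ix)=2\cos^2(ix)-1$).

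The paper's route is shorter and isolates the mechanism: a Lipschitz map into a closed subspace whose componentwise derivative points outside that subspace. This transfers to other examples. Yours is fully elementary, gives a concrete lower bound on the error along a sequence $h\to 0$, and pinpoints the coordinates $i\approx 1/h$ where the linear approximation fails. The price is that it uses the specific form of $f$ and of the domain $(0,1)$.
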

\begin{proof}
First, note that $f$ maps $(0,1)$ into the closed subspace $c_0\subset\ell^\infty$ of sequences convergent to $0$, and that $f$ is Lipschitz continuous:
$$
\Vert f(x)-f(y)\Vert_\infty=\sup_{i\in\bbbn} \left|\frac{\sin (ix)-\sin(iy)}{i}\right|\leq |x-y|.
$$
Suppose to the contrary that $f$ is Fr\'echet differentiable at some $x\in(0,1)$. Then by Proposition~\ref{T4} the Fr\'echet derivative equals the componentwise derivative, so
$$
Df(x):\bbbr\to\ell^\infty,
\quad
Df(x)t=(t\cos x, t\cos(2x),t\cos(3x),\ldots)
$$
and the Fr\'echet differentiability would imply that
$$
\left\Vert \frac{f(x+t)-f(x)-Df(x)t}{t}\right\Vert_\infty=
\left\Vert \frac{f(x+t)-f(x)}{t}-Df(x)1\right\Vert_\infty
$$
would converge to $0$ as $t\to 0$. This is however, impossible because $(f(x+t)-f(x))/t\in c_0$ while
$$
Df(x)1=(\cos x,\cos(2x),\cos(3x),\ldots)\in\ell^\infty\setminus c_0
$$
and an element of $\ell^\infty\setminus c_0$ cannot be approximated by elements of $c_0$ in the $\ell^\infty$ norm.
\end{proof}

\subsection{Metric differentiability}
\label{MD1}
As was observed in \eqref{INeq7}, in order to prove metric differentiability or strong metric differentiability of $f:\bbbr^n\supset\Omega\to X$, we can assume that $X=\ell^\infty$. In the next result we prove metric differentiability of Lipschitz mappings into $\ell^\infty$. While metric differentiability is weaker than strong metric differentiability, the result will play an important role in the proof of Theorem~\ref{INT1}.

\begin{proposition}
\label{MDT2}
Let $f:\bbbr^n\supset\Omega\to\ell^\infty$, $f=(f_1,f_2,\ldots)$ be a Lipschitz mapping defined on an open set. Then $f$ is metrically differentiable a.e., and
$$
\md (f,x)(v)=\Vert D_{\rm c}f(x)v\Vert_\infty=\sup_{i\in\bbbn}|\nabla f_i(x)\cdot v|.
$$
for almost all $x\in\Omega$ and all $v\in\bbbr^n$.
\end{proposition}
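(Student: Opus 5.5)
This is Kirchheim's classical argument: establish directional metric differentiability along a countable dense set of directions via one-dimensional arguments, then pass to a uniform statement using the Lipschitz bound and compactness of the sphere. Let $L=\lip(f)$ and $\sigma_x(v):=\sup_i|\nabla f_i(x)\cdot v|$ for $x\in D$, the full-measure set where all $f_i$ are differentiable. Since each $|\nabla f_i(x)|\le L$, $\sigma_x$ is a seminorm with $\sigma_x(v)\le L|v|$. The whole difficulty is that $\sup_i$ does not commute with $\lim_{t\to0}$, so we cannot simply read off $\|f(x+tv)-f(x)\|_\infty/|t|\to\sigma_x(v)$ from componentwise differentiability.

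\emph{Step 1 (lower bound).} For every $x\in D$ and every $v$ and $i$, we have $\liminf_{t\to0}\|f(x+tv)-f(x)\|_\infty/|t|\ge \lim_{t\to 0}|f_i(x+tv)-f_i(x)|/|t|=|\nabla f_i(x)\cdot v|$. Taking the supremum over $i$ gives the $\liminf$ bound $\ge\sigma_x(v)$, and in fact uniformly: $\|f(y)-f(x)\|_\infty-\sigma_x(y-x)\ge -o(|y-x|)$. To see the uniformity, fix finitely many $i$ realizing $\sigma_x$ up to $\eps$ on the unit sphere (possible since the functions $v\mapsto|\nabla f_i(x)\cdot v|$ are $L$-Lipschitz and the sphere is compact). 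Then Fréchet differentiability of those finitely many $f_i$ gives the bound.

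\emph{Step 2 (upper bound, the main obstacle).} Fix $v$ in a countable dense set $V\subset\Sph^{n-1}$. On each line $\ell=\{a+tv\}$, the curve $g(t)=f(a+tv)$ is Lipschitz, and its metric speed $|\dot g|(t)=\lim_{h\to0}\|g(t+h)-g(t)\|_\infty/|h|$ exists for a.e.\ $t$. This holds because $\|g(t)-g(s)\|_\infty=\sup_i|g_i(t)-g_i(s)|\le\int_s^t\sup_i|g_i'|$, so $m(t):=\sup_i|g_i'(t)|$ is an upper gradient. Combined with the lower bound $\ge|g_i'(t)|$ at Lebesgue points, we get $\lim \frac1h\|g(t+h)-g(t)\|_\infty=m(t)$ at every Lebesgue point $t$ of $m$ where all $g_i$ are differentiable. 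Here $m(t)=\sup_i|\partial_v f_i(a+tv)|$, which equals $\sigma_{a+tv}(v)$ wherever all $f_i$ are differentiable at $a+tv$. Measurability of $x\mapsto\sigma_x(v)$ as a countable supremum, together with Fubini, shows that for a.e.\ $x\in\Omega$ we have $\lim_{t\to0}\|f(x+tv)-f(x)\|_\infty/|t|=\sigma_x(v)$. Intersecting over $v\in V$ still leaves a full-measure set $G$.

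\emph{Step 3 (uniformity in direction).} Finally, we upgrade to the full limit $y\to x$ for $x\in G$. Given $\eps>0$, choose a finite $\eps$-net $V_\eps\subset V$ of $\Sph^{n-1}$. Then there is $\delta>0$ with $\big|\|f(x+tv)-f(x)\|_\infty-|t|\sigma_x(v)\big|\le\eps|t|$ for all $|t|<\delta$ and all $v\in V_\eps$. For $y=x+tw$ with $|w|=1$, pick $v\in V_\eps$ with $|w-v|<\eps$. Then the Lipschitz bound $\|f(x+tw)-f(x+tv)\|_\infty\le L|t|\eps$ and \eqref{Eq11}-type continuity $|\sigma_x(w)-\sigma_x(v)|\le L\eps$ give $\big|\|f(y)-f(x)\|_\infty-\sigma_x(y-x)\big|\le(1+2L)\eps|y-x|$. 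This proves metric differentiability with $\md(f,x)=\sigma_x$. Step 1 is then not even needed, but it serves as a check of the direction of the inequality in Step 2.

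I expect Step 2, the one-dimensional identification of the metric speed with $\sup_i|g_i'|$, to be the delicate point. In particular, the upper bound via the integral of $m$ and the Lebesgue differentiation of $m$ must be justified carefully.
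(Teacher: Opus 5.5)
Your proposal is correct and follows essentially the same route as the paper. Both arguments use the componentwise lower bound, then the upper bound $\Vert f(x+tv)-f(x)\Vert_\infty\leq\big|\int_0^t\Vert D_{\rm c}f(x+\tau v)v\Vert_\infty\,d\tau\big|$ on almost every line with Lebesgue differentiation and Fubini, and finally a finite net of directions from a countable dense set plus the Lipschitz bounds to get uniform convergence on the sphere. You should also reduce to $\Omega=\bbbr^n$ (e.g.\ via Corollary~\ref{BT2}) so the lines stay in the domain; your uniform version of Step~1 is a harmless extra the argument does not need.
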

\begin{remark}
\label{Rem1}
For every $v$, the function $x\mapsto\sup_i|\nabla f_i(x)\cdot v|$ is Borel measurable. This implies measurability of $x\mapsto\md(f,x)(v)$. While we assume here that the mapping is into $\ell^\infty$, the remark applies to Lipschitz mappings $f:\Omega\to X$ into an arbitrary metric space, because the image $f(\Omega)$ is separable and hence it can be isometrically embedded into $\ell^\infty$ (Theorem~\ref{BT3}).
For a deeper measurability result, see Corollary~\ref{T10}.
\end{remark}
\begin{corollary}
\label{MDT3}
If $f:\bbbr^n\supset\Omega\to X$ is a Lipschitz mapping from an open set into any metric space, then $f$ is metrically differentiable a.e.
\end{corollary}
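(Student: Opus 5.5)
The corollary reduces to Proposition~\ref{MDT2}. Metric differentiability at a point only involves the distances $d(f(y),f(x))$ for $y$ near $x$. It is therefore invariant under replacing $f$ by $\kappa\circ f$ for any isometric embedding $\kappa$ of $f(\Omega)$.

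\textbf{Step 1: separability of the image.} Since $\Omega\subset\bbbr^n$ is open, it contains a countable dense subset $Q$, for example its points with rational coordinates. Because $f$ is Lipschitz, hence continuous, $f(Q)$ is dense in $f(\Omega)$. Thus $f(\Omega)$ is a separable metric space with the metric inherited from $X$.

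\textbf{Step 2: embedding into $\ell^\infty$.} By the Kuratowski--Fr\'echet Theorem~\ref{BT3}, there is an isometric embedding $\kappa:f(\Omega)\to\ell^\infty$. The map $g:=\kappa\circ f:\Omega\to\ell^\infty$ is Lipschitz with the same constant as $f$. By Proposition~\ref{MDT2}, there is a set $N\subset\Omega$ with $\H^n(N)=0$ such that for every $x\in\Omega\setminus N$ there is a seminorm $\sigma_x=\md(g,x)$ satisfying
$$
\lim_{y\to x}\frac{\|g(y)-g(x)\|_\infty-\sigma_x(y-x)}{|y-x|}=0 .
$$

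\textbf{Step 3: transfer back to $f$.} Since $\kappa$ is an isometry, $\|g(y)-g(x)\|_\infty=d(f(y),f(x))$ for all $x,y\in\Omega$. Substituting this into the limit above shows that the same seminorm $\sigma_x$ satisfies Definition~\ref{def101} for $f$. Hence $f$ is metrically differentiable at every $x\in\Omega\setminus N$, that is, almost everywhere, with $\md(f,x)=\md(g,x)$.

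There is no real obstacle here. The only point to check is separability of $f(\Omega)$, which is needed because $X$ itself need not be separable; Step~1 supplies it. All the analytic content is already in Proposition~\ref{MDT2}.
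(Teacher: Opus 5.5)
Your proposal is correct and is essentially the paper's argument: the corollary is left without a separate proof there because it follows from Proposition~\ref{MDT2} by the reduction described around \eqref{INeq7}. That reduction is the same as yours: $f(\Omega)$ is separable, it embeds isometrically into $\ell^\infty$ by Theorem~\ref{BT3}, and metric differentiability depends only on the distances $d(f(y),f(x))$, which the embedding preserves.
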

\begin{proof}[Proof of Proposition~\ref{MDT2}]
Since the result is local  in nature, we may assume that $\Omega=\bbbr^n$. This will slightly simplify our notation.
Let $N\subset\bbbr^n$ be a set of measure zero defined at the beginning of Section~\ref{MFD}, and let $D_{\rm c}f(x):\bbbr^n\to\ell^\infty$, for $x\in\bbbr^n\setminus N$, be the componentwise derivative. Then for
any $v\in\bbbr^n$ we have
\begin{equation}
\label{Eq2}
\Vert D_{\rm c}f(x)v\Vert_\infty\leq\liminf_{t\to 0}\left\Vert\frac{f(x+tv)-f(x)}{t}\right\Vert_\infty.
\end{equation}
Indeed, if $i\in\bbbn$, then
$$
|\nabla f_i(x)\cdot v|= \lim_{t\to 0}\left|\frac{f_i(x+tv)-f_i(x)}{t}\right|\leq\liminf_{t\to 0}\left\Vert \frac{f(x+tv)-f(x)}{t}\right\Vert_\infty\, ,
$$
and \eqref{Eq2} follows upon taking the supremum over $i\in\bbbn$.

We will prove now that in fact, we have equality in \eqref{Eq2} i.e., for almost all $x\in\bbbr^n$ and all $v\in\bbbr^n$
$$
\Vert D_{\rm c}f(x)v\Vert_\infty = \lim_{t\to 0}\left\Vert \frac{f(x+tv)-f(x)}{t}\right\Vert_\infty.
$$
Fix $0\neq v\in\bbbr^n$. Assume that a line $\ell=\{x_o+tv:\, t\in\bbbr\}$, $x_o\in\bbbr^n$, intersects $N$ along a set of length zero. Then for almost all $z\in\ell$ (namely for all $z\in \ell\setminus N$) and all $i\in\bbbn$,
the directional derivatives exist and satisfy $D_vf_i(z)=\nabla f_i(z)\cdot v$. Since functions $f_i|_\ell$ are Lipschitz continuous, it follows that for all $x\in \ell$, all $t\in\bbbr$ and all $i\in\bbbn$,
$$
f_i(x+tv)-f_i(x)=
\int_0^t \frac{d}{d\tau} f_i(x+\tau v)\, d\tau=
\int_0^t\nabla f_i(x+\tau v)\cdot v\, d\tau.
$$
Let $W$ be the union of all lines $\ell$ that intersect $N$ along a set of length zero. By Fubini's theorem $|\bbbr^n\setminus W|=0$. Fix $x\in W$ and $t\in\bbbr$.
For any $\eps>0$
there is $i\in\bbbn$ such that
\begin{equation*}
\begin{split}
\Vert f(x+tv)-f(x)\Vert_\infty-\eps
&\leq
|f_i(x+tv)-f_i(x)|=\left|\int_0^t \nabla f_i(x+\tau v)\cdot v\, d\tau\right|\\
&\leq
\Big|\int_0^t \Vert D_{\rm c}f(x+\tau v)v\Vert_\infty\, d\tau\Big|.
\end{split}
\end{equation*}
(We put the absolute value over the last integral, because if $t<0$ the integral is non-positive.)
Since this inequality is true for any $\eps>0$, we have that for all $x\in W$,
\begin{equation}
\label{Eq4}
\Vert f(x+tv)-f(x)\Vert_\infty \leq \Big|\int_0^t \Vert D_{\rm c}f(x+\tau v)v\Vert_\infty\, d\tau\Big|
\quad
\text{for all $t\in\bbbr$.}
\end{equation}
All lines $\ell=\{x_o+tv:\, t\in\bbbr\}\subset W$ have the following two properties:
\begin{enumerate}
\item[(a)] The following function is measurable and bounded:
$$
\tau\mapsto\Vert D_cf(x_o+\tau v)v\Vert_\infty =\sup_{i\in\bbbn}\left|\frac{d}{d\tau}f_i(x_o+\tau v)\right|
$$
\item[(b)] For all $s\in\bbbr$, points $x=x_o+sv\in \ell$ satisfy \eqref{Eq4} i.e.,
$$
\Vert f((x_o+sv)+tv)-f(x_o+sv)\Vert_\infty\leq
\Big|\int_s^{t+s}\Vert D_{\rm c}f(x_o+\tau v)v\Vert_\infty\, d\tau\Big|.
$$
\end{enumerate}
Now (a) and Lebesgue's differentiation theorem imply that for almost all $s\in\bbbr$,
$$
\lim_{t\to 0} \frac{1}{|t|}\Big|\int_s^{t+s} \Vert D_{\rm c}f(x_o+\tau v)v\Vert_\infty\, d\tau\Big| = \Vert D_{\rm c}f(x_o+sv)v\Vert_\infty,
$$
which together with (b) yields that for almost all $s\in\bbbr$,
$$
\limsup_{t\to 0} \left\Vert \frac{f((x_o+sv)+tv)-f(x_o+sv)}{t}\right\Vert_\infty \leq \Vert D_{\rm c}f(x_o+sv)v\Vert_\infty \, .
$$
Since this is true for almost all points $x_o+sv\in \ell$ on all lines $\ell\subset W$, we conclude that there is a set $N_v\subset\bbbr^n$ of measure zero $\H^n(N_v)=0$ such that
for all $x\in \bbbr^n\setminus N_v$ we have
\begin{equation}
\label{Eq5}
\limsup_{t\to 0} \left\Vert \frac{f(x+tv)-f(x)}{t}\right\Vert_\infty \leq \Vert D_{\rm c}f(x)v\Vert_\infty.
\end{equation}
For each $0\neq v\in\bbbr^n$ we have a different exceptional set $N_v$.
Let $\{v_i\}_{i=1}^\infty\subset\Sph^{n-1}$ be countable and dense. Let $\tilde{N}=\bigcup_{i=1}^\infty N_{v_i}$. Clearly, $\H^n(\tilde{N})=0$. We will prove that for all $x\in\bbbr^n\setminus\tilde{N}$, \eqref{Eq5} is true for all $v\in\bbbr^n$.

Let $x\in\bbbr^n\setminus\tilde{N}$. Then \eqref{Eq5} is true for all $v=v_i$. Since both sides of \eqref{Eq5} are $1$-homogeneous with respect to $v$, \eqref{Eq5} is also true for $v=\lambda v_i$, $\lambda>0$. Note that the set
$V:=\{\lambda v_i:\, \lambda>0,\ i\in\bbbn\}\subset\bbbr^n$ is dense.
It is easy to check that both sides of \eqref{Eq5} define functions of $v$ that are Lipschitz continuous on $\bbbr^n$. So the fact that inequality \eqref{Eq5} between Lipschitz functions is valid on a dense subset of $\bbbr^n$, implies that it is true for all $v\in\bbbr^n$. Inequalities \eqref{Eq5} and \eqref{Eq2} yield that for almost all $x\in\bbbr^n$ (namely for all $x\in\bbbr^n\setminus (N\cup\tilde{N})$)
\begin{equation}
\label{Eq6}
\Vert D_{\rm c}f(x)v\Vert_\infty = \lim_{t\to 0} \left\Vert \frac{f(x+tv)-f(x)}{t}\right\Vert_\infty
\quad
\text{for all $v\in\bbbr^n$.}
\end{equation}
We now prove a stronger fact that
$\md (f,x)(v)= \Vert D_{\rm c}f(x)v\Vert_\infty$
is the metric derivative of $f$ for all $x\in\bbbr^n\setminus (N\cup\tilde{N})$.
It is easy to see that $v\mapsto\Vert D_{\rm c}f(x)v\Vert_\infty$ is a seminorm and it remains to show that
\begin{equation}
\label{Eq7}
\lim_{t\to 0^+}\, \sup_{|v|=1} \left|\Big\Vert\frac{f(x+tv)-f(x)}{t} \Big\Vert_\infty-\Vert D_{\rm c}f(x)v\Vert_\infty\right|=0.
\end{equation}
Let as before, $\{v_i\}_{i=1}^\infty\subset\Sph^{n-1}$ be a dense subset. Given $\eps>0$, there is $p\in\bbbn$ such that for every $v\in\Sph^{n-1}$ there is $k\in \{1,2,\ldots,p\}$ such that
\begin{equation}
\label{Eq8}
|v-v_k|<\frac{\eps}{4L},
\end{equation}
where $L$ is the Lipschitz constant of $f$. It follows from \eqref{Eq6} that there is $\delta>0$ such that for all $0<|t|<\delta$
$$
\sup_{1\leq i\leq p} \left|\Big\Vert\frac{f(x+tv_i)-f(x)}{t}\Big\Vert_\infty-\Vert D_{\rm c}f(x) v_i\Vert_\infty\right|<\frac{\eps}{2}.
$$
Using an elementary inequality
$$
\big|\Vert a\Vert-\Vert b\Vert\big|\leq \big|\Vert a_k\Vert-\Vert b_k\Vert\big|+\Vert a-a_k\Vert+\Vert b-b_k\Vert,
$$
for any $0<|t|<\delta$, any $v\in\Sph^{n-1}$ and $v_k$ satisfying \eqref{Eq8}, we have
\begin{equation*}
\begin{split}
&\left|\Big\Vert\frac{f(x+tv)-f(x)}{t}\Big\Vert_\infty-\Vert D_{\rm c}f(x) v\Vert_\infty\right|
\leq
\left|\Big\Vert\frac{f(x+tv_k)-f(x)}{t}\Big\Vert_\infty-\Vert D_{\rm c}f(x) v_k\Vert_\infty\right|\\
&+
\Big\Vert\frac{f(x+tv)-f(x+tv_k)}{t}\Big\Vert_\infty
+
\Vert D_{\rm c}f(x)(v-v_k)\Vert_\infty\\
&\leq
\frac{\eps}{2}+L|v-v_k|+L|v-v_k|<\eps
\end{split}
\end{equation*}
and \eqref{Eq7} follows. The proof is complete.
\end{proof}

\subsection{Metric Scorza-Dragoni theorem}
\label{MSDT}
The main result of this section is Theorem~\ref{T11} which can be regarded as a version of the Scorza-Dragoni theorem (cf.\,  \cite[Theorem~3.8]{dacorogna}, \cite[Theorem~6.35]{FL}).

The spaces $C(\Sph^{n-1})$ and $C(\Sph^{n-1}\times [0,1])$ are separable metric spaces with metrics
$$
d_\infty(g,h)=\sup_{|v|=1}|g(v)-h(v)|
\quad
\text{and}
\quad
\bar{d}_\infty(g,h)=\sup_{|v|=1}\sup_{0\leq t\leq 1} |g(v,t)-h(v,t)|
$$
respectively. Separability easily follows from the Stone-Weierstrass theorem.
Note also that the restriction of continuous functions on $\Sph^{n-1}\times [0,1]$ to $\Sph^{n-1}\times \{ 0\}\simeq\Sph^{n-1}$ yields a continuous (surjective) map
$$
\pi: C(\Sph^{n-1}\times [0,1])\to C(\Sph^{n-1}).
$$

Let $f:\bbbr^n\supset\Omega\to X$ be Lipschitz continuous and let $D\subset\Omega$ be a Borel set such that $\H^n(\Omega\setminus D)=0$, and $f$ is metrically differentiable at all $x\in D$; see Remark~\ref{Rem1}.
Consider the map
$$
\Phi_f:D\to C(\Sph^{n-1}),
\quad
\Phi_f(x)(v)=\md (f,x)(v),\ |v|=1.
$$
The next lemma provides an elementary, but useful estimate for the continuity of the metric derivative.
\begin{lemma}
If $f:\bbbr^n\supset\Omega\to X$ is $L$-Lipschitz and metrically differentiable at $x,y\in\Omega$, then for any $v,w\in\bbbr^n$ we have
\begin{equation}
\label{Eq12}
|\md (f,x)(v)-\md (f,y)(w)|\leq L|v-w|+\min\{|v|,|w|\}\,d_\infty\big(\Phi_f(x),\Phi_f(y)\big).
\end{equation}
\end{lemma}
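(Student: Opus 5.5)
By symmetry of the right-hand side in $(x,v)\leftrightarrow(y,w)$, I will assume without loss of generality that $|v|\le|w|$, so that $\min\{|v|,|w|\}=|v|$. I then split the difference through the intermediate quantity $\md(f,y)(v)$:
$$
|\md(f,x)(v)-\md(f,y)(w)|\leq |\md(f,x)(v)-\md(f,y)(v)|+|\md(f,y)(v)-\md(f,y)(w)|.
$$
The second term is at most $L|v-w|$ by the second estimate in \eqref{Eq11}, applied at the point $y$. This uses only that $f$ is $L$-Lipschitz and metrically differentiable at $y$.

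For the first term I use the $1$-homogeneity of the seminorms $\md(f,x)$ and $\md(f,y)$. If $v=0$, both values vanish and there is nothing to prove. Otherwise I write $v=|v|\,u$ with $|u|=1$. Then
$$
|\md(f,x)(v)-\md(f,y)(v)|=|v|\,|\Phi_f(x)(u)-\Phi_f(y)(u)|\leq |v|\,d_\infty\big(\Phi_f(x),\Phi_f(y)\big)=\min\{|v|,|w|\}\,d_\infty\big(\Phi_f(x),\Phi_f(y)\big).
$$
Adding the two bounds gives \eqref{Eq12}.

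The argument is routine, and the only point needing care is the ordering of the split. Passing through $\md(f,y)(v)$, rather than through $\md(f,x)(w)$, is what produces the factor $\min\{|v|,|w|\}$ instead of $|w|$. When $|w|<|v|$, I simply swap the roles of $x$ and $y$ and instead pass through $\md(f,x)(w)$.
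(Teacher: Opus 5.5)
Your proof is correct and is essentially the paper's argument. The paper also fixes an ordering: it assumes $0<|w|\le|v|$ and splits through $\md(f,x)(w)$, which is the mirror image of your split through $\md(f,y)(v)$. One term is then bounded by $L|v-w|$ via \eqref{Eq11}, and the other by homogeneity and $d_\infty$. The only cosmetic difference is that the paper treats the zero-vector case separately, whereas your homogeneity step absorbs it.
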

\begin{proof}
If $w=0$ (or similarly if $v=0$), \eqref{Eq11} yields
$$
|\md(f,x)(v)-\md(f,y)(w)|=\md(f,x)(v)\leq L|v|=L|v-w|.
$$
Thus we may assume that $v,w\neq 0$ and that $0<|w|\leq |v|$. Again \eqref{Eq11} gives
\begin{equation*}
\begin{split}
&|\md(f,x)(v)-\md(f,y)(w)|\\
&\leq
|\md(f,x)(v)-\md(f,x)(w)|
+
|w|\left|\md(f,x)\left(\frac{w}{|w|}\right)-\md(f,y)\left(\frac{w}{|w|}\right)\right|\\
&\leq
L|v-w|+|w|d_\infty(\Phi_f(x),\Phi_f(y)).
\end{split}
\end{equation*}
\end{proof}
Assume now that $f:\bbbr^n\to X$ is Lipschitz and consider the map
$$
\Psi_f:D\to C(\Sph^{n-1}\times [0,1]),
\quad
\Psi_f(x)(v,t)=
\begin{cases}
{\displaystyle\frac{d(f(x),f(x+tv))}{t}} & \text{if $0<t\leq 1$}\\
\md(f,x)(v), & \text{if $t=0$.}
\end{cases}
$$
Note that continuity of $\Psi_f(x):\Sph^{n-1}\times [0,1]\to\bbbr$ when $x\in D$ follows from the definition of metric differentiability and from the triangle inequality.

(The assumption that $f$ is defined on $\Omega=\bbbr^n$ is needed here; otherwise, for $x\in D$, the point $x+tv$ need not belong to $\Omega$.)

\begin{lemma}
\label{T9}
$\Psi_f:D\to C(\Sph^{n-1}\times [0,1])$ is Borel measurable.
\end{lemma}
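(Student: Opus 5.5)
Since $C(\Sph^{n-1}\times[0,1])$ is a separable metric space, its Borel $\sigma$-algebra is generated by the open balls $\{g:\ \bar d_\infty(g,h)<r\}$. So it suffices to show that for each fixed $h\in C(\Sph^{n-1}\times[0,1])$ the function $x\mapsto \bar d_\infty(\Psi_f(x),h)$ is Borel on $D$. Indeed, preimages of balls are then Borel, and every open set is a countable union of balls by separability.

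Fix a countable dense set $Q\subset\Sph^{n-1}\times(0,1]$, for instance unit vectors from a countable dense subset of $\Sph^{n-1}$ paired with rational $t\in(0,1]$. For $x\in D$ the function $\Psi_f(x)$ is continuous on $\Sph^{n-1}\times[0,1]$, and $Q$ is dense there, including near $t=0$. Hence
$$
\bar d_\infty(\Psi_f(x),h)=\sup_{(v,t)\in Q}\big|\Psi_f(x)(v,t)-h(v,t)\big|.
$$
This reduces the claim to showing that each evaluation map $x\mapsto \Psi_f(x)(v,t)=d(f(x),f(x+tv))/t$, with $t>0$ fixed, is Borel on $D$. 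But this map is continuous in $x$, because $f$ is Lipschitz and $d$ is continuous. A countable supremum of Borel functions is Borel, and $D$ is a Borel set, so $x\mapsto\bar d_\infty(\Psi_f(x),h)$ is Borel on $D$.

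Thus the whole argument rests on using separability together with the density of $Q$ to replace the uncountable supremum by a countable one. The point needing care is that density must include points close to $t=0$, where $\Psi_f(x)$ is given by $\md(f,x)$. The continuity of $\Psi_f(x)$ at $t=0$, valid because $x\in D$, shows that the values at $t=0$ are limits of values in $Q$. So the metric derivative never has to be handled separately, although its measurability, noted in Remark~\ref{Rem1}, would also do.
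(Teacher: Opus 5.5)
Your argument is correct and is essentially the paper's: both use separability to reduce to preimages of balls, then use a countable dense subset of $\Sph^{n-1}\times(0,1]$ together with continuity of $\Psi_f(x)$ (including at $t=0$) to replace the supremum defining $\bar d_\infty$ by a countable one. The paper works with closed balls and writes $\Psi_f^{-1}(\bar B(g,r))=D\cap\bigcap_{i,j}E_{ij}$ with closed sets $E_{ij}$. You instead show that $x\mapsto\bar d_\infty(\Psi_f(x),h)$ is a countable supremum of continuous functions, which amounts to the same thing.
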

Since $\pi:C(\Sph^{n-1}\times [0,1])\to C(\Sph^{n-1})$ is continuous, and $\Phi_f=\pi\circ\Psi_f$, we immediately obtain
\begin{corollary}
\label{T10}
$\Phi_f:D\to C(\Sph^{n-1})$ is Borel measurable.
\end{corollary}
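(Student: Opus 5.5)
The statement is the composition $\Phi_f=\pi\circ\Psi_f$ of a Borel map with a continuous one, so I would prove it by checking that identity and the two ingredients, and then removing the standing assumption $\Omega=\bbbr^n$ under which $\Psi_f$ was defined.

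\emph{Step 1: the identity.} For $x\in D$ and $|v|=1$, the definition of $\Psi_f$ gives $\Psi_f(x)(v,0)=\md(f,x)(v)$. Hence $\pi(\Psi_f(x))=\Psi_f(x)|_{\Sph^{n-1}\times\{0\}}=\Phi_f(x)$ pointwise on $D$.

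\emph{Step 2: continuity of $\pi$.} The restriction map $\pi$ is $1$-Lipschitz:
$$
d_\infty(\pi g,\pi h)\le \bar d_\infty(g,h),
$$
since the supremum over $\Sph^{n-1}\times\{0\}$ is at most the supremum over $\Sph^{n-1}\times[0,1]$.

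\emph{Step 3: Borel measurability when $\Omega=\bbbr^n$.} Let $U\subset C(\Sph^{n-1})$ be open. Then $\pi^{-1}(U)$ is open in $C(\Sph^{n-1}\times[0,1])$, and
$$
\Phi_f^{-1}(U)=\Psi_f^{-1}(\pi^{-1}(U)),
$$
which is a Borel subset of $D$ by Lemma~\ref{T9}. Since open sets generate the Borel $\sigma$-algebra, $\Phi_f$ is Borel measurable. Because $D$ is itself Borel in $\Omega$, Borel subsets of $D$ are Borel in $\Omega$, so nothing is lost by working relative to $D$.

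\emph{Step 4: a general open set $\Omega$.} This is the only point needing care, because $\Psi_f$ requires $x+tv\in\Omega$. Since $f(\Omega)$ is separable, I would embed it isometrically into $\ell^\infty$ by Theorem~\ref{BT3} and extend $f$ to a Lipschitz map $F:\bbbr^n\to\ell^\infty$ by Corollary~\ref{BT2}. Metric differentiability is a local property and $\Omega$ is open, so $F=f$ near each $x\in\Omega$. Therefore $\md(F,x)=\md(f,x)$ for every $x\in D$, and $D$ is contained in the set where $F$ is metrically differentiable. It follows that $\Phi_f=\Phi_F|_D$, and $\Phi_F$ is Borel by Step 3, so $\Phi_f$ is Borel as well.

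I do not expect any step to be an obstacle. The real content sits in Lemma~\ref{T9}, which the paper states and which I take as given. The only point needing care is Step 4, where one must check that passing to the extension $F$ does not change the metric derivative on $D$.
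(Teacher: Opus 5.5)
Your argument is correct and is exactly the paper's: $\Phi_f=\pi\circ\Psi_f$, where $\pi$ is continuous and $\Psi_f$ is Borel by Lemma~\ref{T9}. Your Step~4 is a valid extra generalization rather than a needed step. The corollary sits under the standing assumption $f:\bbbr^n\to X$ made just before $\Psi_f$ is defined, and your embedding-and-extension reduction is the same device the paper uses elsewhere, e.g.\ at the start of the proof of Theorem~\ref{T2}.
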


\begin{proof}[Proof of Lemma~\ref{T9}]
We need to prove that the preimage of any open set is Borel.
Since the space $C(\Sph^{n-1}\times [0,1])$ is separable, any open set is the union of a countable family of closed balls and it suffices to show that the preimage of any closed ball is Borel.

Fix arbitrary $g\in C(\Sph^{n-1}\times [0,1])$ and $r>0$. We need to show that the set
$\Psi_f^{-1}(\bar{B}(g,r))\subset D$ is Borel. We will show that in fact, this set is the intersection of a closed set with $D$.

Let $\{v_i\}_{i=1}^\infty\subset\Sph^{n-1}$ and $\{ t_j\}_{j=1}^\infty\subset (0,1]$ be countable and dense. Note that the sets
$$
E_{ij} =\Big\{x\in\bbbr^n:\ \left|\frac{d(f(x),f(x+t_jv_i))}{t_j}-g(v_i,t_j)\right|\leq r\Big\}
$$
are closed sets. We have
\begin{equation*}
\begin{split}
\Psi_f^{-1}(\bar{B}(g,r))
&=
\{x\in D:\, \bar{d}_\infty(\Psi_f(x),g)\leq r\}\\
&=
\{x\in D:\, |\Psi_f(x)(v,t)-g(v,t)|\leq r\  \text{for all $|v|=1$ and $0\leq t\leq 1$}\}\\
&=
\{x\in D:\, |\Psi_f(x)(v_i,t_j)-g(v_i,t_j)|\leq r\  \text{for all $i,j\in\bbbn$}\}\\
&=
D\cap\bigcap_{i,j=1}^\infty E_{ij}.
\end{split}
\end{equation*}
\end{proof}
\begin{theorem}
\label{T11}
Let $f:\bbbr^n\to X$ be Lipschitz continuous. Then for any $\eps>0$ there is a set $F_\eps\subset D\subset\bbbr^n$ which is closed as a subset of $\bbbr^n$, such that $\H^n(\bbbr^n\setminus F_\eps)<\eps$ and
\begin{itemize}
\item[(a)]
$\Psi_f:F_\eps\to C(\Sph^{n-1}\times [0,1])$ is continuous.
\item[(b)]
$\md(f,\cdot)(\cdot):F_\eps\times\bbbr^n\to\bbbr$ is continuous.
\item[(c)]
On every compact subset $K\subset F_\eps$ we have
\begin{equation}
\label{Eq13}
\lim_{\mycom{|x-y|\to 0}{x\in K,\ y\in\bbbr^n}}
\frac{|d(f(x),f(y))-\md(f,x)(y-x)|}{|y-x|}=0.
\end{equation}
\end{itemize}
\end{theorem}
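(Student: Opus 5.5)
The plan is to apply Lusin's theorem to the Borel map $\Psi_f$ from Lemma~\ref{T9}; parts (b) and (c) then follow from (a) by elementary estimates.

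\textbf{Step (a).} By Lemma~\ref{T9}, $\Psi_f:D\to C(\Sph^{n-1}\times[0,1])$ is Borel measurable. The target is a separable metric space, so Lusin's theorem applies to $\Psi_f$. To pass from finite to infinite measure, I would work on the cubes of a partition of $\bbbr^n$ into unit cubes $Q_k$. In each $Q_k\cap D$ I choose a compact set $C_k$ with $\H^n(Q_k\setminus C_k)<\eps 2^{-k-1}$ on which $\Psi_f$ is continuous. The cubes must be slightly shrunk so that the $C_k$ are at positive mutual distance. Then $F_\eps=\bigcup_k C_k$ is a locally finite union of compact sets, hence closed. Since the $C_k$ are separated, continuity on $F_\eps$ is local, so $\Psi_f|_{F_\eps}$ is continuous, and $F_\eps\subset D$. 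The total measure of $\bbbr^n\setminus F_\eps$ is at most $\eps$ (the cube boundaries have measure zero).

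\textbf{Step (b).} Since $\Phi_f=\pi\circ\Psi_f$ with $\pi$ continuous, $\Phi_f$ is continuous on $F_\eps$. Take $(x_k,v_k)\to(x,v)$ in $F_\eps\times\bbbr^n$. By \eqref{Eq12},
$$
|\md(f,x_k)(v_k)-\md(f,x)(v)|\le L|v_k-v|+|v|\,d_\infty(\Phi_f(x_k),\Phi_f(x))\to0 .
$$

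\textbf{Step (c).} This is the main point, the uniformity. Fix a compact $K\subset F_\eps$. The image $\Psi_f(K)$ is compact in $C(\Sph^{n-1}\times[0,1])$, so by Arzel\`a--Ascoli the family $\{\Psi_f(x)\}_{x\in K}$ is equicontinuous. In particular it is equicontinuous at $t=0$, uniformly in $v$ and $x$: for every $\eta>0$ there is $\delta>0$ such that
$$
|\Psi_f(x)(v,t)-\Psi_f(x)(v,0)|<\eta \quad\text{for all } x\in K,\ |v|=1,\ 0<t<\delta .
$$
Writing $y=x+tv$ with $t=|y-x|$ and using homogeneity of $\md(f,x)$, this says exactly that
$$
\frac{|d(f(x),f(y))-\md(f,x)(y-x)|}{|y-x|}<\eta \quad\text{whenever } |y-x|<\delta,
$$
which is \eqref{Eq13}. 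Because $y$ ranges over all of $\bbbr^n$, not just $K$, we need $f$ to be defined on all of $\bbbr^n$; this is why the theorem assumes $f:\bbbr^n\to X$.

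If one wants to avoid Arzel\`a--Ascoli, an alternative is a direct $\eta/3$ argument using uniform continuity of $\Psi_f$ on $K$ together with continuity of each individual $\Psi_f(x)$.
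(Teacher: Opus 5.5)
Your proof is correct and follows the paper's route. You apply Lusin's theorem to the Borel map $\Psi_f$ for (a), use estimate \eqref{Eq12} for (b), and use compactness of $K$ with continuity of $\Psi_f$ for (c). The differences are only in packaging: the paper's Lusin theorem (Theorem~\ref{TH5}) already handles $\sigma$-finite measures, so your cube-by-cube localization is not needed, although choosing compact $C_k\subset D$ does make closedness in $\bbbr^n$ automatic. In (c), the paper runs exactly the subsequence and three-term triangle-inequality argument you mention as an alternative, instead of quoting uniform equicontinuity of the compact set $\Psi_f(K)$.
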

\begin{remark}
Meaning of \eqref{Eq13} is that
$\forall\ \eps>0$  $\exists\  \delta>0$ $\forall \  x\in K$ $\forall \ y\in\bbbr^n$
$$
0<|x-y|<\delta \implies \frac{|d(f(x),f(y))-\md(f,x)(y-x)|}{|y-x|}<\eps.
$$
\end{remark}
\begin{remark}
Part (b)  can be regarded as a version of the Scorza-Dragoni theorem (cf.\ \cite[Theorem~3.8]{dacorogna}, \cite[Theorem~6.35]{FL}).
\end{remark}
\begin{proof}
From Lemma~\ref{T9} and Lusin's theorem (Theorem~\ref{TH5}), there is a set $F_\eps\subset D$,
which is closed as a subset of $\bbbr^n$ such that
$\H^n(D\setminus F_\eps)<\eps$ and
$$
\Psi_f:F_\eps\to C(\Sph^{n-1}\times [0,1])
\quad
\text{and}
\quad
\Phi_f=\pi\circ\Psi_f:F_\eps\to C(\Sph^{n-1})
$$
are continuous. This proves (a). Now we show that (a) implies (b) and (c).
It is easy to see that (b) follows from continuity of $\Phi_f:F_\eps\to C(\Sph^{n-1})$ and from \eqref{Eq12}.

It remains to prove (c). Suppose to the contrary that (c) is not true. Then there are $\eps>0$ and sequences $x_k\in K$, $y_k\in\bbbr^n$, $0<|x_k-y_k|<\frac{1}{k}$, such that
\begin{equation}
\label{MDeq3}
\frac{|d(f(x_k),f(y_k))-\md(f,x_k)(y_k-x_k)|}{|y_k-x_k|}\geq\eps.
\end{equation}
If we write $y_k=x_k+t_kv_k$, $|v_k|=1$, $0<t_k<\frac{1}{k}$, \eqref{MDeq3} is equivalent to
$$
|\Psi_f(x_k)(v_k,t_k)-\Psi_f(x_k)(v_k,0)|=\left|\frac{d(f(x_k),f(x_k+t_kv_k))}{t_k}-\md(f,x_k)(v_k)\right|\geq \eps.
$$
By taking subsequences, we may further assume that $x_k\to x$ and $v_k\to v$. We have
\begin{equation*}
\begin{split}
\eps
&\leq
|\Psi_f(x_k)(v_k,t_k)-\Psi_f(x_k)(v_k,0)| \leq
|\Psi_f(x_k)(v_k,t_k)-\Psi_f(x)(v_k,t_k)|\\
&+
|\Psi_f(x)(v_k,t_k)-\Psi_f(x)(v_k,0)|
+
|\Psi_f(x)(v_k,0)-\Psi_f(x_k)(v_k,0)|\\
&\leq
\bar{d}_\infty (\Psi_f(x_k),\Psi_f(x))+
|\Psi_f(x)(v_k,t_k)-\Psi_f(x)(v_k,0)|+
d_\infty(\Phi_f(x),\Phi_f(x_k))\\
&=
A_k+B_k+C_k.
\end{split}
\end{equation*}
We used here the fact that
$$
\Psi_f(x)(v_k,0)=\Phi_f(x)(v_k)
\quad
\text{and}
\quad
\Psi_f(x_k)(v_k,0)=\Phi_f(x_k)(v_k).
$$
Clearly, $A_k,C_k\to 0$ as $k\to\infty$ by continuity of
$$
\Psi_f:F_\eps\supset K\to C(\Sph^{n-1}\times [0,1])
\quad
\text{and}
\quad
\Phi_f=\pi\circ\Psi_f:F_\eps\supset K\to C(\Sph^{n-1}).
$$
Finally, $B_k\to 0$ by continuity of
$\Psi_f(x)\in C(\Sph^{n-1}\times [0,1])$ at $(v,0)$. This however, contradicts the fact that
$A_k+B_k+C_k\geq\eps$.
\end{proof}

\subsection{Strong metric differentiability}
\label{SMD}
This section is devoted to the proof of Theorem~\ref{INT1} which we state again:
\begin{theorem}
\label{T2}
If $f:\bbbr^n\supset\Omega\to X$ is Lipschitz, then for almost all $x\in\Omega$, we have
\begin{equation}
\label{Eq15}
\lim_{\Omega\times\Omega\ni (y,z)\to (x,x)}
\frac{d(f(z),f(y))-\md(f,x)(z-y)}{|z-x|+|y-x|}=0.
\end{equation}
\end{theorem}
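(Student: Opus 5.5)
We prove \eqref{Eq15} at every density point of the sets $F_\eps$ from Theorem~\ref{T11}, after two reductions. First, the statement is local. Moreover $f(\Omega)$ is separable, so by Theorem~\ref{BT3} we may regard $f$ as a map into $\ell^\infty$. Corollary~\ref{BT2} then extends the restriction of $f$ to a ball $\bar B\Subset\Omega$ to a Lipschitz map $F:\bbbr^n\to\ell^\infty$. A point $x\in B$ where $F$ is strongly metrically differentiable is one where $f$ is, since $\md(F,x)=\md(f,x)$ there. Hence it suffices to treat $f:\bbbr^n\to X$ that is $L$-Lipschitz. Fix $\eps>0$ and let $F_\eps$ be the closed set from Theorem~\ref{T11}. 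Let $x\in F_\eps$ be a density point of $F_\eps$. Since $\eps$ is arbitrary, almost every point of $\bbbr^n$ is such a point for some $\eps=1/j$. So we must show \eqref{Eq15} at $x$.

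Fix $\eta>0$ and a compact ball $K=\bar B(x,1)\cap F_\eps$. By Theorem~\ref{T11}(c) there is $\delta>0$ such that for all $w\in K$ and all $u$ with $|u-w|<\delta$,
\[
|d(f(w),f(u))-\md(f,w)(u-w)|\leq\eta|u-w|.
\]
By Theorem~\ref{T11}(b), $w\mapsto\md(f,w)$ is uniformly continuous on $K$ in the $d_\infty$ metric. So, shrinking $\delta$, we get $d_\infty(\Phi_f(w),\Phi_f(x))<\eta$ for $w\in K$ with $|w-x|<\delta$. Now take $y,z$ with $r:=|y-x|+|z-x|$ small. The key step uses the density point property. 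Since $|B(y,\eta r)\setminus F_\eps|=o(r^n)$, for $r$ small there is a point $w\in F_\eps$ with $|w-y|\leq\eta r$. Otherwise the ball $B(y,\eta r)\subset B(x,2r)$ would miss $F_\eps$, and its measure, comparable to $\eta^n r^n$, would contradict density once $r$ is small relative to $\eta$.

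Then
\[
|d(f(z),f(y))-d(f(z),f(w))|\leq L\eta r,
\]
and, since $|z-w|\leq 2r<\delta$, the differentiability estimate at $w\in K$ gives
\[
|d(f(w),f(z))-\md(f,w)(z-w)|\leq 2\eta r.
\]
Next, \eqref{Eq12} yields
\[
|\md(f,w)(z-w)-\md(f,x)(z-y)|\leq L|w-y|+|z-w|\,d_\infty(\Phi_f(w),\Phi_f(x))\leq L\eta r+2r\eta.
\]
Summing, $|d(f(z),f(y))-\md(f,x)(z-y)|\leq C(L)\eta\, r$. This is \eqref{Eq15} at $x$, since $\eta>0$ was arbitrary.

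The main obstacle is that Theorem~\ref{T11}(c) controls increments only from base points in $K\subset F_\eps$, while $y,z$ are arbitrary. The density substitution $y\rightsquigarrow w$ resolves this. The only delicate point is the quantitative choice: $w$ must lie within $\eta r$ of $y$, and not merely within $o(1)$. This is exactly what Lebesgue density at $x$ provides, because $|B(x,2r)\setminus F_\eps|/r^n\to0$ beats the fixed ratio $\eta^n$. No stronger regularity is needed.
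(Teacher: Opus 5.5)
Your proof is correct and follows the paper's argument. Like the paper, you reduce to a Lipschitz map $\bbbr^n\to\ell^\infty$ and work at density points of the closed sets $F_{1/i}$ from Theorem~\ref{T11}. You then replace $y$ by a point of $F_{1/i}$ at distance $o(|y-x|)$, and control the remaining terms by the Lipschitz bound, Theorem~\ref{T11}(c), and continuity of $\Phi_f$ on $F_{1/i}$. The differences are cosmetic: you give an explicit $\eta$--$\delta$ estimate with $r=|y-x|+|z-x|$ instead of using sequences with $|z_k-x|\le|y_k-x|$, and you merge the paper's terms $C_k$ and $D_k$ into one application of \eqref{Eq12}.
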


The next result is now an immediate consequence of Theorem~\ref{T2} and Proposition~\ref{MDT2}.
\begin{corollary}
\label{MDT4}
Let $f:\bbbr^n\supset\Omega\to\ell^\infty$, $f=(f_1,f_2,\ldots)$ be a Lipschitz mapping defined on an open set. Then $f$ is strongly metrically differentiable a.e., and
$$
\md (f,x)(v)=\Vert D_{\rm c}f(x)v\Vert_\infty=\sup_{i\in\bbbn}|\nabla f_i(x)\cdot v|.
$$
for almost all $x\in\Omega$ and all $v\in\bbbr^n$.
\end{corollary}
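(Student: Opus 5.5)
The plan is to intersect the two full-measure sets supplied by Proposition~\ref{MDT2} and Theorem~\ref{T2}, and then use uniqueness of the metric derivative to identify the seminorms.

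First, apply Proposition~\ref{MDT2} to $f=(f_1,f_2,\ldots):\Omega\to\ell^\infty$. This gives a set $E_1\subset\Omega$ with $\H^n(\Omega\setminus E_1)=0$ such that, for every $x\in E_1$, $f$ is metrically differentiable at $x$ and
$$
\md(f,x)(v)=\sup_{i\in\bbbn}|\nabla f_i(x)\cdot v|
\quad\text{for all } v\in\bbbr^n.
$$

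Next, apply Theorem~\ref{T2} with $X=\ell^\infty$. This gives a set $E_2\subset\Omega$ of full measure on which \eqref{Eq15} holds with the seminorm $\md(f,x)$. Hence $f$ is strongly metrically differentiable at every $x\in E_2$, in the sense of Definition~\ref{def101}, with $\sigma_x=\md(f,x)$.

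The only point to check is that the two seminorms agree on $E_1\cap E_2$. Suppose $f$ is strongly metrically differentiable at $x$ with some seminorm $\sigma_x$. Setting $z=x$ in the defining limit shows that $f$ is metrically differentiable at $x$ with the same $\sigma_x$. By the uniqueness of the metric derivative noted after Definition~\ref{def101}, $\sigma_x=\md(f,x)$. This is the seminorm computed in Proposition~\ref{MDT2}.

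Therefore, at every $x\in E_1\cap E_2$, which is still a set of full measure, $f$ is strongly metrically differentiable and its metric derivative is given by
$$
\md(f,x)(v)=\Vert D_{\rm c}f(x)v\Vert_\infty=\sup_{i\in\bbbn}|\nabla f_i(x)\cdot v|.
$$
There is no real obstacle here. The mildest step is this identification of seminorms, which is exactly the uniqueness observation after Definition~\ref{def101}. All the depth lies in Theorem~\ref{T2}, which we take as given.
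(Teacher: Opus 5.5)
Your proposal is correct and matches the paper, which simply remarks that the corollary is an immediate consequence of Theorem~\ref{T2} and Proposition~\ref{MDT2}. You intersect the two full-measure sets and identify the seminorms via uniqueness of the metric derivative, which is exactly the intended argument.
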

\begin{proof}[Proof of Theorem~\ref{T2}]
By a standard embedding and extension argument (Theorem~\ref{BT3} and Corollary~\ref{BT2}) we can assume that $f:\bbbr^n\to\ell^\infty$.

For $i=1,2,3,\ldots$ let $F_{1/i}\subset\bbbr^n$ be a closed subset as in Theorem~\ref{T11}.
Let $\tilde{F}_{1/i}$ be the set of density points of $F_{1/i}$. Since $F_{1/i}$ is closed, it follows that $\tilde{F}_{1/i}\subset {F}_{1/i}$. Let $E=\bigcup_{i=1}^\infty \tilde{F}_{1/i}$. Clearly, $\H^n(\bbbr^n\setminus E)=0$.
It suffices to show that \eqref{Eq15} is true for all $x\in E$.

Let $x\in E$. Since $y$ and $z$ play a symmetric role in \eqref{Eq15}, it suffices to show that if
\begin{equation}
\label{Eq17}
0<|y_k-x|\to 0
\quad
\text{and}
\quad
|z_k-x|\leq |y_k-x|,
\end{equation}
then
$$
\frac{d(f(z_k),f(y_k))-\md(f,x)(z_k-y_k)}{|y_k-x|}\to 0
$$
or that
\begin{equation}
\label{Eq18}
d(f(z_k),f(y_k))-\md(f,x)(z_k-y_k)=o(|y_k-x|).
\end{equation}
Since $x\in E$, there is $i\in\bbbn$ such that $x\in F_{1/i}$, and $x$ is a density point of $F_{1/i}$. It easily follows from the definition of the density point that there is $\tilde{y}_k\in F_{1/i}$  such that
\begin{equation}
\label{Eq19}
\frac{|\tilde{y}_k-y_k|}{|y_k-x|}\to 0 \ \text{as $k\to\infty$}
\quad
\text{and hence}
\quad
|\tilde{y}_k-y_k|\leq |y_k-x|\ \text{for $k\geq k_o$.}
\end{equation}
We have
\begin{equation*}
\begin{split}
& |d(f(z_k),f(y_k))-\md(f,x)(z_k-y_k)|
\leq
|d(f(z_k),f(y_k))-d(f(z_k),f(\tilde{y}_k))|\\
&+
|d(f(z_k),f(\tilde{y}_k))-\md (f,\tilde{y}_k)(z_k-\tilde{y}_k)|
+
|\md (f,\tilde{y}_k)(z_k-\tilde{y}_k)-\md (f,\tilde{y}_k)(z_k-{y}_k)|\\
&+
|\md (f,\tilde{y}_k)(z_k-{y}_k)-\md (f,x)(z_k-{y}_k)|
=
A_k+B_k+C_k+D_k.
\end{split}
\end{equation*}
If follows from the triangle inequality and from \eqref{Eq19} that
$$
A_k\leq d(f(\tilde{y}_k),f(y_k))\leq L|\tilde{y}_k-y_k|=o(|y_k-x|).
$$
Inequality \eqref{Eq11} yields
$$
C_k\leq L|\tilde{y}_k-y_k|=o(|y_k-x|).
$$
Now, Theorem~\ref{T11}(c) implies that
\begin{equation}
\label{Eq20}
\frac{B_k}{|z_k-\tilde{y}_k|}\to 0
\quad
\text{as $k\to\infty$},
\end{equation}
because $\tilde{y}_k\in F_{1/i}$ as a convergent sequence, is contained in a compact subset of $F_{1/i}$.
Inequalities \eqref{Eq17} and \eqref{Eq19} along with the triangle inequality imply that
$|z_k-\tilde{y}_k|\leq 3|y_k-x|$ for $k\geq k_o$ and hence $B_k=o(|y_k-x|)$, by \eqref{Eq20}.

It remains to estimate $D_k$. Since \eqref{Eq17} yields $|z_k-y_k|\leq 2|y_k-x|$, we have
$$
D_k\leq |z_k-y_k|\,d_\infty(\Phi_f(\tilde{y}_k),\Phi_f(x))=o(|y_k-x|),
$$
because $\Phi_f(\tilde{y}_k)\to\Phi_f(x)$ in $C(\Sph^{n-1})$ by continuity of $\Phi_f$ on $F_{1/i}$.
This proves \eqref{Eq18} and completes the proof of the theorem.
\end{proof}

\subsection{Approximate metric derivative}
\label{AD1}

Let us recall the classical definition of the approximate derivative.
\begin{definition}
Let $f:A\to\bbbr$ be a measurable function defined on a measurable set $A\subset\bbbr^n$. We say that
$f$ is {\em approximately differentiable} at $x\in A$ if there is a linear function $L:\bbbr^n\to\bbbr$
such that for any $\eps>0$ the set
$$
\Big\{ y\in A:\, \frac{|f(y)-f(x)-L(y-x)|}{|y-x|} <\eps \Big\}
$$
has $x$ as a density point.
\end{definition}
This definition is equivalent to another condition that is easier to work with.
\begin{proposition}
\label{ADT1}
A measurable function $f:A\to\bbbr$ defined in a measurable set $A\subset\bbbr^n$ is approximately
differentiable at $x\in A$ if and only if there is a measurable set $A_x\subset A$
and a linear function $L:\bbbr^n\to\bbbr$ such that $x$ is a density point of $A_x$  and
$$
\lim_{A_x\ni y\to x} \frac{|f(y)-f(x)-L(y-x)|}{|y-x|} = 0.
$$
\end{proposition}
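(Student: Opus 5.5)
The plan is to prove the two implications separately; the implication from the $A_x$ condition to approximate differentiability is immediate. Suppose $A_x\subset A$ is measurable, $x$ is a density point of $A_x$, and the quotient
\[
R(y):=\frac{|f(y)-f(x)-L(y-x)|}{|y-x|}
\]
tends to $0$ as $A_x\ni y\to x$. Fix $\eps>0$. There is $r_\eps>0$ such that $A_x\cap B(x,r_\eps)\setminus\{x\}$ is contained in the set $S_\eps:=\{y\in A:\, R(y)<\eps\}$. Since $x$ is a density point of $A_x$, it is also a density point of $A_x\cap B(x,r_\eps)$, and hence of the larger set $S_\eps$. The set $S_\eps$ is measurable because $f$ is measurable.

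For the converse we assume that each $S_\eps$ has $x$ as a density point, and we glue these sets together at different scales. Put $S_k:=S_{1/k}$. Then
\[
\lim_{r\to0}\frac{|B(x,r)\setminus S_k|}{|B(x,r)|}=0
\quad\text{for each } k.
\]
Choose radii $r_1>r_2>\dots\to0$ inductively so that
\[
\frac{|B(x,r)\setminus S_k|}{|B(x,r)|}<2^{-k}
\quad\text{for all } 0<r\le r_k .
\]
Define
\[
A_x:=\{x\}\cup\bigcup_{k}\Big(S_k\cap\big(B(x,r_k)\setminus B(x,r_{k+1})\big)\Big).
\]
This set is measurable and contained in $A$. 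The convergence of the quotient follows at once: if $y\in A_x$ and $r_{k+1}\le|y-x|<r_k$, then $R(y)<1/k$, and $k\to\infty$ as $y\to x$.

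The remaining step, which is the main (mild) obstacle, is to check that $x$ is a density point of $A_x$. The complement of $A_x$ in the annulus $B(x,r_j)\setminus B(x,r_{j+1})$ is contained in $B(x,r_j)\setminus S_j$. Fix $r$ with $r_{k+1}<r\le r_k$. Then
\[
|B(x,r)\setminus A_x|\le |B(x,r)\setminus S_k|+\sum_{j>k}|B(x,r_j)\setminus S_j|\le 2^{-k}|B(x,r)|+\sum_{j>k}2^{-j}|B(x,r_j)|.
\]
The tail terms $|B(x,r_j)|$ for $j\ge k+2$ are at most $|B(x,r)|$, which gives a contribution of at most $2^{-k}|B(x,r)|$. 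The single term $j=k+1$ only gives $2^{-k-1}|B(x,r_{k+1})|\le 2^{-k-1}|B(x,r)|$, since $r_{k+1}<r$. Altogether the ratio is $O(2^{-k})$, which tends to $0$ as $r\to0$. So $x$ is a density point of $A_x$, and the proposition follows.
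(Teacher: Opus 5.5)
Your proof is correct, including the density estimate $|B(x,r)\setminus A_x|\le 2^{1-k}|B(x,r)|$ for $r_{k+1}<r\le r_k$, which comes from gluing the sets $S_{1/k}$ along annuli. The paper does not prove this proposition itself (it calls it an exercise and refers to \cite[Proposition~5.2]{GH2}), and your construction is the standard argument for this equivalence.
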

The proof is a nice exercise; a complete proof can be found in \cite[Proposition~5.2]{GH2}. We will use the condition from Proposition~\ref{ADT1} to define approximate metric derivative. Note that whenever $x$ is a density point of $A_x\subset A$, it is also a density point of $A$, so checking for approximate differentiability makes sense only in the density points of the domain of the function.
\begin{definition}
\label{def1}
A measurable function $f:A\to X$ defined in a measurable set $A\subset\bbbr^n$
is {\em approximately metrically differentiable} at $x\in A$, if there exists a measurable set $A_x \subset A$ and a seminorm $\sigma_x$ on $\bbbr^n$ such that $x$ is a density point of $A_x$, and
\begin{equation}
\label{eq34}
    \lim_{A_x \ni y\to x} \frac{d(f(y),f(x)) - \sigma_x(y-x)}{|y-x|} = 0 \, .
\end{equation}
We call $\sigma_x$ the \emph{approximate metric derivative} of $f$ at $x$ and denote it by $\apmd(f,x)$.
\end{definition}
Note that since $x$ is a density point of $A$, the seminorm $\sigma_x$ is unique and in particular it does not depend on the choice of $A_x$.

Clearly if $f:\bbbr^n\supset\Omega\to X$ is metrically differentiable at $x$, then $\apmd(f,x)=\md(f,x)$. In fact, a stronger result is true:
\begin{proposition}
\label{ADT2}
Let $\Omega \subset \bbbr^n$ be open and $f:\bbbr^n\supset\Omega\to X$ be Lipschitz. Then $f$ is metrically differentiable at $x_o\in \Omega$ if and only if it is approximately metrically differentiable at $x_o$, in which case, $\md(f,x_o)=\apmd(f,x_o)$.
\end{proposition}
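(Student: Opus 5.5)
The forward implication is immediate: if $f$ is metrically differentiable at $x_o$, take $A_x=\Omega$ in Definition~\ref{def1}. Since $\Omega$ is open, $x_o$ is a density point of $\Omega$, and \eqref{eq34} is just the definition of $\md(f,x_o)$. By uniqueness of the approximate metric derivative, $\apmd(f,x_o)=\md(f,x_o)$. The content of the proposition is therefore the converse, and the plan is to upgrade convergence along a set of density one to convergence along all $y\to x_o$. The only tools needed are the Lipschitz continuity of $f$ and of the seminorm. If $f$ is $L$-Lipschitz, then $\sigma:=\apmd(f,x_o)$ is also Lipschitz: for unit $v$, approximating along points of $A_x$ in direction close to $v$ gives $\sigma(v)\le L$. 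Hence $|\sigma(v)-\sigma(w)|\le \sigma(v-w)\le L|v-w|$.

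Assume $f$ is $L$-Lipschitz near $x_o$ and approximately metrically differentiable at $x_o$ with set $A_x$ and seminorm $\sigma$. Fix $\eps\in(0,1)$ and choose $\delta>0$ so that two things hold for $0<r<\delta$. First, for $y\in A_x$ with $0<|y-x_o|<\delta$ we have $|d(f(y),f(x_o))-\sigma(y-x_o)|\le\eps|y-x_o|$. Second, $|B(x_o,r)\setminus A_x|<\eps^n|B(x_o,r)|$, which is possible because $x_o$ is a density point of $A_x$.

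The key step is to show that every $y$ with $0<|y-x_o|=r<\delta/2$ has a point $\tilde y\in A_x$ with $|y-\tilde y|\le \eps r$. Otherwise the ball $B(y,\eps r)\subset B(x_o,2r)$ misses $A_x$. Then
\[
|B(x_o,2r)\setminus A_x|\ge |B(y,\eps r)|=(\eps/2)^n|B(x_o,2r)|,
\]
and this contradicts the density bound once $\eps$ is replaced by a smaller constant multiple of itself at the outset (for instance, use $(\eps/4)^n$ in place of $\eps^n$ in the density estimate). This quantitative density argument is the same one used to produce $\tilde y_k$ in \eqref{Eq19}, and it is the only slightly delicate point; everything else is the triangle inequality.

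With $\tilde y$ in hand, note that $|\tilde y-x_o|\in[(1-\eps)r,(1+\eps)r]$, so $\tilde y\ne x_o$ and $|\tilde y-x_o|<\delta$. We then estimate
\begin{align*}
|d(f(y),f(x_o))-\sigma(y-x_o)| &\le d(f(y),f(\tilde y))+|d(f(\tilde y),f(x_o))-\sigma(\tilde y-x_o)|\\
&\quad+|\sigma(\tilde y-x_o)-\sigma(y-x_o)|\\
&\le L\eps r+\eps(1+\eps)r+L\eps r .
\end{align*}
Dividing by $r=|y-x_o|$ gives a bound $C(L)\eps$. Since $\eps$ was arbitrary, $f$ is metrically differentiable at $x_o$ with $\md(f,x_o)=\sigma=\apmd(f,x_o)$.
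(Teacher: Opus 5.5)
Your proof is correct and follows the paper's argument. For $y\to x_o$ you pick $\tilde y\in A_{x_o}$ with $|y-\tilde y|=o(|y-x_o|)$ using the density-point property. You then combine the triangle inequality with the Lipschitz bounds for $f$ and for $\sigma=\apmd(f,x_o)$. Beyond the paper, you write out the quantitative density step it leaves to the reader, with the constant adjusted to $(\eps/4)^n$; even $(\eps/2)^n$ works, given the strict inequality.
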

\begin{proof}
Clearly, metric differentiability implies approximate metric differentiability. Assume now that $f$ is approximately metrically differentiable at $x_o\in\Omega$, \eqref{eq34} is satisfied along a set $A_{x_o}$, and $x_o$ is a density point of $A_{x_o}$. It is easy to check that if $f$ is $L$-Lipschitz, then $\sigma_{x_o}$ is $L$-Lipschitz (by modifying the proof of \eqref{Eq11}).
If $x_o\neq y\to x_o$, then we can find $\tilde{y}\in A_{x_o}$ such that $|y-\tilde{y}|=o(|y-x_o|)$.
Note that
$$
|d(f(y),f(x_o))-d(f(\tilde{y}),f(x_o))|\leq L|y-\tilde{y}|=o(|y-x_o|)
$$
and
$$
|\sigma_{x_o}(y-x_o)-\sigma_{x_o}(\tilde{y}-x_o)|\leq L|y-\tilde{y}|=o(|y-x_o|).
$$
Now, \eqref{eq34} applied to $A_{x_o}\ni\tilde{y}\to x_o$ along with the above estimates (triangle inequality)
easily lead to the result. We leave details to the reader.
\end{proof}
\begin{remark}
\label{R2}
It turns out that results about metric differentiability can easily be extended to corresponding results about approximate metric differentiability. Indeed, assume that $f:\bbbr^n\supset A\to X$ is a Lipschitz map defined on a measurable set $A\subset\bbbr^n$. Embed $f(A)$ isometrically to $\ell^\infty$ (Theorem~\ref{BT3}) and
$$
\text{extend }\quad f:\bbbr^n\supset A\to\ell^\infty
\qquad
\text{to a Lipschitz}
\qquad
F:\bbbr^n\to\ell^\infty
$$
(Corollary~\ref{BT2}). If $F$ is metrically differentiable at $x_o\in A$ and $x_o$ is a density point of $A$, then $f$ is approximately metrically differentiable at $x_o$, and $\apmd(f,x_o)=\md(F,x_o)$. Thus, if a statement involving $F$ and $\md(F,x)$ holds for almost every $x\in\bbbr^n$, then the corresponding statement involving $f$ and $\apmd(f,x)$ holds for almost every $x\in A$.
\end{remark}

In particular, according to the Kirchheim-Rademacher Theorem~\ref{INT1}, $F$ is strongly metrically differentiable a.e. Since almost every point of $A$ is a density point we obtain that $f$ is a.e.\ strongly approximately metrically differentiable. We proved
\begin{theorem}
\label{thm:apKR}
If $f:\bbbr^n\supset A\to X$ is a Lipschitz mapping defined on a measurable set, then $f$ is strongly approximately metrically differentiable a.e.\ in $A$. More precisely, for almost every $x\in A$,
\begin{equation}
\label{ADeq1}
\lim_{A\times A\ni (y,z)\to(x,x)}\frac{d(f(z),f(y))-\apmd(f,x)(z-y)}{|z-x|+|y-x|}=0.
\end{equation}
\end{theorem}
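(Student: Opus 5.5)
The plan is to reduce Theorem~\ref{thm:apKR} to the Kirchheim-Rademacher Theorem~\ref{T2} through the embedding-and-extension procedure of Remark~\ref{R2}. The image $f(A)$ is separable, so by Theorem~\ref{BT3} we may assume $f(A)\subset\ell^\infty$ isometrically. Corollary~\ref{BT2} then gives a Lipschitz extension $F:\bbbr^n\to\ell^\infty$ with $F|_A=f$. Distances between images of points of $A$ are unchanged by the embedding, so $d(f(z),f(y))=\Vert F(z)-F(y)\Vert_\infty$ for all $y,z\in A$.

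First, I apply Theorem~\ref{T2} to $F$ on $\Omega=\bbbr^n$. This gives a null set $Z$ such that for every $x\notin Z$, $F$ is strongly metrically differentiable at $x$ with seminorm $\md(F,x)$. By the Lebesgue density theorem, almost every $x\in A$ is a density point of $A$. Let $G$ be the set of density points of $A$ that lie in $A\setminus Z$; then $\H^n(A\setminus G)=0$.

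Next, for $x\in G$, I verify that $f$ is approximately metrically differentiable at $x$ with $\apmd(f,x)=\md(F,x)$. Take $A_x=A$ in Definition~\ref{def1}. The point $x$ is a density point of $A$, and for $y\in A$ we have $d(f(y),f(x))=\Vert F(y)-F(x)\Vert_\infty$. The limit \eqref{eq34} along $A\ni y\to x$ is therefore the restriction of the metric differentiability limit for $F$, which holds because strong metric differentiability implies metric differentiability (take $z=x$). By the uniqueness remark after Definition~\ref{def1}, this identifies $\apmd(f,x)=\md(F,x)$.

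Finally, I restrict the limit in \eqref{Eq15} for $F$ to pairs $(y,z)\in A\times A$. Restricting a limit to a subset whose closure contains $(x,x)$ preserves it; here $x$ is a density point, hence an accumulation point of $A$. Replacing $\Vert F(z)-F(y)\Vert_\infty$ by $d(f(z),f(y))$ and $\md(F,x)$ by $\apmd(f,x)$ then yields \eqref{ADeq1}.

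There is no real obstacle. The only points needing care are that $\apmd(f,x)$ is well defined independently of the chosen extension (which is the uniqueness at density points), and that the null sets coming from Theorem~\ref{T2} and from the density theorem are combined so that a single full-measure subset of $A$ works.
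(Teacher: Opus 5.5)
Your proposal is correct and is essentially the paper's argument. The paper embeds $f(A)$ into $\ell^\infty$, extends to a Lipschitz $F:\bbbr^n\to\ell^\infty$, and notes that \eqref{ADeq1} holds at every density point of $A$ where $F$ is strongly metrically differentiable (Theorem~\ref{T2}), with $\apmd(f,x)=\md(F,x)$ there. You additionally spell out the identification of $\apmd(f,x)$ and the restriction of the limit to $A\times A$, which the paper leaves implicit in Remark~\ref{R2}.
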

Indeed, \eqref{ADeq1} holds at the density points of $A$ that are points of strong metric differentiability of $F$.

The same technique applies to many of the results that are presented in the subsequent sections. For example it applies to the metric Sard theorem, the metric implicit function theorems, and the metric area and co-area formulas, see Theorems~\ref{CT3}, \ref{AFT1} and~\ref{thm:gCoa}.

Using this technique, we can write down the following lemma, essentially a chain rule for approximately metrically differentiable maps. The proof is essentially the same as for the standard chain rule.
\begin{lemma}
\label{lem:chain}
Assume that $A\subset \bbbr^n$ is measurable, $X$ is a metric space,
and $f:A\to X$ is Lipschitz. Let $\Phi:\bbbr^n\to\bbbr^n$ be a $C^1$
diffeomorphism. If $f$ is approximately metrically differentiable at $p$, then $f\circ \Phi$ is approximately
metrically differentiable at $q=\Phi^{-1}(p)$ and
$$
\apmd(f\circ \Phi,q)=\apmd(f,p)\circ D\Phi(q).
$$
\end{lemma}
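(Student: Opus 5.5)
The plan is to work directly with Definition~\ref{def1}, using the characterization by a set $A_p$ of which $p$ is a density point. Let $\sigma=\apmd(f,p)$. Fix a measurable $A_p\subset A$ such that $p$ is a density point of $A_p$ and
$$
\lim_{A_p\ni y\to p}\frac{d(f(y),f(p))-\sigma(y-p)}{|y-p|}=0 .
$$
The map $f\circ\Phi$ is defined on $\Phi^{-1}(A)$. I will take $B:=\Phi^{-1}(A_p)\subset\Phi^{-1}(A)$, which is measurable because $\Phi$ is a homeomorphism. I will show that $q$ is a density point of $B$ and that \eqref{eq34} holds along $B$ with the seminorm $\tau:=\sigma\circ D\Phi(q)$. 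This $\tau$ is a seminorm, being the composition of a seminorm with a linear map.

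\emph{Step 1 (density is preserved).} This is the step I expect to need the most care, though it is standard. Since $\Phi$ is $C^1$ with $C^1$ inverse, there are $r_0>0$ and $C\geq 1$ such that $\Phi$ is $C$-bi-Lipschitz on $B(q,r_0)$. The map $\Phi^{-1}$ is also $C$-Lipschitz on a neighbourhood of $p$, and hence it multiplies Lebesgue measure by at most $C^n$ there (Lipschitz maps in $\bbbr^n$ satisfy $|\Phi^{-1}(E)|\le C^n|E|$, which follows from $\H^n=\LL^n$). For small $r$ we have $\Phi(B(q,r))\subset B(p,Cr)$, and therefore
$$
|B(q,r)\setminus B|=|\Phi^{-1}\big(\Phi(B(q,r))\setminus A_p\big)|\le C^n|B(p,Cr)\setminus A_p|=o(r^n),
$$
because $p$ is a density point of $A_p$. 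Hence $q$ is a density point of $B$.

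\emph{Step 2 (the limit).} For $z\in B$, put $y=\Phi(z)\in A_p$ and split
$$
d(f(\Phi(z)),f(\Phi(q)))-\tau(z-q)=\big[d(f(y),f(p))-\sigma(y-p)\big]+\big[\sigma(\Phi(z)-\Phi(q))-\sigma(D\Phi(q)(z-q))\big].
$$
The first bracket is $o(|y-p|)$. Since $|y-p|\le C|z-q|$ and $y\to p$ as $z\to q$, it is also $o(|z-q|)$.

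For the second bracket, note that $\sigma$ is Lipschitz with some constant $M$. One can take $M=\lip(f)$ as in the proof of Proposition~\ref{ADT2}, or use simply that every seminorm on $\bbbr^n$ is Lipschitz. By the triangle inequality for seminorms, the second bracket is at most $M|\Phi(z)-\Phi(q)-D\Phi(q)(z-q)|$, which is $o(|z-q|)$ by the differentiability of $\Phi$ at $q$.

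Dividing by $|z-q|$ and letting $B\ni z\to q$ gives \eqref{eq34} for $f\circ\Phi$ at $q$ along $B$. By the uniqueness of the approximate metric derivative noted after Definition~\ref{def1}, this yields $\apmd(f\circ\Phi,q)=\apmd(f,p)\circ D\Phi(q)$.
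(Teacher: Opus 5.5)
Your proof is correct, and it takes a genuinely different route from the paper's.

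\textbf{The paper's route.} The paper embeds $f(A)$ into $\ell^\infty$ and extends $f$ to a Lipschitz map $F:\bbbr^n\to\ell^\infty$. It then invokes Proposition~\ref{ADT2} to upgrade approximate metric differentiability of $F$ at $p$ to genuine metric differentiability. Next it proves the chain rule for the genuine metric derivative of $F\circ\Phi$ at $q$, where the limit is taken over all $z\to q$. Only at the end does it restrict to $\Phi^{-1}(A)$, whose density at $q$ it proves with the same bi-Lipschitz estimate you use in Step~1.

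\textbf{Your route.} You never leave the definition. You transport the good set itself, $B=\Phi^{-1}(A_p)$, and verify the defining limit directly along $B$.

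\textbf{What each approach buys.} Your argument is more elementary and self-contained. It needs neither the Kuratowski embedding, nor the McShane extension, nor Proposition~\ref{ADT2}, whose proof the paper only sketches. In fact the Lipschitz hypothesis on $f$ plays no role in your argument, because every seminorm on $\bbbr^n$ is automatically Lipschitz. The paper's approach fits its systematic ``extend and restrict'' scheme of Remark~\ref{R2}. It also yields the extra information, which relies on $f$ being Lipschitz, that the limit holds along the whole domain $\Phi^{-1}(A)$.

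\textbf{One small correction.} Measurability of $B$ does not follow merely from $\Phi$ being a homeomorphism: preimages of Lebesgue measurable sets under homeomorphisms need not be Lebesgue measurable. It does follow here because $\Phi^{-1}$ is locally Lipschitz. Such a map sends null sets to null sets and $\sigma$-compact sets to $\sigma$-compact sets, so it maps the measurable set $A_p$ onto a measurable set.
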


\begin{proof}
As in Remark~\ref{R2}, we may assume that $X\subset \ell^\infty$ and
extend $f$ to a Lipschitz map $F:\bbbr^n\to \ell^\infty$.

Recall that approximate metric differentiability of $f$ at $p$ implies that $p$ is a density point of $A$. Since $f$ is approximately metrically differentiable at $p$ and $F=f$ on $A$, also $F$ is approximately metrically differentiable at $p$, with the same metric derivative. Then Proposition~\ref{ADT2} yields that $F$ is metrically differentiable at $p$ and $\md(F,p)=\apmd(f,p)$.

First we note that $q$ is a density point of $\Phi^{-1}(A)$. Indeed,
since $\Phi$ is a $C^1$ diffeomorphism, it is locally $L$-bi-Lipschitz for some $L\geq 1$. Hence there is $r_0>0$ such that, for $0<r<r_0$,
$\Phi(B(q,r))\subset B(p,Lr)$,
and, for every measurable $E\subset B(q,r_0)$,
$|E|\leq L^n |\Phi(E)|$.

Therefore, for $0<r<r_0$,
$$
\frac{|B(q,r)\setminus \Phi^{-1}(A)|}{|B(q,r)|}
\leq
L^n
\frac{|B(p,Lr)\setminus A|}{\omega_n r^n}
=
L^{2n}
\frac{|B(p,Lr)\setminus A|}{|B(p,Lr)|}\stackrel{r\to 0}{\longrightarrow} 0,
$$
so $q$ is a density point of $\Phi^{-1}(A)$.

As $f\circ \Phi$ coincides with $F\circ \Phi$ on $\Phi^{-1}(A)$,
it remains to prove that $F\circ \Phi$ is metrically differentiable at
$q$ and that $\md(F\circ\Phi,q)=\md(F,p)\circ D\Phi(q)$.

Set $\sigma=\md(F,p)$. By Taylor's formula,
$$
\Phi(z)-p = D\Phi(q)(z-q)+\rho(z),
\qquad \text{where } |\rho(z)|=o(|z-q|).
$$
Writing $y=\Phi(z)$, we have
\begin{equation*}
\begin{split}
\left|\right. d((F\circ\Phi)(z)&,(F\circ\Phi)(q))
-\sigma(D\Phi(q)(z-q))\left.\right|  =
\left|
d(F(y),F(p))-\sigma(D\Phi(q)(z-q))
\right| \\
&\leq
\left|d(F(y),F(p))-\sigma(y-p)\right|+
\left|\sigma(y-p)-\sigma(D\Phi(q)(z-q))\right| \\
& \leq
\left|d(F(y),F(p))-\sigma(y-p)\right|+\sigma(\rho(z)),
\end{split}
\end{equation*}
thus
\begin{equation*}
\begin{split}
&\frac{\left|d((F\circ\Phi)(z),(F\circ\Phi)(q))
-\sigma(D\Phi(q)(z-q))
\right|}{|z-q|}\\
&\qquad\qquad\leq
\frac{
\left|
d(F(y),F(p))-\sigma(y-p)
\right|
}{|y-p|}
\frac{|y-p|}{|z-q|}
+\sigma\Big(\frac{\rho(z)}{|z-q|}\Big).
\end{split}
\end{equation*}
The first term tends to zero with $z\to q$ by the metric differentiability of $F$ at
$p$, because $y=\Phi(z)\to p$ and $|y-p|\leq L|z-q|$ for $z$ close to $q$.
The second term tends to zero by continuity of the seminorm $\sigma$. Hence $F\circ\Phi$ is metrically differentiable
at $q$, with
$$
\md(F\circ\Phi,q)=\sigma\circ D\Phi(q)
=\md(F,p)\circ D\Phi(q).
$$
Since $q$ is a density point of $\Phi^{-1}(A)$, this gives
$$
\apmd(f\circ \Phi,q)
=
\apmd(f,p)\circ D\Phi(q),
$$
as claimed.
\end{proof}

\section{Metric Sard Theorem}
\label{MST}
Throughout this section $X$ will be a metric space and $\Omega\subset\bbbr^n$ will be open.
\subsection{The classical Sard theorem}
The following version of Sard’s theorem applies to Lipschitz mappings between Euclidean spaces.
\begin{theorem}[Sard]
\label{T15}
If $f:\bbbr^n\supset\Omega\to\bbbr^m$ is Lipschitz and
$$
\operatorname{Crit}(f):=\{x\in\Omega:\, \rank Df(x)<n\},
$$
then $\H^n(f(\operatorname{Crit}(f)))=0$.
\end{theorem}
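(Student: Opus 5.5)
Since $\Omega$ is a countable union of closed cubes and a countable union of $\H^n$-null sets is null, it suffices to show that $\H^n(f(\operatorname{Crit}(f)\cap Q))=0$ for a fixed closed cube $Q\subset\Omega$ of side $a$. We split $\operatorname{Crit}(f)\cap Q=Z\cup C$. Here $Z$ is the set of points where $f$ is not differentiable, so $\H^n(Z)=0$ by Rademacher's Theorem~\ref{BT4}. The set $C$ consists of the points where $f$ is differentiable and $\rank Df(x)<n$. An $L$-Lipschitz map satisfies $\H^n(f(E))\leq L^n\H^n(E)$, hence $\H^n(f(Z))=0$. It remains to show $\H^n(f(C))=0$.

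Fix $\eps>0$. For each $x\in C$ there is $r_x>0$ such that
$$
|f(y)-f(x)-Df(x)(y-x)|\leq\eps|y-x|\quad\text{for } |y-x|\leq r_x .
$$
Since $\rank Df(x)<n$, the image $Df(x)(\bbbr^n)$ lies in an $(n-1)$-dimensional affine subspace $V_x$ (after translating by $f(x)$); here we use $m\geq n$, and if $m<n$ everything is trivial anyway. Consequently, for every $r\leq r_x$ the set $f(\bar B(x,r))$ is contained in the $\eps r$-neighborhood of the $(n-1)$-dimensional disc $V_x\cap \bar B(f(x),Lr)$. This neighborhood can be covered by at most $C(n)L^{n-1}\eps^{-(n-1)}$ balls of radius $2\eps r$. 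Using Hausdorff content with balls, we obtain
$$
\H^n_\infty\big(f(C\cap\bar B(x,r))\big)\lesssim_n L^{n-1}\eps^{-(n-1)}(\eps r)^n=L^{n-1}\eps\, r^n .
$$

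The balls $\bar B(x,r)$ with $x\in C$ and $r\leq r_x$ form a fine cover of $C$. By the Vitali $5r$-covering lemma we extract disjoint balls $\bar B(x_i,r_i)$ with $r_i\leq r_{x_i}/5$ such that $C\subset\bigcup_i \bar B(x_i,5r_i)$. Applying the estimate above to the enlarged balls and summing,
$$
\H^n_\infty(f(C))\lesssim_n L^{n-1}\eps\sum_i (5r_i)^n\lesssim_n L^{n-1}\eps\,\big|Q_1\big|,
$$
where $Q_1$ is a fixed neighborhood of $Q$ containing all the balls; this holds once we also require $r_x<1$ and $\dist(Q,\partial\Omega)>1$, or we shrink accordingly. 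Letting $\eps\to0$ gives $\H^n_\infty(f(C))=0$, and therefore $\H^n(f(C))=0$.

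The main point of care is the covering-number estimate for the neighborhood of the flat disc, together with making the covering argument rigorous. The latter uses no measurability of $C$, only disjointness of the chosen balls. The linear-algebra step, that $\rank<n$ forces the image into a hyperplane, is exactly where Fréchet differentiability enters.
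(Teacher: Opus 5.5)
Your proof is correct and follows the paper's argument. Differentiability places $f(\bar B(x,r))$ in an $\eps r$-neighborhood of a disc of dimension at most $n-1$, the covering count gives $\H^n_\infty\lesssim_n \eps L^{n-1}r^n$, and the $5r$-covering lemma with disjoint balls in a bounded region lets you sum and send $\eps\to 0$; the only differences are that you localize to closed cubes in $\Omega$ and discard the non-differentiability set via Rademacher explicitly, whereas the paper extends $f$ to $\bbbr^n$ by McShane and works directly at points of differentiability.
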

The aim of this section is to generalize this result to the case of Lipschitz mappings into metric spaces and in particular into $\ell^\infty$, see Theorems~\ref{T17} and~\ref{T18}. The first issue is that we have to define the rank of the metric derivative, but that is easy, see Definition~\ref{gum13}.
The second issue is, however, much more essential. The classical proof of Sard's Theorem~\ref{T15} (presented below) employs the Fr\'echet differentiability of $f$ in an essential way, but Lipschitz mappings into $\ell^\infty$ need not be Fr\'echet differentiable at any point, see Proposition~\ref{T5}. To overcome this difficulty, we will use strong metric differentiability, Theorem~\ref{INT1}.

\begin{proof}[Proof of Theorem~\ref{T15}]
By Theorem~\ref{BT1} we may assume that $\Omega=\bbbr^n$.

It suffices to show that $\H^n(f(Z))=0$, where $Z:=Q\cap\operatorname{Crit}(f)$ and $Q$ is an open cube of side length $1$. Let $L$ be the Lipschitz constant of $f$ and fix $\eps\in (0,L)$.

The Fr\'echet differentiability of $f$ at the points of $Z$ implies that for every $x\in Z$, there is $r_x>0$ such that $B(x,r_x)\subset Q$ and
$$
|f(y)-f(x)-Df(x)(y-x)|\leq\eps r_x
\quad
\text{for all } y\in B(x,5r_x).
$$
Hence,
$$
\dist (f(y),W_x)\leq\eps r_x
\quad
\text{for all } y\in B(x,5r_x),
$$
where $W_x:=f(x)+Df(x)(T_x\bbbr^n)$ is an affine subspace of $\bbbr^m$ passing through $f(x)$. Clearly,
\begin{equation}
\label{eq33}
f(B(x,5r_x))\subset B(f(x),5Lr_x)\cap\{z\in\bbbr^m:\, \dist (z,W_x)\leq\eps r_x\}.
\end{equation}
Since $x\in\operatorname{Crit}(f)$, we have that $\dim W_x=k\leq n-1$. We claim that
$$
\H^n_\infty\big(f(B(x,5r_x))\big)\lesssim_n \eps L^{n-1}r_x^n.
$$
Indeed,
since $\eps<L$, every point
$$
z\in B(f(x),5Lr_x)
   \cap\{w\in\bbbr^m:\dist(w,W_x)\leq\eps r_x\}
$$
lies at distance at most $\eps r_x$ from a point of
$$
B(f(x),6Lr_x)\cap W_x.
$$
The $k$-dimensional ball $B(f(x),6Lr_x)\cap W_x$ can be covered by
$$
C(k)\left(\frac{L}{\eps}\right)^k
    \lesssim_n \left(\frac{L}{\eps}\right)^k
$$
balls of radius $\eps r_x$ with centers in $W_x$.
Therefore the set on the right-hand side of \eqref{eq33} can be
covered by the same number of balls of radius $2\eps r_x$.
Hence
\begin{equation}
\label{eq35}
\H^n_\infty\big(f(B(x,5r_x))\big)\lesssim_n\Big(\frac{L}{\eps}\Big)^{n-1}(2\eps r_x)^n\lesssim_n\eps L^{n-1}r_x^n.
\end{equation}
From the covering $Z\subset\bigcup_{x\in Z} B(x,r_x)$ we can select a countable family of pairwise disjoint balls $\{B(x_i,r_{x_i})\}_{i\in I}$ (see Theorem~\ref{AT7}) such that $Z\subset\bigcup_{i\in I}B(x_i,5r_{x_i})$ and hence
$$
\H^n_\infty(f(Z))\leq\sum_{i\in I} \H^n_\infty\big(f(B(x_i,5r_{x_i}))\big)\lesssim_n
\eps L^{n-1}\sum_{i\in I} r_{x_i}^n
\lesssim_n\eps L^{n-1}.
$$
The last inequality follows from the fact that the balls $B(x_i,r_{x_i})$ are pairwise disjoint and contained in $Q$.
Letting $\eps\to 0^+$ yields $\H^n_\infty(f(Z))=0$ and hence $\H^n(f(Z))=0$.
\end{proof}

\subsection{Rank of the metric derivative}
In this subsection, we will define the rank of the metric derivative. Since the metric derivative is a seminorm in $\bbbr^n$, more generally we will define the rank of a seminorm.

If $\sigma$ is a seminorm on $\bbbr^n$, then its kernel, i.e., $N_\sigma:=\{v\in\bbbr^n\colon \sigma(v)=0\}$ is a linear subspace of $\bbbr^n$. Recall that $N_\sigma^\perp$ is the orthogonal complement of $N_\sigma$ in $\bbbr^n$.
\begin{definition}
\label{gum13}
The {\em rank of a seminorm} $\sigma$ on $\bbbr^n$ is
$$
\rank\sigma:=n-\dim N_\sigma=\dim N_\sigma^\perp.
$$
In particular, the {\em rank  of the metric derivative} of $f$ at $x$ is defined as the rank of the seminorm $\md(f,x)$.
\end{definition}
Clearly, $\rank\md(f,x)=n$ if and only if $\md(f,x)$ is a norm on $\bbbr^n$.

If $f:\bbbr^n\to\bbbr^m$ is Fr\'echet differentiable at $x\in\bbbr^n$, then it is metrically differentiable at $x$ with $\md(f,x)(v)=|Df(x)v|$, so in that case $\rank \md(f,x)=\rank Df(x)$.

Note that a seminorm $\sigma$ on $\bbbr^n$ restricted to $N_\sigma^\perp$, is a norm and $\rank\sigma$ can be characterized as the maximum of dimensions of linear subspaces $V\subset\bbbr^n$ such that $\sigma|_V$ is a norm.

Later we will also need
\begin{lemma}
\label{T16}
If $\pi:\bbbr^n\to N_\sigma^\perp$ is the orthogonal projection (so, $\ker\pi=N_\sigma$), then
$$
\sigma(v)=\sigma(\pi(v)),
\quad
\text{for all $v\in\bbbr^n$.}
$$
\end{lemma}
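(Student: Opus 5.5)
The plan is to decompose an arbitrary vector $v\in\bbbr^n$ along the orthogonal splitting $\bbbr^n=N_\sigma\oplus N_\sigma^\perp$ and to use the triangle inequality for the seminorm in both directions. Write $v=\pi(v)+(v-\pi(v))$. Since $\pi$ is the orthogonal projection with $\ker\pi=N_\sigma$, we have $\pi(v)\in N_\sigma^\perp$, and $w:=v-\pi(v)$ lies in $N_\sigma$. The latter holds because $\pi(w)=\pi(v)-\pi(\pi(v))=0$, using $\pi\circ\pi=\pi$. Consequently $\sigma(w)=0$.

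The key step is the two-sided estimate coming from subadditivity and symmetry of $\sigma$. The symmetry $\sigma(-w)=\sigma(w)$ follows from homogeneity with $\lambda=-1$. First,
$$
\sigma(v)\leq\sigma(\pi(v))+\sigma(w)=\sigma(\pi(v)).
$$
Second, writing $\pi(v)=v+(-w)$, we get
$$
\sigma(\pi(v))\leq\sigma(v)+\sigma(-w)=\sigma(v)+\sigma(w)=\sigma(v).
$$
Combining the two inequalities gives $\sigma(v)=\sigma(\pi(v))$ for all $v$.

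There is no real obstacle here; the only point requiring a moment's care is checking that $v-\pi(v)\in N_\sigma$. This is immediate from the definition of the orthogonal projection onto $N_\sigma^\perp$, whose kernel is exactly $N_\sigma$. More generally, the same argument shows that $\sigma$ is constant on cosets $v+N_\sigma$, which is the form in which the lemma will presumably be used later when restricting $\sigma$ to $N_\sigma^\perp$ to compute Jacobians.
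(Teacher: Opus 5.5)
Your proposal is correct and is essentially the paper's own proof: both write $v=\pi(v)+(v-\pi(v))$ and $\pi(v)=v+(\pi(v)-v)$, and then apply the triangle inequality in each direction using $\sigma(v-\pi(v))=0$. Your explicit check that $v-\pi(v)\in N_\sigma$ via $\pi\circ\pi=\pi$ and $\ker\pi=N_\sigma$ simply spells out a step the paper leaves implicit.
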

\begin{proof}
Since $v=\pi(v)+(v-\pi(v))$, we have
$
\sigma(v)\leq\sigma(\pi(v))+\sigma(v-\pi(v))=\sigma(\pi(v)).
$
Similarly, $\pi(v)=v+(\pi(v)-v)$ yields
$
\sigma(\pi(v))\leq\sigma(v)+\sigma(\pi(v)-v)=\sigma(v).
$
\end{proof}
The main (but simple) result of this subsection, Theorem~\ref{T13}, characterizes the rank of the metric derivative of a Lipschitz map into $\ell^\infty$.

If  $f=(f_1,f_2,\ldots)\colon\bbbr^n\to\ell^\infty$ is Lipschitz, then according to Proposition~\ref{MDT2}, for almost every $x\in \bbbr^n$, all $f_i$ are differentiable at $x$, $f$ is metrically differentiable at $x$, and
$$
\md(f,x)(v)=\Vert D_{\rm c}f(x)v\Vert_{\ell^\infty}=\sup_{i\in\bbbn}|\nabla f_i(x)\cdot v|
\quad
\text{for all $v\in\bbbr^n$.}
$$
Recall that the linear map $D_{\rm c}f(x)\colon\bbbr^n\to\ell^\infty$ is represented by an $\infty\times n$ matrix, see Definition~\ref{D1}.

Before we state Theorem~\ref{T13}, let us prove that for $\infty\times n$ matrices the row-rank equals the column-rank. While this is a well known fact for finite dimensional matrices, we want to make sure that the result is still true in the infinite dimensional case.

Let
$$
A=
\left\lceil
\begin{array}{cccc}
a_{11} & a_{12} & \ldots & a_{1n}\\
a_{21} & a_{22} & \ldots & a_{2n}\\
a_{31} & a_{32} & \ldots & a_{3n}\\
\vdots & \vdots & \ldots & \vdots
\end{array}
\right\rceil
$$
be an $\infty\times n$ (real) matrix. The span of rows is a linear subspace of $\bbbr^n$ and the span of columns is a linear subspace of $\bbbr^\infty$, the space of all real sequences. The {\em row-rank} and the {\em column-rank} of the matrix $A$ are defined, respectively, as dimensions of these linear subspaces.
\begin{lemma}
\label{gum8}
If $A$ is an $\infty\times n$ matrix, then the row-rank equals the column-rank.
\end{lemma}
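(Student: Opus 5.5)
The plan is to reduce the statement to the finite-dimensional case by selecting finitely many rows, and then apply the classical equality of row-rank and column-rank for finite matrices. Let $R\subset\bbbr^n$ denote the span of the rows $r_i=(a_{i1},\ldots,a_{in})$, and let $C\subset\bbbr^\infty$ denote the span of the columns $c_j=(a_{1j},a_{2j},\ldots)$. Since $R\subset\bbbr^n$, we have $k:=\dim R\leq n$. Moreover $C$ is spanned by $n$ vectors, so $\dim C\leq n$ as well; both ranks are therefore finite.

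\textbf{Step 1: $\dim C\leq k$.} Choose indices $i_1,\ldots,i_k$ such that $r_{i_1},\ldots,r_{i_k}$ form a basis of $R$. Every row is a combination of these, so for each $i$ we can write $r_i=\sum_{l=1}^k\lambda_{il}r_{i_l}$. Consider the linear map $T:\bbbr^n\to\bbbr^\infty$, $Tv=(r_i\cdot v)_{i\in\bbbn}$. Its image is exactly $C$, because $Te_j=c_j$. Now $Tv$ is completely determined by the $k$ numbers $r_{i_1}\cdot v,\ldots,r_{i_k}\cdot v$, since $(Tv)_i=\sum_l\lambda_{il}\,(r_{i_l}\cdot v)$. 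Thus $T$ factors as $\bbbr^n\to\bbbr^k\to\bbbr^\infty$, and hence $\dim C\leq k$.

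\textbf{Step 2: $\dim C\geq k$.} Let $A_0$ be the finite $k\times n$ matrix whose rows are $r_{i_1},\ldots,r_{i_k}$. Its row-rank is $k$, so by the finite-dimensional theorem its column-rank is also $k$. The columns of $A_0$ are the images of the columns $c_j$ under the linear coordinate projection $P:\bbbr^\infty\to\bbbr^k$, $P(x)=(x_{i_1},\ldots,x_{i_k})$. Therefore $k=\dim P(C)\leq\dim C$.

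Combining the two steps gives the equality. An alternative for Step 2 avoids the finite theorem: we have $\ker T=R^\perp$, since $Tv=0$ exactly when $v$ is orthogonal to every row. Rank–nullity then gives $\dim C=n-\dim R^\perp=k$ directly, and this single argument proves the whole lemma. I expect no real obstacle. The only care needed is that everything takes place inside the finite-dimensional space $\bbbr^n$ or in images of it, so rank–nullity applies without any infinite-dimensional issue.
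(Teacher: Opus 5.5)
Your proof is correct, and it takes a different route from the paper's.

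The paper argues entrywise. It finds an invertible $r\times r$ minor $A_r$ inside a basis of rows, and this shows that $r$ columns are linearly independent; that part is your Step~2, with the coordinate projection $P$ written out explicitly. For the upper bound, the paper shows that every further column $k>r$ equals $\sum_{j\le r}\alpha_j c_j$, where $\alpha$ is the unique solution of $A_r\alpha=a_k$. It does this by applying the finite-dimensional theorem to each $(r+1)\times(r+1)$ submatrix formed from the first $r$ rows plus one more row $i'$, and then using uniqueness of $\alpha$.

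You instead read the matrix as the linear map $T\colon\bbbr^n\to\bbbr^\infty$ and get the upper bound by factoring $T$ through $\bbbr^k$.

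What each route buys:
\begin{itemize}
\item The paper's argument is concrete. It yields an explicit basis of the column space (the columns through a nonvanishing minor) and the coefficients expressing the remaining columns.
\item Your argument is shorter. Your alternative, $\ker T=R^\perp$ combined with rank--nullity, proves the lemma in one line. Rank--nullity is valid for any target vector space because the domain is finite dimensional, and this version does not even invoke the finite-dimensional theorem; it reproves it.
\end{itemize}

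Your observation also fits the way the lemma is used. In the proof of Theorem~\ref{T13} the paper already computes $\dim(D_{\rm c}f(x)\bbbr^n)=\dim N^\perp$ with $N=\ker D_{\rm c}f(x)$. Your identity $N=(\operatorname{span}\{\nabla f_i(x)\})^\perp$ gives $\dim N^\perp=$ row-rank directly, so the detour through column-rank could be skipped there.
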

\begin{proof}
Denote the row-rank of $A$ by $r$. Suppose, without loss of generality, that the first $r$ rows of $A$ are linearly independent. The finite $r\times n$ matrix composed of the first $r$ rows has column rank equal to $r$. Again, without loss of generality, we may assume that the first $r$ columns are linearly independent, so $\det[a_{ij}]_{1\leq i,j\leq r}\neq 0$.
This implies that the first $r$ columns of the original matrix $A$ are linearly independent and so the column rank of $A$ is at least $r$. To show that it is actually equal to $r$, it suffices to show that any $k$-th column of $A$, $k>r$, is a linear combination of the first $r$ columns. Fix $k>r$.

Let $A_r:=[a_{ij}]_{1\leq i,j\leq r}$ and $a_k:=[a_{ik}]_{1\leq i\leq r}$. Since the matrix $A_r$ is invertible, the equation $A_r\alpha=a_k$ has a unique solution $\alpha\in\bbbr^r$ i.e., there are unique coefficients $\alpha_1,\ldots,\alpha_r$, such that
\begin{equation}
\label{Eq30}
\sum_{j=1}^r \alpha_j a_{ij}=a_{ik}
\quad
\text{for $i=1,2,\ldots,r$.}
\end{equation}
For $i'>r$, the row-rank of the matrix
\begin{equation}
\label{Eq31}
\left[
\begin{array}{cccc}
a_{11} & \ldots & a_{1r} & a_{1k}\\
\vdots & \ldots & \vdots & \vdots \\
a_{r1} & \ldots & a_{rr} & a_{rk}\\
a_{i'1} & \ldots & a_{i'r} & a_{i'k}
\end{array}
\right]
\end{equation}
equals $r$. Since it is a finite matrix, the column-rank also equals $r$, and hence the last column in \eqref{Eq31} is a linear combination of the first $r$ columns. Uniqueness of coefficients $\alpha_i$ satisfying \eqref{Eq30} shows that the coefficients in such a linear combination must be equal to $\alpha_1,\ldots,\alpha_r$. Therefore, by looking at the last row we have that
$$
\sum_{j=1}^r \alpha_j a_{i'j}=a_{i'k}.
$$
Thus, we have proved that for any $i\in\bbbn$,
$$
\sum_{j=1}^r \alpha_j a_{ij}=a_{ik}.
$$
This shows that the (infinite) $k$-th column of $A$ is a linear combination of the first $r$ columns with coefficients $\alpha_1,\ldots,\alpha_r$. The proof is complete.
\end{proof}
\begin{theorem}
\label{T13}
If $f=(f_1,f_2,\ldots)\colon \bbbr^n\to\ell^\infty$ is Lipschitz, then at almost all $x\in \bbbr^n$,
$$
\rank\md(f,x) = \operatorname{row-rank\ of} D_{\rm c}f(x).
$$
In other words, almost everywhere, $\rank\md(f,x)$ is the maximal $r\geq 0$ with the property that there exist indices $i_1<i_2<\ldots<i_r$ such that the vectors
\begin{equation*}
\nabla f_{i_1}(x),\nabla f_{i_2}(x),\ldots,\nabla f_{i_r}(x)
\quad
\text{are linearly independent.}
\end{equation*}
\end{theorem}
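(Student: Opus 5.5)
We will work at points $x$ where the conclusion of Proposition~\ref{MDT2} holds. At such points every $f_i$ is differentiable and
$$
\md(f,x)(v)=\sup_{i\in\bbbn}|\nabla f_i(x)\cdot v| \quad\text{for all } v\in\bbbr^n .
$$
This is the only analytic input. Everything else is linear algebra about the seminorm $\sigma(v)=\sup_i|a_i\cdot v|$, where $a_i=\nabla f_i(x)$ are the rows of $D_{\rm c}f(x)$ and $\sup_i|a_i|\le L<\infty$.

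The key step is to identify the kernel of $\sigma$. We have $\sigma(v)=0$ if and only if $a_i\cdot v=0$ for every $i$. Hence
$$
N_\sigma=\{v:\, a_i\cdot v=0\ \forall i\}=W^\perp,
$$
where $W=\operatorname{span}\{a_i:\, i\in\bbbn\}\subset\bbbr^n$ is the row space of $D_{\rm c}f(x)$. Since $W$ is a subspace of the finite dimensional space $\bbbr^n$, it is closed and $(W^\perp)^\perp=W$. Therefore
$$
\rank\sigma=n-\dim N_\sigma=n-\dim W^\perp=\dim W,
$$
and $\dim W$ is by definition the row-rank of $D_{\rm c}f(x)$. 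The sup being possibly not attained causes no trouble here, because the vanishing of a supremum of nonnegative numbers is equivalent to the vanishing of each term.

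For the reformulation in terms of indices, note that $\dim W\le n$ is finite. A spanning set of a finite dimensional space contains a basis, so we can pick $r=\dim W$ gradients $\nabla f_{i_1}(x),\dots,\nabla f_{i_r}(x)$ that are linearly independent, and reorder the indices increasingly. No larger independent family exists inside $W$. This shows that $r$ is exactly the maximal such number. By Lemma~\ref{gum8}, the row-rank also equals the column-rank, so either notion may be used.

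There is no genuine obstacle in this argument. The only point requiring care is that the formula for $\md(f,x)$ holds simultaneously for all $v$ off a single null set. Proposition~\ref{MDT2} gives exactly this, which is why the conclusion holds almost everywhere.
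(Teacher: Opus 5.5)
Your proof is correct, but it reaches the row-rank by a more direct route than the paper. Both arguments start from Proposition~\ref{MDT2} and from the fact that the kernel of $\md(f,x)$ equals $\ker D_{\rm c}f(x)$.

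The paper then passes to the image. Since $D_{\rm c}f(x)$ is injective on $N^\perp$, it gets $\rank\md(f,x)=\dim D_{\rm c}f(x)\bbbr^n$, which is the column-rank. It then needs Lemma~\ref{gum8} (row-rank equals column-rank for $\infty\times n$ matrices) to convert this into the row-rank.

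You instead identify the kernel as the orthogonal complement $W^\perp$ of the row space $W\subset\bbbr^n$. This gives $\rank\md(f,x)=n-\dim W^\perp=\dim W$ at once, so Lemma~\ref{gum8} is not needed for this theorem; your mention of it is only an aside.

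In fact, your observation together with the paper's computation $\dim D_{\rm c}f(x)\bbbr^n=n-\dim\ker D_{\rm c}f(x)$ gives a one-line proof of Lemma~\ref{gum8} itself: both ranks equal $n-\dim\ker D_{\rm c}f(x)$.

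What the paper's route records in addition is that $\rank\md(f,x)$ is the dimension of the image of $D_{\rm c}f(x)$ in $\ell^\infty$. That is the rank of $D_{\rm c}f(x)$ as a linear map, the natural analogue of $\rank\md(f,x)=\rank Df(x)$ in the Euclidean case.
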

\begin{proof}
According to Proposition~\ref{MDT2}, for almost all $x\in \bbbr^n$, and all $v\in\bbbr^n$, we have
$\md(f,x)(v)=\Vert D_{\rm c}f(x)v\Vert_{\infty}$.
Clearly,
$$
N:=\{v\in\bbbr^n:\, \md(f,x)(v)=0\}=\ker D_{\rm c}f(x).
$$
The decomposition $\bbbr^n=N\oplus N^\perp$ implies that $D_{\rm c}f(x)\bbbr^n=D_{\rm c}f(x) N^\perp$. Since, $D_cf(x)$ is a monomorphism on $N^\perp$,
$$
\dim (D_{\rm c}f(x)\bbbr^n)=\dim N^\perp = \rank\md(f,x).
$$
Obviously, $D_{\rm c}f(x)\bbbr^n$ equals the span of the vectors $D_{\rm c}f(x)e_i$, where $e_1,\cdots,e_n$ are the unit vectors in the canonical basis of $\bbbr^n$. Since $D_{\rm c}f(x)e_i$ is the $i$'th column of $D_{\rm c}f(x)$, it follows that $\dim (D_{\rm c}f(x)\bbbr^n)$ equals the column-rank of the matrix $D_{\rm c}f(x)$. However, by Lemma~\ref{gum8}, the column-rank is equal to the row-rank, which is equal, in turn, to the maximal number of linearly independent rows $\nabla f_i(x)$.
\end{proof}

\subsection{Sard theorem for Lipschitz mappings into metric spaces}
The next result is the main result of this section. Another, stronger version of Sard's theorem will be proved later, see Theorem~\ref{CT3}.
\begin{theorem}[Metric Sard Theorem I]
\label{T17}
If $f:\mathbb{R}^n\supset\Omega\to X$ is Lipschitz and
$$
\operatorname{Crit}(f):=\{x\in\Omega:\, \rank\md (f,x)<n\},
$$
then $\mathcal{H}^n(f(\operatorname{Crit}(f)))=0$.
\end{theorem}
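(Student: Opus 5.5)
We will mimic the proof of the classical Sard Theorem~\ref{T15}, with Fr\'echet differentiability replaced by the strong metric differentiability of Theorem~\ref{T2}. We may assume that $X=\ell^\infty$ (Theorem~\ref{BT3}) and that $\Omega=\bbbr^n$ (Corollary~\ref{BT2}). The set of points where $f$ is not strongly metrically differentiable has measure zero, so its image has $\H^n$-measure zero because $f$ is Lipschitz. Let $Q$ be a unit cube. It suffices to show that $\H^n_\infty(f(Z))\lesssim \eps$ for every $\eps>0$, where $Z$ is the set of points of $Q\cap\operatorname{Crit}(f)$ at which \eqref{Eq15} holds.

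Fix $x\in Z$ and set $\sigma=\md(f,x)$, so that $k:=\dim N_\sigma^\perp\leq n-1$. By strong metric differentiability there is $r_x>0$ with $B(x,5r_x)\subset Q$ and
$$
|d(f(y),f(z))-\sigma(y-z)|\leq \eps r_x \quad\text{for all } y,z\in B(x,5r_x).
$$
Let $\pi$ denote the orthogonal projection onto $N_\sigma^\perp$. By Lemma~\ref{T16}, $\sigma(y-z)=\sigma(\pi y-\pi z)\leq L|\pi y-\pi z|$. Two consequences follow:
\begin{itemize}
\item Points $y,z\in B(x,5r_x)$ with $|\pi y-\pi z|\leq \eps r_x/L$ satisfy $d(f(y),f(z))\leq 2\eps r_x$.
\item The projected ball $\pi(B(x,5r_x))$ is a $k$-dimensional ball of radius $5r_x$, so it can be covered by $\lesssim_n (L/\eps)^k$ sets of diameter at most $\eps r_x/L$.
\end{itemize}
Pull this cover back by $\pi$ to decompose $B(x,5r_x)$ into $\lesssim (L/\eps)^{k}$ pieces (slabs). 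Each piece is mapped by $f$ into a set of diameter $\leq 2\eps r_x$. Hence, using $k\leq n-1$ and $\eps<L$,
$$
\H^n_\infty(f(B(x,5r_x)))\lesssim_n (L/\eps)^{n-1}(\eps r_x)^n=\eps L^{n-1}r_x^n.
$$
This is the analogue of \eqref{eq35}, obtained without any linear structure in the target. The key point is that the kernel directions of $\sigma$ are collapsed up to the error $\eps r_x$, uniformly for pairs of points in the ball. Mere metric differentiability would only control distances to the center $f(x)$, and that is not enough to bound diameters of images of slabs. This is the step where strong differentiability is essential, and I expect it to be the main point requiring care. In particular, the error must be $\eps r_x$ for all pairs $y,z$ in the ball. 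This follows from \eqref{Eq15}, since the denominator $|z-x|+|y-x|$ is at most $10r_x$ there, after replacing $\eps$ by $\eps/10$.

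Finally, apply the Vitali covering lemma (Theorem~\ref{AT7}) to the family $\{B(x,r_x)\}_{x\in Z}$. This gives disjoint balls $B(x_i,r_{x_i})\subset Q$ with $Z\subset\bigcup_i B(x_i,5r_{x_i})$. Summing the estimate above,
$$
\H^n_\infty(f(Z))\lesssim_n \eps L^{n-1}\sum_i r_{x_i}^n\lesssim_n \eps L^{n-1}.
$$
Letting $\eps\to0$ gives $\H^n_\infty(f(Z))=0$, hence $\H^n(f(Z))=0$, and covering $\Omega$ by countably many cubes finishes the proof.
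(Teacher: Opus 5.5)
Your proof is correct and is essentially the paper's argument. The paper isolates your slab estimate as Proposition~\ref{T14}: it subdivides the $k$ directions of $N_\sigma^\perp$ into $m^k$ cubes, with $m$ playing the role of your $L/\eps$, uses strong metric differentiability exactly as you do to control $d(f(y),f(z))$ for all pairs in the ball, and then runs the same $5r$-covering argument as in the classical Sard proof; the only cosmetic difference is that your version handles $k=0$ uniformly, whereas the paper treats that case separately using ordinary metric differentiability.
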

In the case of Lipschitz mappings into $\ell^\infty$ the theorem reads as
\begin{theorem}
\label{T18}
If $f:\bbbr^n\supset\Omega\to\ell^\infty$ is Lipschitz, then
$$
\H^n(f(\{x\in\Omega:\, \rank D_{\rm c}f(x)<n\}))=0.
$$
\end{theorem}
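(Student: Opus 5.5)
The plan is to prove Theorem~\ref{T18} directly; Theorem~\ref{T17} then follows by embedding $f(\Omega)$ into $\ell^\infty$ via Theorem~\ref{BT3}. By Theorem~\ref{T13}, up to a null set (whose image is $\H^n$-null because $f$ is Lipschitz), $\{\rank D_{\rm c}f<n\}$ coincides with $\operatorname{Crit}(f)$. By Corollary~\ref{BT2} we may take $\Omega=\bbbr^n$. The set of points where $f$ fails to be strongly metrically differentiable is null by Theorem~\ref{T2}, so its image is negligible. It therefore suffices to show $\H^n(f(Z))=0$, where $Z$ is the set of critical points in a unit cube $Q$ at which $f$ is strongly metrically differentiable. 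The argument mimics the proof of Theorem~\ref{T15}. Strong metric differentiability replaces Fr\'echet differentiability, and a covering in the $\sigma$-geometry replaces the affine approximation.

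Fix $\eps\in(0,L)$ and $x\in Z$, and write $\sigma=\md(f,x)$. Then $N_\sigma\neq\{0\}$ and $k=\rank\sigma\le n-1$. By Theorem~\ref{T2} there is $r_x>0$ with $B(x,5r_x)\subset Q$ and
\[
|d(f(z),f(y))-\sigma(z-y)|\le\eps r_x\quad\text{for all }y,z\in B(x,5r_x).
\]
This two-point estimate is exactly what the pointwise metric derivative cannot provide. It gives $d(f(z),f(y))\le\sigma(z-y)+\eps r_x$. Lemma~\ref{T16} gives $\sigma(z-y)=\sigma(\pi z-\pi y)\le L|\pi z-\pi y|$, where $\pi$ is the orthogonal projection onto $N_\sigma^\perp$.

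The projection $\pi(B(x,5r_x))$ is a $k$-dimensional ball of radius $5r_x$. It can be covered by $\lesssim_n (L/\eps)^k$ sets $P_j$ of diameter at most $\eps r_x/L$. The preimages $\pi^{-1}(P_j)\cap B(x,5r_x)$ cover $B(x,5r_x)$. For $y,z$ in one such piece, $d(f(z),f(y))\le L\cdot\eps r_x/L+\eps r_x=2\eps r_x$. So $f(B(x,5r_x))$ is covered by $\lesssim_n (L/\eps)^{n-1}$ sets of diameter at most $2\eps r_x$ (using $k\le n-1$ and $L/\eps>1$). Hence
\[
\H^n_\infty(f(B(x,5r_x)))\lesssim_n (L/\eps)^{n-1}(\eps r_x)^n=\eps L^{n-1}r_x^n.
\]

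Finally, the Vitali covering (Theorem~\ref{AT7}) applied to $\{B(x,r_x)\}_{x\in Z}$ gives disjoint balls $B(x_i,r_{x_i})\subset Q$ with $Z\subset\bigcup_i B(x_i,5r_{x_i})$. Summing the bounds gives $\H^n_\infty(f(Z))\lesssim_n\eps L^{n-1}\sum_i r_{x_i}^n\lesssim_n\eps L^{n-1}$. Letting $\eps\to0$ yields $\H^n(f(Z))=0$.

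The main obstacle is the middle step. Without a linear structure, we must control diameters of image sets uniformly for pairs $y,z$ near $x$, not only distances to $f(x)$. This is precisely why the strong form of Theorem~\ref{T2} is needed: the ordinary metric differentiability of Proposition~\ref{MDT2} would only bound $d(f(y),f(x))$, which cannot detect the collapse in the kernel directions.
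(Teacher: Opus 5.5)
Your proof is correct and is essentially the paper's argument. The paper proves Theorem~\ref{T17}, which is Theorem~\ref{T18} read through Theorem~\ref{T13}, using the same ingredients: strong metric differentiability, Lemma~\ref{T16}, subdivision only in the $N_\sigma^\perp$ directions into thin cylinders, and the $5r$-covering lemma. It packages the local step as Proposition~\ref{T14}, with $m^k$ balls of radius $3\sqrt{k}Lr/m$ in place of your $\lesssim_n (L/\eps)^k$ sets of diameter $2\eps r_x$ (so $m\sim L/\eps$), and treats $k=0$ separately, whereas your version covers that case uniformly.
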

For a short (but not easy) proof of Theorem~\ref{T18} that does not use the Kirchheim-Rademacher theorem see \cite[Theorem~2.2]{hajlaszma}.

The key argument in the proof of Theorem~\ref{T17}
is the next result which gives quantitative estimates for the covering of the image of the critical set by balls
(cf.\ \cite[Lemma~2.7]{hajlaszma} and \cite[Lemma~3.7]{HZ}).

\begin{proposition}
\label{T14}
Let $f:\bbbr^n\supset A \to X$ be an $L$-Lipschitz map defined on a measurable set $A\subset \bbbr^n$ and let
\begin{equation}
\label{eq37}
E_k:=\{x\in A\colon \rank\apmd(f,x)=k\},
\quad
0\leq k\leq n.
\end{equation}
Then almost every point $x\in E_k$ has the following property:

For every integer $m\geq 1$, there is $r_{x,m}>0$ such that for every $0<r\leq r_{x,m}$,  $f(A\cap B(x,r))$ can be covered by $m^k$ balls, each of radius
$3\sqrt{k}Lr/m$, in the case of $k>0$, and by one ball of radius $r/m$, in the case of $k=0$.
\end{proposition}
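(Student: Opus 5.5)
The plan is to derive the covering directly from the strong approximate metric differentiability in Theorem~\ref{thm:apKR}. By that theorem, almost every $x\in A$ satisfies \eqref{ADeq1}, and we restrict attention to such points. Fix such an $x\in E_k$ and write $\sigma=\apmd(f,x)$.

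First we check that $\sigma(v)\leq L|v|$. As in Remark~\ref{R2}, embed $f(A)$ into $\ell^\infty$ and take an $L$-Lipschitz extension $F$ (Corollary~\ref{BT2}). Then $\sigma=\md(F,x)$ by Proposition~\ref{ADT2}, and \eqref{Eq11} gives the bound.

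Next, given $\eps>0$, \eqref{ADeq1} provides $r_\eps>0$ with the following property: for all $y,z\in A\cap B(x,r)$ and $0<r\leq r_\eps$,
$$
d(f(y),f(z))\leq \sigma(y-z)+\eps(|y-x|+|z-x|)\leq \sigma(y-z)+2\eps r.
$$
Let $\pi$ be the orthogonal projection onto $N_\sigma^\perp$, a $k$-dimensional space. By Lemma~\ref{T16}, $\sigma(y-z)=\sigma(\pi(y)-\pi(z))\leq L|\pi(y)-\pi(z)|$. Hence
$$
d(f(y),f(z))\leq L|\pi(y)-\pi(z)|+2\eps r\qquad\text{for } y,z\in A\cap B(x,r),\ 0<r\leq r_\eps .
$$

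\emph{Case $k=0$.} Here $\sigma\equiv 0$, so taking $z=x$ gives $d(f(y),f(x))\leq \eps r$. Choosing $\eps=1/(2m)$ and $r_{x,m}=r_\eps$, the set $f(A\cap B(x,r))$ lies in the single ball $B(f(x),r/m)$.

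\emph{Case $k>0$.} The projection $\pi(B(x,r))$ is the $k$-dimensional Euclidean ball of radius $r$ centered at $\pi(x)$. It lies in a $k$-dimensional cube of side $2r$, which we subdivide into $m^k$ subcubes $Q_1,\dots,Q_{m^k}$ of side $2r/m$ and diameter $2\sqrt{k}\,r/m$. Set $A_j=A\cap B(x,r)\cap\pi^{-1}(Q_j)$. For $y,z\in A_j$ we get
$$
d(f(y),f(z))\leq 2\sqrt{k}\,Lr/m+2\eps r .
$$
Take $\eps=\sqrt{k}L/(4m)$ and $r_{x,m}=r_\eps$. (If $L=0$ the map is constant and the claim is trivial.) Then every nonempty $f(A_j)$ is contained in the open ball of radius $3\sqrt{k}Lr/m$ centered at $f(y_j)$, for any chosen $y_j\in A_j$. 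This is because $2\sqrt{k}Lr/m+\sqrt{k}Lr/(2m)<3\sqrt{k}Lr/m$. Empty pieces are discarded or covered by arbitrary balls, so at most $m^k$ balls suffice.

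I expect no serious obstacle: the essential input is the \emph{two-point} (strong) differentiability estimate \eqref{ADeq1}. Mere metric differentiability would only control $d(f(y),f(x))$, which says nothing about the diameter of the pieces $f(A_j)$. The one point requiring care is that the estimate is uniform for all pairs $y,z$ in the small ball around $x$, and that the Lipschitz bound $\sigma\leq L|\cdot|$ transfers to the approximate derivative via the extension $F$.
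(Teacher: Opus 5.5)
Your argument is correct and is essentially the paper's proof. Both use the two-point (strong) differentiability estimate at $x$, the bound $\sigma(y-z)=\sigma(\pi(y-z))\leq L|\pi(y-z)|$ from Lemma~\ref{T16}, a subdivision of a $k$-dimensional cube of side $2r$ into $m^k$ subcubes of diameter $2\sqrt{k}\,r/m$, and a direct treatment of $k=0$. The only difference is cosmetic: the paper first reduces via Remark~\ref{R2} to $A=\bbbr^n$, $X=\ell^\infty$ and applies Theorem~\ref{T2} on $[-r,r]^n$ in rotated coordinates, whereas you work on $A$ directly with Theorem~\ref{thm:apKR}. Your $L=0$ aside is unnecessary, since $\sigma\leq L|\cdot|$ forces $E_k=\varnothing$ for $k>0$ in that case.
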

Comparing this to the proof of Theorem~\ref{T15},
the proposition plays the role of the estimate of number of balls used to prove
\eqref{eq35}. Then Theorem~\ref{T17} will follow from Proposition~\ref{T14} and the covering argument in a manner similar to that used in the proof of Theorem~\ref{T15}.

The idea of the proof of Proposition~\ref{T14} is as follows.
By covering $B(x,r)$ by a cube and dividing the cube into $m^n$ identical cubes,
we can divide $B(x,r)$ into $m^n$ many sets of diameter of order $r/m$. Then, $f$ maps these sets to $m^n$ sets of diameter bounded by $Lr/m$. However, the point of the proposition is that when rank is $k<n$, there is much more efficient way of covering the image of $B(x,r)$: by $m^k$ many rather than $m^n$ many. The proof below is based on the idea that in directions parallel to the kernel of the derivative, (on small scale) $f$ significantly shrinks down the diameters, so, in subdividing $B(x,r)$ we can have coarser meshes along those directions and still have the desired small diameter for the images of the subdivisions.
\begin{proof}[Proof of Proposition~\ref{T14}]
According to Remark~\ref{R2} we can assume that $A=\bbbr^n$, $X=\ell^\infty$ and
$$
E_k:=\{x\in \bbbr^n\colon \rank\md(f,x)=k\}.
$$

First, assume $k>0$. Let $\tilde{E}_k$ be the set of all points $x\in E_k$ such that
\begin{equation}
\label{Eq27}
d(f(y),f(z))-\md(f,x)(y-z)=o(|x-y|+|x-z|).
\end{equation}
By Theorem~\ref{T2}, $\H^n(E_k\setminus\tilde{E}_k)=0$ and we will show that the covering property in the statement of the proposition is true for all $x\in\tilde{E}_k$.
Fix $x\in\tilde{E}_k$. Let
$$
N_x=\{v\in\bbbr^n:\, \md(f,x)(v)=0\}
\quad
\text{so}
\quad
\dim N_x=n-k.
$$
By translating and rotating the coordinate system, we may assume that $x=0$ is the origin, and that
$$
N_x^\perp=\operatorname{span}\{ e_1,\ldots,e_k\}
\quad
\text{and}
\quad
N_x=\operatorname{span}\{e_{k+1},\ldots,e_n\}.
$$
If $r>0$, then the ball $B(x,r)=B(0,r)$ is contained in the cube $Q=[-r,r]^n$ so
$$
B(0,r)\cap N_x^\perp\subset [-r,r]^k\times\{0\}
\quad
\text{and}
\quad
B(0,r)\cap N_x\subset\{ 0\}\times [-r,r]^{n-k}.
$$
Given an integer $m\geq 1$, divide the cube $[-r,r]^k$ into the lattice of $m^k$ congruent cubes of edge length $2r/m$. Denote them by $\{ Q_\nu\}_{\nu=1}^{m^k}$. Then
$$
B(0,r)\subset [-r,r]^k\times [-r,r]^{n-k}=\bigcup_{\nu=1}^{m^k} \big(Q_\nu\times [-r,r]^{n-k}\big).
$$
The sets $Q_\nu\times [-r,r]^{n-k}$ are thin and long. We have
$$
f(B(0,r))\subset\bigcup_{\nu=1}^{m^k}f(Q_\nu\times [-r,r]^{n-k}).
$$
So far, all of this is true for any $r>0$. Now it suffices to show that there is $r_{x,m}>0$ such that if $0<r\leq r_{x,m}$, then
\begin{equation}
\label{Eq29}
\diam(f(Q_\nu\times [-r,r]^{n-k}))< 3\sqrt{k}Lr/m\, , \quad \text{for $\nu=1,\cdots,m^k$.}
\end{equation}
This, however, easily follows from \eqref{Eq27}. Since $x=0$, \eqref{Eq27} implies that there is $r_{x,m}>0$ such that for any $0<r\leq r_{x,m}$,
\begin{equation}
\label{Eq28}
|d(f(y),f(z))-\md(f,0)(y-z)|< \frac{\sqrt{k}Lr}{m}
\quad
\text{for all $y,z\in [-r,r]^n$.}
\end{equation}
Indeed, $|y|+|z|\lesssim r$ and on the right hand side of \eqref{Eq28} we can put $\eps r$ in place of $\sqrt{k}Lr/m$.
However, in order to have an estimate compatible with \eqref{eq36} below,
it is convenient to take this specific constant, $\sqrt{k}Lr/m$, instead of $\eps r$.

In particular, estimate \eqref{Eq28} is true for $y,z\in Q_\nu\times [-r,r]^{n-k}$.

Let $\pi:\bbbr^n\to N_x^\perp$ be the orthogonal projection. Then, Lemma~\ref{T16} and \eqref{Eq11} yield
$$
\md(f,0)(v)=\md(f,0)(\pi(v))\leq L|\pi(v)|.
$$
On the other hand, if $y,z\in Q_\nu\times [-r,r]^{n-k}$, then $|\pi(y-z)|\leq \diam Q_\nu = 2\sqrt{k}r/m$, and hence
\begin{equation}
\label{eq36}
\md(f,0)(y-z)\leq L|\pi(y-z)|\leq 2\sqrt{k}Lr/m\, ,
\end{equation}
which together with \eqref{Eq28} gives
$d(f(y),f(z))< 3\sqrt{k}Lr/m$. This proves \eqref{Eq29} and completes the proof in the case $k>0$.

When $k=0$, $\md(f,x)=0$ for every $x\in E_0$. Directly from the definition of metric derivative, there is $r_{x,m}>0$ such that
$$
d(f(y),f(x))<\frac{|y-x|}{m}<\frac{r}{m}
\quad
\text{for all $y\in B(x,r), \, 0< r \leq r_{x,m}$.}
$$
But this means that $f(B(x,r))\subset B(f(x),r/m)$.
\end{proof}
\begin{proof}[Proof of Theorem~\ref{T17}]
It suffices to show that for any open cube $Q$ of side length $1$,
$$
\H^n\big(f(\operatorname{Crit}(f)\cap Q)\big)=0.
$$
Since $\operatorname{Crit}(f)=\bigcup_{k=0}^{n-1} E_k$, where the sets $E_k$ are defined by \eqref{eq37}, it suffices to show that $\H^n\big(f(E_k\cap Q)\big)=0$ for all $0\leq k\leq n-1$. Fix $0\leq k\leq n-1$.

Let $E_k=\tilde{E}_k\cup Z_k$, where $\H^n(Z_k)=0$ and the covering property from Proposition~\ref{T14} holds for all $x\in \tilde{E}_k$. Since $\H^n(f(Z_k))=0$, it suffices to prove that $\H^n\big(f(\tilde{E}_k\cap Q)\big)=0$.

Fix an integer $m\geq 1$. For each $x\in\tilde{E}_k\cap Q$, let $r_x>0$ be such that
$$
0<5r_x\leq r_{x,m}
\quad
\text{and}
\quad
B(x,r_x)\subset Q.
$$
According to Theorem~\ref{AT7} from the covering
$$
\tilde{E}_k\cap Q\subset\bigcup_{x\in\tilde{E}_k\cap Q}B(x,r_x)
$$
we can select a countable family of pairwise disjoint balls $\{ B(x_i,r_{x_i})\}_{i\in I}$ such that
$$
\tilde{E}_k\cap Q\subset\bigcup_{i\in I} B(x_i,5r_{x_i}),
\quad
\text{so}
\quad
f(\tilde{E}_k\cap Q)\subset\bigcup_{i\in I} f(B(x_i,5r_{x_i})).
$$
Since $5r_{x_i}\leq r_{x_i,m}$, $f(B(x_i,5r_{x_i}))$ can be covered by $m^k$ balls of radius
$C(k,L) r_{x_i}m^{-1}$. Therefore,
$$
\H^n_\infty\big(f(B(x_i,5r_{x_i}))\big)\lesssim_{n,L} m^k\Big(\frac{r_{x_i}}{m}\Big)^n
$$
and hence
$$
\H^n_\infty\big(f(\tilde{E}_k\cap Q)\big)\lesssim_{n,L}m^{k-n}\sum_{i\in I}r_{x_i}^n\lesssim_nm^{k-n},
$$
because the balls $B(x_i,r_{x_i})$ are pairwise disjoint and contained in $Q$. Since $k<n$, letting $m\to\infty$ yields
$\H^n_\infty\big(f(\tilde{E}_k\cap Q)\big)=0$ and hence
$\H^n\big(f(\tilde{E}_k\cap Q)\big)=0$. The proof is complete.
\end{proof}

\section{Metric implicit function theorem}
\label{MIFT}

Throughout this section $n\geq m$ are positive integers; we will identify
 $$
 \bbbr^{n}=\bbbr^m\times\bbbr^{n-m}=\{(x,y)\colon x\in\bbbr^m, y \in \bbbr^{n-m}\} \, .
 $$


Let us recall the following variant of the classical Implicit Function Theorem for $C^1$ maps.
\begin{lemma}
\label{TFU}
Suppose $\Omega\subset\bbbr^n$ is open and $f:\Omega\to \bbbr^m$ is of class $C^1$. If for some $p\in \Omega$ we have $\rank Df(p)=m$, then there exist an open neighborhood $U$ of $p$ and a~$C^1$-diffeomorphism $G:\bbbr^n\to\bbbr^n$
such that
$$
(f\circ G^{-1})(x,y)=x \quad \text{ for all }(x,y)\in G(U).
$$
\end{lemma}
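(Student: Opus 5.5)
The plan is to apply the classical inverse function theorem to a suitably augmented map and then extend the resulting local diffeomorphism to all of $\bbbr^n$ using the Palais Extension Theorem~\ref{Pal2}.

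First, since $\rank Df(p)=m$, the $m$ rows of $Df(p)$ are linearly independent. After permuting coordinates we may assume the $m\times m$ minor $\partial f/\partial x$ (the derivative in the first $m$ variables) is invertible at $p$. If not, we extend the $m$ rows by $n-m$ standard basis covectors to a basis of $(\bbbr^n)^*$. Concretely, choose a linear map $A:\bbbr^n\to\bbbr^{n-m}$ such that
$$
\begin{pmatrix} Df(p)\\ A\end{pmatrix}
$$
is invertible; for instance, let $A$ be the orthogonal projection onto $\ker Df(p)$, identified with $\bbbr^{n-m}$. Then define
$$
H(z)=(f(z),A(z-p)),\qquad H:\Omega\to\bbbr^m\times\bbbr^{n-m}.
$$
This map is $C^1$ and $DH(p)$ is invertible. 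By the inverse function theorem there is an open neighborhood $V$ of $p$ on which $H$ is a $C^1$ diffeomorphism onto the open set $H(V)$. On $V$ we then have $f=\pi\circ H$, so $f\circ H^{-1}(x,y)=x$ on $H(V)$.

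The remaining and main step is to replace the local diffeomorphism $H|_V$ by a global $C^1$ diffeomorphism $G:\bbbr^n\to\bbbr^n$ that agrees with $H$ on a smaller neighborhood $U$ of $p$. This is exactly what the Palais argument provides, so I would invoke Theorem~\ref{Pal2}. If its hypotheses require a ball, we shrink $V$ to a small closed ball $\bar B(p,\rho)\subset V$; there $H$ is close in $C^1$ to its affine approximation $z\mapsto H(p)+DH(p)(z-p)$. In case a direct citation is not available in the needed form, the fallback is the standard Palais trick. Take a cutoff $\eta\in C_0^\infty(B(p,\rho))$ equal to $1$ on $B(p,\rho/2)$, and set
$$
G(z)=H(p)+DH(p)(z-p)+\eta(z)\big(H(z)-H(p)-DH(p)(z-p)\big).
$$
The perturbation term $\eta\cdot(\text{remainder})$ has $C^1$ norm bounded by $\sup_{B(p,\rho)}\|DH-DH(p)\|+C\rho^{-1}\sup_{B(p,\rho)}|H-H(p)-DH(p)(\cdot-p)|$. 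Both quantities are $o(1)$ as $\rho\to0$ by continuity of $DH$. Hence for small $\rho$ this norm is smaller than $\|DH(p)^{-1}\|^{-1}/2$. Then $G$ is an affine isomorphism plus a small-Lipschitz perturbation. It is therefore injective and surjective (by Banach's fixed point theorem applied to $w\mapsto$ the corresponding contraction), with everywhere invertible derivative, so it is a global $C^1$ diffeomorphism.

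Finally, let $U=B(p,\rho/2)$. On $U$ we have $G=H$, so for $(x,y)\in G(U)=H(U)$ we get $f(G^{-1}(x,y))=\pi(H(G^{-1}(x,y)))=x$, which is the claim. The expected obstacle is only the global extension, that is, verifying the $C^1$ smallness estimate above. The rest is the classical inverse function theorem.
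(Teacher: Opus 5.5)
Your proposal is correct and is essentially the paper's proof. The paper relabels coordinates so that $D_xf(p)$ is invertible and uses $F(x,y)=(f(x,y),y)$, while you complete $Df(p)$ to an isomorphism with the orthogonal projection onto $\ker Df(p)$, which is a coordinate-free version of the same augmentation. Both arguments then shrink to a closed ball inside the inverse-function neighborhood and invoke Theorem~\ref{Pal2} with $k=1$. Your cutoff fallback is also valid, because $\sup_{B(p,\rho)}|H-H(p)-DH(p)(\cdot-p)|\le\rho\sup_{B(p,\rho)}\|DH-DH(p)\|$ keeps the $\rho^{-1}$ term small; it is a more elementary substitute for Palais, but it produces a $G$ that is affine rather than the identity outside a compact set, which suffices for this lemma but not for Remark~\ref{IR1}.
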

In other words: After a change of coordinates in the domain, in the neighborhood of $p$ the function $f$ becomes a projection onto the first $m$ coordinates.
\begin{remark}
Usually the Implicit Function Theorem claims the existence of a \emph{local} diffeomorphism $G:U\to G(U)$. However, using Palais result, Theorem~\ref{Pal2}, (and possibly shrinking $U$) we can extend $G$ to a global diffeomorphism of $\bbbr^n$, which simplifies the formulation of the lemma.
\end{remark}
\begin{remark}
\label{IR1}
If $n=m$ and $f$ is orientation preserving or if $n>m$, we can guarantee, that $G=\operatorname{id}$ outside a compact set.
\end{remark}
\begin{proof}[Sketch of the proof]
We can assume, possibly after relabeling coordinates in $\bbbr^n$, that $\rank D_x f(p)=m$. Let $\pi:\bbbr^n=\bbbr^m\times \bbbr^{n-m}\to \bbbr^m$ be the orthogonal projection, $\pi(x,y)=x$. Note that $F:\Omega\to\bbbr^n$, $F(x,y)=(f(x,y),y)$, has $\rank DF(p)=n$, thus $F$ is a diffeomorphism on some neighborhood $\tilde U$ of $p$. Let $\eps>0$ be such that $\bar{B}(p,\eps)\subset \tilde U$ and denote $U=B(p,\eps)$. Then $F$ satisfies the assumptions of Theorem~\ref{Pal2} on $U$ and we can find a global diffeomorphism $G:\bbbr^n\to\bbbr^n$ which agrees with $F$ on $U$. Finally, for all $(x,y)\in G(U)$
$$
(f\circ G^{-1})(x,y)=(\pi\circ F\circ G^{-1})(x,y)=\pi(x,y)=x.
$$
To prove the claim from Remark~\ref{IR1}, observe that if $F$ is orientation preserving, $G$ is identity outside a compact set by Theorem~\ref{Pal2}. If $m<n$ and $F$ is orientation reversing, we can replace $F$ in the above argument by
$$
\widetilde{F}(x,y)=(f(x,y),-y_1,y_2,\ldots,y_{n-m})
$$
which is orientation preserving.
\end{proof}

For a Lipschitz map $f:\bbbr^n\to\bbbr^m$, the picture near a point $p$ where the rank of its derivative is maximal (i.e., $\rank Df(p)=m$) need not be this simple. To start, $\rank Df(p)=m$ is no more an open condition, so for Lipschitz maps we shall study the behavior of $f$ on \emph{positive measure sets} where $\rank Df=m$.

\begin{proposition}
\label{6:prop1}
Suppose $f:\Omega\subset \bbbr^n\to\bbbr^m$ is Lipschitz and $A\subset\Omega$ is measurable, with $\H^n(A)>0$. If $\rank Df(p)=m$ for every $p\in A$, then there exist a compact set $K\subset A$, $\H^n(K)>0$, and a $C^1$-diffeomorphism $G:\bbbr^n\to\bbbr^n$ such that
\begin{equation}
\label{6:1}
(f\circ G^{-1})(x,y)=x \quad \text{ for all }(x,y)\in G(K).
\end{equation}
\end{proposition}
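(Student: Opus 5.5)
Apply Federer's Theorem~\ref{BT5} to each of the $m$ components of $f$ with small $\eps$. This gives $g\in C^1(\bbbr^n,\bbbr^m)$ and a closed set $E\subset\Omega$ with $|\Omega\setminus E|<\eps$, such that $f=g$ and $Df=Dg$ on $E$. Choose $\eps<\H^n(A)$ (if $\H^n(A)=\infty$, first replace $A$ by a bounded subset of positive finite measure). Then $A\cap E$ has positive measure, and $\rank Dg(p)=\rank Df(p)=m$ for every $p\in A\cap E$. By inner regularity of Lebesgue measure, pick a compact $K_0\subset A\cap E$ with $\H^n(K_0)>0$.

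Since $K_0$ has positive measure, we may choose a point $p\in K_0$; it is convenient to take a density point of $K_0$. At $p$ the $C^1$ map $g$ satisfies $\rank Dg(p)=m$. By Lemma~\ref{TFU}, applied to $g$ on $\bbbr^n$ (or on $\Omega$), there are an open neighborhood $U$ of $p$ and a global $C^1$-diffeomorphism $G:\bbbr^n\to\bbbr^n$ with $(g\circ G^{-1})(x,y)=x$ for all $(x,y)\in G(U)$.

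Since $p$ is a density point of $K_0$, the set $K_0\cap U$ has positive measure, so again by inner regularity we may take a compact $K\subset K_0\cap U$ with $\H^n(K)>0$. For $(x,y)\in G(K)$ we have $G^{-1}(x,y)\in K\subset E\cap U$. Hence $f(G^{-1}(x,y))=g(G^{-1}(x,y))=x$, which is \eqref{6:1}. If $\Omega$ is not all of $\bbbr^n$, Lemma~\ref{TFU} is applied with $g$ restricted to $\Omega$; nothing changes, since $U\subset\Omega$ is ensured by shrinking.

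The argument is short. The only real content is the transfer of the rank condition from $f$ to the $C^1$ approximant $g$ on a set of positive measure, which is where Theorem~\ref{BT5} (equality of derivatives, not only of values, on $E$) is essential. The mild technical points are making sure $K$ has positive measure inside the neighborhood $U$, which is handled by the density point choice, and getting the global diffeomorphism, which is already built into Lemma~\ref{TFU} via the Palais theorem.
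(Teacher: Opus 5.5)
Your proof is correct and follows essentially the same route as the paper. You use Theorem~\ref{BT5} to transfer the rank condition to a $C^1$ map $g$ on a positive-measure subset of $A$; since that theorem is scalar, apply it to each component with $\eps/m$ and intersect the closed sets. You then apply Lemma~\ref{TFU} at a density point and extract a compact set of positive measure inside $U$, exactly as the paper does, except that you pass to a compact $K_0$ before choosing the density point rather than at the end.
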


\begin{proof}
By the $C^1$-Lusin property of Lipschitz maps (Theorem~\ref{BT5}), $f$ coincides, outside a set of small measure, with a $C^1$ map: there is a $C^1$ map $g:\bbbr^n\to\bbbr^m$ and $A'\subset A$, $\H^n(A')>0$, such that $f(p)=g(p)$ and $Df(p)=Dg(p)$ for all $p\in A'$. Note that $\rank Dg(p)=m$ for all $p\in A'$.

Assume now that $p_o\in A'$ is a density point of $A'$, thus $\rank Dg(p_o)=m$. By Lemma~\ref{TFU} there is an open $U\ni p_o$ and a $C^1$ diffeomorphism $G:\bbbr^n\to\bbbr^n$ such that
$$
(g\circ G^{-1})(x,y)=x \quad \text{ for all }(x,y)\in G(U).
$$
Now, note that $\tilde K:=A'\cap U$ has positive measure. If $(x,y)\in G(\tilde K)\subset G(U)$, then $G^{-1}(x,y)\in A'$ and thus
$$
(f\circ G^{-1})(x,y)=(g\circ G^{-1})(x,y)=x,
$$
so \eqref{6:1} holds for all $(x,y)\in G(\tilde K)$. Finally, we find a compact $K\subset \tilde K$ of positive measure, which concludes the proof.
\end{proof}
\begin{remark}
If $f:\bbbr^n\to\bbbr^N$, $N>m$, and $\rank Df(p)=m$ for all $p\in A$, then we can compose $f$ with an orthogonal projection $\pi$ onto an $m$-dimensional coordinate subspace of $\bbbr^N$, so that $\rank D(\pi\circ f)(p)=m$ on a subset of $A$ of positive measure. Then we can apply Proposition \ref{6:prop1} to $\pi\circ f$. We will see a similar idea in Theorem~\ref{6:metric TFU} below.
\end{remark}

\begin{corollary}
\label{6:corr1}
Suppose $f:\bbbr^n\to \bbbr^m$ is Lipschitz and let $A\subset \bbbr^n$ be a measurable set such that $\rank Df(p)=m$ for a.e.\ $p\in A$ and $\H^n(A)>0$. Then there is a countable family $\{K_i\}_i$ of pairwise disjoint compact subsets of $A$, $\H^n(K_i)>0$, such that $\H^n(A\setminus \bigcup_i K_i)=0$ and for each $i$ there exists a diffeomorphism $G_i:\bbbr^n\to\bbbr^n$ such that
\begin{equation}
\label{6:corr1 eq}
(f\circ G_i^{-1})(x,y)=x \quad \text{ for all }(x,y)\in G_i(K_i).
\end{equation}
\end{corollary}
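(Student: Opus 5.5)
The corollary will follow from Proposition~\ref{6:prop1} by a standard exhaustion argument. We select the compact sets $K_i$ one at a time, each time applying the proposition to what remains of $A$. To make the exhaustion countable and complete, at every step we take a set whose measure is at least half of the best available.

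First we reduce to a set on which the rank condition holds everywhere. Let $A_0\subset A$ be the set of points $p\in A$ at which $f$ is differentiable and $\rank Df(p)=m$. By assumption and Rademacher's theorem, $\H^n(A\setminus A_0)=0$, and $A_0$ is measurable since $Df$ is measurable. We may also assume that $\H^n(A)<\infty$. Otherwise we split $A=\bigcup_j A\cap Q_j$ over the disjoint unit cubes $Q_j$ of a lattice, run the argument below in each piece, and take the union of the resulting countable families. The families coming from different cubes are automatically pairwise disjoint.

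Now we construct the sets inductively. Put $R_0=A_0$. Suppose that pairwise disjoint compact sets $K_1,\dots,K_{j}$ have been chosen, and set $R_{j}=A_0\setminus(K_1\cup\dots\cup K_{j})$. If $\H^n(R_{j})=0$ we stop. Otherwise let
\[
s_j=\sup\{\H^n(K):\ K\subset R_j \text{ compact},\ \H^n(K)>0,\ K \text{ satisfies \eqref{6:1} for some diffeomorphism } G\}.
\]
Proposition~\ref{6:prop1}, applied to the measurable set $R_j$ (on which $\rank Df=m$ everywhere), shows that this family is nonempty, so $s_j>0$. We choose $K_{j+1}$ in the family with $\H^n(K_{j+1})\ge s_j/2$, together with a corresponding diffeomorphism $G_{j+1}$. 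Since $K_{j+1}\subset R_j$, the new set is disjoint from the earlier ones, and it is a compact subset of $A$ of positive measure satisfying \eqref{6:corr1 eq}.

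It remains to show that $Z:=A_0\setminus\bigcup_i K_i$ has measure zero; this is the only nontrivial point. The sets $K_i$ are disjoint subsets of $A$, so $\sum_i \H^n(K_i)\le \H^n(A)<\infty$, and therefore $s_j\le 2\H^n(K_{j+1})\to0$. Suppose, to the contrary, that $\H^n(Z)>0$. The set $Z$ is measurable and $\rank Df=m$ on $Z$, so Proposition~\ref{6:prop1} gives a compact $K\subset Z$ with $\H^n(K)>0$ satisfying \eqref{6:1}. Since $K\subset Z\subset R_j$ for every $j$, it belongs to every admissible family, so $s_j\ge \H^n(K)$ for all $j$. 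This contradicts $s_j\to0$. Hence $\H^n(Z)=0$, and consequently $\H^n(A\setminus\bigcup_iK_i)\le\H^n(A\setminus A_0)+\H^n(Z)=0$. If the construction stops after finitely many steps, the same conclusion holds directly.
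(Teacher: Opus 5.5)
Your argument is correct and is essentially the paper's: both are measure-exhaustion arguments that apply Proposition~\ref{6:prop1} to the leftover set and reach a contradiction if that set has positive measure. The only difference is bookkeeping. You pick compact sets greedily inside the remainder, each with measure at least half the available supremum, which makes disjointness automatic. The paper instead takes the supremum of $\H^n$ over the family of subsets of $A$ that can be exhausted by good disjoint compact sets, and uses that this family is closed under countable unions. Your preliminary passage to $A_0$, where $\rank Df=m$ holds at every point, also matches the everywhere hypothesis of Proposition~\ref{6:prop1} exactly as stated.
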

\begin{proof}
Without loss of generality we can assume that $\H^n(A)<\infty$.

Let $\mathcal{E}$ denote the family of all subsets $E\subset A$, which are, up to a null set,
unions of countable families
of disjoint compact sets, i.e., $E=\bigcup_i K_i\cup Z$, $\H^n(K_i)>0$, $\H^n(Z)=0$, such that \eqref{6:corr1 eq} holds for some diffeomorphism $G_i$, for each $i$ (in other words, the subsets of $A$ on which Corollary~\ref{6:corr1} holds). By Proposition \ref{6:prop1}, $\mathcal{E}$ is non-empty.

Since any measurable set is, up to a null set, a countable union of disjoint compact sets of positive measure, it is easy to see that a countable union of sets in $\mathcal{E}$ is again in $\mathcal{E}$.

Let  $\alpha=\sup\{\H^n(E)~:~E\in \mathcal{E}\}$. There is a sequence $E_i\in \mathcal{E}$ such that $\H^n(E_i)\to \alpha$. Then $E=\bigcup_i E_i\in\mathcal {E}$, satisfies $\H^n(E)=\alpha$.

We claim that $\H^n(A)=\alpha$, i.e., $\H^n(A\setminus E)=0$, and thus $A\in\mathcal{E}$ (which concludes the proof). Suppose otherwise, then we can apply Proposition \ref{6:prop1} to $A\setminus E$ in place of $A$, finding a compact set $K\subset A\setminus E$ with $\H^n(K)>0$ and a diffeomorphism $G:\bbbr^n\to\bbbr^n$ for which \eqref{6:corr1 eq} holds. Then $E\cup K\in \mathcal{E}$, while $\H^n(E\cup K)=\alpha+\H^n(K)>\alpha$, which is a contradiction.
\end{proof}

For Lipschitz maps with values in a metric space the `rank of the derivative' condition must be expressed in terms of the metric derivative -- and, if their domain is no longer an open subset of $\bbbr^n$, we need to resort to approximate metric derivatives introduced in Section \ref{AD1}.

The key result of this subsection is the following local version of the implicit function theorem for functions with values into metric spaces:

\begin{theorem}
\label{6:metric TFU}
Assume $X$ is a metric space, $A\subset\bbbr^n$ has positive measure and ${f:A\to X}$ is Lipschitz. Suppose $\rank \apmd(f,p)\geq m$ a.e.\ in $A$. Then
\begin{itemize}
\item[(A)] $\H^m(f(A))>0$;
\item[(B)] there is a compact set $K\subset A$ with $\H^n(K)>0$, a $C^1$ diffeomorphism $G:\bbbr^n\to\bbbr^n$ and a $\sqrt{m}$-Lipschitz map $\pi:X\to \bbbr^m$ such that if we denote $F=f\circ G^{-1}$, then
$$
(\pi\circ F)(x,y)=x \quad \text{ for all }(x,y)\in G(K),
$$
i.e., $\pi\circ F:G(K)\to\bbbr^m$ is a projection onto the first $m$ coordinates;
\item[(C)] $F^{-1}(F(x,y))\cap G(K)
\subset \{x\}\times \bbbr^{n-m}$ for all $(x,y)\in G(K)$;
\item[(D)] for all $y\in\bbbr^{n-m}$ the maps $F|_{(\bbbr^m\times\{y\})\cap G(K)}$ are bi-Lipschitz, with a uniform bi-Lipschitz constant (i.e., independent of $y$).
\end{itemize}
\end{theorem}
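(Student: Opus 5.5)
Following Remark~\ref{R2}, I would embed $f(A)$ isometrically into $\ell^\infty$ (Theorem~\ref{BT3}) and extend $f$ to an $L$-Lipschitz $\tilde f=(f_1,f_2,\ldots):\bbbr^n\to\ell^\infty$ (Corollary~\ref{BT2}). For a.e.\ $p\in A$, $p$ is a density point of $A$, all $f_i$ are differentiable at $p$, and $\tilde f$ is strongly metrically differentiable there with $\apmd(f,p)=\md(\tilde f,p)=\sup_i|\nabla f_i(p)\cdot v|$ (Corollary~\ref{MDT4}). By Theorem~\ref{T13}, the rank condition gives, at a.e.\ $p\in A$, indices $i_1<\dots<i_m$ with $\nabla f_{i_1}(p),\dots,\nabla f_{i_m}(p)$ linearly independent. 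Only countably many $m$-tuples of indices exist, so some fixed tuple works on a subset $A_1\subset A$ of positive measure.

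Let $\pi:X\to\bbbr^m$ be $\pi(z)=(z_{i_1},\dots,z_{i_m})$. Each coordinate of the embedded point is $1$-Lipschitz, so $\pi$ is $\sqrt m$-Lipschitz, and $g=\pi\circ\tilde f:\bbbr^n\to\bbbr^m$ is Lipschitz with $\rank Dg=m$ on $A_1$. Proposition~\ref{6:prop1} then gives a compact $K_1\subset A_1$ of positive measure and a global $C^1$ diffeomorphism $G$ with $(g\circ G^{-1})(x,y)=x$ on $G(K_1)$. This is (B) for any compact $K\subset K_1$. Property (C) follows immediately: if $F(x',y')=F(x,y)$ with both points in $G(K)$, applying $\pi$ gives $x'=x$. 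Property (A) follows from (D) (or directly from (B)): $\pi(f(K))\supset \pi_{\bbbr^m}(G(K))$, which has positive $\H^m$ measure by Fubini since $G(K)$ has positive $\H^n$ measure, and $\pi$ is Lipschitz.

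The main work is (D). The lower bound $|x-x'|=|\pi F(x,y)-\pi F(x',y)|\le\sqrt m\, d(F(x,y),F(x',y))$ holds on $G(K)$. The upper bound uses that $G^{-1}$ is Lipschitz on a neighborhood of the compact set $G(K_1)$ and $f$ is $L$-Lipschitz, so $F$ is Lipschitz there. Together these give uniform bi-Lipschitz constants $\sqrt m$ and $L\cdot\lip(G^{-1}|_{\text{nbhd}})$, independent of $y$.

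The obstacle I expect is that a global diffeomorphism need not be globally Lipschitz. I would handle this by shrinking $K$, for example taking $K_1$ inside a small ball, and noting that $F$ only needs to be Lipschitz on $G(K)$. Lipschitz continuity on a compact convex hull of $G(K)$ follows from the boundedness of $DG^{-1}$ there, and since $F=f\circ G^{-1}$ is evaluated only at points of $G(K)\subset G(A)$, this suffices. So the argument reduces essentially to Theorem~\ref{T13} combined with Proposition~\ref{6:prop1}.
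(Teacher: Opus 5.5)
Your proposal is correct and follows the paper's proof essentially step by step: embed into $\ell^\infty$, use Theorem~\ref{T13} together with a countable choice of index tuples to get $A_1$, apply Proposition~\ref{6:prop1} to $\pi\circ\tilde f$, and then read off (A), (C) and (D) from the identity $(\pi\circ F)(x,y)=x$. Your justification that $F$ is Lipschitz on $G(K)$, via boundedness of $DG^{-1}$ on the compact convex hull of $G(K)$, fills in a point the paper leaves implicit. No shrinking of $K$ is needed for this, since the convex hull of a compact set is already compact.
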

\begin{remark}
\label{6:rem1}
As before, the diffeomorphism $G$ should be understood as a coordinate change in the domain of $f$, which reveals the structure of $f$. After this coordinate change, $f$ and $K$ become $F$ and $G(K)$. By (C), `vertical' sections $(\{x\}\times \bbbr^{n-m})\cap G(K)$ are mapped by $F$, for varying $x$, into disjoint sets in $X$. By (D), the `horizontal' sections $(\bbbr^m\times\{y\})\cap G(K)$ are mapped by $F$ into (not necessarily disjoint) bi-Lipschitz `surfaces'.

The $\sqrt{m}$ Lipschitz constant of $\pi$ is a result of the identification of $\bbbr^m$ with $\ell^\infty_m$; $\pi$ considered as a map from $X$ to $\ell^\infty_m$ is $1$-Lipschitz.
\end{remark}

As an immediate consequence of (A), Theorem~\ref{6:metric TFU}, we get
\begin{corollary}
\label{IT1}
If $X$ is a metric space with $\H^{m+1}(X)=0$ (in particular if the Hausdorff dimension of $X$ is less than $m+1$) and $f\colon \bbbr^n \supset A \to X$ is a Lipschitz mapping, then $\rank \apmd(f,x) \leq m$ at a.e.\ $x \in A$.
\end{corollary}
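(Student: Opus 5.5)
We argue by contradiction, applying part (A) of Theorem~\ref{6:metric TFU} with $m+1$ in place of $m$. Suppose the conclusion fails, i.e.\ the set
$$
A':=\{x\in A:\ \rank\apmd(f,x)\geq m+1\}
$$
has positive measure. Here $\rank\apmd(f,x)$ is defined at a.e.\ $x\in A$ by Theorem~\ref{thm:apKR}; the points where it is undefined form a null set and can be discarded.

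First we check that $A'$ is measurable. Following Remark~\ref{R2}, embed $f(A)$ into $\ell^\infty$ and extend $f$ to a Lipschitz map $F:\bbbr^n\to\ell^\infty$. Then at a.e.\ $x\in A$ we have $\apmd(f,x)=\md(F,x)$. By Theorem~\ref{T13}, $\rank\md(F,x)$ equals the row-rank of $D_{\rm c}F(x)$, which is a measurable function of $x$: the condition $\rank\geq m+1$ holds exactly when some $(m+1)\times(m+1)$ minor built from finitely many of the countably many measurable gradients $\nabla F_i$ is nonzero. Alternatively, one can derive measurability from Corollary~\ref{T10}.

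Next we apply the theorem on the restriction. For a.e.\ $x\in A'$, the point $x$ is a density point of $A'$, so the restriction $f|_{A'}$ has the same approximate metric derivative there as $f$. Hence $\rank\apmd(f|_{A'},x)\geq m+1$ for a.e.\ $x\in A'$. Since $m+1\leq n$ (otherwise $A'$ is empty, because a rank never exceeds $n$), Theorem~\ref{6:metric TFU}(A) applies to $f|_{A'}$ with $m+1$ in place of $m$. It gives $\H^{m+1}(f(A'))>0$. This contradicts $f(A')\subset X$ and $\H^{m+1}(X)=0$, so $A'$ is null and $\rank\apmd(f,x)\leq m$ for a.e.\ $x\in A$.

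For the parenthetical claim: if the Hausdorff dimension of $X$ is less than $m+1$, then $\H^{m+1}(X)=0$ by the definition of Hausdorff dimension, so the first case covers it.

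The only step needing real care is the measurability of $A'$ together with the observation that restricting to $A'$ does not change the approximate derivative at density points. All the substantive work is contained in Theorem~\ref{6:metric TFU}(A).
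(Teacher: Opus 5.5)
Your proof is correct and is the paper's argument: the paper treats the corollary as an immediate consequence of Theorem~\ref{6:metric TFU}(A), applied with $m+1$ in place of $m$ on the set where the rank is at least $m+1$. You additionally verify two details the paper leaves implicit: that this set is measurable, and that restricting $f$ to it does not change $\apmd$ at density points.
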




Exactly as in Corollary~\ref{6:corr1}, we can exhaust the set $\{p\in A~:~\rank \apmd(f,p)\geq m\}$, up to a subset of $\H^n$-measure zero, by a countable (possibly finite) family of compact sets $\{K_i\}$, each with properties as in  Theorem~\ref{6:metric TFU}:

\begin{theorem}[Metric Implicit Function Theorem I]
\label{6:corr2}
Assume $A$ and $f$ are as in Theorem~\ref{6:metric TFU}.
Then there is a countable family $\{K_i\}$ of disjoint compact subsets of $A$ such that
$\H^n(A\setminus\bigcup_i K_i)=0$ and for each $i$ there exist a diffeomorphism $G_i:\bbbr^n\to\bbbr^n$ and a $\sqrt{m}$-Lipschitz $\pi_i:X\to\bbbr^m$ such that the properties (A)-(D) of Theorem~\ref{6:metric TFU} hold for $K_i$, $G_i$, $\pi_i$ in place of $K$, $G$ and $\pi$.
\end{theorem}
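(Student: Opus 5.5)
The argument is an exhaustion, modeled on the proof of Corollary~\ref{6:corr1}, with Theorem~\ref{6:metric TFU} playing the role of Proposition~\ref{6:prop1}. We may assume $\H^n(A)<\infty$. Otherwise, write $A=\bigcup_j (A\cap B(0,j)\setminus B(0,j-1))$, treat each piece separately, and take the union of the resulting countable families; these stay pairwise disjoint because the pieces are disjoint.

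Let $\mathcal{E}$ be the family of measurable $E\subset A$ of the form $E=\bigcup_i K_i\cup Z$. Here $\H^n(Z)=0$, the $K_i$ are countably many pairwise disjoint compact sets, and for each $i$ there are a $C^1$ diffeomorphism $G_i$ and a $\sqrt m$-Lipschitz $\pi_i:X\to\bbbr^m$ such that (A)--(D) hold for $K_i,G_i,\pi_i$. In (A) we read $f(A)$ as $f(K_i)$.

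The first step is to check two properties of $\mathcal{E}$.
\begin{enumerate}
\item[(i)] $\mathcal{E}$ is nonempty. This follows from Theorem~\ref{6:metric TFU} applied to $A$.
\item[(ii)] $\mathcal{E}$ is closed under countable unions.
\end{enumerate}
For (ii), take $E^j=\bigcup_i K_i^j\cup Z^j$ and make the pieces disjoint. Replace $K_i^j$ by $K_i^j\setminus\bigl(\bigcup_{(j',i')<(j,i)}K_{i'}^{j'}\bigr)$ in some enumeration. This set is measurable, so it is, up to a null set, a countable disjoint union of compact subsets.

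The key observation is that (A)--(D) are hereditary: they pass to compact subsets $K'\subset K_i$ with the same $G_i,\pi_i$. Property (B) is an identity holding pointwise on $G_i(K_i)$, so it restricts. For (C), shrinking $G_i(K')$ only makes $F^{-1}(F(x,y))\cap G_i(K')$ smaller. For (D), restricting a bi-Lipschitz map keeps it bi-Lipschitz with the same constant. For (A), if $\H^n(K')>0$, then $\pi_i\circ F$ is the projection on $G_i(K')$. Hence $\H^m(\pi_i(f(K')))\geq \H^m(\pi(G_i(K')))>0$, where $\pi$ is the coordinate projection onto $\bbbr^m$; the last inequality holds by Fubini, since $G_i(K')$ has positive measure. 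As $\pi_i$ is Lipschitz, $\H^m(f(K'))>0$. Compact pieces of measure zero can be absorbed into $Z$.

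Set $\alpha=\sup\{\H^n(E):E\in\mathcal{E}\}$. Choose $E_j\in\mathcal E$ with $\H^n(E_j)\to\alpha$. By (ii), $E=\bigcup_j E_j\in\mathcal{E}$ and $\H^n(E)=\alpha$.

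Now suppose, for contradiction, that $\H^n(A\setminus E)>0$. The set $A\setminus E$ is measurable, and $f|_{A\setminus E}$ is Lipschitz. The rank hypothesis still holds a.e.\ on $A\setminus E$, because approximate metric derivatives are unchanged when passing to a measurable subset, at its density points. Indeed, let $F$ be a Lipschitz extension into $\ell^\infty$ as in Remark~\ref{R2}. A.e.\ point of $A\setminus E$ is a density point of $A\setminus E$ at which $F$ is metrically differentiable. At such points, $\apmd(f|_{A\setminus E},p)=\md(F,p)=\apmd(f,p)$.

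Applying Theorem~\ref{6:metric TFU} to $A\setminus E$ then gives a compact $K\subset A\setminus E$ of positive measure, together with $G$ and $\pi$ satisfying (A)--(D). Hence $E\cup K\in\mathcal{E}$ and $\H^n(E\cup K)=\alpha+\H^n(K)>\alpha$, which contradicts the definition of $\alpha$. Therefore $\H^n(A\setminus E)=0$. The compact pieces of $E$ form the required family, and the null set $Z$ is simply left out.

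Beyond invoking Theorem~\ref{6:metric TFU}, the only point requiring care is this stability under restriction. It covers both the rank hypothesis passing to $A\setminus E$ and the properties (A)--(D) passing to compact subsets during the disjointification. I expect both to be routine, with (A) the one needing the short Fubini argument above.
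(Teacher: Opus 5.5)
Your proposal is correct and follows the paper's approach. The paper proves this theorem by the same exhaustion as in Corollary~\ref{6:corr1}, with Theorem~\ref{6:metric TFU} replacing Proposition~\ref{6:prop1}, and you additionally make explicit two points the paper leaves implicit: that (A)--(D) pass to compact subsets of positive measure (needed for $\mathcal{E}$ to be closed under countable unions), and that the rank hypothesis survives restriction to $A\setminus E$ via density points.
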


\begin{proof}[Proof of Theorem~\ref{6:metric TFU}]
Without loss of generality, we can assume that $X\subset \ell^\infty$, then  $f$ can be extended to a Lipschitz $\tilde{f}:\bbbr^n\to \ell^\infty$, with $\apmd (f,p)=\md(\tilde{f},p)$ for a.e.\ $p\in A$.
Since $\rank \md(\tilde{f},p)\geq m$ for a.e.\ $p\in A$, by Theorem~\ref{T13},  for such $p\in A$ there are indices $i_1(p)<i_2(p)<\cdots<i_m(p)$ such that the gradients of coordinate functions $\nabla \tilde{f}_{i_1(p)}(p),\,\nabla \tilde{f}_{i_2(p)}(p),\ldots, \nabla \tilde{f}_{i_m(p)}(p) $ are linearly independent.

Note that there is only a countable number of possible choices of sets of $m$ indices, so there is a particular choice of indices $i_1<i_2<\cdots<i_m$ such that the set
$$
A_1=\{p\in A~:~\nabla \tilde{f}_{i_1}(p),\,\nabla \tilde{f}_{i_2}(p),\ldots, \nabla \tilde{f}_{i_m}(p) \text{ are linearly independent} \}
$$
has positive measure.
We can also assume that $\tilde{f}(p)=f(p)$ for $p\in A_1$.
Then the projection $\pi:\ell^\infty\to\bbbr^m$, $\pi(x_1,x_2,\ldots)=(x_{i_1},x_{i_2},\ldots,x_{i_m})$ is $\sqrt{m}$-Lipschitz, if $\bbbr^m$ is equipped with its standard Euclidean norm (and $1$-Lipschitz, if we equip it with the $\ell^\infty_m$-norm).

Now, $f_1:=\pi\circ \tilde{f}:\bbbr^n\to\bbbr^m$ is Lipschitz with $\rank Df_1(p)=m$ at all $p\in A_1$, so we can apply Proposition \ref{6:prop1} to $f_1$, which gives a compact $K\subset A_1\subset A$ with $\H^n(K)>0$ and a $C^1$ diffeomorphism $G:\bbbr^n\to\bbbr^n$ such that for all $(x,y)\in G(K)$ we have
\begin{equation}\label{6:eq2}
(f_1\circ G^{-1})(x,y)=x.
\end{equation}
Then \eqref{6:eq2} yields, for $(x,y)\in G(K)$,
$$
x=(f_1\circ G^{-1})(x,y)=(\pi\circ \tilde{f}\circ G^{-1})(x,y)=(\pi\circ f\circ G^{-1})(x,y)=(\pi\circ F)(x,y)\,,
$$
where $F=f\circ G^{-1}$, which proves (B).

Next, note that since $\H^n(K)>0$, also $\H^n(G(K))>0$, and by Fubini's theorem,  the projection of $G(K)$ onto the first $m$ coordinates has positive $\H^m$ measure:
$$
\H^m((\pi\circ F)(G(K)))>0.
$$
However, 
the $\sqrt{m}$-Lipschitz map $\pi$ cannot increase measure more than by the factor of $(\sqrt{m})^m=m^{m/2}$, so we have
\begin{equation*}
\begin{split}
0<&\H^m((\pi\circ F)(G(K)))\leq m^{m/2}\H^m(F(G(K)))=m^{m/2} \H^m\big((f\circ G^{-1})(G(K))\big)\\=&m^{m/2}\H^m(f(K))\leq m^{m/2} \H^m(f(A)),
\end{split}
\end{equation*}
which proves (A).

To see (C), assume that $(x',y')\in F^{-1}(F(x,y))\cap G(K)$ for some $(x,y)\in G(K)$. Then $(x',y')\in G(K)$ and $F(x',y')=F(x,y)$, so obviously $x'=(\pi\circ F)(x',y')=(\pi\circ F)(x,y)=x$ and $(x',y')\in \{x\}\times \bbbr^{n-m}$.

Finally, to prove (D), denote the Lipschitz constant of $F$ on $G(K)$ by $\Lambda$ and take $(x,y), (x',y)\in G(K)$. We have
\begin{equation}\label{6:2}
m^{-1/2}|x-x'|\leq \|x-x'\|_\infty=\|\pi(F(x,y))-\pi(F(x',y))\|_\infty.
\end{equation}
Recall that $\pi:X\to (\bbbr^m,\|\cdot\|_\infty)$ is $1$-Lipschitz and $F:G(K)\to X$ is $\Lambda$-Lipschitz, so
\begin{equation}
\label{6:3}
\|\pi(F(x,y))-\pi(F(x',y))\|_\infty\leq d_X(F(x,y),F(x',y))\leq \Lambda |x-x'|.
\end{equation}
Combining \eqref{6:2} and \eqref{6:3} yields (D).
\end{proof}
As a corollary we obtain the following elegant result about bi-Lipschitz decomposition of a Lipschitz map. A quantitative version of this result will be used in the proof of the area formula; see the proof of Proposition~\ref{AFT3}.
\begin{corollary}[cf. {\cite[Lemma 4]{kir}}]
Assume $X$ is a metric space, $A\subset \bbbr^n$ a measurable set of positive $\H^n$ measure and $f:A\to X$ is Lipschitz. Then there is a countable family $\{K_i\}$ of pairwise disjoint compact subsets of $A$ such that $\H^n(f(A\setminus \bigcup_i K_i))=0$ and for each $i$ the map $f:K_i\to f(K_i)$ is bi-Lipschitz.
\end{corollary}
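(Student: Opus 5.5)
We split $A$ according to the rank of the approximate metric derivative and treat the two parts separately. Set $E_k=\{x\in A:\rank\apmd(f,x)=k\}$ as in \eqref{eq37}. By Theorem~\ref{thm:apKR} (or Remark~\ref{R2} together with Proposition~\ref{MDT2}), $\apmd(f,x)$ exists for a.e.\ $x\in A$, so $A=E_n\cup\bigcup_{k<n}E_k\cup Z$ with $\H^n(Z)=0$, and hence $\H^n(f(Z))=0$. The two parts are handled as follows.

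\textbf{The critical part $\bigcup_{k<n}E_k$.} We show that its image has $\H^n$-measure zero. This is the Metric Sard Theorem for a measurable domain. Theorem~\ref{T17} is stated for open sets, but its proof only uses the covering property of Proposition~\ref{T14}, which is already formulated for measurable $A$ and for $f(A\cap B(x,r))$. We repeat the Vitali covering argument from the proof of Theorem~\ref{T17}, applied to $E_k\cap Q$ for unit cubes $Q$. This gives $\H^n_\infty(f(E_k\cap Q))\lesssim m^{k-n}\to0$, so $\H^n(f(E_k))=0$ for every $k<n$.

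\textbf{The full-rank part $E_n$.} Here we apply Theorem~\ref{6:corr2} with $m=n$ to the set $E_n$, restricting $f$ to it. Where the approximate derivative of $f|_{E_n}$ is defined, it agrees a.e.\ with $\apmd(f,\cdot)$, since a.e.\ point of $E_n$ is a density point of $E_n$. Assuming $\H^n(E_n)>0$ (otherwise there is nothing to do), the theorem yields disjoint compact sets $K_i\subset E_n$ with $\H^n(E_n\setminus\bigcup_i K_i)=0$, diffeomorphisms $G_i$, and $\sqrt n$-Lipschitz maps $\pi_i$.

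With $m=n$ the factor $\bbbr^{n-m}$ is trivial. Property (D) then says that $F_i=f\circ G_i^{-1}$ is bi-Lipschitz on $G_i(K_i)$. Since $G_i$ is a $C^1$ diffeomorphism, it is bi-Lipschitz on the compact set $K_i$. Hence $f|_{K_i}=F_i\circ G_i|_{K_i}$ is bi-Lipschitz onto $f(K_i)$.

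The leftover set $E_n\setminus\bigcup_iK_i$ is $\H^n$-null, so its image is null. Collecting everything, $A\setminus\bigcup_iK_i$ is contained in $(E_n\setminus\bigcup K_i)\cup\bigcup_{k<n}E_k\cup Z$, and its image has $\H^n$-measure zero.

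\textbf{Expected obstacle.} The main care needed is in the critical part: we must confirm that the proof of Theorem~\ref{T17} really transfers to measurable $A$. In that argument the balls $B(x_i,5r_{x_i})$ must be replaced by $A\cap B(x_i,5r_{x_i})$, and the radii $r_{x,m}$ must be taken from Proposition~\ref{T14} on the full-measure subset of $E_k$. Both adjustments look routine. A minor additional point is that the compact sets $K_i$ from Theorem~\ref{6:corr2} lie in $E_n\subset A$, as required, and are pairwise disjoint.
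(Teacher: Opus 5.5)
Your proof is correct and follows the paper's argument. You split off the full-rank set, apply Theorem~\ref{6:corr2} with $m=n$ there, and dispose of the critical part and the null set with the Metric Sard Theorem. The differences are minor: you get the bi-Lipschitz property from (D), which the paper derives from (B) by the same inequality it uses to exhibit the Lipschitz inverse $G_i^{-1}\circ\pi_i$; and you check the measurable-domain Sard step directly from Proposition~\ref{T14}, where the paper simply cites Theorem~\ref{T17}, which is stated only for open sets.
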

\begin{proof}
As before, we can assume that $X\subset \ell^\infty$.
Let $A_1=\{p\in A~~:~~\rank \apmd(f,p)=n\}$. Applying Theorem~\ref{6:corr2} to $f$, $m=n$ and $A_1$ in place of $A$ we get the countable family of compact, disjoint $K_i\subset A_1$, Lipschitz $\pi_i:X\to\bbbr^m$ and $C^1$-diffeomorphisms $G_i:\bbbr^n\to\bbbr^n$ such that (by condition (B) of Theorem~\ref{6:metric TFU})
$$
(\pi_i\circ f\circ G_i^{-1})(x)=x \quad \text{ for all }x\in G_i(K_i),
$$
or, equivalently, taking $x=G_i(y)$,
$$
(G_i^{-1}\circ \pi_i\circ f) (y)=y \quad \text{ for all  }y\in K_i.
$$
In other words, $G_i^{-1}\circ \pi_i|_{f(K_i)}$ is the inverse map to $f$ on $K_i$, $\pi_i$ is Lipschitz on $f(K_i)$ and $G_i^{-1}$ is Lipschitz on  $G_i(K_i)$, so $f$ is bi-Lipschitz on $K_i$.

Now, $A\setminus \bigcup_i K_i=(A\setminus A_1)\cup (A_1\setminus \bigcup_i K_i)$. The second summand, $A_1\setminus \bigcup_i K_i$, has $\H^n$ measure zero, so also $\H^n(f(A_1\setminus \bigcup_i K_i))=0$, and the image of the first summand, $f(A\setminus A_1)$, has $\H^n$ measure zero by the Metric Sard Theorem (Theorem~\ref{T17}). This completes the proof.
\end{proof}

\subsection{Implicit Function Theorem with $\H^m$-$\sigma$-finite target}
If additionally, $X$ is $\mathcal{H}^m$-$\sigma$-finite (i.e., if the $\H^m$-measure is $\sigma$-finite on $X$), the Metric Implicit Function Theorem~\ref{6:corr2} can be substantially improved leading to a version in line with the Euclidean one (Lemma~\ref{TFU}).



\begin{theorem}[Metric Implicit Function Theorem II]
\label{6:thm2}
Assume $X$ is a $\H^m$-$\sigma$-finite metric space and let  $A\subset \bbbr^n$ be measurable, with $\H^n(A)>0$. Let $f\colon A\to X$ be a Lipschitz map such that $\rank \apmd (f,p)=m$ for a.e.\ $p\in A$. Denote by $\pi:\bbbr^n=\bbbr^{m}\times\bbbr^{n-m}\to\bbbr^m$ the orthogonal projection, $\pi(x,y)=x$.
Then there is a countable family $\{K_i\}$ of pairwise disjoint compact subsets of $A$,  $\H^n(A\setminus \bigcup_i K_i)=0$, such that for each $i$ we have
\begin{itemize}
\item a diffeomorphism $G_i:\bbbr^n\to\bbbr^n=\bbbr^m\times\bbbr^{n-m}$
\item and a bi-Lipschitz $\phi_i:\pi(G_i(K_i))\to X$
\end{itemize}
such that
$$
F_i(x,y):=(f\circ G_i^{-1})(x,y)=
(\phi_i\circ\pi)(x,y)=
\phi_i(x)
\quad
\text{for all }(x,y)\in G_i(K_i).
$$
\end{theorem}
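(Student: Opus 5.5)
The plan is to start from the local structure given by Theorem~\ref{6:metric TFU}, exhaust $A$ as in Corollary~\ref{6:corr1}, and use the $\sigma$-finiteness of $\H^m$ on $X$ to show that $F$ is essentially constant on vertical slices. By the exhaustion argument of Corollary~\ref{6:corr1}, it suffices to prove the following local statement. Every measurable $A'\subset A$ of positive measure contains a compact $K'$ of positive measure, together with a diffeomorphism $G$ and a bi-Lipschitz $\phi:\pi(G(K'))\to X$ with $f\circ G^{-1}=\phi\circ\pi$ on $G(K')$.

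First I apply Theorem~\ref{6:metric TFU} to $A'$. This gives $K$, $G$ and $\pi_X:X\to\bbbr^m$, 1-Lipschitz into $(\bbbr^m,\|\cdot\|_\infty)$, such that $\pi_X\circ F(x,y)=x$ on $G(K)$, where $F=f\circ G^{-1}$. Properties (C) and (D) also hold.

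Infinitesimally, the kernel of $F$ is vertical. By Lemma~\ref{lem:chain}, $\apmd(F,q)=\apmd(f,G^{-1}(q))\circ DG^{-1}(q)$ still has rank $m$ a.e.\ on $G(K)$. At density points we have $\|D(\pi_X\circ F)(q)v\|_\infty\leq \apmd(F,q)(v)$ and $D(\pi_X\circ F)(q)v=\pi(v)$. Hence $\ker\apmd(F,q)\subset\{0\}\times\bbbr^{n-m}$, and since both spaces have dimension $n-m$ they coincide. So $\apmd(F,q)(v)=\apmd(F,q)(\pi(v),0)\leq L|\pi(v)|$.

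The infinitesimal information does not by itself force $F$ to be constant on vertical slices. This is where $\H^m$-$\sigma$-finiteness enters. I would choose a finite Borel measure $\nu$ on $X$ with the same null sets as $\H^m\llcorner F(G(K))$; this is possible because $F(G(K))$ is compact and $\H^m$ is $\sigma$-finite. For $y\in\bbbr^{n-m}$ let $S_y=F((\bbbr^m\times\{y\})\cap G(K))$, and for $z\in F(G(K))$ let $Y_z=\{y:(x(z),y)\in G(K),\ F(x(z),y)=z\}$. Here $x(z)=\pi_X(z)$ is determined by (C). Fubini/Tonelli gives
$$
\int_{\bbbr^{n-m}}\nu(S_y^B)\,dy=\int_X\H^{n-m}(Y^B_z)\,d\nu(z)
$$
for any measurable $B\subset G(K)$, where the superscript means we restrict to $B$. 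Let $B$ be the set of points lying in level sets $F^{-1}(z)\cap G(K)$ of zero $\H^{n-m}$-measure. For this $B$ the right-hand side vanishes, so $\nu(S^B_y)=0$ for a.e.\ $y$. By (D) and the equivalence of $\nu$ with $\H^m$, the horizontal slices of $B$ are then $\H^m$-null, so $|B|=0$. Measurability of $B$ and of the function $(z,y)\mapsto$ indicator requires some care, via compactness of $G(K)$ and continuity of $F$. Consequently a.e.\ point of $G(K)$ lies in a level set $L_z\subset\{x\}\times\bbbr^{n-m}$ of positive $(n-m)$-measure.

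Next I select the pieces. For a rational cube $Q\subset\bbbr^{n-m}$, let $P_Q$ be the set of $(x,y)\in G(K)$ with $y\in Q$ such that $L_{F(x,y)}\cap(\{x\}\times Q)$ occupies more than half of $\{x\}\times Q$. Two distinct level sets in the same slice cannot both do this, so $F$ is constant on each vertical slice of $P_Q$. By one-dimensional density in the fibre, countably many $Q$ cover a.e.\ point. Then $\phi(x):=F(x,y)$ is well defined on $\pi(P_Q)$, and $\pi_X\circ\phi=\mathrm{id}$ gives the lower bi-Lipschitz bound $|x-x'|\leq\sqrt m\, d(\phi(x),\phi(x'))$.

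The main obstacle is the upper Lipschitz bound for $\phi$, because $\phi(x)$ and $\phi(x')$ may be realised at points with far apart $y$ and $y'$. I would handle this by further cutting into compact pieces of small diameter on which uniform strong approximate metric differentiability holds, in the spirit of Theorem~\ref{T11}(c) and Theorem~\ref{thm:apKR}. On such a piece, $d(F(x,y),F(x',y'))\leq\apmd(F,q)(x-x',y-y')+\eps(|x-x'|+|y-y'|)$. The first term is at most $L|x-x'|$ by the kernel computation above. The remaining $\eps|y-y'|$ term is the delicate part. I would try to remove it by replacing $y'$ with a point of the same slice close to $y$, which is available by density of the chosen piece. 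Finally I pass to compact subsets and exhaust as in Corollary~\ref{6:corr1}.
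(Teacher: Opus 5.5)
Up to the definition of $\phi$ on $\pi(P_Q)$ your argument is sound (modulo the measurability checks you mention), and it uses $\sigma$-finiteness differently from the paper. The Tonelli identity on $\bbbr^{n-m}\times X$, with a finite measure equivalent to $\H^m$ on $F(G(K))$, correctly shows that a.e.\ point of $G(K)$ lies in a level set of positive $\H^{n-m}$-measure. The more-than-half condition then makes $\phi$ well defined, with $|x-x'|\le\sqrt m\, d(\phi(x),\phi(x'))$.

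The gap is the one you flagged yourself: you never prove that $\phi$ is Lipschitz, and the route you sketch does not do it. Uniform approximate metric differentiability on small pieces leaves you with $d(F(x,y),F(x',y'))\le (L+\eps)|x-x'|+\eps|y-y'|$. Nothing in your construction bounds $|y-y'|$ by a multiple of $|x-x'|$. Density of a piece at a point does not put the vertical slice over $x'$ near a prescribed height $y$ taken from the slice over $x$. Cutting $P_Q$ into pieces of small diameter can also destroy whatever overlap the slices had. As written, you only get an injective map with Lipschitz inverse, not a bi-Lipschitz $\phi$.

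The missing idea is that your own selection rule already supplies a common height. For $x,x'\in\pi(P_Q)$, the sets $\{y\in Q:(x,y)\in L_{\phi(x)}\}$ and $\{y\in Q:(x',y)\in L_{\phi(x')}\}$ each have measure greater than $\frac12\H^{n-m}(Q)$, so they intersect. For $y^*$ in the intersection, both $(x,y^*)$ and $(x',y^*)$ lie in $G(K)$, and
$d(\phi(x),\phi(x'))=d(F(x,y^*),F(x',y^*))\le c|x-x'|$ by (D). With this observation the kernel computation and the differentiability machinery become unnecessary.

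The paper avoids the issue altogether. After reducing to $\H^m(X)<\infty$, it greedily picks horizontal slices at heights $y_{ij}$ that carry more than half of the remaining mass, discards $F_i^{-1}(B_{ij})$, and uses disjointness of the images $B_{ij}$ to force exhaustion. Property (C) then gives $F_i(x,y)=F_i(x,y_{ij})$ on each piece. Hence $\phi_{ij}=F_i(\cdot,y_{ij})$ is $F_i$ restricted to a single horizontal slice and is bi-Lipschitz directly by (D).
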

\begin{proof}
The key idea of the proof is the fact that by Theorem~\ref{6:corr2} we can find $K_i$, $G_i$ and $F_i$ such that, for all $y\in \bbbr^{n-m}$,  $F_i$ maps the `horizontal cuts' $C_i:=(\bbbr^m\times\{y\})\cap G_i(K_i)$ into $X$ in a bi-Lipschitz way. Since $X$ is $\H^m$-$\sigma$-finite, we can choose countably many such slices so that, for each $i$, the full inverse images of their images cover $G_i(K_i)$ up to an $\H^n$-null set. If $m<n$, the slices themselves have $\H^n$ measure zero. However, if $E_i:=F_i^{-1}(F_i(C_i))$, then $F_i(E_i)=F_i(C_i)$, and the sets $E_i$ may have positive $\H^n$ measure. After applying $G_i^{-1}$, we subdivide the resulting sets into compact, pairwise disjoint subsets that cover $A$ up to a set of measure zero.

The rest of the proof is a careful implementation of this idea.

Without loss of generality we may assume that $\mathcal{H}^m(X)<\infty$. Indeed, assume $X=\bigcup_{i=1}^\infty X_i$, where the sets $X_i$ are pairwise disjoint and have finite measure, $\mathcal{H}^m(X_i)<\infty$. By the Borel regularity of the Hausdorff measure we can also assume that $X_i$ are Borel. Then $A=\bigcup_{i=1}^\infty f^{-1}(X_i)$ is a decomposition of $A$ into disjoint and measurable pieces and we may apply the result to each piece $f^{-1}(X_i)$ separately, whenever $\H^n(f^{-1}(X_i))>0$. Note that $\apmd(f|_{f^{-1}(X_i)},p)=\apmd(f,p)$ at a.e. $p\in f^{-1}(X_i)$ (namely, at every density point $p$ of $f^{-1}(X_i)$ in which $f$ is approximately metrically differentiable).



By Theorem~\ref{6:corr2}, there exist countably many compact, disjoint $K_i'$ such that $\H^n(A\setminus \bigcup_i K_i')=0$ and for every $i$ there is $C^1$-diffeomorphism $G_i:\bbbr^n\to\bbbr^n=\bbbr^m\times \bbbr^{n-m}$  such that $F_i:=f\circ G_i^{-1}$ restricted to every `horizontal slice' $(\bbbr^m\times \{y\})\cap G_i(K_i')$ is $c_i$-bi-Lipschitz, with $c_i$ independent of $y$.

Subdividing the sets $K_i'$ into smaller pieces, if necessary, we may assume that each of the sets $\tilde{K}_i:=G_i(K_i')$ is contained in a unit cube.

We shall define the sets $A_{ij}\subset \tilde{K}_i$, $j=1,2,\ldots$ and $B_{ij}\subset F_i(\tilde{K}_i)$, $j=0,1,2,\ldots$ by induction.

Let $A_{i1}=\tilde{K}_i$ and $B_{i0}=\varnothing$. Suppose now that the sets $A_{ij}$ and $B_{ij-1}$ have been defined.







Since $A_{ij}\subset\tilde{K}_i$ is contained in a unit cube,
by Fubini's theorem, one can find $y_{ij}$ such that $\H^m((\bbbr^m\times\{y_{ij}\})\cap A_{ij})>\frac{1}{2}\H^n(A_{ij})$; pick such $y_{ij}$ and denote $B_{ij}=F_i((\bbbr^m\times\{y_{ij}\})\cap A_{ij})$. Then $\H^m(B_{ij})\geq 2^{-1}c_i^{-m}\H^n(A_{ij})$, because $F_i$ is $c_i$-bi-Lipschitz on $(\bbbr^m\times\{ y_{ij}\})\cap A_{ij}$.
Finally, \begin{itemize}
    \item if $\H^n(A_{ij}\setminus F_i^{-1}(B_{ij}))>0$, we set $A_{ij+1}=A_{ij}\setminus F_i^{-1}(B_{ij})$,
    \item if $\H^n(A_{ij}\setminus F_i^{-1}(B_{ij}))=0$, we terminate the inductive construction, setting

  $$
  A_{ik}=A_{ij}\setminus F_i^{-1}(B_{ij}), \qquad B_{ik}=\varnothing \qquad \text{ for all }k>j. $$
\end{itemize}

Note that all the sets $A_{ij}$ and $B_{ij}$ arising in the construction are Borel. Indeed, this follows by induction. 

The set $A_{i1}=\tilde K_i$ is compact. Suppose that $A_{ij}$ is Borel and put $S_{ij}:=(\bbbr^m\times\{y_{ij}\})\cap A_{ij}$, $T_{ij}:=(\bbbr^m\times\{y_{ij}\})\cap \tilde K_i$. Then $S_{ij}$ is a Borel subset of the compact set $T_{ij}$. Since $F_i$ is bi-Lipschitz on the horizontal slice $T_{ij}$, the restriction $F_i|_{T_{ij}}$ is a homeomorphism of $T_{ij}$ onto the compact set $F_i(T_{ij})\subset X$. Therefore $B_{ij}=F_i(S_{ij})$
is Borel in $F_i(T_{ij})$, and hence Borel in $X$, so
$A_{i,j+1}=A_{ij}\setminus F_i^{-1}(B_{ij})$
is Borel as well. This proves, in particular, the measurability of all the sets $A_{ij}$ and $B_{ij}$ and justifies the use of Fubini's theorem.

From the construction it follows that
$$
B_{ij}\subset F_i(A_{ij}),
\quad
B_{ij}\cap F_i(A_{ij+1})=\varnothing
\quad
\text{and}
\quad
A_{ik}\subset A_{ij+1}
\text{ for } k>j.
$$
This implies that the sets $B_{ij}$ are pairwise disjoint. Indeed, for $k>j$ we have $B_{ik}\subset F_i(A_{ik})$ and hence
$$
B_{ij}\cap B_{ik}\subset B_{ij}\cap F_i(A_{ik})\subset B_{ij}\cap F_i(A_{ij+1})=\varnothing.
$$
Clearly, the sequence $\H^n(A_{ij})$ is non-increasing. Also,
$\lim_{j\to\infty}\H^n(A_{ij})=0$. Indeed, assume that for some $\eps>0$ and all $j$ we have $\H^n(A_{ij})>\eps$, then for all $j$ we have
$$
\H^m(B_{ij})\geq 2^{-1}c_i^{-m}\H^n(A_{ij})>2^{-1}c_i^{-m}\eps>0.
$$
This leads to a contradiction, because $B_{ij}$ are disjoint subsets of $X$, which has finite $\H^m$ measure.

Note that
\begin{equation}
\label{Ieq1}
G_i(K_i')=\tilde{K}_i=Z_i\cup\bigcup_{j=1}^\infty(A_{ij}\cap F_i^{-1}(B_{ij})),
\end{equation}
where $Z_i:=\bigcap_{j=1}^\infty A_{ij}$, thus  $\H^n(Z_i)=0$ and the sets $A_{ij}\cap F_i^{-1}(B_{ij})$ are pairwise disjoint.

Indeed, since $(A_{ij})_j$ is a decreasing sequence of sets with the intersection $Z_i$ of measure zero,
and $A_{i1}=\tilde{K}_i$, we have
$$
\tilde{K}_i=Z_i\cup\bigcup_{j=1}^\infty(A_{ij}\setminus A_{ij+1}),
\quad
\H^n(Z_i)=0,
$$
and it remains to observe that
$$
A_{ij}\setminus A_{ij+1}=A_{ij}\setminus(A_{ij}\setminus F_i^{-1}(B_{ij}))=A_{ij}\cap F_i^{-1}(B_{ij}).
$$
We may assume that the sets $A_{ij}\cap F_i^{-1}(B_{ij})$ have positive $\H^n$ measure, as otherwise we can omit those sets $A_{ij}\cap F_i^{-1}(B_{ij})$ which have zero $\H^n$ measure, enlarging the null set $Z_i$ by their union.

Observe that
$$
\text{If } (x,y)\in A_{ij}\cap F_i^{-1}(B_{ij}), \text{ then } (x,y_{ij}) \in A_{ij}\cap F_i^{-1}(B_{ij}) \text{ and }
F_i(x,y)=F_i(x,y_{ij}).
$$
Indeed,
$$
F_i(x,y)\in F_i(A_{ij})\cap B_{ij}=F_i\big((\bbbr^m\times\{ y_{ij}\})\cap A_{ij}\big),
$$
so $F_i(x,y)=F_i(x',y_{ij})$ for some $(x',y_{ij})\in A_{ij}$, but property (C) implies that $x'=x$ so
$F_i(x,y)=F_i(x,y_{ij})$.

Thus, if we define
$$
\phi_{ij}:\pi(A_{ij}\cap F_i^{-1}(B_{ij}))\to X
\quad
\text{by}
\quad
\phi_{ij}(x)=F_i(x,y_{ij}),
$$
then $\phi_{ij}$ is $c_i$-bi-Lipschitz and
$$
F_i(x,y)=\phi_{ij}(x)
\quad
\text{for }
\quad
(x,y)\in A_{ij}\cap F_i^{-1}(B_{ij}).
$$
Note that by \eqref{Ieq1}, the sets $A_{ij}\cap F_i^{-1}(B_{ij})$ decompose $G_i(K_i')$ into disjoint sets (up to a set of measure zero).

Finally, each $G_i^{-1}(A_{ij}\cap F_i^{-1}(B_{ij}))\subset K_i'$ is, up to a $\H^n$-null set, a countable union of disjoint compact sets $K_{ijk}$, which, after relabeling, give the desired compact sets $K_i$.
\end{proof}

\section{Seminorms and Jacobians}
\label{SJ}
Recall that $\sigma:\bbbr^n\to [0,\infty)$ is a \emph{seminorm} if $\sigma(\lambda v)=|\lambda|\sigma(v)$ and $\sigma(v+w)\leq \sigma(v)+\sigma(w)$ for all $\lambda\in\bbbr$ and $v,w\in\bbbr^n$.

Every seminorm $\sigma$ on $\bbbr^n$ is continuous and it is uniquely defined by its restriction $\sigma|_{\Sph^{n-1}}$ to the unit sphere $\Sph^{n-1}=\{x\in\bbbr^n~:~|x|=1\}$. Therefore, we can consider the space of all seminorms on $\bbbr^n$ as the subspace of the space $C(\Sph^{n-1})$ of continuous functions on $\Sph^{n-1}$. The latter space is separable by the Stone-Weierstrass theorem, which yields the following observation.

\begin{lemma}
\label{lem:semi sep}
The space of all seminorms on $\bbbr^n$, endowed with the metric
$$
d_\infty(\sigma,\tau):=\sup_{|v|=1} |\sigma(v)-\tau(v)|,
$$
is separable.
\end{lemma}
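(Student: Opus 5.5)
The plan is to realize the space of seminorms as a subspace of a separable metric space; separability then passes to the subspace. The natural candidate is $C(\Sph^{n-1})$ with the sup metric, via the restriction map $\sigma\mapsto\sigma|_{\Sph^{n-1}}$.

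First, I will check that this map is well defined and isometric. Every seminorm $\sigma$ is continuous on $\bbbr^n$. Writing $v=\sum v_ie_i$ gives $\sigma(v)\le \sum|v_i|\sigma(e_i)\le C|v|$. Combined with $|\sigma(v)-\sigma(w)|\le\sigma(v-w)$, this shows that $\sigma$ is Lipschitz, so $\sigma|_{\Sph^{n-1}}\in C(\Sph^{n-1})$. By homogeneity, $\sigma(v)=|v|\sigma(v/|v|)$ for $v\neq 0$, so the map is injective. Moreover, $d_\infty(\sigma,\tau)$ is by definition exactly the sup-distance between the restrictions, so the map is an isometric embedding onto its image.

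Second, I will show that $C(\Sph^{n-1})$ with the sup norm is separable. By the Stone--Weierstrass theorem, polynomials in the coordinates $x_1,\dots,x_n$ are dense in $C(\Sph^{n-1})$, since they form a subalgebra that separates points and contains constants. Polynomials with rational coefficients are then dense as well, and there are only countably many of them.

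Finally, I will use the elementary fact that any subset $S$ of a separable metric space $Y$ is separable. Take a countable dense set $\{q_j\}\subset Y$. For each pair $(j,k)$ such that $B(q_j,1/k)\cap S\neq\varnothing$, pick a point $s_{jk}$ in that intersection. The countable family $\{s_{jk}\}$ is dense in $S$: given $s\in S$ and $k$, choose $q_j$ with $|q_j-s|<1/k$; then $s_{jk}$ exists and lies within $2/k$ of $s$. Applying this with $Y=C(\Sph^{n-1})$ and $S$ the image of the seminorms gives the lemma.

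None of these steps is a real obstacle. The only point needing care is the last one: rational polynomials themselves are usually not seminorms, so the countable dense set must be extracted inside the subspace rather than taken directly from $C(\Sph^{n-1})$.
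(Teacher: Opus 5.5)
Your proof is correct and follows the paper's own argument: restrict seminorms isometrically to $\Sph^{n-1}$, invoke Stone--Weierstrass for the separability of $C(\Sph^{n-1})$, and pass to the subspace. You additionally spell out the continuity of seminorms and the inheritance of separability by subspaces, which the paper leaves implicit.
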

Recall the definition of the Jacobian of the seminorm (see, Definition~\ref{INd2}).
\begin{definition}
Let $\sigma$ be a seminorm on $\bbbr^n$.
Let $1\leq m\leq n$. If $\rank\sigma<m$, we set $|J_m(\sigma)|=0$. If $\rank\sigma=m$, we define
\begin{equation}
\label{SJeq1}
|J_m(\sigma)|=\frac{\omega_m}{\H^m(\{v\in N_\sigma^\perp:\, \sigma(v)\leq 1\})}\, .
\end{equation}
\end{definition}

\begin{proposition}
\label{SJT7}
If $\sigma$ is a seminorm on $\bbbr^n$ satisfying $\operatorname{rank}\sigma\leq m$, then
\begin{equation}
\label{SJeq4}
|J_m(\sigma)|
=
\sup_{V\in \mathrm{Gr}(m,n)} |J_m(\sigma|_V)|,
\end{equation}
where $\mathrm{Gr}(m,n)$ is the Grassmannian of $m$-dimensional linear
subspaces of $\mathbb{R}^n$.
\end{proposition}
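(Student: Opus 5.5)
The plan is to compute $|J_m(\sigma|_V)|$ explicitly for every $V\in\mathrm{Gr}(m,n)$ and compare it with $|J_m(\sigma)|$. Throughout, a seminorm on an $m$-dimensional subspace $V$ is regarded as a seminorm on $\bbbr^m$ via a linear isometry $V\cong\bbbr^m$. Since $\H^m$ is invariant under isometries, $|J_m(\sigma|_V)|$ does not depend on this identification. Also $\ker(\sigma|_V)=V\cap N_\sigma$. Write $r=\rank\sigma\leq m$, $N=N_\sigma$ and $W=N^\perp$.

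\emph{Case $r<m$.} By definition the left-hand side of \eqref{SJeq4} is $0$. For any $V\in\mathrm{Gr}(m,n)$ the dimension formula gives
$$\dim(V\cap N)\geq m+(n-r)-n=m-r>0.$$
Hence $\rank(\sigma|_V)<m$ and $|J_m(\sigma|_V)|=0$, so both sides vanish.

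\emph{Case $r=m$.} Then $W\in\mathrm{Gr}(m,n)$ and $\sigma|_W$ is a norm. Moreover $|J_m(\sigma|_W)|=|J_m(\sigma)|$ directly from \eqref{SJeq1}, because the kernel of $\sigma|_W$ is trivial and its orthogonal complement in $W$ is $W$ itself. So the supremum in \eqref{SJeq4} is at least $|J_m(\sigma)|$, and it remains to prove $|J_m(\sigma|_V)|\leq|J_m(\sigma)|$ for every $V$.

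If $V\cap N\neq\{0\}$, then $\rank(\sigma|_V)<m$ and the Jacobian is $0$. Otherwise, let $P:\bbbr^n\to W$ be the orthogonal projection. Its kernel is $N$, so $P|_V:V\to W$ is injective and, since the dimensions agree, an isomorphism. By Lemma~\ref{T16}, $\sigma(v)=\sigma(Pv)$, which gives
$$
\{v\in V:\sigma(v)\leq1\}=(P|_V)^{-1}\big(\{w\in W:\sigma(w)\leq 1\}\big).
$$
I will identify $V$ and $W$ isometrically with $\bbbr^m$ and use $\H^m=\LL^m$ (Corollary~\ref{SJT2}) together with the linear change of variables formula. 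This gives
$$\H^m(\{v\in V:\sigma(v)\leq1\})=\frac{\H^m(\{w\in W:\sigma(w)\leq1\})}{|\det(P|_V)|}.$$

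The one point needing an argument is $|\det (P|_V)|\leq 1$. Since $|Pv|\leq|v|$, all singular values of $P|_V$ (computed in orthonormal bases of $V$ and $W$) are at most $1$, and the absolute value of the determinant is their product. Therefore $\H^m(\{v\in V:\sigma(v)\leq1\})\geq \H^m(\{w\in W:\sigma(w)\leq1\})$. Consequently $|J_m(\sigma|_V)|\leq |J_m(\sigma)|$, with equality for $V=W$, which proves \eqref{SJeq4}, the supremum being attained at $V=N_\sigma^\perp$.
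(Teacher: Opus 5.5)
Your proof is correct and follows the paper's argument: the same case split, the same use of Lemma~\ref{T16}, and the same orthogonal projection onto $N_\sigma^\perp$ to compare the two unit balls. The differences are minor. You bound the volume distortion by $|\det(P|_V)|\le 1$ via singular values, where the paper only uses that the $1$-Lipschitz map $P|_V$ cannot increase $\H^m$. You also supply the explicit dimension count $\dim(V\cap N_\sigma)\ge m-r>0$ for the case $\rank\sigma<m$, which the paper leaves implicit.
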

\begin{proof}
If $\operatorname{rank}\sigma<m$, then both sides of \eqref{SJeq4} are
equal to zero, because the restriction of $\sigma$ to every
$m$-dimensional subspace is degenerate.

Assume now that $\rank\sigma=m$. Then $\sigma|_{N_\sigma^\perp}$ is a norm and
by the definition of the Jacobian
$$
|J_m(\sigma)|=\big|J_m\big(\sigma|_{N_\sigma^\perp}\big)\big|.
$$
Hence the left hand side of \eqref{SJeq4} is less than or equal to the right hand side and it remains to show the opposite inequality.

Let  $V\in \mathrm{Gr}(m,n)$. If $V\cap N_\sigma\neq\{ 0\}$, then $\sigma|_V$ is degenerate and hence
$$
|J_m(\sigma|_V)|=0\leq |J_m(\sigma)|.
$$
If $V\cap N_\sigma=\{ 0\}$,
then the orthogonal projection $\pi:\bbbr^n\to N_\sigma^\perp$ restricted to $V$ is an isomorphism and by Lemma~\ref{T16}, $\sigma(v)=\sigma(\pi(v))$ for every $v\in V$.
Hence
\[
\pi\bigl(\{v\in V:\sigma(v)\leq 1\}\bigr)
=
\pi\bigl(\{v\in V:\sigma(\pi (v))\leq 1\}\bigr)
=
\{w\in N_\sigma^\perp:\sigma(w)\leq 1\}.
\]
Since the orthogonal projection
$\pi|_V:V\to N_\sigma^\perp$ is $1$-Lipschitz we have
\[
\mathcal H^m\bigl(\{w\in N_\sigma^\perp:\sigma(w)\leq 1\}\bigr)
\leq
\mathcal H^m\bigl(\{v\in V:\sigma(v)\leq 1\}\bigr).
\]
Therefore, the definition of the Jacobian yields
\[
|J_m(\sigma|_V)|
=
\frac{\omega_m}
{\mathcal H^m\bigl(\{v\in V:\sigma(v)\leq 1\}\bigr)}
\leq
\frac{\omega_m}
{\H^m\bigl(\{w\in N_\sigma^\perp:\sigma(w)\leq 1\}\bigr)}
=
|J_m(\sigma)|.
\]
The proof is complete.
\end{proof}
If $\sigma$ is a norm on $\bbbr^n$, we write $\bbbr^n_\sigma$ or $(\bbbr^n,\sigma)$ for $\bbbr^n$ equipped with the norm metric
$d(x,y)=\sigma(x-y)$.
Similarly, $B^n_\sigma(x,r)$ and $\bar{B}^n_\sigma(x,r)$ denote the open and closed balls in $\bbbr^n_\sigma$, respectively. For any set $A\subset \bbbr^n_\sigma$, we define
$\diam_\sigma(A)=\sup\{\sigma(x-y):x,y\in A\}$.
We write $\H^s_\sigma$ for the Hausdorff measure on the metric space $\bbbr^n_\sigma$, while $\H^s$ denotes the Hausdorff measure with respect to the Euclidean metric.

\begin{remark}
\label{SJR1}
As a sign of warning, note that
if $L:\bbbr^n_\sigma\to \bbbr^n_\tau$ is an isometry of normed spaces, then in general $|J_n(\sigma)|\neq |J_n(\tau)|$. Let $\sigma$ be the Euclidean norm and $\tau=\tfrac{1}{2}\sigma$. Then $L:\bbbr^n_\sigma\to\bbbr^n_\tau$, $Lx=2x$ is an isometry, but $|J_n(\sigma)|=1$, while $|J_n(\tau)|=2^{-n}$.
\end{remark}


The next result will be very important (see \cite[Lemma~6]{kir}).
\begin{proposition}
\label{SJT1}
If $\sigma$ is a norm on $\bbbr^n$, then for any set $A\subset\bbbr^n$,
\begin{equation}
\label{eq:jac2}
\H^n_\sigma(A) = |J_n(\sigma)| \mathcal{H}^n(A),
\quad
\text{and hence}
\quad
|J_n(\sigma)|=\H^n_\sigma([0,1]^n).
\end{equation}
\end{proposition}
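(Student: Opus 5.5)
The identity map $\mathrm{id}:\bbbr^n\to\bbbr^n_\sigma$ is bi-Lipschitz, since all norms on $\bbbr^n$ are equivalent. So $\H^n_\sigma$ is a translation invariant Borel regular measure on $\bbbr^n$. It is finite and positive on compact sets with nonempty interior, and it has the same null sets as $\H^n=\LL^n$ (Corollary~\ref{SJT2}). By uniqueness of Haar measure there is a constant $c_\sigma>0$ with
$$
\H^n_\sigma(A)=c_\sigma\H^n(A)\quad\text{for all } A\subset\bbbr^n .
$$
One can also avoid Haar measure. The measure $A\mapsto\H^n_\sigma(A)$ restricted to Borel sets is translation invariant, and it is $n$-homogeneous under dilations $x\mapsto\lambda x$ because $\sigma$ is a norm. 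Therefore $\H^n_\sigma(Q)=\H^n_\sigma([0,1]^n)\,|Q|$ for all dyadic cubes $Q$. This extends to open sets, then to Borel sets by regularity, and to arbitrary sets via Borel regularity of both outer measures. This yields $c_\sigma=\H^n_\sigma([0,1]^n)$, which is the second identity in \eqref{eq:jac2}.

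The real content is identifying $c_\sigma=|J_n(\sigma)|=\omega_n/\H^n(\bar B_\sigma)$, where $\bar B_\sigma=\{v:\sigma(v)\le1\}$. Applying the identity to $A=\bar B_\sigma$, it suffices to prove
$$
\H^n_\sigma(\bar B_\sigma)=\omega_n ,
$$
that is, the unit ball of any $n$-dimensional normed space has Hausdorff measure $\omega_n$ with respect to its own metric. Here I would use the isodiametric inequality in finite-dimensional normed spaces, Theorem~\ref{AT3}. Its form should be: every set $E\subset\bbbr^n$ satisfies $\H^n(E)\le \H^n(\bar B_\sigma)\,(\diam_\sigma E/2)^n$. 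This follows from Brunn–Minkowski/symmetrization applied to $\frac12(E-E)\subset \frac{\diam_\sigma E}{2}\bar B_\sigma$.

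For the upper bound $\H^n_\sigma(\bar B_\sigma)\le\omega_n$, I would cover $\bar B_\sigma$ efficiently. By Vitali covering (Theorem~\ref{AT7}) with $\sigma$-balls, cover $\bar B_\sigma$ up to a Lebesgue null set by disjoint small balls $\bar B_\sigma(x_i,r_i)$ with $\sum |\bar B_\sigma(x_i,r_i)|\le |\bar B_\sigma|+\eps$. Then
$$
\sum \omega_n (\diam_\sigma/2)^n=\sum\omega_n r_i^n=\frac{\omega_n}{|\bar B_\sigma|}\sum|\bar B_\sigma(x_i,r_i)| .
$$
The null remainder has $\H^n_\sigma$-measure zero by the bi-Lipschitz comparison. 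Letting $\eps\to0$ gives $\H^n_\sigma(\bar B_\sigma)\le\omega_n$.

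For the lower bound, take any cover $\bar B_\sigma\subset\bigcup E_j$. The isodiametric inequality gives
$$
|\bar B_\sigma|\le\sum|E_j|\le |\bar B_\sigma|\sum(\diam_\sigma E_j/2)^n .
$$
Multiplying by $\omega_n/|\bar B_\sigma|$ gives $\sum\omega_n(\diam_\sigma E_j/2)^n\ge\omega_n$, hence $\H^n_\sigma(\bar B_\sigma)\ge\omega_n$.

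The main obstacle is this lower bound, i.e.\ the isodiametric inequality for a general norm. Since Theorem~\ref{AT3} is available in the appendix, the remaining care goes into the normalization $\omega_s(\diam/2)^s$ in the definition of $\H^n_\sigma$ and the measure-extension step.
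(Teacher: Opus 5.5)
Your proposal is correct and follows the paper's route. The paper also first obtains $\H^n_\sigma=c(\sigma)\H^n$ (Corollary~\ref{AT12}) and then identifies $c(\sigma)=|J_n(\sigma)|$ from $\H^n_\sigma(\bar{B}^n_\sigma(0,1))=\omega_n$ (Proposition~\ref{AT5}), proving the upper bound with a Vitali covering and the lower bound with the isodiametric inequality (Theorem~\ref{AT3}). The only small differences are that your upper bound is a direct estimate where the paper argues by contradiction, and that the covering result you need is the Vitali covering theorem (Theorem~\ref{AT6}), not the $5r$-covering lemma (Theorem~\ref{AT7}).
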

\begin{remark}
In the case of a norm, \eqref{SJeq1} yields
\begin{equation}
\label{SJeq3}
|J_n(\sigma)|=\frac{\omega_n}{\H^n(\{v\in\bbbr^n:\, \sigma(v)\le 1\})}.
\end{equation}
This formula seems more convenient for computing or estimating the Jacobian than
\[
|J_n(\sigma)|=\H^n_\sigma([0,1]^n),
\]
because it involves the Lebesgue measure, whereas the Hausdorff measure $\H^n_\sigma$ is generally more difficult to handle. Nevertheless, \eqref{eq:jac2} will play an important role in the proof of the area formula, Theorem~\ref{AFT1}.
\end{remark}

\begin{proof}[Proof of Proposition~\ref{SJT1}]
According to Corollary~\ref{AT12}, there is a constant $c(\sigma)\in (0,\infty)$ such that $\H^n_\sigma(A)=c(\sigma)\H^n(A)$ for all sets $A\subset\bbbr^n$ and it remains to show that $c(\sigma)=|J_n(\sigma)|$. This, however, follows immediately from Proposition~\ref{AT5} since
for the unit ball $\Bbbb^n_\sigma$ we have
$$
|J_n(\sigma)|=\frac{\omega_n}{\H^n(\Bbbb_\sigma^n)}=\frac{\H^n_\sigma(\Bbbb^n_\sigma)}{\H^n(\Bbbb^n_\sigma)}=c(\sigma).
$$
The proof is complete.
\end{proof}

\begin{lemma}
\label{lem: J cont}
The map $\sigma \to |J_n(\sigma)|$ is continuous on the space of seminorms on $\bbbr^n$ equipped with the distance $d_\infty(\sigma,\sigma')=\sup_{|v|=1}|\sigma(v)-\sigma'(v)|$.
\end{lemma}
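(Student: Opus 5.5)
The plan is to use the formula \eqref{SJeq3} from Definition~\ref{INd2}: $|J_n(\sigma)|=0$ when $\rank\sigma<n$, and $|J_n(\sigma)|=\omega_n/\H^n(\Bbbb_\sigma)$ when $\sigma$ is a norm, where $\Bbbb_\sigma=\{v:\sigma(v)\le 1\}$. Continuity then reduces to how the Lebesgue measure of the unit ball depends on $\sigma$. We split into two cases according to whether the limit seminorm $\sigma$ is a norm. Throughout, let $\sigma_k\to\sigma$ in $d_\infty$ and put $\eps_k=d_\infty(\sigma_k,\sigma)\to 0$. By homogeneity,
$$
\sigma(v)-\eps_k|v|\le\sigma_k(v)\le\sigma(v)+\eps_k|v|
\quad\text{for all } v\in\bbbr^n.
$$

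\emph{Case 1: $\sigma$ is a norm.} Set $c=\min_{|v|=1}\sigma(v)>0$; the minimum exists by compactness of $\Sph^{n-1}$. Then $\eps_k|v|\le(\eps_k/c)\sigma(v)$, so
$$
(1-\eps_k/c)\,\sigma\le\sigma_k\le(1+\eps_k/c)\,\sigma .
$$
For large $k$ this shows that $\sigma_k$ is a norm and that
$$
(1+\eps_k/c)^{-1}\Bbbb_\sigma\subset\Bbbb_{\sigma_k}\subset(1-\eps_k/c)^{-1}\Bbbb_\sigma .
$$
Since $\H^n(\lambda E)=\lambda^n\H^n(E)$, we get $\H^n(\Bbbb_{\sigma_k})\to\H^n(\Bbbb_\sigma)\in(0,\infty)$, and therefore $|J_n(\sigma_k)|\to|J_n(\sigma)|$.

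\emph{Case 2: $\rank\sigma<n$.} Here $|J_n(\sigma)|=0$, and we must show $|J_n(\sigma_k)|\to0$. It suffices to consider those $k$ with $\sigma_k$ a norm, since otherwise $|J_n(\sigma_k)|=0$ already. So we need $\H^n(\Bbbb_{\sigma_k})\to\infty$. Pick a unit vector $e\in N_\sigma$. Then $\sigma_k(te)\le\eps_k|t|$, so $\Bbbb_{\sigma_k}$ contains the segment $\{te:|t|\le 1/\eps_k\}$.

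We also need a fixed-size piece transversal to $e$. Let $L$ be a Lipschitz bound for $\sigma$ (for example $L=\max_{|v|=1}\sigma(v)$, with $\sigma(v)\le L|v|$). Then $\sigma_k(w)\le(L+1)|w|$ once $\eps_k\le1$, so $\Bbbb_{\sigma_k}$ contains the Euclidean ball $B(0,1/(L+1))$. By convexity, $\Bbbb_{\sigma_k}$ contains the convex hull of this ball together with the points $\pm e/\eps_k$. That hull is a double cone with base an $(n-1)$-disc of radius $1/(L+1)$ and height $1/\eps_k$. Its volume is comparable to $C(n,L)\,\eps_k^{-1}$, which tends to $\infty$. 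Hence $|J_n(\sigma_k)|\le C\eps_k\to0$.

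The only step requiring care is Case 2, specifically recording the convexity and double-cone volume estimate cleanly. Case 1 is routine scaling. Combining the two cases gives continuity at every seminorm.
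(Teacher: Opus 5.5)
Your proof is correct and follows the paper's argument. It has the same split according to whether $\sigma$ is a norm, the same two-sided bound $(1-\eps/a)\sigma\le\tau\le(1+\eps/a)\sigma$ in the norm case, and the same elongation of the unit ball along a kernel direction $e$ in the degenerate case. The differences are cosmetic: in the norm case the paper compares $\H^n_\tau$ with $\H^n_\sigma$ and evaluates $|J_n(\sigma)|=\H^n_\sigma([0,1]^n)$ via Proposition~\ref{SJT1}, whereas your direct inclusion of unit balls needs only \eqref{SJeq3}; in the degenerate case the paper inscribes a cylinder via the triangle inequality, where you inscribe a double cone via convexity.
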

\begin{proof}
Let $\sigma$ be a seminorm on $\bbbr^n$; we claim that $|J_n(\cdot)|$ is continuous at $\sigma$.

Let us first assume that $\rank\sigma=n$, i.e.,  $\sigma$ is a norm. Let $a=\inf_{|v|=1}\sigma(v)$. Then, for any $0<\eps<a$ and any seminorm $\tau$ such that $d_\infty(\tau,\sigma)<\eps$, we have $\sigma(v)=|v|\sigma(v/|v|)\geq a|v|$ and
$$
|\sigma(v)-\tau(v)|=|v|\big|\sigma(v/|v|)-\tau(v/|v|)\big|\leq \eps |v|\leq \frac{\eps}{a}\sigma(v),
$$
so
$$
\left(1-\frac{\eps}{a}\right)\sigma(v)\leq \tau(v)\leq \left(1+\frac{\eps}{a}\right)\sigma(v)
$$
(in particular, $\tau$ is also a norm). Thus, for any $A\subset\bbbr^n$,
$$
\left(1-\frac{\eps}{a}\right)\diam_\sigma(A)\leq \diam_\tau(A)\leq \left(1+\frac{\eps}{a}\right)\diam_\sigma(A),
$$
and therefore, for any $A\subset\bbbr^n$,
$$
\left(1-\frac{\eps}{a}\right)^n\H^n_\sigma(A)\leq \H^n_\tau(A)\leq \left(1+\frac{\eps}{a}\right)^n\H^n_\sigma(A).
$$
Setting $A=[0,1]^n$ we get, by Proposition~\ref{SJT1}, that
$$
\left(1-\frac{\eps}{a}\right)^n |J_n(\sigma)|\leq |J_n(\tau)|\leq \left(1+\frac{\eps}{a}\right)^n |J_n(\sigma)|,
$$
which proves the claim in the case when $\sigma$ is a norm.

Assume now that $\rank \sigma<n$, so there is $e\in\bbbr^n$, $|e|=1$, such that $\sigma(e)=0$. Denote $C:=d_\infty(\sigma,0)=\sup_{|v|=1}\sigma(v)$.

Let $\tau_j$ be a sequence of seminorms such that $d_\infty(\tau_j,\sigma)\xrightarrow{j\to\infty}0$, $d_\infty(\tau_j,\sigma)<1$.
Our aim is to show that $|J_n(\tau_j)|\xrightarrow{j\to\infty}|J_n(\sigma)|=0$; since $|J_n(\tau_j)|=0$ whenever $\tau_j$ is not a norm, it suffices to consider the case when all $\tau_j$ are norms.

Let $\eps_j:=\tau_j(e)$; obviously $\eps_j\xrightarrow{j\to\infty}\sigma(e)=0$ and since $\tau_j$ are norms, $\eps_j>0$.

Also,
$$
\sup_{|v|=1}\tau_j(v)=d_\infty(\tau_j,0)\leq d_\infty(\tau_j,\sigma)+d_\infty(\sigma,0)\leq 1+C.
$$
Assume now that $t\in\bbbr$, $|t|\leq (2\eps_j)^{-1}$ and $v\in \bbbr^n$, $v\perp e$, $|v|\leq (2(C+1))^{-1}$. Then
$$
\tau_j(te+v)\leq |t|\tau_j(e)+|v|\tau_j(v/|v|)\leq (2\eps_j)^{-1}\eps_j+(2(C+1))^{-1} (C+1)=1.
$$
Therefore the unit ball $\Bbbb^n_{\tau_j}$ contains the cylinder
$$
E_j=\{te+v~~:~~|t|\leq (2\eps_j)^{-1},\,v\perp e,\, |v|\leq (2(C+1))^{-1}\}
$$
and hence $\H^n(\Bbbb^n_{\tau_j})\geq \H^n(E_j)=\omega_{n-1}(2(C+1))^{-(n-1)} \eps_j^{-1}\xrightarrow{j\to\infty}\infty$.
However, since $\tau_j$ is a norm, \eqref{SJeq3} yields $|J_n(\tau_j)|=\omega_n/\H^n(\Bbbb^n_{\tau_j})$, thus
$|J_n(\tau_j)|\xrightarrow{j\to\infty}0$, as required. This concludes the proof.
\end{proof}

\begin{corollary}
\label{SJT6}
Let $1\leq m\leq n$. The map $\sigma\mapsto |J_m(\sigma)|$, defined on the subspace of
seminorms satisfying $\operatorname{rank}\sigma\leq m$, is continuous.
\end{corollary}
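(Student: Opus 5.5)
We prove continuity of $\sigma\mapsto|J_m(\sigma)|$ at a fixed seminorm $\sigma$ with $\rank\sigma\le m$, splitting into two cases according to whether $\rank\sigma<m$ or $\rank\sigma=m$. Throughout, $\tau$ denotes a seminorm with $\rank\tau\le m$ and $d_\infty(\tau,\sigma)$ small. The tool is Proposition~\ref{SJT7}, which expresses the Jacobian as
$$
|J_m(\tau)|=\sup_{V\in\mathrm{Gr}(m,n)}|J_m(\tau|_V)|.
$$
This reduces everything to the full-dimensional Jacobian $|J_m(\cdot)|$ of seminorms on $m$-dimensional spaces $V\cong\bbbr^m$. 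For these, Lemma~\ref{lem: J cont} (applied after choosing an orthonormal basis of $V$, which is an isometry and preserves $\H^m$) gives continuity. The remaining issue is uniformity over $V$.

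\textbf{Case $\rank\sigma=m$.} Take $V$ in the supremum to be $N_\sigma^\perp$. Then
$$
|J_m(\tau)|\ge|J_m(\tau|_{N_\sigma^\perp})|\longrightarrow|J_m(\sigma|_{N_\sigma^\perp})|=|J_m(\sigma)|,
$$
by Lemma~\ref{lem: J cont}, since $d_\infty(\tau|_V,\sigma|_V)\le d_\infty(\tau,\sigma)$. This gives lower semicontinuity. For the upper bound, let $a=\inf\{\sigma(v): v\in N_\sigma^\perp,\ |v|=1\}>0$. Fix $\eps$ with $d_\infty(\tau,\sigma)<\eps<a$. Then $\tau$ restricted to $N_\sigma^\perp$ is a norm, so $\rank\tau\ge m$, and hence $\rank\tau=m$. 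Let $P$ be the orthogonal projection onto $N_\sigma^\perp$. Lemma~\ref{T16} cannot be used directly, since $P$ projects onto $N_\sigma^\perp$ and not onto $N_\tau^\perp$. Instead we compare the unit balls directly.

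For $v\in N_\tau^\perp$, Lemma~\ref{T16} for $\tau$ gives $\tau(v)=\tau(\pi_\tau w)$ for every $w$ with $\pi_\tau w=v$, where $\pi_\tau$ is the projection onto $N_\tau^\perp$. The plan is to show that $N_\tau$ is close to $N_\sigma$ in the Grassmannian. If $u\in N_\tau$ with $|u|=1$, then $\sigma(u)<\eps$, and hence $|Pu|\le\sigma(Pu)/a=\sigma(u)/a<\eps/a$. So $N_\tau$ lies within angle $O(\eps)$ of $N_\sigma$, and therefore $N_\tau^\perp$ is $O(\eps)$-close to $N_\sigma^\perp$. The map $P|_{N_\tau^\perp}\colon N_\tau^\perp\to N_\sigma^\perp$ is then an isomorphism with Jacobian $1-O(\eps)$. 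Also $\tau(v)=\tau(Pv+(v-Pv))$, and $v-Pv$ lies within $O(\eps)|v|$ of $N_\tau$ by closeness of subspaces, so $|\tau(v)-\tau(Pv)|\le O(\eps)|v|$. Combining these facts, the $\tau$-unit ball in $N_\tau^\perp$ and the $\tau$-unit ball in $N_\sigma^\perp$ have $\H^m$ measures whose ratio is $1+o(1)$. That yields $|J_m(\tau)|\le|J_m(\tau|_{N_\sigma^\perp})|(1+o(1))\to|J_m(\sigma)|$. This subspace-perturbation estimate is the main technical step, and I expect it to be the main obstacle: the bookkeeping that $N_\tau$ is close to $N_\sigma$ needs care.

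\textbf{Case $\rank\sigma<m$.} Here $|J_m(\sigma)|=0$, and we must show $|J_m(\tau)|\to0$ uniformly. By Proposition~\ref{SJT7} it suffices to bound $|J_m(\tau|_V)|$ uniformly in $V$. Since $\dim V=m>\rank\sigma$, every $V$ contains a unit vector $e_V\in N_\sigma\cap V$. We repeat the cylinder argument from the proof of Lemma~\ref{lem: J cont} inside $V$, with $\eps=\tau(e_V)\le d_\infty(\tau,\sigma)$ and $C=\sup_{|v|=1}\sigma(v)$. This gives
$$
\H^m(\{v\in V:\tau(v)\le1\})\ge c(m,C)\,d_\infty(\tau,\sigma)^{-1},
$$
uniformly in $V$, whenever $\tau|_V$ is a norm; otherwise the Jacobian of $\tau|_V$ is $0$. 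Hence $\sup_V|J_m(\tau|_V)|\lesssim d_\infty(\tau,\sigma)\to0$.
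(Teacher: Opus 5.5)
Your proof is correct, but it follows a different route from the paper's.

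\textbf{The paper's route.} The paper makes no case distinction. It shows that $\Phi(\sigma,V)=|J_m(\sigma|_V)|$ is jointly continuous on the product of the space of all seminorms with $\mathrm{Gr}(m,n)$. To do this it chooses orthonormal bases $e_i^j\to e_i$ of $V_j\to V$, pulls back to seminorms on $\bbbr^m$, and applies Lemma~\ref{lem: J cont} in dimension $m$. It then concludes from Proposition~\ref{SJT7} and Lemma~\ref{SJT8}, since the supremum of a jointly continuous function over a compact parameter space is continuous. Compactness of the Grassmannian thus gives for free the uniformity in $V$ that you build by hand. The paper also never needs to know where the supremum is attained.

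\textbf{Your route.} You use continuity only on the fixed subspace $N_\sigma^\perp$ and replace compactness by explicit estimates.
\begin{itemize}
\item In the degenerate case, your cylinder bound is uniform in $V$. It yields the quantitative estimate $|J_m(\tau)|\lesssim d_\infty(\tau,\sigma)$, which is more than the paper records.
\item In the case $\operatorname{rank}\sigma=m$, you effectively show that the maximizing subspace $N_\tau^\perp$ converges to $N_\sigma^\perp$, and you compare the two unit balls through the projection $P$. This works: one-sided closeness of equidimensional subspaces gives two-sided closeness, and $P|_{N_\tau^\perp}$ has Jacobian close to $1$.
\item You should state one step explicitly. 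Your additive error $|\tau(v)-\tau(Pv)|\le O(\varepsilon)|v|$ becomes a multiplicative factor $1+O(\varepsilon)$ only because $\tau\ge (a-O(\varepsilon))|\cdot|$ on both $N_\sigma^\perp$ and $N_\tau^\perp$.
\end{itemize}
This subspace bookkeeping is exactly what the paper's argument avoids. Once you know $N_\tau^\perp\to N_\sigma^\perp$, the identity $|J_m(\tau)|=\Phi(\tau,N_\tau^\perp)$ together with joint continuity of $\Phi$ would finish at once.

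In short, your route is longer but gives explicit moduli of continuity; the paper's is shorter and purely qualitative.
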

In the proof we will need a simple lemma.
\begin{lemma}
\label{SJT8}
Let $X$ and $K$ be metric spaces, with $K$ compact, and let
$\Phi\colon X\times K\to \mathbb R$ be continuous. Define
$F\colon X\to\mathbb R$ by
\[
F(x)=\sup_{y\in K}\Phi(x,y).
\]
Then the supremum is attained for every $x\in X$, and the function $F$ is continuous.
\end{lemma}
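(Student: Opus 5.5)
The statement to prove is Lemma~\ref{SJT8}: for $\Phi$ continuous on $X\times K$ with $K$ compact, the supremum $F(x)=\sup_{y\in K}\Phi(x,y)$ is attained and $F$ is continuous. The plan is to split this into three parts.

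\emph{Attainment.} For fixed $x\in X$, the map $y\mapsto\Phi(x,y)$ is continuous on the compact set $K$. It is therefore bounded and attains its maximum, so $F(x)=\max_{y\in K}\Phi(x,y)$ is finite.

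\emph{Lower semicontinuity of $F$.} Fix $x\in X$ and a sequence $x_k\to x$; since $X$ is metric, sequences suffice. Choose $y_x\in K$ with $F(x)=\Phi(x,y_x)$. Then
\[
F(x_k)\geq\Phi(x_k,y_x)\to\Phi(x,y_x)=F(x),
\]
so $\liminf_k F(x_k)\geq F(x)$.

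\emph{Upper semicontinuity of $F$.} This is the only step that uses compactness of $K$ in an essential way. Suppose, to the contrary, that $\limsup_k F(x_k)>F(x)$. Pass to a subsequence with $F(x_k)\to\ell>F(x)$, allowing $\ell=\infty$ a priori. Choose maximizers $y_k\in K$ with $F(x_k)=\Phi(x_k,y_k)$. By compactness of $K$, pass to a further subsequence with $y_k\to y\in K$. Then $(x_k,y_k)\to(x,y)$ in $X\times K$, and joint continuity of $\Phi$ gives $\Phi(x_k,y_k)\to\Phi(x,y)$. Hence $\ell=\Phi(x,y)\leq F(x)$, which is a contradiction.

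Combining the two semicontinuity statements gives $F(x_k)\to F(x)$, so $F$ is continuous.

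The main care point is the upper semicontinuity step, because it needs both the extraction of a convergent subsequence of maximizers from the compact $K$ and \emph{joint} continuity of $\Phi$. Separate continuity of $\Phi$ in each variable would not suffice there.

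Looking ahead, the lemma will be applied to Corollary~\ref{SJT6} with $X$ the space of seminorms of rank at most $m$, $K=\mathrm{Gr}(m,n)$ (compact), and $\Phi(\sigma,V)=|J_m(\sigma|_V)|$, combined with Proposition~\ref{SJT7}. Joint continuity of this $\Phi$ should follow from Lemma~\ref{lem: J cont} after identifying each $V$ isometrically with $\bbbr^m$ by a rotation depending continuously on $V$ locally. That verification belongs to the corollary, however, not to the lemma.
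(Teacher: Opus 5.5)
Your proof is correct and follows essentially the same argument as the paper. The paper uses the same two ingredients: a fixed maximizer $y_o$ at $x$ gives the lower bound, and a convergent subsequence of maximizers $y_j$ from compactness of $K$, together with joint continuity of $\Phi$, gives the upper bound. The only difference is packaging: the paper combines both into a single contradiction argument instead of separate lower and upper semicontinuity steps.
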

\begin{proof}
Since $K$ is compact and $y\mapsto \Phi(x,y)$ is continuous, the supremum is
attained for each fixed $x\in X$.
We prove continuity.
Suppose to the contrary that $F$ is not continuous at $x\in X$. Then, there is $\eps>0$ and a sequence $x_j\to x$ such that
\begin{equation}
\label{ziuta}
|F(x_{j})-F(x)|\geq \eps.
\end{equation}
Choose $y_j$ such that $F(x_j)=\Phi(x_j,y_j)$. Since $K$ is compact, a subsequence $y_{j_k}\to y$ is convergent. Hence
$$
F(x_{j_k})=\Phi(x_{j_k},y_{j_k})\to\Phi(x,y)\leq F(x).
$$
On the other hand, choose $y_o\in K$ such that $F(x)=\Phi(x,y_o)$. Then
$$
F(x_{j_k})\geq\Phi(x_{j_k},y_o)\to\Phi(x,y_o)=F(x).
$$
Hence $F(x_{j_k})\to F(x)$, but that contradicts \eqref{ziuta}.
\end{proof}

\begin{proof}[Proof of Corollary~\ref{SJT6}]
In view of Proposition~\ref{SJT7}, it suffices to prove continuity of the right hand side of \eqref{SJeq4} with respect to $\sigma$ satisfying $\rank\sigma\leq m$. Define
\[
\Phi(\sigma,V)=|J_m(\sigma|_V)|,
\qquad V\in \mathrm{Gr}(m,n).
\]
Note that $\Phi(\sigma,V)$ is defined for all seminorms and hence the right hand side of \eqref{SJeq4} is defined for all seminorms $\sigma$, but the equality in \eqref{SJeq4} is true for $\sigma$ with $\rank\sigma\leq m$.

We claim that $(\sigma,V)\mapsto\Phi(\sigma,V)$ is continuous as a function defined on the product of the space of all seminorms and the Grassmannian.

Let $\sigma_j\to \sigma$ with respect to $d_\infty$, and let
$V_j\to V$ in $\mathrm{Gr}(m,n)$. Choose an orthonormal basis
$e_1,\ldots,e_m$ of $V$
and choose orthonormal bases
$e_1^j,\ldots,e_m^j$ of $V_j$ such that
$e_i^j\to e_i$ for $i=1,\ldots,m$.
Define seminorms $\widetilde\sigma_j$ and $\widetilde\sigma$ on
$\mathbb{R}^m$ by
\[
\widetilde\sigma_j(a)
=
\sigma_j\Big(\sum_{i=1}^m a_i e_i^j\Big),
\qquad
\widetilde\sigma(a)
=
\sigma\Big(\sum_{i=1}^m a_i e_i\Big),
\qquad
\text{where } a=(a_1,\ldots,a_m)\in\mathbb{R}^m.
\]
Then
$\widetilde\sigma_j\to\widetilde\sigma$
uniformly on the unit sphere in $\mathbb{R}^m$. Hence, by Lemma~\ref{lem: J cont} applied in dimension $m$,
$|J_m(\widetilde\sigma_j)|\to |J_m(\widetilde\sigma)|$.
Since
\[
|J_m(\widetilde\sigma_j)|=|J_m(\sigma_j|_{V_j})|,
\qquad
|J_m(\widetilde\sigma)|=|J_m(\sigma|_V)|,
\]
we obtain $\Phi(\sigma_j,V_j)\to \Phi(\sigma,V)$. Thus $\Phi$ is continuous.

Recall that, by Proposition~\ref{SJT7},
$$
|J_m(\sigma)|=\sup_{V\in \mathrm{Gr}(m,n)} \Phi(\sigma,V).
$$
Since the Grassmannian $\mathrm{Gr}(m,n)$ is compact, continuity of $\sigma\mapsto |J_m(\sigma)|$
on the space of seminorms $\sigma$ on $\mathbb{R}^n$ with $\operatorname{rank}\sigma\leq m$ follows from Lemma~\ref{SJT8}.
\end{proof}

Important examples of seminorms on $\bbbr^n$ arise as pull-backs of norms by linear maps.

\begin{lemma}
\label{SJT5}
Assume $n\leq m$, $L:\bbbr^n\to\bbbr^m$ is an injective linear map and $\sigma$ is a norm on $\bbbr^m$. Then $\sigma\circ L$ is a norm on $\bbbr^n$ and
\begin{equation}
\label{eq:pullsemi}
|J_n(\sigma\circ L)|=\sqrt{\det(L^T L)}\,\,|J_n(\sigma|_{L(\bbbr^n)})|.
\end{equation}
\end{lemma}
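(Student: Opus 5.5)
First, $\sigma\circ L$ is a norm on $\bbbr^n$: it is a seminorm (sublinearity and homogeneity pass through the linear map $L$), and if $\sigma(Lv)=0$ then $Lv=0$ because $\sigma$ is a norm, so $v=0$ by injectivity. Thus $\rank(\sigma\circ L)=n$. Set $W=L(\bbbr^n)$, an $n$-dimensional subspace of $\bbbr^m$. The restriction $\sigma|_W$ is a norm on $W$, and its Jacobian in Definition~\ref{INd2} is computed with $\H^n$ on $W$, which is $W$'s Euclidean structure inherited from $\bbbr^m$.

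The plan is to compare the two unit balls
$$
B_1=\{v\in\bbbr^n:\ \sigma(Lv)\le 1\}
\quad\text{and}\quad
B_2=\{w\in W:\ \sigma(w)\le 1\}.
$$
Clearly $L(B_1)=B_2$, since $L:\bbbr^n\to W$ is a linear bijection. By \eqref{SJeq1} and \eqref{SJeq3},
$$
|J_n(\sigma\circ L)|=\frac{\omega_n}{\H^n(B_1)},
\qquad
|J_n(\sigma|_W)|=\frac{\omega_n}{\H^n(B_2)} .
$$
The identity \eqref{eq:pullsemi} is therefore equivalent to
$$
\H^n(L(B_1))=\sqrt{\det(L^TL)}\,\H^n(B_1).
$$
This is the linear area formula: a linear injection from $\bbbr^n$ onto an $n$-dimensional subspace of $\bbbr^m$ multiplies $n$-dimensional Hausdorff measure by $\sqrt{\det(L^TL)}$.

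To prove this scaling, use the polar decomposition $L=QS$, with $S=(L^TL)^{1/2}$ a positive definite symmetric map on $\bbbr^n$ and $Q=LS^{-1}$. The map $Q$ satisfies $Q^TQ=S^{-1}L^TLS^{-1}=I$, so it is a linear isometry of $\bbbr^n$ onto $W$. Isometries preserve Hausdorff measure, since the definition only uses diameters. Hence $\H^n(L(B_1))=\H^n(S(B_1))$, and $S(B_1)$ lies in $\bbbr^n$. Since $\H^n=\LL^n$ on $\bbbr^n$ (Corollary~\ref{SJT2}), the linear change of variables formula for Lebesgue measure gives $\LL^n(S(B_1))=|\det S|\,\LL^n(B_1)=\sqrt{\det(L^TL)}\,\LL^n(B_1)$. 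Substituting into the two Jacobian formulas yields \eqref{eq:pullsemi}.

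The only delicate step is a bookkeeping one rather than a real obstacle. One must make sure that $\H^n$ on $W$ in the definition of $|J_n(\sigma|_W)|$ is the Hausdorff measure of $W$ as a subset of $\bbbr^m$. This agrees with Lebesgue measure transported by an isometry $\bbbr^n\cong W$, which is exactly what the step with $Q$ uses.
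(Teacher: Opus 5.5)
Your proof is correct, but it follows a different route from the paper's.

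\textbf{The paper's route.} The paper observes that $L:(\bbbr^n,\sigma\circ L)\to(L(\bbbr^n),\sigma|_{L(\bbbr^n)})$ is a linear isometry, so $\H^n_{\sigma\circ L}([0,1]^n)=\H^n_{\sigma|_{L(\bbbr^n)}}(L([0,1]^n))$. It then turns both sides into Jacobians using the characterization $\H^n_\sigma(A)=|J_n(\sigma)|\,\H^n(A)$ from Proposition~\ref{SJT1}. It finishes with Lemma~\ref{AT8} to evaluate $\H^n(L([0,1]^n))=\sqrt{\det(L^TL)}$.

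\textbf{Your route.} You work directly with the unit-ball definition \eqref{SJeq1}. Since $L$ maps the unit ball of $\sigma\circ L$ onto the unit ball of $\sigma|_W$, you only need the Euclidean scaling $\H^n(L(B_1))=\sqrt{\det(L^TL)}\,\H^n(B_1)$. You re-derive this by polar decomposition, which is exactly Lemma~\ref{AT8}; the paper proves that lemma the same way, via $L=US$.

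\textbf{What each approach buys.} Your argument is more elementary. It bypasses Proposition~\ref{SJT1}, whose proof rests on the nontrivial identity $\H^n_\sigma(\bar{B}^n_\sigma(0,1))=\omega_n$ (Proposition~\ref{AT5}); that identity in turn needs the isodiametric inequality and the Vitali covering theorem. Your unit-ball comparison is also the same one the paper itself uses in Lemma~\ref{lem:semi11}. The paper's version stays within the metric picture, where the Jacobian is the density of $\H^n_\sigma$ with respect to $\H^n$; this is the form used in the proof of the area formula. It also implicitly relies on your bookkeeping point: Proposition~\ref{SJT1} has to be applied on the subspace $L(\bbbr^n)$ after identifying it isometrically with $\bbbr^n$.
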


\begin{proof}
Checking that $\sigma\circ L$ is a seminorm is straightforward; also, $\rank (\sigma\circ L)=n$ and thus $\sigma\circ L$ is a norm on $\bbbr^n$.

Now, note that $L:(\bbbr^n,\sigma\circ L)\to (L(\bbbr^n),\sigma|_{L(\bbbr^n)})\subset (\bbbr^m,\sigma)$ is a linear isometry, thus for any $A\subset \bbbr^n$ we have $\H^n_{\sigma\circ L}(A)=\H^n_{\sigma|_{L(\bbbr^n)}}(L(A))$. Applying it to $A=[0,1]^n$ we get, by Proposition~\ref{SJT1},
and Lemma~\ref{AT8}
\[
\begin{split}
&
|J_n(\sigma\circ L)|
=
\H^n_{\sigma\circ L}([0,1]^n)=\H^n_{\sigma|_{L(\bbbr^n)}}(L([0,1]^n))\\
&=
|J_n(\sigma|_{L(\bbbr^n)})|\H^n(L([0,1]^n))=|J_n(\sigma|_{L(\bbbr^n)})|\sqrt{\det L^TL}.
\end{split}
\]
The proof is complete.
%
\end{proof}

\begin{remark}
In the above lemma, if $L$ is not injective, then one immediately checks that $\sigma\circ L$ is still a seminorm in $\bbbr^n$.  In this case, however, $\ker L$ is non-trivial. Since $\sigma$ is a norm, $\ker L=\ker (\sigma\circ L)$, so $\rank (\sigma\circ L)=\rank L<n$, hence $|J_n(\sigma\circ L)|=0$.

At the same time $\ker L= \ker (L^T L)$, so $L^T L$ is not invertible and thus $\det(L^T L)=0$. However, $\dim L(\bbbr^n)<n$, so $|J_n(\sigma|_{L(\bbbr^n)})|$ is not defined.
\end{remark}

\begin{lemma}
\label{SJT4}
Assume $n\geq m$, $L:\bbbr^n\to\bbbr^m$ is linear and $\sigma$ is a norm on $\bbbr^m$. Then $\sigma\circ L$ is a seminorm on $\bbbr^n$ with $\rank (\sigma\circ L)\leq m$ and
$$
|J_m(\sigma\circ L)|=\sqrt{\det(LL^T)}|J_m(\sigma)|.
$$
\end{lemma}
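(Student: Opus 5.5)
Seminorm property: $\sigma\circ L$ is clearly a seminorm, and since $\sigma$ is a norm, $\ker(\sigma\circ L)=\ker L$, so $\rank(\sigma\circ L)=n-\dim\ker L=\rank L\leq m$. If $\rank L<m$, then $\rank(\sigma\circ L)<m$, so $|J_m(\sigma\circ L)|=0$ by definition. Also $LL^T$ is singular, since $\rank(LL^T)=\rank L<m$, so both sides vanish. Hence the main case is $\rank L=m$, i.e.\ $L$ surjective.

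In that case $N:=N_{\sigma\circ L}=\ker L$, and by Definition~\ref{INd2}
$$
|J_m(\sigma\circ L)|=\frac{\omega_m}{\H^m(\{v\in N^\perp:\sigma(Lv)\leq 1\})}.
$$
The restriction $M:=L|_{N^\perp}:N^\perp\to\bbbr^m$ is a linear isomorphism. The set in the denominator is $M^{-1}(\Bbbb^m_\sigma)$, where $\Bbbb^m_\sigma=\{w\in\bbbr^m:\sigma(w)\leq 1\}$. Therefore
$$
\H^m(\Bbbb^m_\sigma)=\H^m\big(M(M^{-1}(\Bbbb^m_\sigma))\big)=|J_m M|\,\H^m(M^{-1}(\Bbbb^m_\sigma)),
$$
where $|J_m M|$ is the Jacobian of the linear isomorphism $M$ between $m$-dimensional inner product spaces. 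This is the linear change of variables on $\H^m$, i.e.\ Lemma~\ref{AT8} applied after identifying $N^\perp$ isometrically with $\bbbr^m$. Combining this with \eqref{SJeq3} for the norm $\sigma$ on $\bbbr^m$, which gives $|J_m(\sigma)|=\omega_m/\H^m(\Bbbb^m_\sigma)$, yields
$$
|J_m(\sigma\circ L)|=\frac{\omega_m\,|J_m M|}{\H^m(\Bbbb^m_\sigma)}=|J_m M|\,|J_m(\sigma)|.
$$

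It remains to identify $|J_m M|$ with $\sqrt{\det(LL^T)}$; this linear-algebra step is the only place needing care. Choose an orthonormal basis of $N^\perp$ and extend it by an orthonormal basis of $N$. In this basis $L$ has matrix $[\,\tilde M\ \ 0\,]$, where $\tilde M$ is the $m\times m$ matrix of $M$. A change of orthonormal basis $U$ on $\bbbr^n$ gives $(LU)(LU)^T=LL^T$, so
$$
LL^T=\tilde M\tilde M^T \quad\text{and}\quad \sqrt{\det(LL^T)}=|\det\tilde M|.
$$
On the other hand, $|\det\tilde M|$ is exactly the factor by which $M$ scales $\H^m$ from $N^\perp$ to $\bbbr^m$, which is what Lemma~\ref{AT8} gives: $\sqrt{\det(\tilde M^T\tilde M)}=|\det\tilde M|$. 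This proves $|J_m M|=\sqrt{\det(LL^T)}$ and completes the argument.
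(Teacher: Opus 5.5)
Your proof is correct, but it takes a different route from the paper's.

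The paper's argument runs as follows.
\begin{itemize}
\item It identifies $(\ker L)^\perp$ with $\bbbr^m$ via an isometry $U$.
\item It then invokes the Hausdorff-measure characterization of the Jacobian, Proposition~\ref{SJT1}, twice: $|J_m(\sigma\circ L)|=\H^m_{\sigma\circ L\circ U}([0,1]^m)=\H^m_\sigma(LU([0,1]^m))=|J_m(\sigma)|\,|\det(LU)|$.
\item The middle equality holds because $LU:(\bbbr^m,\sigma\circ L\circ U)\to(\bbbr^m,\sigma)$ is an isometry and therefore preserves Hausdorff measure.
\item Finally it uses, implicitly, that $UU^T$ is the projection onto $(\ker L)^\perp$, which gives $\det((LU)(LU)^T)=\det(LL^T)$.
\end{itemize}

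You instead work directly with the unit-ball formula that defines $|J_m|$. You observe that $L|_{N^\perp}$ maps the unit ball of $\sigma\circ L$ in $N^\perp$ onto the unit ball of $\sigma$. After that, only the linear change-of-variables formula for Lebesgue measure is needed. This is the same mechanism the paper itself uses for Lemma~\ref{lem:semi11}.

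What your route buys is that it is more elementary. It avoids Proposition~\ref{SJT1}, which rests on the isodiametric inequality in normed spaces (via Brunn--Minkowski) and on Proposition~\ref{AT5}.

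What the paper's route buys is a connection to the representation $|J_m(\sigma)|=\H^m_\sigma([0,1]^m)$, i.e., the Jacobian as the density of $\H^m_\sigma$ with respect to $\H^m$. That is the form that drives the area formula.

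Your final linear-algebra step completes the basis to a full orthogonal $n\times n$ change of basis, so that $(LU)(LU)^T=LL^T$ is immediate from $UU^T=I_n$. This is equivalent to the paper's use of $UU^T$ as a projection. Combined with your change-of-variables step, it is exactly the content of the unlabeled lemma following Lemma~\ref{AT8} in the appendix.
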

\begin{remark}
Immediately from the definition of the $m$-Jacobian, for any seminorm $\tau$ of rank $m$ on $\bbbr^n$ we have $|J_m(\tau)|=|J_m(\tau|_{N_\tau^\perp})|$ and $\tau$ is a norm on  $N_\tau^\perp$.
\end{remark}
\begin{proof}
If $L$ is not surjective, $\rank (\sigma\circ L)=\rank L<m$, so $|J_m(\sigma\circ L)|=0$, but also $LL^T:\bbbr^m\to\bbbr^m$ is not surjective, so $\det (LL^T)=0$.

Assume thus that $\rank L=m$ and denote $M=(\ker L)^\perp$. This is an $m$-dimensional linear subspace of $\bbbr^n$, so we have a linear isometry $U:\bbbr^m\to M$.

Then $\sigma\circ L\circ U$ is a norm on $\bbbr^m$ and
$$
|J_m(\sigma\circ L)|=|J_m(\sigma\circ L|_{M})|=|J_m(\sigma\circ L\circ U)|=\H^m_{\sigma\circ L\circ U}([0,1]^m).
$$
Also, $L\circ U:(\bbbr^m,\sigma\circ L\circ U)\to (\bbbr^m, \sigma)$ is an isometry, so
\begin{equation*}
\begin{split}
|J_m(\sigma\circ L)|
&=
\H^m_{\sigma\circ L\circ U}([0,1]^m)=\H^m_\sigma((L\circ U)([0,1]^m))=|J_m(\sigma)|\H^m((L\circ U)([0,1]^m))\\
&=|J_m(\sigma)||\det(LU)|=|J_m(\sigma)|\sqrt{\det\left((LU)(LU)^T\right)}=|J_m(\sigma)|\sqrt{\det LL^T}.
\end{split}
\end{equation*}
\end{proof}
\begin{lemma}
\label{lem:semi11}
Assume $N\geq m$, $L:\bbbr^N\to\bbbr^N$ is a linear isomorphism and $\sigma$ is a seminorm on $\bbbr^N$, $\rank \sigma\leq m$. Then $\sigma\circ L$ is a seminorm on $\bbbr^N$ and
\begin{equation}
\label{eq:lab}
|J_m(\sigma)||\det L|=|J_m(\sigma\circ L)||\det (L|_{N_{\sigma\circ L}})|.
\end{equation}
\end{lemma}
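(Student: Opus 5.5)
The plan is to reduce \eqref{eq:lab} to a block-triangular factorization of $L$ adapted to the kernels. Throughout, $\det(L|_{N_{\sigma\circ L}})$ is understood as the determinant of $L|_{N_{\sigma\circ L}}\colon N_{\sigma\circ L}\to N_\sigma$ computed in orthonormal bases. Its absolute value does not depend on the choice of bases.

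\textbf{Kernels and the trivial case.} Checking that $\sigma\circ L$ is a seminorm is immediate. Since $\sigma(Lv)=0$ if and only if $Lv\in N_\sigma$, we have $N_{\sigma\circ L}=L^{-1}(N_\sigma)$. Hence $L$ maps $K:=N_{\sigma\circ L}$ isomorphically onto $N:=N_\sigma$, and $\rank(\sigma\circ L)=\rank\sigma$. If $\rank\sigma<m$, both Jacobians in \eqref{eq:lab} vanish and there is nothing to prove. So we may assume $\rank\sigma=m$. Then $\dim K=\dim N=N-m$ and $\dim K^\perp=\dim N^\perp=m$.

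\textbf{Block decomposition.} Decompose the domain as $\bbbr^N=K^\perp\oplus K$ and the target as $\bbbr^N=N^\perp\oplus N$, and choose orthonormal bases adapted to these splittings. Because $L(K)=N$, the matrix of $L$ in these bases is block lower-triangular:
$$
L=\begin{bmatrix} A & 0\\ B & C\end{bmatrix},
\qquad A=P\circ L|_{K^\perp}\colon K^\perp\to N^\perp,
\qquad C=L|_K\colon K\to N,
$$
where $P$ is the orthogonal projection onto $N^\perp$. The change of orthonormal bases only changes signs of determinants, so
$$
|\det L|=|\det A|\,|\det C|=|\det A|\,|\det(L|_{N_{\sigma\circ L}})|.
$$
Moreover, $A$ is invertible, since $L$ and $C$ are.

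\textbf{Relating the Jacobians.} It remains to show that $|J_m(\sigma\circ L)|=|J_m(\sigma)|\,|\det A|$. For $v\in K^\perp$, Lemma~\ref{T16} gives $\sigma(Lv)=\sigma(PLv)=\sigma(Av)$. Thus $(\sigma\circ L)|_{K^\perp}=\sigma|_{N^\perp}\circ A$, and therefore
$$
\{v\in K^\perp:\sigma(Lv)\le1\}=A^{-1}\big(\{w\in N^\perp:\sigma(w)\le1\}\big).
$$
The map $A^{-1}$ is a linear isomorphism between $m$-dimensional Euclidean spaces, so it multiplies $\H^m$ by $|\det A|^{-1}$. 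By the definition \eqref{SJeq1} of $|J_m|$ this yields $|J_m(\sigma\circ L)|=|\det A|\,|J_m(\sigma)|$. Multiplying this identity by $|\det(L|_{N_{\sigma\circ L}})|$ and using the factorization of $|\det L|$ gives \eqref{eq:lab}.

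\textbf{Main obstacle.} The only delicate point is bookkeeping: making precise that $\det(L|_{N_{\sigma\circ L}})$ means the determinant of $K\to N$ in orthonormal bases, and checking the block-triangular form. The latter rests on $L(K)\subset N$, which is exactly why the off-diagonal block mapping $K$ into $N^\perp$ is zero.
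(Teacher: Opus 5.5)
Your proof is correct and follows the paper's argument essentially step for step. The paper likewise sets $T=\pi\circ L\colon N_{\sigma\circ L}^\perp\to N_\sigma^\perp$ (your $A$) and uses Lemma~\ref{T16} to see that $T$ maps the unit ball of $\sigma\circ L$ onto that of $\sigma$, giving $|J_m(\sigma\circ L)|=|\det T|\,|J_m(\sigma)|$; it then reads off $|\det L|=|\det T|\,|\det(L|_{N_{\sigma\circ L}})|$ from the same block lower-triangular matrix in orthonormal bases adapted to the two splittings.
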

\begin{remark}
Here $|\det (L|_{N_{\sigma\circ L}})|$ is the determinant of the linear isomorphism $L|_{N_{\sigma\circ L}}:N_{\sigma\circ L}\to N_\sigma$. It has to be taken with the absolute value due to the lack of canonical orientations in the subspaces.
\end{remark}
\begin{proof}
Since $\rank (\sigma\circ L)=\rank\sigma$, if $\rank\sigma<m$, both sides of \eqref{eq:lab} are equal zero. Thus we may assume that $\rank\sigma=m$.
Denote by $\pi:\bbbr^N\to\bbbr^N$ the orthogonal projection onto $N_\sigma^\perp$. Observe that $N_{\sigma\circ L}=L^{-1}(N_\sigma)$.

Set $T=\pi\circ L:N_{\sigma\circ L}^\perp \to N_\sigma^\perp$.
Note that $T$ is an isometry between $(N_{\sigma\circ L}^\perp, \sigma\circ L)$ and $(N_\sigma^\perp,\sigma)$. Indeed,
$\dim N_{\sigma\circ L}^\perp=\dim N_\sigma^\perp=m$ and
for $v\in N_{\sigma\circ L}^\perp$, Lemma~\ref{T16} yields
$$
\sigma(Tv)=\sigma ((\pi\circ L)v)=\sigma(Lv)=(\sigma\circ L)(v).
$$
In particular, if we denote the unit balls in the normed spaces by
$$
B_{\sigma\circ L}=\{v\in N_{\sigma\circ L}^\perp~:~(\sigma\circ L)(v)\leq 1\}
\quad
\text{and}
\quad
B_\sigma=\{v\in N_\sigma^\perp~:~\sigma(v)\leq 1\},
$$
then $T(B_{\sigma\circ L})=B_\sigma$, thus
$\H^m(B_\sigma)=|\det T|\H^m(B_{\sigma\circ L})$ and hence
$$
|J_m(\sigma\circ L)|=\frac{\omega_m}{\H^m(B_{\sigma\circ L})}=|\det T|\frac{\omega_m}{\H^m(B_\sigma)}=|\det T| |J_m(\sigma)|
$$
Therefore, it suffices to show that
\begin{equation}
\label{xyw3}
|\det L|=|\det T|\, |\det (L|_{N_{\sigma\circ L}})|.
\end{equation}
Introduce in $\bbbr^N$ two orthonormal bases: one adapted to the splitting $\bbbr^N=N_{\sigma\circ L}^\perp\oplus N_{\sigma\circ L}$ (in the domain of $L$), the other to $\bbbr^N=N_{\sigma}^\perp\oplus N_{\sigma}$ (in the target). In these bases the matrix of $L$ takes the square block form:
$$
L=
\left[
\begin{array}{ c | c}
  ~~T ~~\rule[-1.5ex]{0pt}{4.5ex} & 0 \\
\hline
*\rule[-1.5ex]{0pt}{4.5ex} & L|_{N_{\sigma\circ L}}
\\
\end{array}
\right],
$$
and hence \eqref{xyw3} follows.
\end{proof}

\section{Area Formula}
\label{AF}

This section is centered around the Metric Area Formula of Kirchheim \cite[Theorem~7 and Corollary~8]{kir} (cf. Theorem~\ref{INT4}).

\begin{theorem}[Metric area formula]
\label{AFT1}
Suppose that $A\subset \bbbr^n$ is measurable, $X$ is a metric space,
and $f:A\to X$ is Lipschitz. Then for every measurable function
$g:A\to [0,\infty)$ we have
$$
\int_A g(x)\, |J_n(\apmd(f,x))|\, d\H^n(x)
=
\int_X
\Big(
\sum_{x\in f^{-1}(y)\cap A} g(x)
\Big)
\,d\H^n(y).
$$
\end{theorem}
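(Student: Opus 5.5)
The plan follows Kirchheim's original route: reduce to the case where $f$ is bi-Lipschitz on small pieces with metric derivative nearly constant, and then compare with the normed space $\bbbr^n_\sigma$ via Proposition~\ref{SJT1}. By Remark~\ref{R2} we may assume $X\subset\ell^\infty$ and work with a Lipschitz extension, with $\apmd(f,x)=\md(F,x)$ a.e.\ in $A$. By monotone convergence it suffices to treat $g=\chi_E$ for measurable $E\subset A$, i.e.\ to prove
$$
\int_E |J_n(\apmd(f,x))|\,d\H^n(x)=\int_X \H^0(f^{-1}(y)\cap E)\,d\H^n(y).
$$
First split $E=E_{<n}\cup E_n$ according to whether $\rank\apmd(f,x)<n$ or $=n$. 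On $E_{<n}$ the left side vanishes by definition. The right side also vanishes, because $\H^n(f(E_{<n}))=0$ by the Metric Sard Theorem~\ref{T17} (in its approximate version via Remark~\ref{R2}). Null sets are handled in the same way, since $f$ is Lipschitz and so maps $\H^n$-null sets to $\H^n$-null sets. Measurability of $y\mapsto \H^0(f^{-1}(y)\cap E)$ will be obtained at the end from the decomposition into pieces on which $f$ is injective.

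The core step is a quantitative bi-Lipschitz decomposition of $E_n$. Fix $t>1$. Using the separability of the space of seminorms (Lemma~\ref{lem:semi sep}), choose a countable dense family of norms $\sigma_k$. Using the uniform convergence in Theorem~\ref{T11}(c), or equivalently the strong metric differentiability of Theorem~\ref{thm:apKR}, together with Lusin-type arguments (Corollary~\ref{T10}), partition $E_n$ up to a null set into countably many measurable pieces $P_j$. On each $P_j$ there is one norm $\sigma=\sigma_{k(j)}$ such that
$$
t^{-1}\sigma(x-z)\leq d(f(x),f(z))\leq t\,\sigma(x-z)\quad\text{for } x,z\in P_j,
$$
and $d_\infty(\apmd(f,x),\sigma)$ is small for $x\in P_j$. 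To get the two-sided inequality I would take points $x$ with $\apmd(f,x)$ close to $\sigma_k$ in the multiplicative sense $t^{-1/2}\sigma_k\le \apmd(f,x)\le t^{1/2}\sigma_k$, which is possible since $\apmd(f,x)$ is a norm on $E_n$. I would then require that the strong differentiability error be at most $\eps|x-z|$ for $|x-z|<1/i$, and finally chop into sets of diameter less than $1/i$. This construction is the main obstacle: the strong (two-point) differentiability is exactly what is needed to get the lower bound for pairs $x,z$ not equal to the base point, and one must arrange measurability of the pieces. I would obtain the latter by intersecting with the closed sets $F_\eps$ from Theorem~\ref{T11} and using the continuity of $\Phi_f$ there.

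On each piece, $f|_{P_j}$ is injective and satisfies $\H^n(f(S))\in[t^{-n},t^n]\cdot\H^n_\sigma(S)$ for every $S\subset P_j$, because $f\circ(\mathrm{id}:\bbbr^n_\sigma\to\bbbr^n_\sigma)^{-1}$ is $t$-bi-Lipschitz. Proposition~\ref{SJT1} gives $\H^n_\sigma(S)=|J_n(\sigma)|\H^n(S)$. The continuity of the Jacobian (Lemma~\ref{lem: J cont}), or directly the multiplicative comparison of norms as in its proof, gives $|J_n(\apmd(f,x))|\in[t^{-n},t^n]\,|J_n(\sigma)|$ on $P_j$. Hence
$$
t^{-2n}\int_{S}|J_n(\apmd(f,x))|\,d\H^n\leq \H^n(f(S))\leq t^{2n}\int_S|J_n(\apmd(f,x))|\,d\H^n .
$$
Summing over $j$ with $S=E\cap P_j$, and using $\H^0(f^{-1}(y)\cap E)=\sum_j\chi_{f(E\cap P_j)}(y)$ up to the image of a null set, gives the formula within a factor $t^{\pm 2n}$. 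The images $f(E\cap P_j)$ are measurable, since we may take the pieces compact up to null sets, so these images are compact. Letting $t\to1$ completes the proof.
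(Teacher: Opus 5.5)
Your proposal is correct and is essentially the paper's proof, which is Kirchheim's argument. Sard handles the rank-deficient part; a dense countable family of norms $\sigma_j$ gives pieces on which $\md$ is multiplicatively close to $\sigma_j$ and the differentiability error is uniformly small; these pieces are cut into sets of small diameter, and then Proposition~\ref{SJT1} and the limit $t\to 1$ finish the proof. One remark: strong (two-point) differentiability is not actually needed, since for $x,z$ in the same piece you can use $x$ itself as the base point. So ordinary metric differentiability with a radius uniform over the piece suffices; these are the paper's sets $D^j_k$, which are measurable directly because $x\mapsto\md(f,x)(v)$ is measurable, and this avoids Theorem~\ref{T11}.
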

\begin{remark}
The function $y\mapsto \sum_{x\in f^{-1}(y)\cap A}g(x)$, i.e., the sum of values of $g$ over the fiber $f^{-1}(y)\cap A$, is by definition equal to zero if $y\not \in f(A)$. It may also be equal to $+\infty$ for some $y$.
\end{remark}

\begin{remark}
Since the image $f(A)\subset X$ is separable, we may assume, after an
isometric embedding, that $f(A)\subset \ell^\infty$. Then $f$ admits a
Lipschitz extension
\[
F:\bbbr^n\to \ell^\infty
\qquad
\text{and}
\qquad
\md(F,x)=\apmd(f,x) \text{ for a.e.\ } x\in A.
\]
Thus it suffices to prove the following proposition.
\end{remark}

\begin{proposition}
\label{AFT2}
Suppose that $f:\bbbr^n\to \ell^\infty$ is Lipschitz. If
$g:A\to [0,\infty)$ is a measurable function defined on a measurable set
$A\subset \bbbr^n$, then
\begin{equation}
\label{AFeq2}
\int_A g(x)\, |J_n(\md(f,x))|\, d\H^n(x)
=
\int_{\ell^\infty}
\Big(
\sum_{x\in f^{-1}(y)\cap A} g(x)
\Big)
\,d\H^n(y).
\end{equation}
\end{proposition}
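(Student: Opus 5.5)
We follow Kirchheim's strategy: decompose $A$ into pieces on which $f$ is almost a linear isometry from a fixed normed space $(\bbbr^n,\sigma)$, and then use Proposition~\ref{SJT1} to compare $\H^n$ on the image with $|J_n(\sigma)|\H^n$ on the domain. First we reduce to $g=\chi_E$ for a measurable $E\subset A$. Both sides of \eqref{AFeq2} are then countably additive measures in $E$, with measurability of the right-hand side to be checked. Monotone convergence and simple functions handle general $g$. We split $A=A_0\cup A_n\cup Z$, where $A_0=\{\rank\md(f,x)<n\}$, $A_n=\{\rank\md(f,x)=n\}$ and $\H^n(Z)=0$. On $A_0$ the left side vanishes since $|J_n|=0$. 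The right side also vanishes, because $\H^n(f(A_0))=0$ by the Metric Sard Theorem~\ref{T17}, and $\H^n(f(Z))=0$ since $f$ is Lipschitz.

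On $A_n$ we fix $\eps>0$ and build a countable Borel partition $A_n=\bigcup_k E_k\cup Z'$, $\H^n(Z')=0$, together with norms $\sigma_k$ such that
\begin{equation*}
(1+\eps)^{-1}\sigma_k(x-y)\leq d(f(x),f(y))\leq (1+\eps)\sigma_k(x-y)
\quad\text{for } x,y\in E_k,
\end{equation*}
and $d_\infty(\md(f,x),\sigma_k)$ is small for $x\in E_k$. To get this, we use separability of the space of seminorms (Lemma~\ref{lem:semi sep}) and choose a countable dense family $\{\sigma_k\}$ of norms. By Corollary~\ref{T10}, the sets $\{x: \md(f,x) \text{ is close to } \sigma_k\}$ are Borel. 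Theorem~\ref{T2} (strong metric differentiability) and Theorem~\ref{T11}(c) supply uniform smallness of the error $d(f(y),f(z))-\md(f,x)(z-y)$ on sets where an Egorov-type radius $r(x)\geq 1/j$ holds. Since $\md(f,x)$ is a norm comparable to $\sigma_k$, i.e.\ $\sigma_k(v)\geq a|v|$, that additive error becomes a multiplicative $(1+\eps)$ factor once we intersect with sets of diameter $<1/j$.

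On each $E_k$ the map $f$ is then $(1+\eps)^2$-bi-Lipschitz from $(E_k,\sigma_k)$ onto $f(E_k)$. Hence
\begin{equation*}
(1+\eps)^{-2n}\H^n_{\sigma_k}(E)\leq \H^n(f(E))\leq (1+\eps)^{2n}\H^n_{\sigma_k}(E)
\end{equation*}
for $E\subset E_k$, and Proposition~\ref{SJT1} gives $\H^n_{\sigma_k}(E)=|J_n(\sigma_k)|\H^n(E)$. By continuity of $|J_n|$ (Lemma~\ref{lem: J cont}) and closeness of $\md(f,x)$ to $\sigma_k$, we have $|J_n(\sigma_k)|\H^n(E)\approx\int_E|J_n(\md(f,x))|\,dx$ up to a factor $1+\eps'$. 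Since $f$ is injective on $E_k$, summing over $k$ gives $\int_X \H^0(f^{-1}(y)\cap E)\,d\H^n=\sum_k \H^n(f(E\cap E_k))$. Comparing with $\sum_k\int_{E\cap E_k}|J_n(\md f)|$ and letting $\eps\to0$ yields \eqref{AFeq2} for $g=\chi_E$.

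The main obstacle is the decomposition step. It requires uniform control over pairs $x,y$ in the same piece, not only pairs with one point fixed as in Theorem~\ref{T11}(c). We would first try to combine a Lusin-type restriction (Theorem~\ref{T11}) with pieces of small diameter. For $x,y\in E_k$ close, Theorem~\ref{T11}(c) applied at $x$ gives $|d(f(x),f(y))-\md(f,x)(y-x)|\leq\eta|x-y|$ uniformly on a compact set. Replacing $\md(f,x)$ by $\sigma_k$ costs $d_\infty$-small error times $|x-y|$, and $\sigma_k(v)\geq a_k|v|$ converts this into the desired multiplicative bound. The measurability of $y\mapsto\H^0(f^{-1}(y)\cap E)$ also follows from this decomposition, since on each $E_k$ the map $f$ is injective with Borel images of Borel sets.
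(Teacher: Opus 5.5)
Your proposal is correct and follows essentially the paper's (Kirchheim's) argument. You reduce to characteristic functions and pass to general $g$ by simple functions and monotone convergence, dispose of the rank-deficient set with the Metric Sard Theorem~\ref{T17}, and cut the rest into countably many pieces on which $f$ is nearly an isometry from $(E,\sigma_j)$, with $\sigma_j$ taken from a dense family of norms, so that Proposition~\ref{SJT1} turns $\H^n(f(E))$ into $\int_E|J_n(\md(f,x))|\,d\H^n(x)$. Your ``main obstacle'' is not really one: for $x,y$ in a piece of diameter $<1/k$, the one-point estimate at the basepoint $x$ already gives the two-sided bi-Lipschitz bound. The paper therefore just uses the sets $D^j_k$ of points where radius $1/k$ works for ordinary metric differentiability, and the comparison $(\lambda^{-1}+\eps)\sigma_j\le\md(f,x)\le(\lambda-\eps)\sigma_j$ handles the bi-Lipschitz and Jacobian bounds at once, so neither Theorem~\ref{T2} nor Theorem~\ref{T11}(c) is needed (though using the latter also works).
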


For a map $f:\bbbr^n\to X$ and a set $A\subset \bbbr^n$, we define the
multiplicity function, also called the {\em Banach indicatrix}, by
\[
N(f,A,y):=\card\big(f^{-1}(y)\cap A\big)
        =\H^0\big(f^{-1}(y)\cap A\big).
\]
In the particular case $g=\chi_A$, Proposition~\ref{AFT2}
reduces to the following statement.

\begin{proposition}
\label{AFT3}
If $f:\bbbr^n\to \ell^\infty$ is Lipschitz, then for every measurable set
$A\subset \bbbr^n$ we have
\begin{equation}
\label{AFeq3}
\int_A |J_n(\md(f,x))|\, d\H^n(x)
=
\int_{\ell^\infty} N(f,A,y)\, d\H^n(y).
\end{equation}
\end{proposition}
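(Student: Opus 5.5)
We will prove \eqref{AFeq3} by Kirchheim's decomposition argument: split $A$, up to sets negligible on both sides, into small compact pieces on which $f$ is bi-Lipschitz with distortion close to a fixed norm, then sum over the pieces.

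\emph{Reduction to the regular set.} Split $A=A_0\cup A_1$, where
\[
A_0=\{x\in A:\ \rank\md(f,x)<n\}\cup Z,\qquad A_1=A\setminus A_0,
\]
and $Z$ is the null set where $f$ is not metrically differentiable. On $A_0$ the left-hand side vanishes because $|J_n|=0$ there. The right-hand side also vanishes: $\H^n(f(A_0))=0$ by the Metric Sard Theorem~\ref{T17} together with $\H^n(f(Z))=0$, and $N(f,A_0,\cdot)$ is supported in $f(A_0)$. Both sides are additive in $A$, so it suffices to treat $A\subset A_1$, where $\md(f,x)$ is a norm.

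\emph{Quantitative bi-Lipschitz decomposition.} Fix $t>1$. By Lemma~\ref{lem:semi sep}, choose a countable dense set $\{\sigma_j\}$ of norms. We aim to cover $A_1$, up to a null set, by countably many disjoint Borel sets $E_k$ such that, for each $k$, there is $j=j(k)$ with
\begin{equation*}
t^{-1}\sigma_j(v)\le \md(f,x)(v)\le t\,\sigma_j(v)\quad\text{for all } x\in E_k,\ v\in\bbbr^n,
\end{equation*}
\begin{equation*}
t^{-1}\sigma_j(x-y)\le d(f(x),f(y))\le t\,\sigma_j(x-y)\quad\text{for all } x,y\in E_k.
\end{equation*}
To build the $E_k$, apply Theorem~\ref{T11}(c) on compact pieces of $F_{1/i}$. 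This gives, uniformly on each piece and for $|y-x|<\delta$, the estimate $d(f(x),f(y))=\md(f,x)(y-x)+o(|y-x|)$. Since $\md(f,\cdot)$ is a norm on these pieces, and after a further Lusin restriction it is bounded below by a constant times $|v|$, the error $o(|y-x|)$ can be absorbed multiplicatively. We then sort the points by the index $j$ for which $d_\infty(\md(f,x),\sigma_j)$ is small, and cut into sets of diameter less than $\delta$. The measurability of $x\mapsto\md(f,x)$ comes from Corollary~\ref{T10}. This step, getting the two-point bi-Lipschitz estimate uniformly on a set, is the main obstacle; Theorem~\ref{T11} is exactly what handles it.

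\emph{Comparison on each piece.} On $E_k$, the map $f$ is $t$-bi-Lipschitz from $(E_k,\sigma_j)$ onto $f(E_k)$. Hence, using Proposition~\ref{SJT1},
\[
t^{-n}|J_n(\sigma_j)|\,\H^n(E_k)\le \H^n(f(E_k))\le t^n|J_n(\sigma_j)|\,\H^n(E_k).
\]
From the first displayed inequality, $|J_n(\md(f,x))|$ lies between $t^{-n}|J_n(\sigma_j)|$ and $t^{n}|J_n(\sigma_j)|$ on $E_k$, so
\[
\H^n(f(E_k))\ \text{is within a factor}\ t^{2n}\ \text{of}\ \int_{E_k}|J_n(\md f)|.
\]
Since $f$ is injective on each $E_k$, we have $N(f,A,y)=\sum_k\chi_{f(E_k)}(y)$ for $\H^n$-a.e.\ $y$. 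The exceptional null set has null image because $f$ is Lipschitz. The sets $f(E_k)$ are $\H^n$-measurable: if we take each $E_k$ $\sigma$-compact, then each $f(E_k)$ is $\sigma$-compact. Summing over $k$ by monotone convergence gives \eqref{AFeq3} up to the factor $t^{\pm2n}$, and letting $t\to1$ completes the proof.
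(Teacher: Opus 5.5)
Your proposal is correct and follows the paper's proof (Kirchheim's argument) essentially step by step: the Sard/null-set reduction, the countable dense family $\{\sigma_j\}$ with multiplicative closeness, bi-Lipschitz pieces compared through Proposition~\ref{SJT1}, summation over disjoint pieces, and $t\to1$. The one difference is how you get the uniform two-point estimate. You use the Scorza-Dragoni-type Theorem~\ref{T11}(c). The paper instead splits $B_j$ into the sets $D^j_k$ of points at which the radius $1/k$ already gives an error of at most $\eps c_j|y-x|$. Since $d(f(x),f(y))$ needs the estimate centered at only one of the two points, pointwise metric differentiability is enough. So the step you flag as the main obstacle is in fact elementary and does not need Theorem~\ref{T11}.
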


One can derive Proposition~\ref{AFT2} and hence Theorem~\ref{AFT1} from Proposition \ref{AFT3}, as well, using the standard technique of approximating measurable $g$ by simple functions. Before we recall this argument, let us first address the question of measurability of integrands in \eqref{AFeq2}.

Since $g$ is assumed to be $\H^n$-measurable, the measurability of the left side integrand of \eqref{AFeq2} follows from the following lemma. For later use in the proof of the co-area formula, we state the result in greater generality than is needed here; at this point, for the area formula, only the case $m=n$ will be used.
\begin{lemma}\label{AFT4}
Assume $A\subset\bbbr^n$ is $\H^n$ measurable, $X$ is a metric space and let $f:A\to X$ be Lipschitz and such that for some $m\in\bbbn$, $1\leq m\leq n$,
$$
\rank \apmd(f,x)\leq m\quad \text{for $\H^n$-a.e. }x\in A.
$$
Then $x\mapsto |J_m(\apmd(f,x))|$ is $\H^n$-measurable on $A$.
\end{lemma}

\begin{proof}
Embed the separable space $f(A)$ isometrically into $\ell^\infty$ and let
$$
F=(F_1,F_2,\ldots):\bbbr^n\to\ell^\infty
$$
be a Lipschitz extension of $f$.

By Remark \ref{R2}, for $\H^n$-a.e. $x\in A$ we have $\apmd(f,x)=\md(F,x)$. Also, the coordinate functions $F_1,F_2,\ldots$ are differentiable at $\H^n$-a.e. point of $A$ and by Proposition \ref{MDT2} for \mbox{$\H^n$-a.e.} $x\in\bbbr^n$ and every $v$ we have $\md(F,x)(v)=\sup_i|\nabla F_i(x)\cdot v|$. Let $E\subset A$ be a subset of $A$ of full $\H^n$ measure in which all of these a.e. conditions hold, i.e., $E$ consists of these $x\in A$ in which
\begin{itemize}
    \item $\rank \md (F,x)\leq m$,
    \item $\apmd(f,x)=\md(F,x)$,
    \item all $F_i$ are differentiable at $x$,
    \item $\md(F,x)(v)=\sup_i|\nabla F_i(x)\cdot v|$.
\end{itemize}
For $j=1,2,\ldots$ and $x\in E$ define the seminorms
$$
\tau_j^x(v):=
\max_{1\leq i\leq j} |\nabla F_i(x)\cdot v|,\qquad v\in\bbbr^n.
$$
Equivalently, $\tau_j^x(v)=\|L_j(x)v\|_\infty$, where
$$
L_j(x):\bbbr^n\to\bbbr^j,
\qquad
L_j(x)v
=
\left(
\nabla F_1(x)\cdot v,\ldots,\nabla F_j(x)\cdot v
\right).
$$
Obviously, the map $x\mapsto L_j(x)\in \mathrm{Lin}(\bbbr^n,\bbbr^j)$ is $\H^n$-measurable.

By Proposition~\ref{MDT2},
$$
\md(F,x)(v)=\sup_{i\in\bbbn}|\nabla F_i(x)\cdot v|,
\qquad v\in\bbbr^n,
$$
in particular, $\tau_j^x(v)\leq\md(F,x)(v)$ for all $v\in\bbbr^n$, and thus $N_{\md(F,x)} \subset N_{\tau_j^x}$. Therefore, $\rank \tau_j^x\leq \rank \md (F,x)\leq m$ for every $x\in E$ and every $j$.

The seminorms $\tau_j^x$ form a sequence which converges pointwise to $\md(F,x)$ for $x\in E$.
Since for every $v$ the sequence $\tau_j^x(v)$ is non-decreasing and the sphere $\{|v|=1\}$ is compact, by the Dini Theorem the convergence $\tau_j^x(v)\to \md(F,x)(v)=\apmd(f,x)(v)$ is uniform on the unit sphere, so $d_\infty(\tau_j^x,\apmd(f,x))\to 0$. Then, by Corollary~\ref{SJT6}, $|J_m(\apmd(f,x))|=\lim_{j\to\infty}|J_m(\tau_j^x)|$ for all $x\in E$.

Since $E$ is a subset of $A$ of full $\H^n$ measure, to prove the measurability of  $x\mapsto |J_m(\apmd(f,x))|$ on $A$ it remains to show that the functions $x\mapsto |J_m(\tau_j^x)|$ are $\H^n$ measurable on $E$. Recall that $\tau_j^x(v)=\|L_j(x)v\|_{\infty}$.

For a linear map $L\colon \mathbb R^n\to\mathbb R^j$, define the seminorm
$\sigma_L$ on $\mathbb R^n$ by $\sigma_L(v)=\|Lv\|_\infty$.

Clearly, $d_\infty(\sigma_L,\sigma_{L'})\leq \|L-L'\|$, so
$L\mapsto \sigma_L$ is continuous as a map from
$\operatorname{Lin}(\mathbb R^n,\mathbb R^j)$ to the space of seminorms, and consequently, it is continuous on the closed subset $\mathcal{L}_m:=\{L\in \operatorname{Lin}(\mathbb R^n,\mathbb R^j)~:~\rank L\leq m\}$. By Corollary~\ref{SJT6}, the map $L\mapsto |J_m(\sigma_L)|$ is continuous on $\mathcal{L}_m$. Since
$x\mapsto L_j(x)$ is $\H^n$ measurable on $E$, it follows that
$$
x\mapsto |J_m(\sigma_{L_j(x)})|=|J_m(\tau_j^x)|
$$
is $\H^n$ measurable on $E$.
\end{proof}


The next lemma addresses the measurability of the right-hand-side integrand of \eqref{AFeq3}. We shall address the $\H^n$-measurability of the right-hand-side integrand of \eqref{AFeq2} in Remark \ref{AF:rem1}.

We say that $A$ is {\em $\sigma$-compact} if $A$ can be written as a countable union of compact sets.
For a much deeper version of the next lemma, see \cite{evseev}.
\begin{lemma}
\label{AF:lem1}
Assume that $f\colon \mathbb R^n\to X$ is continuous, where $X$ is a metric space.
Then the following hold.

(a) If $A$ is $\sigma$-compact, then $y\mapsto N(f,A,y)$ is Borel.

(b) If $f$ is Lipschitz and $A$ is $\mathcal H^n$-measurable, then
$y\mapsto N(f,A,y)$ is $\mathcal H^n$-measurable.
\end{lemma}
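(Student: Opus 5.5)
For part (a), I will write $A=\bigcup_{k} C_k$ with each $C_k$ compact. Replacing $C_k$ by $C_1\cup\dots\cup C_k$, I may assume the sequence is increasing. Then $N(f,A,y)=\lim_k N(f,C_k,y)$ is a monotone limit, so it suffices to treat a compact set $C$.

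For compact $C$ and each $j\in\bbbn$, I partition $\bbbr^n$ into the disjoint half-open dyadic cubes $\{Q^j_\nu\}_\nu$ of side $2^{-j}$. Each $C\cap Q^j_\nu$ is $\sigma$-compact, so its image $f(C\cap Q^j_\nu)$ is a countable union of compact sets, hence Borel (it is even compact if I use closed cubes, but half-open cubes keep the pieces disjoint). Define
$$
N_j(y):=\sum_\nu \chi_{f(C\cap Q^j_\nu)}(y).
$$
This sum is finite for fixed $j$ because $C$ is bounded, so $N_j$ is Borel. I claim $N_j(y)\nearrow N(f,C,y)$ for every $y$:
\begin{itemize}
\item Each dyadic cube of generation $j$ is a disjoint union of cubes of generation $j+1$, and a cube meeting $f^{-1}(y)$ has at least one child meeting it. So $N_j(y)$, which counts the cubes meeting $f^{-1}(y)\cap C$, is nondecreasing in $j$.
\item $N_j(y)\le N(f,C,y)$, since the cubes are disjoint and each counted cube contains at least one point of the fiber.
\item If $f^{-1}(y)\cap C$ contains $p$ distinct points, then for $j$ large these points lie in distinct cubes, so $N_j(y)\ge p$. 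This also covers the case where the fiber is infinite.
\end{itemize}
Hence $N(f,C,y)$ is a pointwise limit of Borel functions and is Borel. This finishes (a).

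For part (b), I will use inner regularity of Lebesgue measure to write $A=B\cup Z$ with $B$ $\sigma$-compact, $Z$ disjoint from $B$, and $\H^n(Z)=0$. Then
$$
N(f,A,y)=N(f,B,y)+N(f,Z,y).
$$
The first term is Borel by (a). For the second term, $N(f,Z,y)=0$ whenever $y\notin f(Z)$. Since $f$ is Lipschitz, $\H^n(f(Z))\le \lip(f)^n\H^n(Z)=0$. So $N(f,Z,\cdot)$ vanishes outside an $\H^n$-null set, and it is $\H^n$-measurable because $\H^n$ is complete on its measurable sets. Therefore $N(f,A,\cdot)$ is $\H^n$-measurable.

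The only delicate step is the convergence $N_j\to N(f,C,\cdot)$ together with the Borel measurability of each image $f(C\cap Q^j_\nu)$. Continuity of $f$ and $\sigma$-compactness handle the measurability, and the counting argument above handles the convergence.
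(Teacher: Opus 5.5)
Your proof is correct. Part (b) is the same as the paper's argument: inner regularity gives $A=B\cup Z$ with $B$ $\sigma$-compact and $\H^n(Z)=0$, and then $\H^n(f(Z))\le \lip(f)^n\H^n(Z)=0$.

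Part (a) takes a genuinely different route. The paper fixes a compact $K$ and describes the superlevel sets directly. It forms the compact set $E_{k,\ell}\subset K^k$ of $k$-tuples that are pairwise $1/\ell$-separated and have a common image, and shows that
\[
\{y:\,N(f,K,y)\ge k\}=\bigcup_{\ell}(f\circ\pi_1)(E_{k,\ell}).
\]
This is a countable union of compact sets, and the proof concludes via $N=\sum_k\chi_{\{N\ge k\}}$.

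You instead use the classical Federer/Evans--Gariepy dyadic approximation $N_j=\sum_\nu\chi_{f(C\cap Q^j_\nu)}$. Each term is Borel because continuous images of $\sigma$-compact sets are $\sigma$-compact. Convergence to $N(f,C,\cdot)$ follows from your two bounds, $N_j\le N$ and $N_j\ge p$ for large $j$. These bounds alone already force $N_j\to N$, even without the monotonicity, which you also verify correctly.

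The paper's route needs no limiting or counting argument. It also yields the extra information that the superlevel sets of $N(f,K,\cdot)$ are $\sigma$-compact, at the cost of passing to the product $K^k$.

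Your route is the standard textbook one and works verbatim for $\sigma$-compact $A$. The reduction to compact $C$ is unnecessary there, since a countable sum of Borel functions is Borel. The same partition-and-count scheme also adapts to any pieces whose images are $\H^n$-measurable, for instance Lipschitz images of Lebesgue measurable sets.
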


\begin{proof}
We first prove (a) in the case when $A=K$ is compact.
It suffices to prove that the sets
\[
N_k:=\{y\in X:N(f,K,y)\geq k\},
\qquad
k\in\mathbb{N},
\]
are Borel. Indeed, then
$$
N(f,K,y)=\sum_{k=1}^\infty  \chi_{N_k}(y)
\qquad
\text{is Borel.}
$$
Fix $k\in\mathbb N$. For $\ell\in\mathbb N$, let
\[
E_{k,\ell}=\{(x_1,\ldots,x_k)\in K^k: |x_i-x_j|\geq 1/\ell
\text{ for } i\neq j,\ f(x_1)=\cdots=f(x_k)\}.
\]
The set $E_{k,\ell}$ is compact. Indeed, it is closed in the compact set $K^k$.
Let
\[
B_{k,\ell}=(f\circ\pi_1)(E_{k,\ell}),
\]
where $\pi_1(x_1,\ldots,x_k)=x_1$. Then $B_{k,\ell}$ is compact.

We claim that
\[
\{y\in X:N(f,K,y)\geq k\}=\bigcup_{\ell=1}^{\infty}B_{k,\ell}.
\]
Indeed, if $y\in B_{k,\ell}$, then there are $k$ distinct points
$x_1,\ldots,x_k\in K$ such that $f(x_1)=\cdots=f(x_k)=y$, and hence
$N(f,K,y)\geq k$. Conversely, if $N(f,K,y)\geq k$, then there are distinct points
$x_1,\ldots,x_k\in K$ such that $f(x_1)=\cdots=f(x_k)=y$. Since the points are
distinct, there is $\ell\in\mathbb N$ such that $|x_i-x_j|\geq 1/\ell$ for all
$i\neq j$. Thus $(x_1,\ldots,x_k)\in E_{k,\ell}$ and $y\in B_{k,\ell}$.

Therefore, $\{y\in X:N(f,K,y)\geq k\}$ is Borel, since it is a countable union of compact sets.

Now suppose that $A$ is $\sigma$-compact. Write
$A=\bigcup_{j=1}^{\infty}K_j$, where $K_j$ are compact and
$K_1\subset K_2\subset\ldots$ Then, for every $k\in\mathbb N$,
\[
\{y\in X:N(f,A,y)\geq k\}
=
\bigcup_{j=1}^{\infty}\{y\in X:N(f,K_j,y)\geq k\}.
\]
The sets on the right-hand side are Borel by the compact case. Hence
$y\mapsto N(f,A,y)$ is Borel. This proves (a).

We now prove (b). Since $A$ is $\mathcal H^n$-measurable in $\mathbb R^n$, by
inner regularity we can find a $\sigma$-compact set $K\subset A$ such that
$\mathcal H^n(A\setminus K)=0$.

Let $Z=A\setminus K$. Since $f$ is Lipschitz,
$\H^n(f(Z))\leq \operatorname{Lip}(f)^n\mathcal H^n(Z)=0$.
For $y\notin f(Z)$ we have
\[
N(f,A,y)=N(f,K,y).
\]
By part (a), the function $y\mapsto N(f,K,y)$ is Borel. Thus
$y\mapsto N(f,A,y)$ agrees with a Borel function outside the
$\mathcal H^n$-null set $f(Z)$. Hence $y\mapsto N(f,A,y)$ is
$\mathcal H^n$-measurable.
\end{proof}

\begin{proof}[Proof of Proposition~\ref{AFT2}, assuming Proposition~\ref{AFT3}]
Since Proposition~\ref{AFT2} implies Theorem~\ref{AFT1}, this will also complete
the proof of Theorem~\ref{AFT1}.

Assume Proposition~\ref{AFT3} is proved, i.e., \eqref{AFeq3} holds for any $\H^n$-measurable set in $\bbbr^n$. Let $A\subset\bbbr^n$ be measurable. Then by linearity of the integral, for any simple function
$$
g=\sum_{j=1}^N a_j\chi_{E_j},\qquad a_j\geq 0,
$$
Proposition~\ref{AFT3} applied to $E_j\cap A$ yields
\[
\begin{split}
\int_A g(x) |J_n(\md(f,x))|\,d\H^n(x)=\int_{\bbbr^n} \chi_A(x)\sum_{j=1}^N a_j \chi_{E_j}(x) |J_n(\md(f,x))|\,d\H^n(x)\\
=\sum_{j=1}^N a_j \int_{E_j\cap A}|J_n(\md(f,x))|\,d\H^n(x)
=\sum_{j=1}^N a_j \int_{\ell^\infty} N(f,E_j\cap A,y)\,d\H^n(y)
\end{split}
\]
Note, however, that for $g(x)=\sum_{j=1}^N a_j\chi_{E_j}(x)$,
\begin{equation}
\label{AF:eq3}
\sum_{j=1}^N a_j N(f,E_j\cap A,y)=\sum_{j=1}^N \sum_{x\in f^{-1}(y)\cap A}a_j\chi_{E_j}(x)=\sum_{x\in f^{-1}(y)\cap A}g(x),
\end{equation}
so
$$
\int_A g(x) |J_n(\md(f,x))|\,d\H^n(x)=\int_{\ell^\infty} \sum_{x\in f^{-1}(y)\cap A}g(x)\,d\H^n(y),
$$
i.e., \eqref{AFeq2} holds for any simple function $g$.

For a general $\H^n$-measurable, nonnegative $g:A\to [0,\infty)$, we can find an increasing sequence of simple functions $g_j$ converging pointwise to $g$. Then
\eqref{AFeq2} holds for all $g_j$, and since obviously $\sum_{x\in f^{-1}(y)\cap A} g_j(x) \nearrow \sum_{x\in f^{-1}(y)\cap A}g(x) $, by the monotone convergence theorem \eqref{AFeq2} holds for $g$, as well.
\end{proof}
\begin{remark}\label{AF:rem1}
By tracking the above proof we can also derive the $\H^n$-measurability of $G(y):=\sum_{x\in f^{-1}(y)\cap A}g(x)$, i.e., the right-hand-side integrand of \eqref{AFeq2}. Namely, if $(g_j)$ is the increasing sequence of simple functions that converge pointwise to $g$, we have $\sum_{x\in f^{-1}(y)\cap A} g_j(x) \nearrow G(y)$, and a pointwise limit of $\H^n$-measurable functions is $\H^n$-measurable, so it suffices to prove that $G(y)$ is $\H^n$-measurable in the case when $g$ is a non-negative simple function, $g(x)=\sum_{j=1}^N a_j\chi_{E_j}(x)$. Then, however,  by \eqref{AF:eq3},
$G(y)=\sum_{x\in f^{-1}(y)\cap A}g(x)=\sum_{j=1}^N a_j N(f,E_j\cap A,y)$ and $\H^n$-measurability of $G$ follows from Lemma \ref{AF:lem1}.
\end{remark}

\begin{proof}[Proof of Proposition \ref{AFT3}]
Recall that if $f:\bbbr^n\to\ell^\infty$ is Lipschitz, then for $\H^n$-a.e.\ $x\in\bbbr^n$, we have
\begin{equation}
\label{AF:md-limit}
\lim_{y\to x}
\frac{\Vert f(y)-f(x)\Vert_\infty-\md(f,x)(y-x)}{|y-x|}=0.
\end{equation}

Let
\begin{itemize}
\item $\Omega_0=\{x\in\bbbr^n~~:~~\text{\eqref{AF:md-limit} holds and }\md(f,x)\text{ is \emph{not} a norm}\}$,
\item $\Omega_1=\{x\in\bbbr^n~~:~~\text{\eqref{AF:md-limit} holds and }\md(f,x)\text{ is a norm}\}$
\item $Z=\{x\in\bbbr^n~~:~~\text{\eqref{AF:md-limit} does not hold}\}$
\end{itemize}

Let us now proceed with the proof of the formula \eqref{AFeq3}.

Set $A_1:=\Omega_1 \cap A$. Note that $A\setminus A_1\subset \Omega_0\cup Z$ and $f(A\setminus A_1)\subset f(\Omega_0)\cup f(Z)$. Also,
\begin{itemize}
\item by definition, $|J_n(\md(f,x))|=0$ for $x\in \Omega_0$,
\item $f(\Omega_0)$ is, by the  Metric Sard Theorem (Theorem~\ref{T17}),  a $\H^n$-null set,
\item $Z$ is a $\H^n$-null set,  by the Kirchheim-Rademacher Theorem, and since $f$ is Lipschitz, also ${\H^n(f(Z))=0}$.
\end{itemize}
(Note that the proof of Theorem~\ref{T17} does not use any form of the area formula.)

Therefore $N(f,A\setminus A_1,y)$, which is non-zero only if $y\in f(A\setminus A_1)$, vanishes for $\H^n$-almost every $y$,
\begin{equation}
\label{eqnull}
\int_{A\setminus A_1} |J_n(\md(f,x))| \, d\H^n(x) = 0=\int_{\ell^\infty} N(f,A\setminus A_1,y) \, d\H^n(y) \,,
\end{equation}
and since $N(f,A,y)=N(f,A_1,y)+N(f,A\setminus A_1,y)$, it suffices to prove \eqref{AFeq3} with $A_1$ in place of $A$.

Since the space of all norms on $\bbbr^n$ is separable (Lemma~\ref{lem:semi sep}),  we can choose a dense countable subset $\{\sigma_1,\sigma_2,\ldots\}$.
Fix $\lambda>1$ and $\eps > 0$ small enough so that $\lambda^{-1}+\eps < 1 < \lambda - \eps$. For $j=1,2,\ldots$, let $B_j$ be the set of all $x \in A_1$ such that
\begin{equation}
\label{ma3}
(\lambda^{-1}+\eps)  \sigma_j(v) \leq \md(f,x)(v) \leq (\lambda - \eps) \sigma_j(v), \quad \text{for all $v \in \bbbr^n$.}
\end{equation}
Then $B_j$ are measurable. We claim that
$$
A_1 = \bigcup_{j=1}^\infty B_j.
$$
Indeed, let $x\in A_1$. We need to find $\sigma_j$ that satisfies \eqref{ma3}. Since $\md(f,x)$ is a norm, $m:=\inf_{|v|=1}\md(f,x)(v)>0$. Let $\eta>0$ be so small that
$$
\lambda^{-1}+\eps<\frac{1}{1+\eta}<\frac{1}{1-\eta}<\lambda-\eps.
$$
By the density, we can find $\sigma_j$ such that
$$
\sup_{|v|=1}|\sigma_j(v)-\md(f,x)(v)|<m\eta,
$$
and it easily follows that $\sigma_j$ satisfies \eqref{ma3}. One first verifies it for $|v|=1$, and then the inequality follows for all $v$ by homogeneity. We leave details to the reader.

For each $j=1,2,\ldots$, $\sigma_j$ is a norm, so there is a constant $c_j >0$ such that
\begin{equation}\label{eq:ucmo2}
     \sigma_j(v) \ge c_j |v| \quad \text{for all $v \in \bbbr^n$}
\end{equation}

Finally, for every $x \in B_j$ there exists a $\delta >0$ such that
\begin{equation}\label{eq:ucmo1}
\big|\, \Vert f(x)-f(y)\Vert_\infty - \md(f,x)(y-x)\big| \leq \eps c_j|y-x|, \; \text{for all $y \in B(x,\delta)$.}
\end{equation}
Keeping $j$ fixed, for $k=1,2,\ldots$ let $ D^j_k $ be the set of all $x \in B_j$ such that $\delta=1/k$ satisfies \eqref{eq:ucmo1}, i.e.\
\begin{equation}
    \label{ma4}
    \big|\, \Vert f(x)-f(y)\Vert_\infty - \md(f,x)(y-x)\big| \leq \eps c_j|y-x|, \; \text{for all $y \in B(x,{1}/{k})$.}
\end{equation}
Then $D^j_k$ are measurable and clearly $B_j = \bigcup_k D^j_k$, so $A_1 =\bigcup_k\bigcup_j D^j_k$.

Fix any measurable $E\subset D^j_k$ with $\diam E < 1/k$. For any $x,y \in E$, since $x \in D^j_k$ and $y \in B(x,{1}/{k})$, all the inequalities in \eqref{ma3} and \eqref{ma4} hold. So, by the triangle inequality
\begin{align*}
    \Vert f(x)-f(y)\Vert_\infty &\leq \md(f,x)(y-x)+\eps c_j|y-x| \\
    &\le (\lambda-\eps)\sigma_j(y-x)+\eps c_j|y-x|\\
    &= \lambda \sigma_j(y-x) +\eps (c_j|y-x|-\sigma_j(y-x))\\
    &\le \lambda \sigma_j(y-x),
\end{align*}
where in the last inequality we used \eqref{eq:ucmo2}.

Analogously, we calculate
\begin{align*}
    \Vert f(x)-f(y)\Vert_\infty &\ge \md(f,x)(y-x)-\eps c_j|y-x| \\
    &\ge (\lambda^{-1}+\eps)\sigma_j(y-x)-\eps c_j|y-x|\\
    &= \lambda^{-1} \sigma_j(y-x) +\eps (\sigma_j(y-x)-c_j|y-x|)\\
    &\ge \lambda^{-1} \sigma_j(y-x),
\end{align*}
Together they yield
$$
\lambda^{-1} \sigma_j(y-x) \le \Vert f(x)-f(y)\Vert_\infty \le \lambda \sigma_j(y-x), \; \text{for all $x$ and $y$ in $E$}.
$$
In other words,
$$
f\colon (E,\sigma_j) \to \ell^\infty
$$
is $\lambda$-bi-Lipschitz. In particular, it is one-to-one and
\begin{equation}
\label{nord1}
 \lambda^{-n} \H^n_{\sigma_j}(E) \leq  \H^n(f(E)) \leq \lambda^n\H^n_{\sigma_j}(E) \, .
\end{equation}
On the other hand, since $E \subset B_j$,  for every $x \in E$ we have, by \eqref{ma3},
$$
\{v~\in \bbbr^n~:~\lambda^{-1}\sigma_j(v)\leq 1\}\supset \{v\in\bbbr^n~:~\md(f,x)(v)\leq 1\}\supset  \{v~\in \bbbr^n~:~\lambda\sigma_j(v)\leq 1\}
$$
thus, by the definition of the $n$-Jacobian,
\begin{equation}
\label{nord2}
\lambda^{-n} |J_n(\sigma_j)| \leq |J_n(\md(f,x))| \leq \lambda^n |J_n(\sigma_j)|.
\end{equation}

Then, by Proposition~\ref{SJT1},  for every $j$,
$$
\H^n_{\sigma_j}(E) = |J_n(\sigma_j)| \H^n(E) = \int_{E} |J_n(\sigma_j)| \, d\H^n(x) \, .
$$
Thus, from \eqref{nord1}, \eqref{nord2} and the last equality, we get
\begin{equation}
\label{nord3}
 \lambda^{-2n} \int_{E} |J_n(\md(f,x))| \, d\H^n(x)  \leq  \H^n(f(E))  \leq \lambda^{2n}\int_{E} |J_n(\md(f,x))| \, d\H^n(x)  \, .
\end{equation}
By representing each of the sets $D_k^j$ as a union of measurable sets of diameter less than $1/k$ we can write
$$
A_1=\bigcup_{k=1}^\infty\bigcup_{j=1}^\infty D_k^j=\bigcup_{\ell=1}^\infty E_\ell,
$$
where the sets $E_\ell$ are measurable and for each $\ell$ there are $k$ and $j$ such that $E_\ell\subset D_k^j$, $\diam E_\ell<1/k$. Using a standard argument we may assume that the sets $E_\ell$ are pairwise disjoint. The map $f$ is one-to-one on each $E_\ell$ and \eqref{nord3} holds.

Since,
$$
N(f,A_1,y)=\sum_{\ell=1}^\infty N(f,E_\ell,y)=\sum_{\ell=1}^\infty\chi_{f(E_\ell)}(y),
$$
summing \eqref{nord3} over all $\ell$ yields
\begin{align*}
   \lambda^{-2n}\int_{A_1} |J_n(\md(f,x))| \, d\H^n(x) &\leq  \int_{\ell^\infty} N(f,A_1,y)\, d\H^n(y)
   &\leq  \lambda^{2n} \int_{A_1} |J_n(\md(f,x))| \, d\H^n(x) \, .
\end{align*}
Since $\lambda>1$ was arbitrary, by letting $\lambda \to 1^+$ we obtain the area formula on $A_1$, i.e.\
$$
\int_{A_1} |J_n(\md(f,x))| \, d\H^n(x) = \int_{\ell^\infty} N(f,A_1,y)\, d\H^n(y),
$$
which together with \eqref{eqnull} yields
$$
\int_{A} |J_n(\md(f,x))| \, d\H^n(x) = \int_{\ell^\infty} N(f,A,y)\, d\H^n(y),
$$
as desired.
\end{proof}

As a final result, we present a useful change of variables formula. \begin{corollary}
\label{gum14}
Suppose $A\subset\bbbr^n$ is measurable, $X$ is a metric space, $f:A\to X$ is Lipschitz and $u:f(A)\to [0,\infty]$ is $\H^n$-measurable. Then
$$
\int_A u(f(x))|J_n(\apmd (f,x))|\,d\H^n(x)=\int_Xu(y) N(f,A,y)\,d\H^n(y).
$$
\end{corollary}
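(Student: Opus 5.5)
We will derive this change of variables formula from Theorem~\ref{AFT1} by choosing $g:=u\circ f$ on $A$. The key identity is that for every $y\in X$,
$$
\sum_{x\in f^{-1}(y)\cap A} u(f(x))=u(y)\,N(f,A,y),
$$
because every term of the sum equals $u(y)$ and there are exactly $N(f,A,y)$ terms. We use the conventions $0\cdot\infty=0$ and that the sum is zero when $y\notin f(A)$. With this identity, the right-hand side of Theorem~\ref{AFT1} becomes $\int_X u(y)N(f,A,y)\,d\H^n(y)$, which is the claim. Theorem~\ref{AFT1} is stated for finite-valued $g$, so we first treat truncations $u_k=\min\{u,k\}$ and then let $k\to\infty$ by monotone convergence on both sides.

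The main step is to check that $g=u\circ f$ is $\H^n$-measurable on $A$. This is not automatic, since $u$ is only assumed $\H^n$-measurable on $f(A)$, not Borel. To handle it, we choose a Borel function $\tilde u$ on $X$ (extended by zero off $f(A)$) that agrees with $u$ outside an $\H^n$-null set $Z\subset f(A)$; Borel regularity of $\H^n$ lets us take $Z$ contained in a Borel null set $\tilde Z$. Then $\tilde u\circ f$ is measurable, since $f$ is continuous. The two functions $u\circ f$ and $\tilde u\circ f$ can differ only on $f^{-1}(\tilde Z)\cap A$, and this set need not be $\H^n$-null.

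The main obstacle is therefore the following: the set $f^{-1}(\tilde Z)$ may have positive measure, but there $|J_n(\apmd(f,x))|=0$ almost everywhere. To see this, apply Theorem~\ref{AFT1} with $g=\chi_{f^{-1}(\tilde Z)\cap A}$, which is measurable because $f^{-1}(\tilde Z)\cap A$ is measurable. The right-hand side is $\int_{\tilde Z}N\,d\H^n=0$, so $J_n=0$ a.e.\ on that set.

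Thus the products $(u\circ f)\,J_n$ and $(\tilde u\circ f)\,J_n$ agree a.e., so the left-hand integrand is measurable and the two left integrals coincide. The two right-hand integrands $uN$ and $\tilde uN$ differ only on the $\H^n$-null set $Z$, so the right integrals coincide as well. It therefore suffices to prove the formula for the Borel function $\tilde u$, where the argument of the first paragraph applies directly.
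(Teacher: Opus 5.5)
Your proposal is correct and follows essentially the same route as the paper: replace $u$ by a Borel representative, apply the area formula on the preimage of the exceptional null set to see that $|J_n(\apmd(f,\cdot))|$ vanishes a.e.\ there, and then apply Theorem~\ref{AFT1} to the Borel composite. The differences are minor. You add a truncation argument to handle the value $+\infty$, which the paper glosses over. The paper, in turn, justifies the existence of the Borel representative through the $\H^n$-$\sigma$-finiteness of $f(A)$ and Lemma~\ref{app:measborel}, a step you assert without proof.
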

\begin{proof}
The corollary essentially follows by applying Theorem~\ref{AFT1} to $g=u\circ f$, but there is a subtle point: unless $u$ is Borel, $g=u\circ f$ need not be measurable.

Note that $f(A)$ is $\H^n$-$\sigma$-finite. Indeed, set $A_k=A\cap B(0,k)$, then $f(A)=\bigcup_{k=1}^\infty f(A_k)$ and $\H^n(f(A_k))\leq \rm{Lip}(f)^n k^n\omega_n<\infty$.

By Lemma~\ref{app:measborel}, there is  a Borel $v:f(A)\to [0,\infty]$ and a $\H^n$-null Borel set $Z\subset f(A)$ such that $u=v$ on $f(A)\setminus Z$. Applying the area formula \eqref{AFeq3} to $f^{-1}(Z)$ we get
$$
\int_{f^{-1}(Z)}|J_n(\apmd(f,x))|\,d\H^n(x)=\int_Z N(f,A,y)\,d\H^n(y)=0,
$$
so $|J_n(\apmd(f,x))|=0$ for $\H^n$-a.e. $x\in f^{-1}(Z)$ and
$$
u(f(x))|J_n(\apmd(f,x))|=v(f(x))|J_n(\apmd(f,x))|\quad \text{for $\H^n$-a.e. }x\in A.
$$
Now $v\circ f$ is measurable, so we can apply Theorem~\ref{AFT1} to $g=v\circ f$, which gives
\begin{equation*}
    \begin{split}
\int_A u(f(x))&|J_n(\apmd(f,x))|\,d\H^n(x)=\int_A v(f(x))|J_n(\apmd(f,x))|\,d\H^n(x)\\
&=\int_X\sum_{x\in f^{-1}(y)\cap A}v(f(x))\,d\H^n(y)=\int_X v(y)N(f,A,y)\,d\H^n(y)\\
&=\int_X u(y)N(f,A,y)\,d\H^n(y),
    \end{split}
\end{equation*}
as desired.
\end{proof}

\section{Co-area inequality and Sard's theorem}
\label{coarea}
If $(X,\mu)$ is a measure space, then for {\em any} function $f:X\to [0,\infty]$ we define the {\em upper integral} by
$$
\int^*_X f \ d\mu = \inf \, \int_X \phi \, d\mu \, ,
$$
where the infimum is taken over all {\em $\mu$-measurable} functions $\phi$ satisfying $0 \leq f(x) \leq \phi(x)$ for $\mu$-a.e.\ $x \in X$.

While in the definition of the upper integral we do not require measurability of $f$, if $\int_X^*f\,d\mu=0$, then $f=0$ $\mu$-a.e.

Clearly,
\begin{equation}
\label{Ceq5}
\int_X^*f\, d\mu=\int_{X\setminus Z}^* f\, d\mu
\quad
\text{if } \mu(Z)=0.
\end{equation}
It is easy to check that if $f_i:X\to [0,\infty]$, $i\in\bbbn$, then
$$
\int_X^*\sum_{i=1}^\infty f_i\, d\mu\leq \sum_{i=1}^\infty\int_X^* f_i\, d\mu.
$$
The following monotone convergence is also an easy exercise, see \cite[Lemma~2.2]{EH1}:
If $f_n\colon X\to [0,\infty]$ is a monotone sequence of functions $0\leq f_1\leq f_2\leq\ldots$ and $\lim_{n\to\infty} f_n(x)=f(x)$ for $\mu$-a.e.\ $x\in X$, then
\begin{equation}
\label{Ceq6}
\lim_{n\to\infty} \int_X^*f_n\, d\mu=\int_X^*f\, d\mu.
\end{equation}

The next result is a classical co-area inequality.
\begin{theorem}[Co-area inequality]
\label{CT1}
Let $X$ and $Y$ be arbitrary metric spaces, $0\leq m\leq n<\infty$ (any) real numbers and $A\subset X$ any subset.
Then, for any $L$-Lipschitz map $ f: X \to Y$ we have
$$
\int^*_Y \mathcal{H}^{n-m} \left(f^{-1}(y) \cap A\right)\,  d\mathcal{H}^m (y)  \leq \frac{\omega_{n-m} \omega_m}{\omega_{n}}L^m \,\mathcal{H}^{n}(A) \, .
$$
Moreover if $X$ is proper (i.e., bounded and closed sets in $X$ are compact), $A$ is $\H^n$\nobreakdash-measurable, and $\H^n(A)<\infty$, then the function
$$
y\mapsto \mathcal{H}^{n-m} \left(f^{-1}(y) \cap A\right)
$$
is $\H^m$-measurable and therefore, the upper integral  can be replaced with the usual integral.
\end{theorem}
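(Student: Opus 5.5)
We follow the classical Federer/Eilenberg covering argument. Reduce first to $\H^n(A)<\infty$, since otherwise there is nothing to prove. Fix $\delta>0$ and choose a countable cover $A\subset\bigcup_i C_i$ with $\diam C_i<\delta$ and
$$
\sum_i \omega_n\Big(\frac{\diam C_i}{2}\Big)^n\leq \H^n_\delta(A)+\delta .
$$
We may take the $C_i$ closed, since closure does not change the diameter. For each $y\in Y$, the sets $C_i\cap f^{-1}(y)$ cover $f^{-1}(y)\cap A$ and have diameter $<\delta$. Hence
$$
\H^{n-m}_\delta(f^{-1}(y)\cap A)\leq \sum_i \omega_{n-m}\Big(\frac{\diam C_i}{2}\Big)^{n-m}\chi_{f(C_i)}(y).
$$

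Next integrate with respect to $\H^m$, using the upper integral and its countable subadditivity. The function $\chi_{f(C_i)}$ may not be measurable, but its upper integral is at most $\H^m(f(C_i))$, by Borel regularity of $\H^m$. Since $f$ is $L$-Lipschitz, $\diam f(C_i)\leq L\diam C_i$, and a single set gives the isodiametric-type bound
$$
\H^m(f(C_i))\leq \omega_m\Big(\frac{L\diam C_i}{2}\Big)^m .
$$
This is the delicate point: it only holds for $\H^m_\infty$, or for $\H^m_{\delta'}$ once $L\diam C_i<\delta'$, and not for $\H^m$ itself. The remedy is to work with $\H^m_{L\delta}$ in place of $\H^m$, which is an outer measure, and to define the upper integral with respect to it. Combining the estimates gives
$$
\int^*\H^{n-m}_\delta(f^{-1}(y)\cap A)\,d\H^m_{L\delta}(y)\leq \frac{\omega_{n-m}\omega_m}{\omega_n}L^m\big(\H^n_\delta(A)+\delta\big).
$$
We then let $\delta\to0$. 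The integrands increase to $\H^{n-m}(f^{-1}(y)\cap A)$ and the measures increase to $\H^m$. Passing to the limit in both simultaneously is the main obstacle, because upper integrals against varying outer measures are not covered by \eqref{Ceq6}. This is exactly the subtle step handled in \cite{EH1}, and Federer uses the same approach. In our setting I would first try to rewrite the upper integral as a Choquet-type integral of superlevel sets, $\int_0^\infty \H^m_{L\delta}(\{g_\delta>t\})\,dt$, and then apply monotone convergence to the increasing set functions.

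For the measurability statement, assume $X$ is proper, $A$ is measurable with $\H^n(A)<\infty$. Approximate $A$ from inside by a $\sigma$-compact set $K$, so that the remainder $A\setminus K$ has $\H^n$ measure zero. By the inequality just proved, $\H^{n-m}(f^{-1}(y)\cap(A\setminus K))=0$ for $\H^m$-a.e. $y$, so it suffices to treat $K$, and by monotonicity a compact $K$. For compact $K$, the map $y\mapsto\H^{n-m}_\delta(f^{-1}(y)\cap K)$ is upper semicontinuous. Indeed, covers by open sets are stable under small perturbations of $y$, because $f^{-1}(y')\cap K$ lies in a small neighborhood of $f^{-1}(y)\cap K$ when $y'$ is close to $y$, by compactness. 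Hence these functions are Borel, and letting $\delta\to0$ gives Borel measurability of $y\mapsto\H^{n-m}(f^{-1}(y)\cap K)$.
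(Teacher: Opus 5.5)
Your set-up is correct: the near-optimal closed $\delta$-covers $\{C_i\}$ of $A$, the pointwise bound $\H^{n-m}_\delta(f^{-1}(y)\cap A)\le\sum_i\omega_{n-m}(\diam C_i/2)^{n-m}\chi_{f(C_i)}(y)$, and your remark that $\H^m(f(C_i))\le\omega_m(L\diam C_i/2)^m$ fails for the measure $\H^m$. But the argument stops exactly where the difficulty of the theorem lies. The passage $\delta\to0$, after $\H^m$ has been replaced by $\H^m_{L\delta}$, is never carried out, and the Choquet remedy you suggest does not work.

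The set function $\H^m_{L\delta}$ is only an outer measure. In general its Carath\'eodory $\sigma$-algebra does not even contain the Borel sets, so an ``upper integral'' with respect to it, in the sense of Section~\ref{coarea}, is not a usable object. If you use $\int_0^\infty\H^m_{L\delta}(\{h>t\})\,dt$ instead, you lose subadditivity, because Hausdorff contents are not strongly subadditive. For example, in $\bbbr^2$ take $C=[-1,1]\times\{0\}$, $E_1=C\cup(\{0\}\times[0.7,1.7])$ and $E_2=C\cup(\{0\}\times[-1.7,-0.7])$. Then $\H^1_\infty(E_1)=\H^1_\infty(E_2)=\H^1_\infty(C)=2$, since all three sets have diameter $2$ and project onto $[-1,1]$. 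A short projection argument gives $\H^1_\infty(E_1\cup E_2)\ge 2.7$. Hence the Choquet integral of $\chi_{E_1}+\chi_{E_2}$ is at least $4.7>4=\H^1_\infty(E_1)+\H^1_\infty(E_2)$, and after rescaling the same holds for $\H^1_\delta$. With $f=\mathrm{id}$ and $C_i=E_i$, this shows that the inequality $\int_0^\infty\H^m_{L\delta}(\{\sum_i a_i\chi_{f(C_i)}>t\})\,dt\le\sum_ia_i\H^m_{L\delta}(f(C_i))$, which you need before letting $\delta\to0$, fails in general.

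Moreover, the ``monotone convergence'' you want amounts to $\H^m_{L\delta_k}(E_k)\to\H^m(E)$ whenever $E_k\nearrow E$ and $\delta_k\to0$. That is an increasing sets lemma for Hausdorff contents, in the spirit of Davies; it is itself a deep statement and is not available for free in an arbitrary metric space. So the proposal reduces the theorem to its hardest step and then defers that step.

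The missing idea, in the setting where the paper actually gives a proof, is never to change the measure. The paper derives Theorem~\ref{CT1} from Theorem~\ref{CT2} together with \eqref{Ceq2}. It proves Theorem~\ref{CT2} only when $Y$ is $\H^m$-$\sigma$-finite and takes the general case from \cite{EH1}. In that proof, after reducing to $\H^m(Y)<\infty$:
\begin{enumerate}
\item The asymptotic isodiametric inequality (Theorem~\ref{AT1}) produces sets $W_j\nearrow Y\setminus Z$ with $\H^m(Z)=0$, such that $\H^m(F)\le(1+\eps)\omega_m(\diam F/2)^m$ for every $F$ meeting $W_j$ with $\diam F<1/j$.
\item In the pointwise bound, only the covering sets with $\overline{f(A_{ik})}\cap W_j\ne\varnothing$ are kept.
\item One integrates over $W_j$ against the fixed measure $\H^m$.
\item Finally one applies \eqref{Ceq6} in $k$ and in $j$, which is legitimate precisely because the measure stays fixed.
\end{enumerate}

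Your measurability part is essentially Lemma~\ref{lem:measfib}, with one correction. The perturbation argument proves upper semicontinuity of $y\mapsto\H^{n-m}_{\delta,op}(f^{-1}(y)\cap K)$ (open covers, Lemma~\ref{lem:hsop}), not of $\H^{n-m}_\delta$. Fattening arbitrary covering sets into open ones may push their diameters above $\delta$. Lemma~\ref{lem:hsop} then still gives $\H^{n-m}$ in the limit.
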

This beautiful theorem is the culmination of work by N\"obeling, Szpilrajn, Eilenberg, Harold, Federer, and Davies, in chronological order, spanning a period of forty years.

The proof is highly nontrivial. For a self-contained, detailed, and relatively elementary treatment, see \cite{EH1}, which also includes an extensive historical overview.

Theorem~\ref{CT1} follows from a more general result, Theorem~\ref{CT2} below.
\begin{definition}
\label{CD1}
For an arbitrary map $f:X\to Y$ between metric spaces, $m,n\in [0,\infty)$, $\delta\in (0,\infty]$, and any $A\subset X$ we define
$$
\Phi^{m,n}_\delta(f,A):=\frac{\omega_m\omega_n}{2^{m+n}}\inf\sum_{i=1}^\infty (\diam f(A_i))^m(\diam A_i)^n,
$$
where the infimum is taken over all $\delta$-coverings  $\{A_i\}_{i=1}^\infty$ of $A$ (see Definition~\ref{HMD1} of the Hausdorff content; if $m=0$ or $n=0$, the same convention for $0^0$ is used as there). Furthermore, we define
$$
\Phi^{m,n}(f,A):=\lim_{\delta\to 0^+}\Phi^{m,n}_\delta(f,A).
$$
Note that the limit exists, because the function $\delta\mapsto\Phi_\delta^{m,n}$ is non-increasing.
\end{definition}
It is easy to see that
\begin{equation}
\label{Ceq1}
\Phi^{m,n}(f,A\cup B)\leq \Phi^{m,n}(f,A)+\Phi^{m,n}(f,B).
\end{equation}

The next result from \cite[Theorem~7.1]{EH1} is a generalization of Theorem~\ref{CT1}.
\begin{theorem}
\label{CT2}
If $f:X\to Y$ is a uniformly continuous map between metric spaces, $0\leq m\leq n<\infty$ are real numbers, and $A\subset X$, then
$$
\int_Y^* \H^{n-m}(f^{-1}(y)\cap A)\, d\H^m(y)\leq
\Phi^{m,n-m}(f,A).
$$
\end{theorem}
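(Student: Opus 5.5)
The plan is the classical Eilenberg--Federer scheme: approximate the fibre measures by Hausdorff contents built from a single near-optimal cover of $A$, and then integrate the resulting simple-like functions over $Y$. As announced in the structure of the paper, I will only treat the case where $\H^m$ is $\sigma$-finite on $Y$. By \eqref{Ceq1} and countable subadditivity of the upper integral, we may also replace $Y$ by a Borel piece $Y_0$ with $\H^m(Y_0)<\infty$.

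\textbf{Step 1: one cover controls every fibre.} Fix $\delta>0$ and choose a $\delta$-cover $\{A_i\}$ of $A$ with
$$
\frac{\omega_m\omega_{n-m}}{2^{n}}\sum_i(\diam f(A_i))^m(\diam A_i)^{n-m}\leq \Phi^{m,n-m}_\delta(f,A)+\delta .
$$
Define
$$
g_\delta(y):=\frac{\omega_{n-m}}{2^{n-m}}\sum_i(\diam A_i)^{n-m}\chi_{f(A_i)}(y).
$$
For each $y$, the sets $A_i\cap f^{-1}(y)$ form a $\delta$-cover of $f^{-1}(y)\cap A$, and only indices with $y\in f(A_i)$ contribute. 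Hence
$$
\H^{n-m}_\delta(f^{-1}(y)\cap A)\leq g_\delta(y)\quad\text{for every }y .
$$
Taking $\delta=1/k$, the contents increase to $\H^{n-m}(f^{-1}(y)\cap A)$. By the monotone convergence \eqref{Ceq6} for upper integrals, it therefore suffices to show
$$
\liminf_{k\to\infty}\int^*_Y g_{1/k}\,d\H^m\leq \Phi^{m,n-m}(f,A).
$$

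\textbf{Step 2: uniform continuity makes images small.} Uniform continuity of $f$ gives a modulus $\eta(\delta)\to0$ such that $\diam f(A_i)\leq\eta(\delta)$ whenever $\diam A_i<\delta$. Consequently
$$
\H^m_{\eta(\delta)}(f(A_i))\leq \frac{\omega_m}{2^m}(\diam f(A_i))^m .
$$
If the integral of $g_\delta$ could be taken against the content $\H^m_{\eta(\delta)}$ and were countably subadditive, we would get $\sum_i c_i\,\H^m_{\eta}(f(A_i))\leq\Phi_\delta+\delta$, where $c_i=\frac{\omega_{n-m}}{2^{n-m}}(\diam A_i)^{n-m}$.

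\textbf{Step 3 (main obstacle): passing from contents to $\H^m$.} The real difficulty is that $\H^m(f(A_i))$ may be much larger than $\H^m_\eta(f(A_i))$. Moreover, $\H^m_\eta$ is not a measure, so neither linearity of the integral nor a Choquet-type subadditivity is available for free. My plan is to prove a Fatou-type lemma using that $\H^m$ is finite on $Y_0$.

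First, for each fixed $\eta$, I would refine the cover of $Y_0$. Using Borel regularity, replace each $f(A_i)$ by a Borel hull $B_i$ of the same diameter, for instance its closure. Then $g_\delta$ is dominated by a Borel function $\tilde g_\delta=\sum_i c_i\chi_{B_i}$.

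Next, I would estimate $\int\tilde g_\delta\,d\H^m$ level set by level set. For a fixed level set $\{\tilde g_\delta>t\}$, I would compare $\H^m$ of a Borel set $E$ with $\H^m_\eta(E)+\varepsilon$. This comparison is uniform over a fixed finite-measure family once $\eta$ is small, by the definition $\H^m=\lim_{\eta\to0}\H^m_\eta$ applied to finitely many sets, together with an Egorov-type argument on the finite measure $\H^m|_{Y_0}$.

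The step I am least sure of is the interchange of the sum over $i$ with the content. I would try first to reduce, via truncation, to finitely many $A_i$ and to disjointify the images $B_i$ into Borel atoms, so that on each atom the function $\tilde g_\delta$ is constant. On such atoms one only needs subadditivity of $\H^m_\eta$ across the sets $B_i$ containing a given atom, which is a sum of finitely many terms weighted by $c_i$. If this interchange fails, I will fall back on the argument of \cite{EH1} for this point.

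Letting $\delta\to0$ then yields
$$
\int^*_Y\H^{n-m}(f^{-1}(y)\cap A)\,d\H^m(y)\leq\Phi^{m,n-m}(f,A).
$$
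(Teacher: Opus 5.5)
Steps 1 and 2 reproduce the paper's setup: one near-optimal $\delta$-cover dominating all fibre contents at once, uniform continuity to make the images small, closures of the images to get a Borel majorant, and \eqref{Ceq6} to pass to the limit. Step 3, which you rightly call the crux, has a genuine gap. Once you pass to $B_i=\overline{f(A_i)}$, linearity and monotone convergence already give $\int\tilde g_\delta\,d\H^m=\sum_i c_i\H^m(B_i)$, so disjointifying into atoms changes nothing. Replacing $\H^m$ by $\H^m_\eta$ on the atoms runs in the wrong direction, because subadditivity only yields $\H^m_\eta(B_i)\le\sum_{\alpha\subset B_i}\H^m_\eta(\alpha)$.

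What you really need is $\H^m(B_i)\le(1+\eps)\frac{\omega_m}{2^m}(\diam B_i)^m$, and this fails in general even when $\H^m(Y)<\infty$. A set of diameter about $r$ made of $N$ parallel $m$-disks of radius $r/2$ has $\H^m\approx N\omega_m(r/2)^m$, and such clusters can sit at arbitrarily small scales inside a space of finite $\H^m$ measure. Your Egorov-type uniformity cannot repair this, for two reasons. First, the family $\{B_i\}$ is re-chosen for each $\delta$, so there is no fixed family on which $\H^m_\eta\to\H^m$ could be made uniform. Second, for a set $E$ of diameter at most $\eta$, the bound $\H^m_\eta(E)\le\frac{\omega_m}{2^m}(\diam E)^m$ from Step 2 is automatic and says nothing about $\H^m(E)$.

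The missing idea is the asymptotic isodiametric inequality, Theorem~\ref{AT1}. If $\H^m(Y)<\infty$, then for $\H^m$-a.e.\ $y$ there is $\delta_y>0$ such that every $F$ with $y\in F\subset\bar B(y,\delta_y)$ satisfies $\H^m(F)\le(1+\eps)\frac{\omega_m}{2^m}(\diam F)^m$. The paper then argues as follows:
\begin{enumerate}
\item It collects the good points into increasing sets $W_j$ on which $\delta_y=1/j$ works, with $\H^m(Y\setminus\bigcup_jW_j)=0$.
\item It uses uniform continuity to take the cover so fine that $\diam f(A_{ik})<1/j$.
\item It integrates only over $W_j$. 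Only the $B_i$ meeting $W_j$ contribute there. Each such $B_i$ lies in $\bar B(y,1/j)$ for some $y\in W_j\cap B_i$, so it obeys the good bound. The bad images are never estimated; they are simply invisible from $W_j$.
\item This gives $\int^*_{W_j}\H^{n-m}(f^{-1}(y)\cap A)\,d\H^m(y)\le(1+\eps)\big(\Phi^{m,n-m}(f,A)+\eps\big)$, and \eqref{Ceq6} in $j$ finishes the proof.
\end{enumerate}

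A smaller point concerns your reduction to a finite-measure piece. Citing \eqref{Ceq1} uses subadditivity in the wrong direction: summing over pieces $Y_k$ would need $\sum_k\Phi^{m,n-m}(f,A\cap f^{-1}(Y_k))\le\Phi^{m,n-m}(f,A)$. Instead, exhaust $Y$ by an increasing sequence of sets of finite measure, keep all of $A$, and apply \eqref{Ceq6}.
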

Indeed, since for an $L$-Lipschitz $f$ we have
$\diam f(A_i)\leq L\diam A_i$, it follows directly from the definitions of $\Phi^{m,n}$ and $\H^n$ that
\begin{equation}
\label{Ceq2}
\Phi^{m,n-m}(f,A)\leq L^m\frac{\omega_{n-m}\omega_m}{\omega_n}\, \H^n(A).
\end{equation}
Although Theorem~\ref{CT2} does not mention measurability of $y\mapsto \mathcal{H}^{n-m} \left(f^{-1}(y) \cap A\right)$, that part of Theorem~\ref{CT1} is easy (we present this reasoning in the case $X=\bbbr^n$ as Lemma \ref{lem:measfib} below).
The proof of Theorem~\ref{CT2} is long and difficult, and we will prove it here only under the additional assumption that $Y$ is $\H^m$-$\sigma$-finite. This special case is sufficient for our main application, which is the proof of the Metric Co-area Theorem~\ref{INT5}.
\begin{proof}[Proof of {Theorem~\ref{CT2}} when $Y$ is $\H^m$-$\sigma$-finite]
We can in fact assume that $\H^m(Y)<\infty$, because we can represent $Y$ as a union of an increasing sequence of sets $Y_i$ of finite $\H^m$ measure and pass to the limit applying \eqref{Ceq6}.
We can also assume that $\Phi^{m,n-m}(f,A)<\infty$ as otherwise the inequality is obvious.
Fix $\eps>0$.

Theorem~\ref{AT1} yields that there is a set $Z\subset Y$ with $\H^m(Z)=0$ such that
\begin{equation}
\label{Ceq3}
\forall y\in Y\setminus Z\ \exists\delta_y>0\ \forall F\subset Y
\ \Big(y\in F\subset\bar{B}(y,\delta_y)\  \Rightarrow \
\H^m(F)\leq (1+\eps)\frac{\omega_m}{2^m} (\diam F)^m\Big).
\end{equation}

For $j\in\bbbn$, let $W_j$ be the set of points $y\in Y\setminus Z$ such that
$$
\forall F\subset Y
\quad \Big(y\in F\subset\bar{B}(y,1/j)\  \Rightarrow \
\H^m(F)\leq (1+\eps)\frac{\omega_m}{2^m} (\diam F)^m\Big).
$$
It follows from \eqref{Ceq3} that
\begin{equation}
\label{Ceq7}
Y\setminus Z=\bigcup_{j=1}^\infty W_j,
\qquad
W_1\subset W_2\subset W_3\subset\ldots,
\qquad
\H^m(Z)=0.
\end{equation}
Therefore, it suffices to show that
\begin{equation}
\label{Ceq8}
\int_{W_j}^*\H^{n-m}(f^{-1}(y)\cap A)\, d\H^m(y)\leq
(1+\eps)\big(\Phi^{m,n-m}(f,A)+\eps\big).
\end{equation}
Indeed, \eqref{Ceq7} along with \eqref{Ceq5} and \eqref{Ceq6} will give
$$
\int_Y^*\cdots=\int_{Y\setminus Z}^*\cdots=\lim_{j\to\infty}
\int_{W_j}^*\H^{n-m}(f^{-1}(y)\cap A)\, d\H^m(y) \leq
(1+\eps)\big(\Phi^{m,n-m}(f,A)+\eps\big),
$$
and the result will follow upon letting $\eps\to 0^+$.

Fix $j\in\bbbn$. Since $f$ is uniformly continuous, there is
$\eta_j>0$ such that $\diam f(E)<1/j$ whenever $\diam E\leq \eta_j$.

Choose a decreasing sequence $\delta_k\to 0$ with $\delta_k\leq\eta_j$.
For each $k$, take a $\delta_k$-covering $\{A_{ik}\}_{i=1}^\infty$
of $A$ such that
$$
\frac{\omega_m\omega_{n-m}}{2^n}
\sum_{i=1}^\infty
(\diam f(A_{ik}))^m(\diam A_{ik})^{n-m}
<
\Phi^{m,n-m}_{\delta_k}(f,A)+\eps \leq
\Phi^{m,n-m}(f,A)+\eps.
$$
Since $\diam A_{ik}\leq\delta_k\leq\eta_j$, we have $\diam f(A_{ik})<\frac{1}{j}$.

If $i\in I_k:=\big\{i:\, W_j\cap\overline{f(A_{ik})}\neq\varnothing \big\}$, we can find $y\in W_j$ such that $y\in\overline{f(A_{ik})}\subset \bar{B}(y,1/j)$, and the definition of the set $W_j$ yields that
$$
\H^m\big(\overline{f(A_{ik})}\big)\leq
(1+\eps)\frac{\omega_m}{2^m}\big(\diam\overline{f(A_{ik})}\big)^m=
(1+\eps)\frac{\omega_m}{2^m}\big(\diam f(A_{ik})\big)^m.
$$
For any $y\in Y$, the sets $\{ A_{ik}:\, y\in f(A_{ik})\}$ form a $\delta_k$-covering of $f^{-1}(y)\cap A$ and hence
$$
\H^{n-m}_{\delta_k}(f^{-1}(y)\cap A)\leq
\frac{\omega_{n-m}}{2^{n-m}}\sum_{i=1}^\infty (\diam A_{ik})^{n-m}\chi_{\overline{f(A_{ik})}}(y).
$$
We could place $\chi_{f(A_{ik})}(y)$ on the right-hand side, but we use the characteristic function of the closure $\overline{f(A_{ik})}$ in order to ensure that the function on the right-hand side is measurable.

If $y\in W_j$, then by the definition of $I_k$,
$$
\H^{n-m}_{\delta_k}(f^{-1}(y)\cap A)\leq
\frac{\omega_{n-m}}{2^{n-m}}\sum_{i\in I_k} (\diam A_{ik})^{n-m}\chi_{\overline{f(A_{ik})}}(y).
$$
Integrating both sides of this inequality over $W_j$ yields
\begin{equation*}
\begin{split}
&
\int_{W_j}^*\H^{n-m}_{\delta_k}(f^{-1}(y)\cap A)\, d\H^m(y)\leq
\frac{\omega_{n-m}}{2^{n-m}}\sum_{i\in I_k} (\diam A_{ik})^{n-m} \H^{m}\big(\overline{f(A_{ik})}\big) \\
&\leq
(1+\eps)\frac{\omega_{n-m}\omega_m}{2^n}\sum_{i\in I_k}(\diam A_{ik})^{n-m}
(\diam f(A_{ik}))^m \leq
(1+\eps)\big(\Phi^{m,n-m}(f,A)+\eps\big).
\end{split}
\end{equation*}
Since $\delta_k\searrow 0$, for every $y\in Y$ we have $\H^{n-m}_{\delta_k}(f^{-1}(y)\cap A)\nearrow\H^{n-m}(f^{-1}(y)\cap A)$.
Therefore, applying \eqref{Ceq6} to the functions $\chi_{W_j}(y)\H^{n-m}_{\delta_k}(f^{-1}(y)\cap A)$
gives \eqref{Ceq8}. The proof is complete.
\end{proof}

\begin{lemma}
\label{lem:measfib}
Assume $X$ is an arbitrary metric space, $0\leq m\leq n$ are integers, $A\subset \bbbr^n$ is $\H^n$-measurable and $f:A\to X$ is Lipschitz. Then the function
$$
z\longmapsto \H^{n-m}(f^{-1}(z))=\H^{n-m}(f^{-1}(z)\cap A),\quad z\in X,
$$
is $\H^m$-measurable.
\end{lemma}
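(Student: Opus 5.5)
We prove this by reducing to the case of a compact domain, where the fibers vary upper semicontinuously. Three reductions and one compactness argument suffice.

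\emph{Step 1: remove a null set.} By inner regularity, write $A=K\cup Z$ with $K=\bigcup_j K_j$, where $K_1\subset K_2\subset\ldots$ are compact, and $\H^n(Z)=0$. Extend $f$ to a Lipschitz map on $Z$'s ambient space; since $f$ is already defined on $A\supset Z$, it suffices to regard $f|_Z$ as a Lipschitz map on the metric space $Z$. The co-area inequality, Theorem~\ref{CT1}, holds for arbitrary metric spaces and arbitrary sets, and gives
$$
\int_X^*\H^{n-m}(f^{-1}(z)\cap Z)\,d\H^m(z)\lesssim L^m\H^n(Z)=0 .
$$
Hence $\H^{n-m}(f^{-1}(z)\cap Z)=0$ for $\H^m$-a.e.\ $z$. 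By subadditivity and monotonicity, $\H^{n-m}(f^{-1}(z)\cap A)=\H^{n-m}(f^{-1}(z)\cap K)$ for $\H^m$-a.e.\ $z$. It therefore suffices to show that $z\mapsto \H^{n-m}(f^{-1}(z)\cap K)$ is Borel. Borel sets are $\H^m$-measurable, since $\H^m$ is a Borel regular outer measure.

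\emph{Step 2: pass to compact sets.} The sets $f^{-1}(z)\cap K_j$ increase to $f^{-1}(z)\cap K$, and Hausdorff measure is continuous along increasing sequences of sets. Thus $\H^{n-m}(f^{-1}(z)\cap K)=\lim_j\H^{n-m}(f^{-1}(z)\cap K_j)$, and it is enough to treat a single compact $K$. Moreover $\H^{n-m}=\lim_{\delta\to0^+}\H^{n-m}_\delta$ monotonically, so it is enough to show that for each fixed $\delta>0$ the function
$$
u_\delta(z):=\H^{n-m}_\delta(f^{-1}(z)\cap K)
$$
is upper semicontinuous on $X$. To make this work we use the version of $\H^{n-m}_\delta$ in which the covering sets have diameter $<\delta$.

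\emph{Step 3: upper semicontinuity (the main point).} Fix $z$ and $\eps>0$, and set $F:=f^{-1}(z)\cap K$, which is compact. Take a covering of $F$ by sets of diameter $<\delta$ whose $\H^{n-m}_\delta$-sum is less than $u_\delta(z)+\eps$. Enlarge each covering set to an open neighborhood with slightly bigger diameter, still $<\delta$, so that the sum increases by at most $\eps$. By compactness, finitely many of these open sets already cover $F$; call their union $U$, an open set containing $F$.

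We claim $f^{-1}(z')\cap K\subset U$ for all $z'$ close to $z$. Otherwise there are $z_k\to z$ and points $x_k\in (K\setminus U)\cap f^{-1}(z_k)$. By compactness of $K\setminus U$, a subsequence converges to some $x\in K\setminus U$, and continuity of $f$ gives $f(x)=z$, so $x\in F\subset U$, a contradiction.

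For such $z'$ the same finite cover covers $f^{-1}(z')\cap K$, so $u_\delta(z')\le u_\delta(z)+2\eps$. Hence $u_\delta$ is upper semicontinuous, and in particular Borel.

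Combining the three steps, $z\mapsto\H^{n-m}(f^{-1}(z)\cap A)$ agrees $\H^m$-a.e.\ with a Borel function, and is therefore $\H^m$-measurable.

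The delicate points are the null set in Step 1, which is exactly why Theorem~\ref{CT1} is needed, and the compactness argument in Step 3, which requires working with the strict-diameter Hausdorff content so that the enlarged open sets remain admissible coverings.
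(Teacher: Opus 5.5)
Your proof is correct and follows the paper's argument essentially step for step. The co-area inequality (Theorem~\ref{CT1}) disposes of the null part, monotone convergence reduces the problem to a single compact $K$, and upper semicontinuity of the $\delta$-contents of the compact fibers comes from the same sequential-compactness argument showing that nearby fibers lie in the open union $U$. The only difference is technical: the paper uses the open-cover contents $\H^{n-m}_{1/j,op}$ together with Lemma~\ref{lem:hsop}, while you use strict-diameter covers enlarged to open neighborhoods. Your extraction of a finite subcover is harmless but not needed.
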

\begin{proof}
Let $K_1\subset K_2\subset\cdots\subset A$ be compact sets approximating $A$ from within, so that $Z=A\setminus \bigcup_{i=1}^\infty K_i$ is $\H^n$-null, $\H^n(Z)=0$. Then $A=\bigcup_{i=1}^\infty K_i \cup Z$ and for  any $z\in X$ we have
$$
\H^{n-m}(f^{-1}(z)\cap \bigcup_{i=1}^\infty K_i)\leq \H^{n-m}(f^{-1}(z)\cap A)\leq \H^{n-m}(f^{-1}(z)\cap\bigcup_{i=1}^\infty K_i)+\H^{n-m}(f^{-1}(z)\cap Z).$$
By the Coarea inequality (Theorem \ref{CT1}),
$$
\int_X^{*}
    \H^{n-m}\left(f^{-1}(z)\cap Z\right)\,d\H^m(z)
\leq
(\operatorname{Lip} f)^m\frac{\omega_{n-m}\omega_m}{\omega_n}\H^n(Z)=0,
$$
thus for $\H^m$-a.e.\ $z\in X$ we have $\H^{n-m}\left(f^{-1}(z)\cap Z\right)=0$
so, again for $\H^m$-a.e.\ $z\in X$,
$$
\H^{n-m}(f^{-1}(z)\cap A)=\H^{n-m}(f^{-1}(z)\cap \bigcup_{i=1}^\infty K_i)=\lim_{i\to\infty}  \H^{n-m}(f^{-1}(z)\cap K_i),
$$
where the last equality follows from the fact that $f^{-1}(z)\cap \bigcup_{i=1}^j K_i=f^{-1}(z)\cap K_j$ forms an increasing sequence of sets. Thus to show the measurability of
$$
z\longmapsto \H^{n-m}(f^{-1}(z))=\H^{n-m}(f^{-1}(z)\cap A)
$$
it suffices to verify that for any compact set $K\subset A$, the function
$$
X\ni z\longmapsto g_K(z)=\H^{n-m}(f^{-1}(z)\cap K)
$$ is $\H^m$-measurable. In fact, we shall prove that it is Borel.

Set
$$
g_j(z)=\H^{n-m}_{1/j,op}(f^{-1}(z)\cap K).
$$

By Lemma \ref{lem:hsop},
$$
g_K(z)=\H^{n-m}(f^{-1}(z)\cap K)=\lim_{j\to\infty} g_j(z)=\sup_j g_j(z),
$$
since $g_j(z)$ is non-decreasing in $j$.

We claim that each $g_j$ is upper semicontinuous. Fix $t\in \bbbr$ and suppose $g_j(z)<t$. By the definition of $\H^{n-m}_{1/j,op}$ (see Lemma \ref{lem:hsop}) we can choose an open $1/j$-cover $A_1,A_2,\ldots$ of $f^{-1}(z)\cap K$ such that $2^{m-n}\omega_{n-m}\sum_i(\diam A_i)^{n-m}<t.$

Set $U=\bigcup_i A_i$. We claim that $f^{-1}(y)\cap K\subset U$ whenever $y$ is sufficiently close to $z$.

Assume to the contrary that $f^{-1}(y)\cap K\not \subset U$, then we can find a sequence $y_k\to z$ such that
$$
x_k\in (f^{-1}(y_k)\cap K)\setminus U.
$$
The set $K\setminus U$ is compact, so after passing to a subsequence we can assume that $x_k\to x$ for some $x\in K\setminus U$, and by continuity of $f$,
$$
f(x)=\lim_{k\to\infty}f(x_k)=\lim y_k=z,
$$
so $x\in f^{-1}(z)\cap K$, while $x\not \in U$. However, by construction, $f^{-1}(z)\cap K\subset U$, which is a contradiction.

Hence $f^{-1}(y)\cap K\subset U$ for $y$ sufficiently close to $z$ and thus $\{A_i\}$ form an open $1/j$-cover of $f^{-1}(y)\cap K$, as well. This shows that $g_j(y)<t$ for $y$ close to $z$, hence the sets $\{g_j<t\}$ are open and $g_j$ are upper semicontinuous.

In particular, every $g_j$ is Borel measurable, and since $g_K=\sup_j g_j$, so is $g_K$.
\end{proof}

For a Lipschitz mapping $f:\bbbr^n\supset A\to X$ we define
$$
\operatorname{Crit}_m(f):=\{x\in A:\, \rank \apmd(f,x)<m\}.
$$
As a main application of Theorem~\ref{CT2} we will prove the following stronger version of the Metric Sard Theorem~\ref{T17}. We believe this result is new.
\begin{theorem}[Metric Sard Theorem II]
\label{CT3}
Let $0\leq m\leq n$, $m\in\bbbz$, $n\in\bbbn$, and let $X$ be any metric space. If $A\subset\bbbr^n$ is measurable, and $f:A\to X$ is Lipschitz, then
$$
\H^{n-m}\big(f^{-1}(z)\cap\operatorname{Crit}_m(f)\big)=0,
\qquad
\text{for } \H^m \text{ almost all } z\in X.
$$
\end{theorem}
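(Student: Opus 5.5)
The plan is to apply Theorem~\ref{CT2} to the uniformly continuous (indeed Lipschitz) map $f:A\to X$, with the metric space $A\subset\bbbr^n$ as domain and $\operatorname{Crit}_m(f)$ as the set. It then suffices to prove
$$
\Phi^{m,n-m}\big(f,\operatorname{Crit}_m(f)\big)=0,
$$
because the upper integral of $z\mapsto\H^{n-m}(f^{-1}(z)\cap\operatorname{Crit}_m(f))$ then vanishes, and so the integrand is zero $\H^m$-a.e. If $m=0$ the set $\operatorname{Crit}_0(f)$ is empty, so we may assume $m\geq 1$.

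\textbf{Reductions.} Since $\Phi^{m,n-m}$ is countably subadditive in the sense of \eqref{Ceq1}, we split the critical set into pieces. Write $\operatorname{Crit}_m(f)=\bigcup_{k=0}^{m-1}E_k\cup Z'$, where $E_k$ is as in \eqref{eq37} and $\H^n(Z')=0$. For the null part we use \eqref{Ceq2}: $\Phi^{m,n-m}(f,Z')\lesssim L^m\H^n(Z')=0$. Let $\tilde E_k\subset E_k$ be the full-measure set of points where the covering property of Proposition~\ref{T14} holds; the remainder $E_k\setminus\tilde E_k$ is null and is handled in the same way. Finally, we intersect with a unit open cube $Q$, so it remains to show $\Phi^{m,n-m}(f,\tilde E_k\cap Q)=0$ for each fixed $k<m$.

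\textbf{Key estimate.} The proof of Proposition~\ref{T14} gives more than a covering of the image: it provides a covering of the domain. For $x\in\tilde E_k$ and $0<r\leq r_{x,M}$, the set $A\cap B(x,r)$ is covered by $M^k$ slabs $S_\nu=A\cap(Q_\nu\times[-r,r]^{n-k})$ (in rotated coordinates), with
$$
\diam S_\nu\leq 2\sqrt n\, r,\qquad \diam f(S_\nu)\leq 3\sqrt k\, L r/M .
$$
For $k=0$ we instead use the single set $A\cap B(x,r)$, whose image has diameter at most $2r/M$. The important choice is to measure the slabs at their \emph{domain} scale $r$, not to subdivide them further. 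A subdivision into pieces of diameter $r/M$ gains nothing, since then $M^n(r/M)^m(r/M)^{n-m}=r^n$. With the slabs themselves the contribution of one ball is
$$
M^k (r/M)^m r^{n-m}\lesssim_{n,L} M^{k-m}r^n ,
$$
which is small because $k<m$.

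\textbf{Covering argument.} Fix $\delta>0$ and $M$. For each $x\in\tilde E_k\cap Q$ choose $r_x$ with $5r_x\leq r_{x,M}$, $2\sqrt n\cdot 5r_x<\delta$ and $B(x,r_x)\subset Q$. The Vitali-type Theorem~\ref{AT7} gives disjoint balls $B(x_i,r_{x_i})$ whose fivefold enlargements cover $\tilde E_k\cap Q$. The slabs of all the balls $B(x_i,5r_{x_i})$ together form a $\delta$-covering of $\tilde E_k\cap Q$, so
$$
\Phi^{m,n-m}_\delta(f,\tilde E_k\cap Q)\lesssim_{n,L} M^{k-m}\sum_i r_{x_i}^n\lesssim M^{k-m}.
$$
Letting $M\to\infty$ and then $\delta\to0$ gives the claim.

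\textbf{Main obstacle.} The main technical point is checking that Proposition~\ref{T14}'s proof really yields the slab decomposition of $A\cap B(x,r)$ with the stated image-diameter bound. It does: \eqref{Eq29} is proved exactly for the sets $f(Q_\nu\times[-r,r]^{n-k})$, via the extension $F$ to $\bbbr^n$, and restricting to $A$ only shrinks the diameters. One must also keep the Vitali radii below $\delta$ so that the cover is admissible.
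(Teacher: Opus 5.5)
Your proposal is correct and follows the paper's proof essentially step by step: Theorem~\ref{CT2} reduces the claim to $\Phi^{m,n-m}(f,E_k)=0$ for each $k<m$ (null remainders are handled by \eqref{Ceq1} and \eqref{Ceq2}), and Proposition~\ref{T14} together with the $5r$-covering lemma bounds $\Phi^{m,n-m}_\delta$ by a constant times $N^{k-m}\sum_i r_i^n$, which tends to zero. The one difference is that the paper uses only the \emph{statement} of Proposition~\ref{T14}: it covers $A\cap B_i$ by the preimages $f^{-1}(B_{ij})\cap B_i$ of the $N^k$ image balls, each of domain diameter at most $2r_i$, so your ``main obstacle'' of reopening the proof of Proposition~\ref{T14} to extract the slabs can be avoided.
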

\begin{remark}
Adding to $\operatorname{Crit}_m(f)$ the set $Z$ where $\apmd(f,x)$ is not defined does not change anything, because $\H^n(Z)=0$ and hence $\H^{n-m}(f^{-1}(z)\cap Z)=0$ for $\H^m$-almost every $z\in X$ by the co-area inequality.
\end{remark}
\begin{remark}
If $n=m$, then $\operatorname{Crit}_n(f)=\operatorname{Crit}(f)$, $\H^{n-m}=\H^0$ is the counting measure so $\H^{0}\big(f^{-1}(z)\cap\operatorname{Crit}(f)\big)=0$ means that $f^{-1}(z)\cap\operatorname{Crit}(f)=\varnothing$ for $\H^n$ almost all $z\in X$ which is equivalent to $\H^n(f(\operatorname{Crit}(f)))=0$. Thus the Metric Sard Theorem~\ref{T17} follows from Theorem~\ref{CT3}.
\end{remark}
\begin{proof}
If $m=0$, then $\operatorname{Crit}_m(f)=\varnothing$, since the rank of a seminorm is always nonnegative, hence the conclusion of the theorem holds for all $z\in X$. In what follows we assume $1\leq m\leq n$.

We may assume that the set $A$ is bounded, since $A$ can be decomposed into countably many bounded pieces.

For $k=0,1,2,\ldots,m-1$, let
$E_k:=\{x\in A:\, \rank \apmd(f,x)=k\}$, so
$\operatorname{Crit}_m(f)=\bigcup_{k=0}^{m-1} E_k$.
According to Theorem~\ref{CT2}
$$
\int_X^*\H^{n-m}(f^{-1}(z)\cap E_k)\, d\H^m(z)\leq \Phi^{m,n-m}(f,E_k),
$$
and hence it suffices to prove that $\Phi^{m,n-m}(f,E_k)=0$, because it will imply that
$\H^{n-m}(f^{-1}(z)\cap E_k)=0$ for $\H^m$ almost all $z\in X$.

In fact, it suffices to prove that $\Phi^{m,n-m}(f,A_k)=0$, for some
$A_k\subset E_k$ satisfying $\H^n(E_k\setminus A_k)=0$. Indeed, this, \eqref{Ceq1} and \eqref{Ceq2} will give
$$
\Phi^{m,n-m}(f,E_k)\leq \Phi^{m,n-m}(f,A_k)+\Phi^{m,n-m}(f,E_k\setminus A_k)=0.
$$
To this end, it suffices to show that for any $\delta>0$, $\Phi^{m,n-m}_\delta(f,A_k)=0$.
Fix $\delta>0$.

According to Proposition~\ref{T14}, almost every $x\in E_k$ has the following property:

For every integer $N\geq 1$, there is $0<r_{x,N}<\delta/2$ such that $f(A\cap B(x,r_{x,N}))$ can be covered by $N^k$ balls of radius $Cr_{x,N}N^{-1}$. Denote such balls by $\{B_j^{x,N}\}_{j=1}^{N^k}$.

Let $A_k\subset E_k$ be the set of points $x\in E_k$ satisfying the above property, so $\H^n(E_k\setminus A_k)=0$.

Applying Theorem~\ref{AT7} to the family of balls $\{B(x,r_{x,N}/5)\}_{x\in A_k}$ we find a countable (possibly finite) family of pairwise disjoint balls $\{B(x_i,r_{x_i,N}/5)\}_{i\in I}$ such that if we set
$$
U_\delta:=\bigcup_{x\in A_k} B(x,r_{x,N}/5),
$$
then
$$
A_k\subset U_\delta=\bigcup_{x\in A_k} B(x,r_{x,N}/5)\subset\bigcup_{i\in I} B(x_i,r_{x_i,N}).
$$
Let
$$
B_i=B(x_i,r_i):=B(x_i,r_{x_i,N}),
\quad
B_{ij}:=B_j^{x_i,N}
\quad
\text{and}
\quad
A_{ij}:=f^{-1}(B_{ij})\cap B_i.
$$
Note that
$$
\bigcup_{j=1}^{N^k} A_{ij} = \bigcup_{j=1}^{N^k} f^{-1}\big(B_j^{x_i,N}\big)\cap B(x_i,r_{x_i,N})
=A\cap B(x_i,r_{x_i,N})=A\cap B_i
$$
and hence
$$
A_k\subset A\cap \bigcup_{i\in I}B_i= \bigcup_{i\in I}\bigcup_{j=1}^{N^k} A_{ij}.
$$
Since
$\diam A_{ij}\leq\diam B_i\leq 2r_{x_i,N}<\delta$, $\{A_{ij}\}_{i,j}$ is a $\delta$-covering of $A_k$. Note also that
$$
\diam f(A_{ij})\leq \diam B_{ij}\leq Cr_{x_i,N}/N=Cr_i/N.
$$
Therefore,
\[
\begin{split}
&
\Phi_\delta^{m,n-m}(f,A_k)\lesssim\sum_{i,j}\big(\diam f(A_{ij})\big)^m(\diam A_{ij})^{n-m}
\lesssim
\sum_{i,j}\Big(\frac{r_i}{N}\Big)^m\, r_i^{n-m}\\
&=\sum_{i\in I}N^k\Big(\frac{r_i}{N}\Big)^mr_i^{n-m}
=
N^{k-m}5^n\sum_{i\in I}\Big(\frac{r_i}{5}\Big)^n
\lesssim
N^{k-m}\H^n(U_\delta)\stackrel{N\to\infty}{\longrightarrow} 0,
\end{split}
\]
since $k<m$, and the bound $\H^n(U_\delta)<\infty$ is uniform in $N$ (because $A$ is bounded). Therefore, $\Phi_\delta^{m,n-m}(f,A_k)=0$ and that completes the proof.
\end{proof}

The proof of Theorem~\ref{CT3} presented above is not self-contained, since it relies on the difficult Theorem~\ref{CT2}. Although we did not prove Theorem~\ref{CT2} in full generality, we did establish it in the case where $Y$ is $\H^m$-$\sigma$-finite. Hence, the argument above yields a complete proof of the following special case of Theorem~\ref{CT3}, which we record as a corollary. Corollary~\ref{CT4} will be used later in the proof of the Metric Co-area Theorem~\ref{INT5}.
\begin{corollary}
\label{CT4}
Let $0\leq m\leq n$, $m\in\bbbz$, $n\in\bbbn$, and let $X$ be a $\H^m$-$\sigma$-finite metric space. If $A\subset\bbbr^n$ is measurable, and $f:A\to X$ is Lipschitz, then
$$
\H^{n-m}\big(f^{-1}(z)\cap\operatorname{Crit}_m(f)\big)=0,
\qquad
\text{for } \H^m \text{ almost all } z\in X.
$$
\end{corollary}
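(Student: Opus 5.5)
The plan is to rerun the proof of Theorem~\ref{CT3} word for word, except at the one point where that proof used the general Theorem~\ref{CT2}. There we substitute the special case of Theorem~\ref{CT2} proved above. That case assumes the target $Y$ is $\H^m$-$\sigma$-finite, which is exactly the hypothesis on $X$ here.

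The case $m=0$ is trivial, since $\operatorname{Crit}_0(f)=\varnothing$. For $1\le m\le n$:

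\begin{enumerate}
\item First reduce to a bounded $A$ by decomposing $A$ into countably many bounded measurable pieces. A countable union of $\H^m$-null exceptional sets is $\H^m$-null, so it suffices to treat each piece.

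\item Split $\operatorname{Crit}_m(f)=\bigcup_{k=0}^{m-1}E_k$, where $E_k=\{x\in A:\rank\apmd(f,x)=k\}$. It is enough to show, for each $k$,
$$
\int_X^*\H^{n-m}(f^{-1}(z)\cap E_k)\,d\H^m(z)=0 .
$$

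\item To apply the special case of Theorem~\ref{CT2}, replace $f$ by its Lipschitz extension. Embed $f(A)$ isometrically into $\ell^\infty$ and extend $f$ by Corollary~\ref{BT2} to $F:\bbbr^n\to\ell^\infty$, which is uniformly continuous.

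\item The target space must then be $\H^m$-$\sigma$-finite, and $\ell^\infty$ is not. The point to watch is that $F$ may leave $f(A)$, while $f^{-1}(z)\cap E_k=F^{-1}(z)\cap E_k$ only involves $z\in f(A)$. So take $Y=f(A)$ with the restricted map $f:A\to f(A)$, which is uniformly continuous from the metric space $A$. Then $Y\subset X$ is $\H^m$-$\sigma$-finite, because Hausdorff measure is computed intrinsically under isometric embedding and subsets of a $\sigma$-finite space are $\sigma$-finite.

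\item Apply the $\sigma$-finite case of Theorem~\ref{CT2} with metric space $A$ and subset $E_k$ to get
$$
\int_Y^*\H^{n-m}(f^{-1}(z)\cap E_k)\,d\H^m(z)\le\Phi^{m,n-m}(f,E_k).
$$
The integrand vanishes off $Y$, so this also bounds the upper integral over $X$.
\end{enumerate}

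It remains to show $\Phi^{m,n-m}(f,E_k)=0$, which is exactly the covering argument from the proof of Theorem~\ref{CT3}. It used only Proposition~\ref{T14}, the Vitali covering Theorem~\ref{AT7}, and subadditivity \eqref{Ceq1} together with the bound \eqref{Ceq2}, none of which depends on Theorem~\ref{CT2}.

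\begin{enumerate}
\item Let $A_k\subset E_k$ be the set of points with the covering property of Proposition~\ref{T14}, so $\H^n(E_k\setminus A_k)=0$.

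\item By \eqref{Ceq1} and \eqref{Ceq2}, $\Phi^{m,n-m}(f,E_k)\le\Phi^{m,n-m}(f,A_k)+\Phi^{m,n-m}(f,E_k\setminus A_k)$, and the second term is at most a constant times $\H^n(E_k\setminus A_k)=0$. So it suffices to show $\Phi^{m,n-m}_\delta(f,A_k)=0$ for every $\delta>0$.

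\item Fix $\delta>0$ and $N\ge1$. Select Vitali balls $B_i=B(x_i,r_i)$ with $r_i<\delta/2$. By Proposition~\ref{T14}, each $f(A\cap B_i)$ is covered by $N^k$ balls of radius $Cr_i/N$. Pulling these back gives a $\delta$-covering $\{A_{ij}\}$ of $A_k$ with $\diam f(A_{ij})\lesssim r_i/N$ and $\diam A_{ij}\le 2r_i$.

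\item This yields
$$
\Phi^{m,n-m}_\delta(f,A_k)\lesssim N^{k-m}\,\H^n(U_\delta),
$$
where $U_\delta$ is the union of the disjoint balls $B(x_i,r_i/5)$. Since $A$ is bounded, $\H^n(U_\delta)$ is bounded uniformly in $N$. Because $k<m$, letting $N\to\infty$ gives $\Phi^{m,n-m}_\delta(f,A_k)=0$.
\end{enumerate}

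A zero upper integral forces $\H^{n-m}(f^{-1}(z)\cap E_k)=0$ for $\H^m$-a.e.\ $z\in X$, and we take the union over $k$.

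The main obstacle is the bookkeeping in the third step of the first list: checking that the $\sigma$-finite proof of Theorem~\ref{CT2} applies with domain $A$ (not $\bbbr^n$) and target $Y=f(A)$. That proof only needs $f$ uniformly continuous and $\H^m(Y)$ $\sigma$-finite, and it never uses completeness of the domain, so I expect this to go through directly.
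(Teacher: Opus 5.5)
Your proposal is correct and is the paper's own argument: Corollary~\ref{CT4} is obtained by rerunning the proof of Theorem~\ref{CT3}, using the $\H^m$-$\sigma$-finite case of Theorem~\ref{CT2} proved above in place of the general statement. The detour through the $\ell^\infty$ extension and $Y=f(A)$ is unnecessary, because that special case applies directly to $f:A\to X$ with the metric space $A$ as domain and $Y=X$, which is exactly how Theorem~\ref{CT2} is invoked in the proof of Theorem~\ref{CT3}.
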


As an application of the above results we prove rectifiability of fibers of Lipschitz maps.
\begin{definition}
Let $0\leq k\leq n$ be integers. We say that $E\subset\bbbr^n$ is {\em countably $\H^k$-rectifiable} if there are Lipschitz maps $f_i:\bbbr^k\supset A_i\to\bbbr^n$ such that $\H^k(E\setminus \bigcup_{i=1}^\infty f_i(A_i))=0$. In particular, countably $\H^0$-rectifiable sets are the same as countable sets.
\end{definition}

\begin{theorem}
\label{8.9}
Let $0\leq m\leq n$, $m\in\bbbz$, $n\in\bbbn$, and let
$f:\bbbr^n\supset A\to X$ be a Lipschitz map from a measurable set to an arbitrary metric space. Then, for
$\mathcal{H}^m$-almost every $z\in X$, the fiber $f^{-1}(z)$ is countably
$\mathcal{H}^{n-m}$-rectifiable.
\end{theorem}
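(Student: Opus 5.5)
The plan is to split the fiber into a critical part, a part lying in a null exceptional set, and a regular part. The first two will have $\H^{n-m}$-measure zero for $\H^m$-a.e.\ $z$, and the third will be covered by countably many Lipschitz images of $\bbbr^{n-m}$. Since a set of $\H^{n-m}$-measure zero is trivially countably $\H^{n-m}$-rectifiable, this suffices. If $m=0$ there is nothing to prove: $f^{-1}(z)\subset\bbbr^n$ is the image of a subset of $\bbbr^n$ under the identity map. So assume $1\leq m\leq n$ and write
$$
A=\operatorname{Crit}_m(f)\cup A',
\qquad
A':=\{x\in A:\,\rank\apmd(f,x)\geq m\},
$$
where we may first discard the $\H^n$-null set on which $\apmd(f,x)$ is not defined.

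For the critical part we use the Metric Sard Theorem~\ref{CT3}, which requires no $\sigma$-finiteness of $X$. It gives $\H^{n-m}(f^{-1}(z)\cap\operatorname{Crit}_m(f))=0$ for $\H^m$-a.e.\ $z\in X$. This is the one point where the full, non-$\sigma$-finite Theorem~\ref{CT2} is needed, and it is what makes the result hold for arbitrary $X$. The discarded null set is handled by the co-area inequality, Theorem~\ref{CT1}, as in the remark after Theorem~\ref{CT3}.

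For the regular part, if $\H^n(A')>0$ we apply the Metric Implicit Function Theorem~I (Theorem~\ref{6:corr2}) to $f|_{A'}$. It yields disjoint compact sets $K_i\subset A'$ with $\H^n(A'\setminus\bigcup_iK_i)=0$, $C^1$ diffeomorphisms $G_i$ of $\bbbr^n$, and Lipschitz maps $\pi_i:X\to\bbbr^m$ such that
$$
(\pi_i\circ f\circ G_i^{-1})(x,y)=x
\qquad\text{for all }(x,y)\in G_i(K_i).
$$
The set $Z:=A'\setminus\bigcup_iK_i$ is $\H^n$-null, so Theorem~\ref{CT1} gives $\int^*_X\H^{n-m}(f^{-1}(z)\cap Z)\,d\H^m(z)=0$. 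Hence $\H^{n-m}(f^{-1}(z)\cap Z)=0$ for $\H^m$-a.e.\ $z$.

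For each $i$ and each $z\in X$, the displayed identity gives
$$
f^{-1}(z)\cap K_i\subset G_i^{-1}\big(\{\pi_i(z)\}\times\bbbr^{n-m}\big),
$$
because a point $p\in K_i$ with $f(p)=z$ satisfies $G_i(p)=(x,y)$ with $x=\pi_i(f(p))=\pi_i(z)$. The map $y\mapsto G_i^{-1}(\pi_i(z),y)$ is $C^1$ on $\bbbr^{n-m}$, so it is Lipschitz on each ball $B^{n-m}(0,R)$, $R\in\bbbn$. Consequently $f^{-1}(z)\cap K_i$ lies in a countable union of Lipschitz images of subsets of $\bbbr^{n-m}$. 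When $m=n$ this says the fiber meets each $K_i$ in at most one point, which is consistent with the claim.

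Finally, the union of the countably many sets $f^{-1}(z)\cap K_i$, together with $f^{-1}(z)\cap Z$ and $f^{-1}(z)\cap\operatorname{Crit}_m(f)$, is all of $f^{-1}(z)$. Only finitely many exceptional $\H^m$-null sets of $z$ arise: one from Theorem~\ref{CT3}, one for the undefined-derivative set, and one for $Z$. So for $\H^m$-a.e.\ $z$ the fiber is countably $\H^{n-m}$-rectifiable.

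I do not expect a genuine obstacle beyond what is already contained in Theorem~\ref{CT3}. The only care needed is that every null-set statement is obtained through the co-area inequality, which holds for arbitrary $X$, rather than through Corollary~\ref{CT4}, which would require $\sigma$-finiteness.
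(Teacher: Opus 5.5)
Your proposal is correct and is essentially the paper's proof. You use the same decomposition into $\operatorname{Crit}_m(f)$, an $\H^n$-null set, and $\{\rank\apmd(f,\cdot)\geq m\}$, handled respectively by Theorem~\ref{CT3}, the co-area inequality (Theorem~\ref{CT1}), and Theorem~\ref{6:corr2}. The only difference is cosmetic: you use property (B) to place $f^{-1}(z)\cap K_i$ inside $G_i^{-1}(\{\pi_i(z)\}\times\bbbr^{n-m})$, where the paper uses property (C).

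One small caveat about your commentary: Theorem~\ref{CT3} is not the only step resting on the full Theorem~\ref{CT2}. In the paper, the co-area inequality for a non-$\sigma$-finite $X$ is also derived from the general Theorem~\ref{CT2}, which the paper proves only for $\H^m$-$\sigma$-finite targets.
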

\begin{proof}
The case $m=0$ is immediate. Assume that $1\leq m\leq n$, and set
\[
A_{<m}
=
\left\{
x\in A:
\rank\apmd(f,x)<m
\right\}
\text{ and }
A_{\geq m}
=
\left\{
x\in A:
\rank\apmd(f,x)\geq m
\right\}.
\]
Clearly, $A=A_{<m}\cup A_{\geq m}\cup N$, where $\H^n(N)=0$.
By the Metric Sard Theorem~\ref{CT3},
\[
\mathcal{H}^{n-m}
\bigl(f^{-1}(z)\cap A_{<m}\bigr)=0
\qquad
\text{for $\mathcal{H}^m$-almost every $z\in X$,}
\]
so for such $z$ that part of the set $f^{-1}(z)$ is countably $\H^{n-m}$-rectifiable. Similarly, the co-area inequality, Theorem~\ref{CT1}, implies that $\H^{n-m}(f^{-1}(z)\cap N)=0$ for $\H^m$ almost all $z\in X$. Thus, it remains to investigate fibers $f^{-1}(z)\cap A_{\geq m}$.

Applying the Metric Implicit Function Theorem~\ref{6:corr2} to $f|_{A_{\geq m}}$ we obtain pairwise disjoint compact sets $K_i\subset A_{\geq m}$ such that
\[
\mathcal{H}^n
\Big(
A_{\geq m}\setminus\bigcup_{i=1}^{\infty}K_i
\Big)=0,
\]
and, for every $i$,  a $C^1$-diffeomorphism
$G_i:\mathbb{R}^n\to
\mathbb{R}^m\times\mathbb{R}^{n-m}$
such that, writing $F_i=f\circ G_i^{-1}$,
\[
F_i^{-1}\bigl(F_i(x,y)\bigr)\cap G_i(K_i)
\subset
\{x\}\times\mathbb{R}^{n-m}
\qquad
\text{for all } (x,y)\in G_i(K_i).
\]
Since the fibers $f^{-1}(z)$ intersected with $K_i$ are contained in diffeomorphic images of $(n-m)$-dimensional  hyperplanes, they are rectifiable. We still have a subset $\tilde{N}\subset A_{\geq m}$, $\H^n(\tilde{N})=0$ that is not covered by the sets $K_i$, but as we have already noticed, $\H^{n-m}(f^{-1}(z)\cap\tilde{N})=0$ for $\H^m$ almost all $z\in X$. The proof is complete.
\end{proof}

\section{Metric co-area formula}
\label{MCOF}

In this section we prove and discuss the Metric Co-area Formula. Its general form, Theorem \ref{thm:gCoa}, follows easily from Proposition \ref{thm:Coa} in a similar way as the Metric Area Formula (Theorem \ref{AFT1}) follows from Proposition \ref{AFT2}.
\begin{theorem}\label{thm:gCoa}
Let $1\leq m\leq n$ be integers.
Suppose $X$ is an $\H^m$-$\sigma$-finite metric space, $A\subset\bbbr^n$ is $\H^n$-measurable, $f:A\to X$ is Lipschitz,  and $g:A\to [0,\infty]$ is $\H^n$-measurable. Then
\begin{equation}
\label{eq:gCoa}
\int_{A} g(x)|J_m(\apmd (f,x))|\,d\H^n(x)=\int_X\left( \int_{f^{-1}(z)}g(y)\,d\H^{n-m}(y)\right) d\H^m(z)
\end{equation}
(measurability of integrands is part of the claim).
\end{theorem}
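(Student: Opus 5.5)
We will follow the scheme used for the area formula and reduce \eqref{eq:gCoa} to the case $g=\chi_E$, that is, to the identity
$$
\int_E|J_m(\apmd(f,x))|\,d\H^n(x)=\int_X\H^{n-m}(f^{-1}(z)\cap E)\,d\H^m(z)
$$
for measurable $E\subset A$ (this is presumably Proposition~\ref{thm:Coa}). The general case then follows in two steps. First, linearity gives it for nonnegative simple $g$. Second, monotone convergence gives it for arbitrary measurable $g\geq 0$, since $\int_{f^{-1}(z)}g_j\,d\H^{n-m}\nearrow\int_{f^{-1}(z)}g\,d\H^{n-m}$. Measurability of the left integrand comes from Lemma~\ref{AFT4}, which applies because $\rank\apmd(f,x)\leq m$ a.e. by Corollary~\ref{IT1} and $\sigma$-finiteness. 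Measurability of the right integrand comes from Lemma~\ref{lem:measfib} for characteristic functions, and then passes to limits of simple functions.

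To prove the set version, split $E$ into three parts. The first is $E\cap\operatorname{Crit}_m(f)$. There the Jacobian vanishes by definition, and the fibers have $\H^{n-m}$-measure zero for $\H^m$-a.e. $z$ by Corollary~\ref{CT4}. The second is the null set where $\apmd$ fails to exist or has rank $>m$. This set is handled by the co-area inequality (Theorem~\ref{CT1}). The third part is where $\rank\apmd(f,x)=m$, and there we apply Metric Implicit Function Theorem~II (Theorem~\ref{6:thm2}). Up to a null set, which is again negligible on both sides by Theorem~\ref{CT1}, this part decomposes into disjoint compacts $K_i$ with
$$
F_i=f\circ G_i^{-1}=\phi_i\circ\pi \quad\text{on } G_i(K_i),
$$
where $\phi_i$ is bi-Lipschitz. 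Both sides are countably additive in $E$, so it suffices to prove the identity for $E\subset K_i$.

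For fixed $i$, let $\tilde E=G_i(E)$. On $\tilde E$ the fiber of $F_i$ over $z=\phi_i(x)$ is the slice $\tilde E_x=\tilde E\cap(\{x\}\times\bbbr^{n-m})$, since $\phi_i$ is injective. The fiber of $f$ is therefore $G_i^{-1}(\tilde E_x)$. Now chain the formulas on the right side:

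(1) The metric area formula (Corollary~\ref{gum14}) applied to $\phi_i$ converts $\int_X\cdots d\H^m(z)$ into $\int_{\pi(\tilde E)}|J_m(\apmd(\phi_i,x))|\,\H^{n-m}(G_i^{-1}(\tilde E_x))\,d\H^m(x)$.

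(2) The classical area formula on each slice, for the $C^1$ embedding $G_i^{-1}$ restricted to $\{x\}\times\bbbr^{n-m}$, turns $\H^{n-m}(G_i^{-1}(\tilde E_x))$ into $\int_{\tilde E_x}|J_{n-m}(DG_i^{-1}|_{\{0\}\times\bbbr^{n-m}})|\,d\H^{n-m}$.

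(3) Fubini merges the two integrals into a single integral over $\tilde E$.

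(4) The change of variables $x=G_i^{-1}(p)$ returns to an integral over $E$ with the factor $|\det DG_i^{-1}|^{-1}$.

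It remains to match the integrands pointwise a.e. By the chain rule (Lemma~\ref{lem:chain}), with $\sigma=\apmd(f,p)$ and $L=DG_i^{-1}(G_i(p))$, we have $\apmd(F_i,q)=\sigma\circ L$. Since $F_i=\phi_i\circ\pi$ on $\tilde E$, at density points this also equals $\apmd(\phi_i,x)\circ\pi$. The kernel of $\apmd(F_i,q)$ is $\{0\}\times\bbbr^{n-m}$, so $|J_m(\sigma\circ L)|=|J_m(\apmd(\phi_i,x))|$. Lemma~\ref{lem:semi11} then gives
$$
|J_m(\sigma)|\,|\det L|=|J_m(\apmd(\phi_i,x))|\,|\det(L|_{\{0\}\times\bbbr^{n-m}})|,
$$
and this is exactly the required identity of densities. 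The main obstacle is the bookkeeping in this last step: justifying that $\apmd(F_i,q)=\apmd(\phi_i,x)\circ\pi$ at a.e. point, which needs density points of $\tilde E$ and of the slice projections, and checking measurability of everything so that Fubini is legitimate.
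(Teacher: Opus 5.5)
Your proposal is correct and follows essentially the paper's route. You reduce to the set version (Proposition~\ref{thm:Coa}) via simple functions and monotone convergence, and you prove the set version by discarding $\operatorname{Crit}_m(f)$ (Corollary~\ref{CT4}) and null sets (co-area inequality), decomposing the rank-$m$ part with Theorem~\ref{6:thm2}, and matching densities through Lemma~\ref{lem:chain}, Lemma~\ref{lem:semi11}, change of variables, Fubini, the slice area formula and Corollary~\ref{gum14}. The one detail the paper adds that you skip is an initial reduction to Borel $A$ and Borel $g$, via Lemma~\ref{app:measborel} and the co-area inequality; this is what makes the fiber integrals $\int_{f^{-1}(z)}g\,d\H^{n-m}$ well defined for $\H^m$-a.e.\ $z$.
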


\begin{proposition}
\label{thm:Coa}
Let $1\leq m\leq n$ be integers.
Suppose $X$ is an $\H^m$-$\sigma$-finite metric space, $A\subset \bbbr^n$ is $\H^n$-measurable and $f:A\to X$ is Lipschitz. Then
\begin{equation}\label{eq:mCoa}
    \int_A |J_m(\apmd(f,x))|\,d\H^n(x)=\int_X \H^{n-m}(f^{-1}(z)\cap A)\,d\H^m(z).
\end{equation}
(measurability of integrands is part of the claim).
\end{proposition}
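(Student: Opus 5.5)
We will prove Proposition~\ref{thm:Coa} by reducing it, via the Metric Implicit Function Theorem~II (Theorem~\ref{6:thm2}), to a pieces-wise computation combining the Euclidean co-area formula with the metric area formula. The argument has three steps: discard the critical set, decompose the rest into pieces on which the formula is checked, and reassemble.

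\textbf{Step 1 (critical set).} By Corollary~\ref{IT1}, since $X$ is $\H^m$-$\sigma$-finite (so $\H^{m+1}(X)=0$), we have $\rank\apmd(f,x)\le m$ a.e.\ in $A$. Hence Lemma~\ref{AFT4} gives measurability of the left-hand integrand, and Lemma~\ref{lem:measfib} gives measurability of the right-hand one. Split
\[
A=\operatorname{Crit}_m(f)\cup A_m\cup N,\qquad A_m=\{x\in A:\rank\apmd(f,x)=m\},\qquad \H^n(N)=0.
\]
On $\operatorname{Crit}_m(f)$ the Jacobian vanishes by definition, and Corollary~\ref{CT4} shows that $\H^{n-m}(f^{-1}(z)\cap \operatorname{Crit}_m(f))=0$ for $\H^m$-a.e.\ $z$. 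The null set $N$ contributes nothing on either side; on the right this follows from the co-area inequality, Theorem~\ref{CT1}. Both sides are countably additive in $A$ by monotone convergence, so it suffices to prove \eqref{eq:mCoa} on $A_m$.

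\textbf{Step 2 (local model).} Apply Theorem~\ref{6:thm2} to $f|_{A_m}$. This gives disjoint compact sets $K_i$ exhausting $A_m$ up to a null set, together with diffeomorphisms $G_i$ and bi-Lipschitz maps $\phi_i$ such that $F_i=f\circ G_i^{-1}=\phi_i\circ\pi$ on $G_i(K_i)$. Fix $i$ and write $K=K_i$, $G=G_i$, $\phi=\phi_i$. Since $\phi$ is injective, for $z=\phi(x)$ we have
\[
f^{-1}(z)\cap K = G^{-1}\bigl((\{x\}\times\bbbr^{n-m})\cap G(K)\bigr).
\]
At density points, the chain rule (Lemma~\ref{lem:chain}) gives
\[
\apmd(f,p)\circ DG^{-1}(q)=\apmd(F,q)=\apmd(\phi,x)\circ\pi ,\qquad q=G(p)=(x,y).
\]
The last equality holds because $F=\phi\circ\pi$ on $G(K)$ and $q$ is a density point, so approximate derivatives along $G(K)$ are computed from $\phi\circ\pi$. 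Since $\apmd(\phi,x)$ is a norm on $\bbbr^m$, Lemma~\ref{SJT4} yields $|J_m(\apmd(F,q))|=|J_m(\apmd(\phi,x))|$. Lemma~\ref{lem:semi11} with $L=DG^{-1}(q)$ then relates this to $|J_m(\apmd(f,p))|$ through the factor $|\det DG^{-1}(q)|/|\det(DG^{-1}(q)|_{\{0\}\times\bbbr^{n-m}})|$. The denominator $|\det(DG^{-1}(q)|_{\{0\}\times\bbbr^{n-m}})|$ is exactly the $(n-m)$-Jacobian of $G^{-1}$ restricted to the vertical slice.

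\textbf{Step 3 (computation).} Change variables $p=G^{-1}(q)$ in $\int_K|J_m(\apmd(f,p))|\,d\H^n$ and use Fubini in $q=(x,y)$:
\[
\int_{\pi(G(K))}|J_m(\apmd(\phi,x))|\int_{(\{x\}\times\bbbr^{n-m})\cap G(K)} \bigl|J_{n-m}\bigl(DG^{-1}(x,y)|_{\{0\}\times\bbbr^{n-m}}\bigr)\bigr|\,d\H^{n-m}(y)\,d\H^m(x).
\]
The classical area formula for the injective $C^1$ map $G^{-1}$ on the slice turns the inner integral into $\H^{n-m}(f^{-1}(\phi(x))\cap K)$. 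Theorem~\ref{AFT1}/Corollary~\ref{gum14} applied to the injective map $\phi$ then converts the outer integral into $\int_X\H^{n-m}(f^{-1}(z)\cap K)\,d\H^m(z)$. Summing over $i$ finishes the proof.

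\textbf{Main obstacle.} I expect the hardest point to be the Jacobian bookkeeping in Step~2. First, one must justify that $\apmd(F,q)=\apmd(\phi,\pi(q))\circ\pi$ holds at a.e.\ $q\in G(K)$. This needs $x=\pi(q)$ to be a density point of $\pi(G(K))$ at which $\phi$ is approximately metrically differentiable, which holds for a.e.\ $q$ by Fubini. Second, one must check that the kernel $N_{\apmd(F,q)}=\{0\}\times\bbbr^{n-m}$ is transported correctly in Lemma~\ref{lem:semi11}, so that the factor that appears is precisely the slice Jacobian of $G^{-1}$.
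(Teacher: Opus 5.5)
Your proposal is correct and follows essentially the same route as the paper. You discard $\operatorname{Crit}_m(f)$ and null sets via Corollary~\ref{CT4} and the co-area inequality, decompose the rank-$m$ set with Theorem~\ref{6:thm2}, combine the chain rule (Lemma~\ref{lem:chain}) with Lemma~\ref{lem:semi11}, and finish with change of variables, Fubini, the area formula on vertical slices, and Corollary~\ref{gum14} for $\phi$. The only difference is minor: you get $\apmd(f\circ G^{-1},q)=\apmd(\phi,x)\circ\pi$ in one step by comparing, at density points of $G(K)$, with the Lipschitz extension of $\phi\circ\pi$. The paper instead proves $N_\sigma=\{0\}\times\bbbr^{n-m}$ and $\sigma|_{\bbbr^m\times\{0\}}=\apmd(\phi,x)$ by two separate density-point computations, and both arguments are valid.
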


As in the case of the Metric Area Formula, we shall first show how Theorem \ref{thm:gCoa} follows from Proposition \ref{thm:Coa}, and then concentrate on proving the latter.

\begin{proof}[Proof of Theorem \ref{thm:gCoa}, assuming Proposition \ref{thm:Coa} holds.]\mbox{}\\
The proof is standard: Proposition~\ref{thm:Coa} is Theorem \ref{thm:gCoa} with $g\equiv 1$. Proposition~\ref{thm:Coa} allows us to establish \eqref{eq:gCoa} for any simple function $g$, and then prove the general case by approximation of $g$ by simple functions and the monotone convergence theorem. Here are the details.

First choose a Borel set $A_0\subset A$ such that $\H^n(A\setminus A_0)=0$ and $g|_{A_0}$ is Borel. This is possible by Lemma~\ref{app:measborel} and the Borel regularity of $\H^n$. By the Co-area Inequality (Theorem~\ref{CT1}),
$$
\H^{n-m}\big(f^{-1}(z)\cap(A\setminus A_0)\big)=0
\quad\text{for $\H^m$-a.e.\ $z\in X$.}
$$
For every such $z$, the set $f^{-1}(z)\cap A_0$ is Borel, so the restriction of $g$ to $f^{-1}(z)$ is $\H^{n-m}$-measurable, and
$$
\int_{f^{-1}(z)}g(y)\,d\H^{n-m}(y)
=\int_{f^{-1}(z)\cap A_0}g(y)\,d\H^{n-m}(y).
$$
The inner integral over the original fiber is thus defined for $\H^m$-a.e.\ $z$; on the exceptional null set we assign it the value zero.

Also, $\apmd(f|_{A_0},x)=\apmd(f,x)$ for $\H^n$-a.e.\ $x\in A_0$, so the left-hand side of \eqref{eq:gCoa} does not change when $A$ is replaced by $A_0$. Hence, it suffices to prove the theorem when $A$ and $g$ are Borel, which we assume from now on. In this case all fibers are Borel, and restrictions of Borel functions to the fibers are $\H^{n-m}$-measurable for every $z$.

Let $g$ be an arbitrary non-negative simple function on $A$, $g=\sum_{j=1}^N a_j\chi_{A_j}$, $a_j\geq 0$, $A_j\subset A$ Borel.  For any $j$ we have $\apmd(f|_{A_j},x)=\apmd(f,x)$ for $\H^n$-a.e.\ $x\in A_j$, so applying Proposition~\ref{thm:Coa} to each of the restrictions  $f|_{A_j}:A_j\to X$  we obtain
\begin{equation*}\begin{split}
\int_{A}&g(x)|J_m(\apmd(f,x))|\,d\H^n(x)=\int_{A}\left(\sum_{j=1}^N a_j\chi_{A_j}(x)\right)|J_m(\apmd(f,x))|\,d\H^n(x)\\
    &=\sum_{j=1}^N a_j\int_{A_j} |J_m(\apmd(f,x))|\,d\H^n(x)=\sum_{j=1}^N a_j \int_X \H^{n-m}(f^{-1}(z)\cap A_j)\,d\H^m(z)\\
    &=\sum_{j=1}^N a_j\int_X
    \left(
        \int_{f^{-1}(z)}\chi_{A_j}(y)\,d\H^{n-m}(y)
    \right)d\H^m(z)\\
    &=\int_X\left(\int_{f^{-1}(z)}g(y)\,d\H^{n-m}(y)
    \right)d\H^m(z),
\end{split}
\end{equation*}
i.e., \eqref{eq:gCoa} holds for the arbitrary simple function $g$.

For a general non-negative Borel function $g:A\to[0,\infty]$, choose an increasing
sequence of non-negative Borel simple functions $g_k$ which converge pointwise to $g$. Then for any~$k$
\begin{equation}\label{eq:Coagk}
    \int_{A} g_k(x)|J_m(\apmd(f,x))|\,d\H^n(x)=\int_X\left(\int_{f^{-1}(z)}g_k(y)\,d\H^{n-m}(y)\right)d\H^m(z),
\end{equation}
and since the integrands of all the integrals in \eqref{eq:Coagk} converge with $k\to\infty$ pointwise, in an increasing way, to the corresponding integrands for $g$,  we obtain the desired formula \eqref{eq:gCoa} by the monotone convergence theorem. For that last step to be valid, we need to assert the measurability of the  integrands.

Since $X$ is $\H^m$-$\sigma$-finite, we have $\H^{m+1}(X)=0$. Thus, Corollary~\ref{IT1} gives $\rank \apmd (f,x)\leq m$ for $\H^n$-a.e. $x\in A$. Hence, by Lemma~\ref{AFT4}, $x\mapsto |J_m(\apmd(f,x))|$ is $\H^n$-measurable on $A$. Since $g$ is $\H^n$-measurable by assumption, the integrand on the left side of \eqref{eq:gCoa} is $\H^n$-measurable.

Now, for the right-hand side integrand of \eqref{eq:gCoa}, applying Lemma \ref{lem:measfib} to any Borel $A'\subset A$ and $f|_{A'}:A'\to X$ yields the $\H^m$-measurability of  $$z\mapsto \H^{n-m}(f^{-1}(z)\cap A')$$ (we also provide a slightly different proof within the proof of Proposition \ref{thm:Coa}). Thus
$$
z\mapsto \int_{f^{-1}(z)}g_k(y)\,d\H^{n-m}(y)
$$
is $\H^m$-measurable for any non-negative Borel simple function $g_k$. Let $g_k$ be, as before, the increasing sequence of simple functions converging to $g$. Then, as we already noted,
$$
\int_{f^{-1}(z)}g_k(y)\,d\H^{n-m}(y)~~\text{converge pointwise to~~}\int_{f^{-1}(z)}g(y)\,d\H^{n-m}(y),
$$
which proves the $\H^m$-measurability of
$$
z\mapsto \int_{f^{-1}(z)}g(y)\,d\H^{n-m}(y).
$$
\end{proof}

\begin{proof}[Proof of Proposition \ref{thm:Coa}.]\mbox{}\\
The idea of the proof is as follows: it is clear that both sides of the formula \eqref{eq:mCoa} are countably additive with respect to the set $A$ in the sense that if $A$ is decomposed into a countable union of measurable, pairwise disjoint sets $A=\bigcup_j A^j$ for which the fiber intersections $f^{-1}(z)\cap A^j$ are $\H^{n-m}$-measurable, and \eqref{eq:mCoa} holds for each of them in place of $A$, then it holds for $A$, as well. Thus, we begin by identifying negligible sets, for which both sides of \eqref{eq:mCoa} vanish:
\begin{itemize}
    \item[a)] the set where $\rank \apmd(f,\cdot)$ is less than $m$,
    \item[b)] certain $\H^n$-measure zero subsets of $A$, (e.g., the set where $f$ is not approximately metrically differentiable or where $\rank \apmd(f,x)>m$).
\end{itemize}
That the set a) is negligible, follows from the definition of $|J_m(\apmd(f,\cdot))|$ on the left-hand side and Corollary~\ref{CT4} on the right-hand side. As for b) it is obvious on the left-hand side, and requires the Co-area Inequality (Theorem~\ref{CT1}) on the right-hand side of \eqref{eq:mCoa}.

What we are left with is the set $\{x\in A~:~\rank \apmd(f,x)=m\}$, which, by the Metric Implicit Function Theorem II (Theorem~\ref{6:thm2}) decomposes, up to a set of $\H^n$ measure zero (again negligible), into a countable family of pairwise disjoint compact sets $K_j$, on which $f$ transforms, after a diffeomorphism change of variables, to a map depending only on a subset of variables, i.e., constant along the fibers. Then, the metric co-area formula \eqref{eq:mCoa} simplifies to Fubini's theorem (that allows us to integrate the $m$-Jacobian of $f$ along the fibers) and the change-of-variables formula resulting from the Area Formula (Corollary~\ref{gum14}) in the horizontal directions. Also, the fibers $f^{-1}(z)\cap K_j$ are compact and thus $\H^{n-m}$-measurable.

While this outline is short, details must be filled to make the proof rigorous.

Consider first the set $A_0=\{x\in A~~:~~\rank \apmd(f,x)<m\}$. Note that, by the definition of $|J_m(\cdot)|$,  $|J_m(\apmd(f,x))|=0$ whenever $\rank\apmd(f,x)<m$, and since $A_0=A\cap \mathrm{Crit_m} (f)$, by Corollary~\ref{CT4},
$\H^{n-m}(f^{-1}(z)\cap A_0)=0$ for $\H^m$ almost all $z\in X$.
Therefore, both sides of \eqref{eq:mCoa} vanish if we replace $A$ with $A_0$.

Let now $A_1$ be any subset of $A$ with $\H^n(A_1)=0$. Then obviously
$$\int_{A_1}|J_m(\apmd(f,x))|\,d\H^n(x)=0,$$ and by the Co-area Inequality,
$$\int^*_X \mathcal{H}^{n-m} \left(f^{-1}(z) \cap A_1\right)\,  d\mathcal{H}^m (z)  \leq \frac{\omega_{n-m} \omega_m}{\omega_{n}}L^m \,\mathcal{H}^{n}(A_1)=0,$$
so both sides of \eqref{eq:mCoa} vanish with $A_1$ in place of $A$.

We want to emphasize that here we need the Co-area Inequality for an $\H^m$-$\sigma$-finite target and in that case we provided a complete proof, see the proof of Theorem~\ref{CT2}.

This reasoning allows us in particular to neglect the sets
$$
\{x\in A~:~\apmd (f,x) \text{ does not exist }\}
\qquad
\text{and}
\qquad
\{x\in A~:~\rank \apmd(f,x)>m\},
$$
which by Theorem~\ref{thm:apKR} and Corollary~\ref{IT1} have measure zero (since $X$ is $\H^m$-$\sigma$-finite, $\H^{m+1}(X)=0$), and we are left with the set $$A_2=\{x\in A~:~\rank\apmd (f,x)=m\}.$$
If $\H^n(A_2)=0$, the set $A_2$ is negligible, both sides of \eqref{eq:mCoa} are zero and the proof is concluded. Assume thus that $\H^n(A_2)>0$.

Denote by $\pi:\bbbr^{n}=\bbbr^m\times\bbbr^{n-m}\to\bbbr^m$ the orthogonal projection onto the first $m$ coordinates. Theorem~\ref{6:thm2} provides us with a countable family of pairwise disjoint compact sets $K_i\subset A_2$, $i=1,2,\ldots$, such that $\H^n(A_2\setminus \bigcup_i K_i)=0$, and each of the $K=K_i$ has the following property:

There exist a $C^1$ diffeomorphism $G:\bbbr^n\to\bbbr^n=\bbbr^m\times\bbbr^{n-m}$ and a bi-Lipschitz $\phi:\pi(G(K)) \to X$ such that for all $(x,y)\in G(K)\subset \bbbr^m\times\bbbr^{n-m}$ we have $f\circ G^{-1}(x,y)=\phi(x)$. In other words, after a diffeomorphic coordinate change $G^{-1}$, the transformed mapping depends (in a bi-Lipschitz way) only on the first $m$ coordinates and it is constant on the fibers $G(K)\cap (\{x\}\times \bbbr^{n-m})$.

Since  $\H^n(A_2\setminus \bigcup_i K_i)=0$, also the set $Z:=A_2\setminus \bigcup_i K_i$ is negligible and it remains to verify \eqref{eq:mCoa} on $\bigcup_i K_i$. The formula \eqref{eq:mCoa} is countably additive with respect to a decomposition into a countable pairwise disjoint union of compact sets (compactness of $K_i$ guarantees compactness -- and thus $\H^{n-m}$-measurability -- of the fibers $f^{-1}(z)\cap K_i$), so it suffices now to verify that it holds on each compact piece $K=K_i$:
\begin{equation}\label{eq:coAonK}
\int_K |J_m(\apmd(f,x))|\,d\H^n(x)=\int_X \H^{n-m}(f^{-1}(z)\cap K)\,d\H^m(z).
\end{equation}
Fix $G$, $\phi$, and denote $\Psi:=G^{-1}$.
Recall that $f\circ\Psi(x,y)=\phi(x)$ for all $(x,y)\in G(K)$, and $\phi:\pi(G(K))\to X$ is bi-Lipschitz.

Set $h:=f\circ\Psi:G(K)\to X$. For $\H^n$-a.e. $q=(x,y)\in G(K)$, the point $q$ is a density point of $G(K)$, $\sigma:=\apmd(h,q)$ exists and has rank $m$, and
$\tau:=\apmd(\phi,x)$ exists. The last assertion follows from Theorem~\ref{thm:apKR} applied to $\phi$ and Fubini's theorem.

We claim that
$N_\sigma=\{0\}\times\bbbr^{n-m}$ and $\sigma|_{\bbbr^m\times\{0\}}=\tau.
$
To prove this, embed $X$ isometrically into $\ell^\infty$ and extend $h$ and $\phi$ to Lipschitz maps defined on $\bbbr^n$ and $\bbbr^m$, respectively. Since $h$ and $\phi$ are approximately metrically differentiable at $q$ and $x$, respectively, Proposition~\ref{ADT2} shows that the corresponding extensions are metrically differentiable there, with metric derivatives $\sigma$ and $\tau$.

Since $q$ is a density point of $G(K)$, for every fixed $a\in\bbbr^n$ and $t\to0$ we can choose $q_t=(x_t,y_t)\in G(K)$, $q_t=q+ta+o(|t|)$.

First take $a=(0,v)$. Then $x_t=x+o(|t|)$, and, since $h(x',y')=\phi(x')$ on $G(K)$ and $\phi$ is Lipschitz,
$d(h(q_t),h(q))=d(\phi(x_t),\phi(x))=o(|t|)$. On the other hand, metric differentiability of the extension of $h$ at $q$ gives
$$
d(h(q_t),h(q))=\sigma(q_t-q)+o(|q_t-q|)
=|t|\sigma(0,v)+o(|t|).
$$
Hence $\sigma(0,v)=0$ for every $v\in\bbbr^{n-m}$, so $
\{0\}\times\bbbr^{n-m}\subset N_\sigma$. Since $\rank\sigma=m$, both spaces have dimension $n-m$, and consequently $N_\sigma=\{0\}\times\bbbr^{n-m}$ and $
N_\sigma^\perp=\bbbr^m\times\{0\}$.

Next take $a=(u,0)$. Then $x_t=x+tu+o(|t|)$. Metric differentiability of the extensions of $h$ and $\phi$, together with $h(q_t)=\phi(x_t)$ and $h(q)=\phi(x)$, yields
$$
|t|\sigma(u,0)+o(|t|)
=d(h(q_t),h(q))
=d(\phi(x_t),\phi(x))
=|t|\tau(u)+o(|t|),
$$
so $\sigma(u,0)=\tau(u)$ for every $u\in\bbbr^m$. Hence
$\apmd(f\circ\Psi,(x,y))|_{\bbbr^m\times\{0\}}=\apmd(\phi,x)$ and 
$$
|J_m(\apmd(f\circ\Psi,(x,y)))|=|J_m(\apmd(\phi,x))|
\qquad
\text{for $\H^n$-a.e.\ }(x,y)\in G(K).
$$

By the chain rule (Lemma~\ref{lem:chain}), we know that $\apmd(f\circ\Psi,(x,y))=\apmd(f,\Psi(x,y))\circ D\Psi(x,y)$; if we set $\sigma:=\apmd(f,\Psi(x,y))$ and $L:=D\Psi(x,y)$, then, as observed above, $N_{\sigma\circ L}=\{0\}\times\bbbr^{n-m}$ and Lemma~\ref{lem:semi11} yields
\begin{equation*}
\begin{split}
&|J_m(\apmd(f,\Psi(x,y)))|\,|\det D\Psi(x,y)|\\
&=|J_m(\apmd(f\circ\Psi,(x,y)))|\,\big|\det (D\Psi(x,y)|_{\{0\}\times\bbbr^{n-m}})\big|\\
&=|J_m(\apmd(\phi,x))|\, \big|\det (D\Psi(x,y)|_{\{0\}\times\bbbr^{n-m}})\big|.
    \end{split}
\end{equation*}
for $\H^n$-a.e. $(x,y)\in G(K)$.

Thus, using the standard change of variables formula, and the Fubini theorem (in the last equality)
\[
\begin{split}
&
\int_K|J_m(\apmd(f,p))|\,d\H^n(p)
=
\int_{G(K)}|J_m(\apmd(f,\Psi(x,y)))|\,|\det D\Psi(x,y)|\,d\H^n(x,y)\\
&=
\int_{G(K)}|J_m(\apmd(\phi,x))|\, \big|\det (D\Psi(x,y)|_{\{0\}\times\bbbr^{n-m}})\big|\,d\H^n(x,y)\\
&=
\int_{\bbbr^m} |J_m(\apmd(\phi,x))|\left(\int_{G(K)\cap({\{x\}\times\bbbr^{n-m}})} \big|\det (D\Psi(x,y)|_{\{0\}\times\bbbr^{n-m}})\big|\,d\H^{n-m}(y)\right)\,d\H^m(x).
\end{split}
\]
Now, by the Area formula,
$$
\int_{G(K)\cap({\{x\}\times\bbbr^{n-m}})} \big|\det (D\Psi(x,y)|_{\{0\}\times\bbbr^{n-m}})\big|\,d\H^{n-m}(y)=
\H^{n-m}\big(\Psi(G(K)\cap (\{x\}\times \bbbr^{n-m}))\big)\, ,
$$
in particular the function $x\mapsto \H^{n-m}\big(\Psi(G(K)\cap (\{x\}\times \bbbr^{n-m}))\big)$ is, by Fubini's Theorem, $\H^m$-measurable.

(Note that here by $\det (D\Psi(x,y)|_{\{0\}\times\bbbr^{n-m}})$ we understand the $(n-m)$-dimensional determinant of the restriction of $D\Psi(x,y)$ to the vertical space ${\{0\}\times\bbbr^{n-m}}\approx \bbbr^{n-m}$.)

As sets,
$$
\Psi(G(K)\cap (\{x\}\times \bbbr^{n-m})) =
\begin{cases}
f^{-1}(\phi(x)) \cap {K} & \text{if } x\in \pi(G(K))\\
\varnothing &\text{if } x\in\bbbr^m\setminus \pi(G(K)),
\end{cases}
$$
so
\begin{equation*}
\begin{split}
\int_K|J_m(\apmd(f,p))|\,d\H^n(p)=
\int_{\pi(G(K))} \H^{n-m}(f^{-1}(\phi(x)) \cap {K})\, \big|J_m(\apmd(\phi,x))\big|\,d\H^{m}(x) \,,
\end{split}
\end{equation*}
and the function $x\mapsto \tilde{g}_K(x):=\H^{n-m}(f^{-1}(\phi(x)) \cap {K})$ is measurable, and so is
$$
g_K(z)=\H^{n-m}(f^{-1}(z) \cap {K})=\begin{cases} \tilde{g}_K\circ\phi^{-1}(z)&\text{ if }z\in f(K)\\
0& \text{ otherwise,}\end{cases}
$$
because $f(K)=\phi(\pi(G(K)))$ is compact and $\phi^{-1}$ is Lipschitz on $f(K)$.

Finally, by the change of variables formula (Corollary~\ref{gum14}), applied with ${\phi:\pi(G(K))\to X}$ in place of $f$ and $u(z)=g_K(z)=\H^{n-m}(f^{-1}(z)\cap K)$, we get
\[
\begin{split}
\int_K|J_m(\apmd(f,p))|\,d\H^n(p)
&=
\int_X u(z)N(\phi,\pi(G(K)),z)\,d\H^m(z)\\
&=
\int_X \H^{n-m}(f^{-1}(z)\cap K)\,d\H^m(z),
\end{split}
\]
since $\phi$ is bi-Lipschitz on $\pi(G(K))$ and thus $N(\phi,\pi(G(K)),z)=1$ for all $z\in\phi(\pi(G(K)))$, and whenever $z\not\in\phi(\pi(G(K)))$, $f^{-1}(z)\cap K=\varnothing$, so $\H^{n-m}(f^{-1}(z)\cap K)=0$.

This completes the proof of \eqref{eq:coAonK},  which, by the preceding remarks, completes the proof of the proposition,  except for the analysis of measurability of the integrands.

The measurability of $|J_m(\apmd(f,\cdot))|$ was already established in the course of the proof of Theorem~\ref{thm:gCoa}.
The measurability of the right-hand-side integrand has been addressed already in Lemma \ref{lem:measfib}. The arguments below are related, but slightly different.

Recall that we decomposed $A$ into disjoint sets $A=A_0\cup A_1\cup Z\cup\bigcup_i K_i$, where $A_0=A\cap\mathrm{Crit}_m f$ and $\H^n(A_1\cup Z)=0$. As we observed, by Corollary \ref{CT4} and the Co-area Inequality,
$$
\H^{n-m}(f^{-1}(z)\cap A_0)=\H^{n-m}(f^{-1}(z)\cap (A_1\cup Z))=0 \quad \text{for }\H^m\text{-a.e.\ }z\in X,
$$
thus $\H^{n-m}(f^{-1}(z)\cap A)=\sum_i \H^{n-m}(f^{-1}(z)\cap K_i)=\sum_i g_{K_i}(z)$ for $\H^m$-almost every $z$ in $X$. As we already noted in the course of the proof, for each $K_i$ the function $g_{K_i}$ is measurable, which completes the proof of measurability of 
$z\mapsto\H^{n-m}(f^{-1}(z)\cap A)$.
\end{proof}

The next example shows that the assumption that $X$ is $\H^m$-$\sigma$-finite is indeed necessary.
\begin{example}[c.f. {\cite[Example 6.1]{reichel}}]
\label{ex:4.11}
Let $C_1$ be a Cantor set in $[0,1]$ obtained by removing, in a standard ternary construction, one interval of length $\frac{1}{4}$, two intervals of length $\frac{1}{16}$, ..., $2^k$ intervals of length $4^{-(k+1)}$. Obviously, $\H^1(C_1)=\frac{1}{2}$. Removing intervals of the same length, again through a ternary construction, from the interval $[0,\frac{1}{2}]$, yields a zero $\H^1$-measure Cantor set $C_2$.

Let $\phi:\bbbr\to\bbbr$, $\phi(t)=\int_0^t \chi_{\bbbr\setminus C_1}(s)\, ds$. Then $\phi$ is $1$-Lipschitz, strictly increasing, maps $C_1$ onto $C_2$ and $\phi'(t)=1$ for all $t\not \in C_1$, $\phi'(t)=0$ for a.e.\ $t\in C_1$.

Fix now $A=C_1\times\bbbr\subset\bbbr^2$,  let $X=C_2\times \bbbr\subset \ell_2^\infty$, the plane endowed with the maximum norm $\|(x,y)\|_\infty=\max\{|x|,|y|\}$, and let $f:A\to X$ be given by the formula $f(x,y)=(\phi(x),y)$.

The space $X$ is not $\H^1$-$\sigma$-finite. Indeed, assume to the contrary that $X=\bigcup_{i=1}^\infty X_i$ with $\H^1(X_i)<\infty$. It easily follows from the Theorem \ref{CT1} (Coarea inequality) that $\H^0(X_i\cap (\bbbr\times\{y\}))<\infty$ for a.e. $y\in\bbbr$, i.e., $X_i$ meets a.e. horizontal line at a finite number of points, and thus $X$ would have to meet a.e. horizontal line at a countable set of points -- but this is not the case, as $X\cap (\bbbr\times\{y\})=C_2\times\{y\}$ is uncountable for any $y$.

For a.e.\ $(x,y)\in A$ we have $\frac{\partial}{\partial x}f(x,y)=(0,0)$,  $\frac{\partial}{\partial y}f(x,y)=(0,1)$, so $\rank \apmd f(x,y)=1$ a.e.\ in $A$ and $|J_1(\apmd(f,(x,y)))|=1$ a.e.\ in $A$.

Now, $\int_A |J_1(\apmd (f,(x,y)))|\,d\H^2=\int_{C_1\times\bbbr}d\H^2=\H^1(C_1)\cdot \H^1(\bbbr)=\infty$. On the other hand, if $(u,v)\in X$, then the fiber $f^{-1}(u,v)=\{(\phi^{-1}(u),v)\}$ consists of a single point, because $\phi$ is strictly increasing and $\phi(C_1)=C_2$, and $\H^1(f^{-1}(u,v)\cap A)=0$. Thus
$$
\int_X \H^1(f^{-1}(u,v)\cap A)\, d\H^1(u,v) =0
$$
and the formula \eqref{eq:mCoa} fails -- the conclusion of the Metric Co-area Formula does not hold in this case.
\end{example}

\section{Metric Differentiability Revisited}
\label{ap:sec3}

The purpose of this section is to provide better understanding and intuitions concerning the notion of metric and strong metric differentiability.

The principal tool to understand any kind of differentiability in normed spaces is the analysis of blow-up maps.
Let $(Y,\Vert\cdot\Vert)$ be any normed space, $\Omega\subset \bbbr^n$ an open set, $f:\Omega\to Y$.
\begin{definition}
    Let $p\in\Omega$ and set $R=\dist(p,\partial\Omega)$. For $0<r<R$ we define the blow-up maps of~$f$ at~$p$ as
    $$
    F_r(v):=\frac{f(p+rv)-f(p)}{r}\qquad \text{ for }v\in\bbbr^n,~~ |v|< R/r.
    $$
\end{definition}
\begin{remark}
The restriction $|v|< R/r$ is necessary to have $p+rv\in\Omega$. However, we are mostly interested in the limit behavior of $F_r$ as $r\to 0$, so for any fixed $p$ and any $v\in\bbbr^n$ the blow-up maps $F_r$ are well defined at $v$ for all $r$ small enough, i.e., $0<r<R/|v|$.

If $f$ admits an extension $\tilde f:\bbbr^n\to Y$, then one may
consider the blow-up maps
$$
\tilde F_r(v)=\frac{\tilde f(p+rv)-\tilde f(p)}{r},
\qquad v\in\bbbr^n .
$$
This is the case, for instance, for Lipschitz maps with values in
$\bbbr^m$ or $\ell^\infty$. One immediately checks that $F_r(v)=\tilde F_r(v)$ whenever $|v|<R/r$,
so the local limiting behaviour as $r\to 0$ of $\tilde{F}_r$ and $F_r$ is the same.
\end{remark}

\begin{remark}
\label{rem:md1}
It is well known that
\begin{itemize}
    \item[a)] $f$ is \emph{Gateaux differentiable} at $p$ (i.e., the directional derivative $D_vf(p)$ exists for all $v\in \bbbr^n$ and the map $v\mapsto D_vf(p)$ is linear) if and only if there is a linear map $L:\bbbr^n\to Y$ such that $F_r(v)\xrightarrow{~~r\to 0~~}Lv$, for every $v\in\bbbr^n$;
    \item[b)] $f$ is \emph{Fr\'echet differentiable} at $p$ if and only if there is a linear map $L:\bbbr^n\to Y$ such that $F_r(v)\xrightarrow{~~r\to 0~~}Lv$ and this convergence is locally uniform in $v$ (i.e., uniform on every compact subset of $\bbbr^n$),
\end{itemize}
and in both cases the linear map $L$ is the (Gateaux or Fr\'echet) differential of $f$ at $p$. Thus Gateaux differentiability corresponds to pointwise, and Fr\'echet -- to locally uniform convergence of $F_r$ as $r\to 0$.
\end{remark}
Our aim is to describe the metric and strong metric differentiability in similar terms. Since any separable metric space $X$ embeds in $\ell^\infty$, the case of $Y=\ell^\infty$ is of particular interest.
\begin{lemma}\label{lem:md l1.2}
 The map $f:\Omega\to Y$ is
\begin{itemize}
\item[a)]  \emph{metrically differentiable} at $p$ if and only if there is a seminorm $\sigma$ on $\bbbr^n$ such that $\Vert F_r(v)\Vert\xrightarrow{~~r\to 0~~}\sigma(v)$  uniformly in $v$ on the unit sphere $\{u\in\bbbr^n~:~|u|=1\}$;
\item[b)] \emph{strongly metrically differentiable} at $p$ if and only if  there is a seminorm $\sigma$ on $\bbbr^n$ such that $\Vert F_r(v)-F_r(w)\Vert\xrightarrow{~~r\to 0~~}\sigma(v-w)$ uniformly in $v,w$ on the unit ball $\{u\in\bbbr^n~:~|u|\leq 1\}$.
\end{itemize}
\end{lemma}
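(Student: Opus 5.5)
The plan is to rewrite both limits in Definition~\ref{def101} in terms of the blow-ups by the substitution $y=p+rv$ (and $z=p+rw$), and then to check that the uniformity requirements match. The identity behind everything is
$$
\Vert F_r(v)\Vert=\frac{\Vert f(p+rv)-f(p)\Vert}{r},
\qquad
\Vert F_r(v)-F_r(w)\Vert=\frac{\Vert f(p+rv)-f(p+rw)\Vert}{r}.
$$

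For part a), assume first that $f$ is metrically differentiable at $p$ with $\sigma=\md(f,p)$. Given $\eps>0$, choose $\delta$ so that $|\Vert f(y)-f(p)\Vert-\sigma(y-p)|\le\eps|y-p|$ whenever $0<|y-p|<\delta$. For $|v|=1$ and $0<r<\delta$, put $y=p+rv$. Dividing by $r$ and using $\sigma(rv)=r\sigma(v)$ gives $|\Vert F_r(v)\Vert-\sigma(v)|\le\eps$, and this holds uniformly on the sphere.

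Conversely, assume the uniform convergence on the sphere. Given $y\ne p$, set $r=|y-p|$ and $v=(y-p)/r$. Then
$$
\frac{\Vert f(y)-f(p)\Vert-\sigma(y-p)}{|y-p|}=\Vert F_r(v)\Vert-\sigma(v),
$$
which tends to $0$ as $y\to p$ because $r\to 0$ and the convergence is uniform in $v$. So both directions of a) amount to a change of variables.

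For part b), assume first strong metric differentiability with seminorm $\sigma$. For $|v|,|w|\le1$ put $y=p+rv$ and $z=p+rw$. Then
$$
\big|\Vert F_r(v)-F_r(w)\Vert-\sigma(v-w)\big|
=\frac{\big|\Vert f(z)-f(y)\Vert-\sigma(z-y)\big|}{r}.
$$
The definition bounds the numerator by $\eps(|z-p|+|y-p|)\le 2\eps r$ once $r$ is small. That gives a bound of $2\eps$, uniform on the unit ball.

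The converse in b) is the only step that needs a small idea, because the denominator in the definition is $|z-p|+|y-p|$ and no single scale is given. For $(y,z)\ne(p,p)$ I will set $r=\max\{|y-p|,|z-p|\}>0$, $v=(y-p)/r$ and $w=(z-p)/r$. Then $|v|,|w|\le1$, and $r\le |y-p|+|z-p|\le 2r$. This yields
$$
\frac{\big|\Vert f(z)-f(y)\Vert-\sigma(z-y)\big|}{|z-p|+|y-p|}
\le\big|\Vert F_r(v)-F_r(w)\Vert-\sigma(v-w)\big|.
$$
As $(y,z)\to(p,p)$ we have $r\to0$, so uniform convergence on the unit ball sends the right-hand side to $0$.

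The only care needed is that $r$ is small enough for $p+rv$ and $p+rw$ to lie in $\Omega$; this is automatic for $r<R$. The seminorm $\sigma$ is the same object on both sides of each equivalence, so no separate uniqueness argument is needed.
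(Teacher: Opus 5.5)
Your proof is correct and follows the paper's argument essentially step for step. Both use the substitutions $y=p+rv$, $z=p+rw$ in each direction, and in the converse of b) both choose $r=\max\{|y-p|,|z-p|\}$ and use $r\le|y-p|+|z-p|\le 2r$; your version just writes out the $\eps$--$\delta$ bookkeeping and the requirement $r<R$ more explicitly.
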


Recall that a normed space $(Y,\Vert\cdot\Vert)$ is {\em strictly convex} if for all $x,y\in Y$, $x\neq y$, with $\Vert x\Vert=\Vert y\Vert=1$, we have $\Vert tx+(1-t)y\Vert<1$ for all $t\in (0,1)$.

We find the following observation very instructive:

\begin{proposition}
\label{lem:md l1.3}
Assume $(Y,\Vert\cdot\Vert)$ is finite dimensional. If $f:\Omega\to (Y,\Vert\cdot\Vert)$ is strongly metrically differentiable at $p$, then from any sequence $r_k\to 0$ we can choose a subsequence (still denoted by $r_k$) such that $F_{r_k}$ converge with $k\to\infty$, locally uniformly, to a continuous map $F:\bbbr^n\to Y$, which is distance-preserving with respect to the pseudometric induced by $\md(f,p)$. If additionally either
\begin{itemize}
        \item[a)] $Y$ is strictly convex or
        \item[b)] $\rank \md(f,p)=\dim Y$,
    \end{itemize}
then the limit maps $F$ are linear.
\end{proposition}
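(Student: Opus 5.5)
The plan has three stages: get a limit map by Arzel\`a--Ascoli, read off its distance-preserving property from strong metric differentiability, and then prove linearity separately under a) and b).

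\emph{Existence of the limit.} Assume $f$ is strongly metrically differentiable at $p$ and write $\sigma=\md(f,p)$. For each $R>0$, once $r$ is small enough, strong metric differentiability gives
\[
\|F_r(v)-F_r(w)\|=\frac{\|f(p+rv)-f(p+rw)\|}{r}=\sigma(v-w)+o(1)\,(|v|+|w|)
\]
uniformly for $|v|,|w|\le R$; this is Lemma~\ref{lem:md l1.2}(b) after rescaling. Since $F_r(0)=0$ and $\sigma(v-w)\le C|v-w|$, the family $\{F_r\}$ is uniformly bounded on $\bar B(0,R)$ and asymptotically equicontinuous. More precisely, for $|v-w|<\delta$ we have $\|F_r(v)-F_r(w)\|\le C\delta+\eps$ once $r$ is small. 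Because $Y$ is finite dimensional, bounded sets are relatively compact, so a diagonal Arzel\`a--Ascoli argument (in its ``asymptotic equicontinuity'' version) over $R=1,2,\ldots$ yields a subsequence $F_{r_k}\to F$ locally uniformly, with $F$ continuous. Passing to the limit in the display gives $\|F(v)-F(w)\|=\sigma(v-w)$ for all $v,w$, and $F(0)=0$. So $F$ is distance-preserving from $(\bbbr^n,\sigma)$ into $Y$.

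\emph{Linearity under a).} The key observation is that $F$ maps the midpoint of any segment to the exact metric midpoint of the images. Take $v,w$ and set $m=(v+w)/2$. Then
\[
\|F(v)-F(m)\|=\|F(m)-F(w)\|=\tfrac12\sigma(v-w)=\tfrac12\|F(v)-F(w)\|.
\]
In a strictly convex normed space the point $\tfrac12(F(v)+F(w))$ is the unique point with this property. Indeed, equality in the triangle inequality $\|a+b\|=\|a\|+\|b\|$ with $\|a\|=\|b\|$ forces $a=b$ by strict convexity, applied to $a=F(v)-F(m)$ and $b=F(m)-F(w)$. Hence $F((v+w)/2)=(F(v)+F(w))/2$. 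Together with $F(0)=0$ and continuity, this makes $F$ additive and $\bbbr$-homogeneous, so $F$ is linear (the standard Jensen argument over dyadic rationals).

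\emph{Linearity under b).} This is where I expect the main obstacle, since the midpoint argument fails without strict convexity. I would first reduce to a norm. Since $\|F(v)-F(w)\|=\sigma(v-w)$, the map $F$ is constant along cosets of $N_\sigma$, so it factors through $\bbbr^n/N_\sigma\cong N_\sigma^\perp$ via a map $\tilde F$. The map $\tilde F$ is an isometric embedding of the $k$-dimensional normed space $(N_\sigma^\perp,\sigma)$, $k=\rank\sigma$, into $Y$, which has dimension $k$. By invariance of domain, $\tilde F$ is continuous and injective between spaces of equal dimension, so its image is open. An isometric embedding is also proper, so the image is closed; hence $\tilde F$ is surjective onto $Y$. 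Then $\tilde F$ is a surjective isometry between real normed spaces with $\tilde F(0)=0$, and the Mazur--Ulam theorem gives that $\tilde F$ is linear. Consequently $F=\tilde F\circ(\text{projection onto } N_\sigma^\perp)$ is linear. The step needing most care is the surjectivity via invariance of domain; if I wanted to avoid it, I would try a volume argument instead: compare $\H^k$ of $\tilde F$ of a ball with $\H^k$ of the corresponding ball in $Y$ using Proposition~\ref{SJT1}.
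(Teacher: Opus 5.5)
Your proposal is correct and follows essentially the same route as the paper. The paper writes out by hand the diagonal-over-$\mathbb{Q}^n$ and uniform-Cauchy argument that your asymptotic Arzel\`a--Ascoli encapsulates, passes to the limit to get $\|F(v)-F(w)\|=\md(f,p)(v-w)$, and in case b) uses exactly your factorization through $N_\sigma^\perp$, invariance of domain, closedness of the image, and Mazur--Ulam. The only cosmetic difference is in case a): you use uniqueness of metric midpoints plus the Jensen/continuity argument, whereas the paper shows directly for every $t\in[0,1]$ that $F(tu+(1-t)v)$ lies in the one-point intersection of the closed balls $\bar{B}_Y(F(u),(1-t)r)$ and $\bar{B}_Y(F(v),tr)$.
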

\begin{corollary}
If $f:\bbbr^n\supset\Omega\to\bbbr^m$ is strongly metrically differentiable at $p$, then $\md(f,p)(v)=|F(v)|$ for some linear map $F:\bbbr^n\to\bbbr^m$ (i.e., the seminorm $\md(f,p)$ is a pullback of the Euclidean norm by a linear map).
\end{corollary}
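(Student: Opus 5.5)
The corollary follows from Proposition~\ref{lem:md l1.3} applied with $Y=\bbbr^m$ carrying the Euclidean norm. This norm is strictly convex, so alternative a) of that proposition applies. The plan therefore has two parts: produce a limit map $F$ of blow-ups, and then read off $\md(f,p)(v)=|F(v)|$ from the fact that $F$ is distance-preserving.

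\textbf{Step 1: a linear limit map.} Fix any sequence $r_k\to 0$. By Proposition~\ref{lem:md l1.3}, after passing to a subsequence, the blow-ups $F_{r_k}(v)=(f(p+r_kv)-f(p))/r_k$ converge locally uniformly to a continuous map $F:\bbbr^n\to\bbbr^m$. This $F$ satisfies $|F(v)-F(w)|=\md(f,p)(v-w)$ for all $v,w$. Since $(\bbbr^m,|\cdot|)$ is strictly convex, the same proposition tells us that $F$ is linear.

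\textbf{Step 2: identification of the seminorm.} Note that $F_{r_k}(0)=0$ for every $k$, so $F(0)=0$. Taking $w=0$ in the distance identity gives
\[
\md(f,p)(v)=|F(v)-F(0)|=|F(v)|.
\]
Thus $\md(f,p)$ is the pullback of the Euclidean norm by the linear map $F$. Equivalently, one can argue directly from Lemma~\ref{lem:md l1.2}(a): since $f$ is metrically differentiable, $|F_{r}(v)|\to\md(f,p)(v)$ as $r\to0$, and passing to the limit along $r_k$ gives $|F(v)|=\md(f,p)(v)$.

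There is no genuine obstacle here, because all the difficulty sits inside Proposition~\ref{lem:md l1.3}. Its substantive content is the compactness step and the linearity step. For compactness, one uses Lemma~\ref{lem:md l1.2}(b): $|F_r(v)-F_r(w)|$ is uniformly close to $\md(f,p)(v-w)\le C|v-w|$, so the blow-ups are asymptotically equi-Lipschitz and bounded on compact sets, and Arzel\`a--Ascoli applies in finite dimensions. For linearity, one uses the fact that a metric midpoint in a strictly convex space is unique. For $v,w$ we have $\sigma(v-\tfrac{v+w}{2})=\sigma(\tfrac{v+w}{2}-w)=\tfrac12\sigma(v-w)$, where $\sigma=\md(f,p)$, so $F(\tfrac{v+w}{2})$ is a metric midpoint of $F(v),F(w)$. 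Strict convexity then forces $F(\tfrac{v+w}2)=\tfrac12(F(v)+F(w))$, and this midpoint-affine property, together with continuity and $F(0)=0$, gives linearity.
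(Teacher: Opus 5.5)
Your argument is correct and is how the corollary follows in the paper: apply Proposition~\ref{lem:md l1.3}, case a), to the strictly convex Euclidean norm. This gives a linear limit $F$ of blow-ups satisfying $|F(v)-F(w)|=\md(f,p)(v-w)$, and since $F(0)=0$ you get $\md(f,p)(v)=|F(v)|$. Your sketch of the proposition's internals is also consistent with the paper, which uses a diagonal/uniform-Cauchy argument rather than Arzel\`a--Ascoli as such (the $F_r$ need not be continuous), and the intersection of two tangent balls for general $t\in[0,1]$ rather than midpoints.
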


\begin{proof}[Proof of Lemma~\ref{lem:md l1.2}]
For a), assume first that $f$ is metrically differentiable at $p$ and denote $\sigma=\md(f,p)$. Then for $x\in\Omega$ we have $\Vert f(x)-f(p)\Vert=\sigma(x-p)+o(|x-p|)$, thus for any $v\in\bbbr^n$ with $|v|=1$
\begin{equation*}
\Vert F_r(v)\Vert=\frac{\Vert f(p+rv)-f(p)\Vert}{r}=\frac{\sigma(rv)+o(r)}{r}=\sigma(v)+\frac{o(r)}{r}\xrightarrow{~r\to 0~}\sigma(v)
\end{equation*}
and the convergence is uniform in $v$.

Assume now that $\|F_r(v)\|$ converges with $r\to 0$ uniformly to a seminorm $\sigma(v)$ on the unit sphere $\{|v|=1\}$. Then for $x\in\Omega$ we set $r=|x-p|$, $v=(x-p)/|x-p|$; an easy calculation shows that
$$
\frac{\Vert f(x)-f(p)\Vert-\sigma(x-p)}{|x-p|}=\Vert F_r(v)\Vert-\sigma(v)
$$
and the right hand side converges to 0 uniformly in $v$, by assumption, thus $f$ is metrically differentiable at $p$.

For b), the arguments are very similar. Assume that $f$ is strongly metrically differentiable at $p$, with $\sigma=\md(f,p)$, that is for $x,y\in\Omega$ we have $\Vert f(x)-f(y)\Vert =\sigma(x-y)+o(|x-p|+|y-p|)$.
Then for any $v,w\in \bbbr^n$, $|v|, |w|\in [0,1]$, we have
\begin{equation*}
    \begin{split}
\Vert F_r(v)-F_r(w)\Vert &=\frac{\Vert f(p+rv)-f(p+rw)\Vert}{r}=\frac{\sigma(rv-rw)+o(|rv|+|rw|)}{r}\\
&=\sigma(v-w)+\frac{o(r(|v|+|w|))}{r}\xrightarrow{~~r\to 0~~}    \sigma(v-w)
    \end{split}
\end{equation*}

and since $|v|+|w|$ is bounded by 2, the convergence is uniform in $v,w$.

For the converse, let $x,y\to p$ and set
$$
    r=\max\{|x-p|,|y-p|\},\qquad v=\frac{x-p}{r},\qquad w=\frac{y-p}{r}.
$$
Then $|v|,|w|\leq 1$ and
$$
\Vert F_r(v)-F_r(w)\Vert-\sigma(v-w)=\frac{\Vert f(x)-f(y)\Vert-\sigma(x-y)}{r}.
$$
Since $r\le |x-p|+|y-p|\leq 2r$,
the uniform convergence in $v,w\in \bar{B}(0,1)$ implies
$$
\frac{\Vert f(x)-f(y)\Vert-\sigma(x-y)}{|x-p|+|y-p|} \to 0,
$$
so $f$ is strongly metrically differentiable at $p$.
\end{proof}

\begin{proof}[Proof of Proposition~\ref{lem:md l1.3}]
Let us write, for short, $\sigma=\md(f,p)$.

Fix $M>0$. For all sufficiently small $r>0$, the maps $F_r$ are defined on $B(0,M)$. Then by the strong metric differentiability of $f$,
$$
\Vert F_r(v)\Vert=\frac{\Vert f(p+rv)-f(p)\Vert}{|r|}=\frac{\sigma(rv)+o(r|v|)}{r}=\sigma(v)+\frac{o(r|v|)}{r}
$$
and the right hand side is bounded, for $v\in B(0,M)$, independently of $v$, so the family $\{F_r\}$ is uniformly bounded on $B(0,M)$. Similarly, using
b), Lemma~\ref{lem:md l1.2}, and the scaling property of the blow-up map:
$$
F_r(v)=MF_{rM}\Big(\frac{v}{M}\Big),
$$
we get immediately that $\Vert F_r(v)-F_r(w)\Vert\to \sigma(v-w)$ uniformly for $v,w\in B(0,M)$.

Let $D=\mathbb Q^n$ and fix any sequence $r_j\to 0$. For each $v\in D$, the sequence $F_{r_j}(v)$ is defined and bounded for all sufficiently large $j$. Since $Y$ is finite dimensional, a diagonal argument gives a subsequence, again denoted by $r_j$, such that $F_{r_j}(v)$ converges for every $v\in D$. We claim that this subsequence converges locally uniformly.

Fix $M>0$ and $\eps>0$. By the uniform convergence above, for all sufficiently large $j$ we have
$$
\Vert F_{r_j}(v)-F_{r_j}(w)\Vert\leq\sigma(v-w)+\eps
\quad\text{for all $v,w\in B(0,M)$.}
$$
Since $\sigma$ is continuous, we can choose finitely many points $v_1,\ldots,v_N\in D\cap B(0,M)$ such that for every $v\in B(0,M)$ there is $v_i$ with $\sigma(v-v_i)<\eps$. The convergence at these finitely many points implies that, for all sufficiently large $j,k$,
$$
\max_{1\leq i\leq N}\Vert F_{r_j}(v_i)-F_{r_k}(v_i)\Vert<\eps.
$$
Thus, for $v\in B(0,M)$ and $v_i$ chosen as above,
\begin{equation*}
\begin{split}
\Vert F_{r_j}(v)-F_{r_k}(v)\Vert
&\leq\Vert F_{r_j}(v)-F_{r_j}(v_i)\Vert
+\Vert F_{r_j}(v_i)-F_{r_k}(v_i)\Vert\\
&\qquad+\Vert F_{r_k}(v_i)-F_{r_k}(v)\Vert
<5\eps.
\end{split}
\end{equation*}
Hence the sequence is uniformly Cauchy on $B(0,M)$. Since $Y$ is complete and $M>0$ was arbitrary, $F_{r_j}$ converges locally uniformly to a map $F:\bbbr^n\to Y$.

Again by b), Lemma~\ref{lem:md l1.2}, for any fixed $v,w\in\bbbr^n$,
$\Vert F_{r_j}(v)-F_{r_j}(w)\Vert$ converges to $\md(f,p)(v-w)$, so passing with $r_j\to 0$ we get
$\Vert F(v)-F(w)\Vert=\md(f,p)(v-w)$. In particular, $F$ is continuous. This means that $F$ is a (pseudo)isometric `embedding' of $(\bbbr^n,\md(f,p))$ into $(Y,\Vert\cdot\Vert)$ -- it preserves the seminorm distance. Note, however, that $F$ need not be injective (thus the quotation marks in `embedding'), and it becomes injective only after factoring it through $\ker \md(f,p)$.

It remains to prove that $F$ is linear. This is essentially a variant of the Mazur-Ulam Theorem (\cite{MazurUlam, Vaisala}, see also \cite[Theorem 14.1.3]{AlbiacKalton}), stating that any surjective isometry between normed spaces is affine, and if the target space is strictly convex, the surjectivity assumption can be dropped.

The proof in the case a), when the target space $Y$ is strictly convex (e.g. when $Y=\bbbr^m$ equipped with the standard Euclidean norm) is easy, so we include it here for completeness.

Obviously, $F(0)=0$, since $F_r(0)=0$ for all $r$.
Assume $u,v\in (\bbbr^n,\sigma)$, $t\in \bbbr$ and $z=tu+(1-t)v$. It is an easy exercise to show that to check linearity of $F$ it suffices to show that $F(z)=tF(u)+(1-t)F(v)$ when $t\in[0,1]$.

Denote $r:=\sigma(u-v)$, then $\sigma(z-u)=(1-t)\sigma(u-v)=(1-t)r$, thus $z\in \bar{B}_\sigma(u,(1-t)r)\subset (\bbbr^n,\sigma)$. In the same way we show that $z\in\bar{B}_\sigma(v,tr)\subset (\bbbr^n,\sigma)$. Since $F$ is a pseudoisometry, it maps any ball $\bar{B}_\sigma(w,s)\subset (\bbbr^n,\sigma)$ into the corresponding ball $\overline{B}_Y(F(w),s)$. In particular,
$$
F(\bar{B}_\sigma(u,(1-t)r))\subset\bar{B}_Y(F(u),(1-t)r), \quad F(\bar{B}_\sigma(v,tr))\subset\bar{B}_Y(F(v),tr),
$$
and thus $F(z)\in \bar{B}_Y(F(u),(1-t)r)\cap \bar{B}_Y(F(v),tr)$. Note, however, that $\Vert F(u)-F(v)\Vert =\sigma(u-v)=r=(1-t)r+tr$, so these two balls in $Y$, which is strictly convex, intersect in only one point, namely in $tF(u)+(1-t)F(v)$. Thus $F(z)=tF(u)+(1-t)F(v)$, as desired.

For the case b), one easily checks that $F$ factorizes through $N:=\ker \sigma$: if $v=v'+v''$, $v'\in N$, $v''\in N^\perp$, then $\Vert F(v)-F(v'')\Vert=\sigma(v-v'')=\sigma(v')=0$. Then $\sigma$ is a norm on $N^\perp$ and $F|_{N^\perp}:(N^\perp,\sigma)\to (Y,\Vert\cdot\Vert)$ is an isometric embedding between normed spaces of the same dimension $\dim Y=\dim N^\perp=\rank \sigma$. Then $F|_{N^\perp}$ is continuous and injective, so by the Brouwer's invariance of domain theorem, its image in $Y$ is open. On the other hand, one easily checks that the image of $F|_{N^\perp}$ is closed. Indeed, assume that for some sequence  $(x_k)$ in $N^\perp$ and $y\in Y$ we have $F(x_k)\to y$ as $k\to\infty$. Since the sequence $(F(x_k))$ is bounded and $F$ is an isometric embedding, also the sequence $(x_k)$ is bounded in $(N^\perp,\sigma)$.  By passing to a subsequence, we may assume that $x_k\to x$, then by continuity $F(x_k)\to F(x)$, so $y=F(x)\in F(N^\perp)$. Thus the image of $F|_{N^\perp}$, being both open and closed, must equal the whole $Y$, and $F|_{N^\perp}$ is surjective. Finally, we invoke the Mazur-Ulam Theorem, which states that $F|_{N^\perp}$ is affine, and, since $F(0)=0$, linear. Hence $F$ is linear too.
\end{proof}

Next, we give two explicit examples of Lipschitz maps $f: \mathbb R^1 \supset (-\delta,+\delta) \to \mathbb R^2$ that illustrate the notion of strong metric differentiability and its relation to Fr\'echet differentiability.

Let us shortly explain the idea behind the examples. In both cases we construct maps that behave essentially like $t\mapsto (t,0)$ near $t=0$, so that the expected metric derivative is $\md(f,0)(t)=|t|$.

Suppose we have $f(t)=t u(t)$ (on a neighborhood of $t=0$) where $|u(t)|=1$. Then, $f$ is metrically differentiable at $t=0$, with $\md(f,0)(t)=|t|$, regardless of the behavior of $u(t)$, whereas Fr\'echet differentiability of $f$ depends on the behavior of $u(t)$ as $t \to 0$. We can even take a discontinuous $u(t)$ and still retain metric differentiability, e.g.\ $f(t)=(|t|,0)$ -- obviously $f$ is neither Fr\'echet nor even Gateaux differentiable.

Obviously, if the curve $f$ spirals infinitely around the origin with $t\to 0$, it cannot be Fr\'echet nor Gateaux differentiable at $t=0$.  It turns out, however, that strong metric differentiability allows for infinite rotations, as long as they are very slow, at smaller and smaller scales.

Our first example shows that
neither metric, nor strong metric differentiability properties are stable under addition, even for Lipschitz maps.
\begin{example}
There exist Lipschitz $f,g:(-e^{-e},e^{-e})\to\bbbr^2$ such that $f$ and $g$ are strongly metrically differentiable at $t=0$, but $f+g$ is not even metrically differentiable at $t=0$.

Let $f$ be given by the formula $f(t)=(t,0)$. Since $f$ is smooth, it is strongly metrically differentiable, with $\md(f,0)(t)=|f'(0)t|=|(t,0)|=|t|$.
    The construction of $g$ is more involved.

    Let $\theta(t)=\frac{\pi}{2}(1+\sin(\log(\log t^{-1})))$,
    $$
    u(s)=\begin{cases}\left(\cos\theta(|s|),\sin \theta(|s|)\right) & s\neq 0,\\
    (1,0) &s=0,\end{cases}
    $$
    and finally set $g(t)=\int_0^t u(s)\,ds$. Obviously, $u(t)$ is bounded, so its primitive $g$ is Lipschitz.

Showing that $g$ is strongly metrically differentiable at $t=0$, with $\md(g,0)(t)=|t|$, is a tedious exercise.  Careful analysis of the blow-up $G_r(t)=g(rt)/r$ shows that if we set $L_r(t)=tu(r)$, then $|G_r(t)-L_r(t)|\xrightarrow{r\to 0}0$ uniformly in $t$, for $|t|\leq 1$, thus the blow-ups $G_r(t)$ converge uniformly to the line segment $L_r(t)$, parameterized by arc length. In particular, $|G_r(t)-G_r(s)|$ is for small $r$ uniformly close to $|L_r(t)-L_r(s)|=|t-s|$. However, the direction $u(r)$ of $L_r(t)$ oscillates with $r\to 0$ and so does the length of the blow-up $te_1+G_r(t)$ of $f+g$, which for small $r$ is close to $te_1+L_r(t)=t(e_1+u(r))$. By a), Lemma \ref{lem:md l1.2}, $f+g$ cannot be metrically differentiable at $t=0$.
\end{example}

The next example shows that strong metric differentiability is not enough to guarantee Fr\'echet differentiability.
\begin{example}
The map
    $$
f:(-0.5,0.5)\to\bbbr^2,
\qquad
f(t)=\begin{cases}
    \left(t\cos\sqrt{-\ln |t|}, t\sin\sqrt{-\ln |t|}\right)& t\neq 0,\\
    (0,0) &t=0.
\end{cases}
$$
is Lipschitz, strongly metrically differentiable at $t=0$, but not Fr\'echet differentiable.
\end{example}
\begin{proof}
Indeed, $\lim_{t\to 0}(f(t)-f(0))/t=\lim_{t\to 0}(\cos\sqrt{-\ln |t|}, \sin\sqrt{-\ln |t|})$ does not exist, so $f$ is not (Fr\'echet) differentiable at $t=0$. However, if we write $\alpha_r(t)=\sqrt{-\ln(r|t|)}$, the blow-up map can be written as
$$
F_r(t)=\frac{f(rt)}{r}=t(\cos\alpha_r(t),\sin\alpha_r(t)),
$$
$F_r(0)=0$,
and by elementary calculation
$$
|F_r(t)-F_r(s)|^2=t^2+s^2-2ts\cos(\alpha_r(t)-\alpha_r(s))\xrightarrow{r\to 0}t^2+s^2-2ts=(t-s)^2,
$$
so for fixed $t,s$ we have $|F_r(t)-F_r(s)|\xrightarrow{r\to 0}|t-s|$. One can check that for $r\in (0,\frac{1}{2})$, we have the uniform estimate on the unit ball ($|s|,|t|\leq 1$):
$$
\big||F_r(t)-F_r(s)|-|t-s|\big|\leq \frac{1}{e\sqrt{-\ln r}}.
$$
 This, by b), Lemma \ref{lem:md l1.2}, proves that $f$ is indeed strongly metrically differentiable at $t=0$, with ${\md(f,0)(t)=|t|}$.


To see that $f$ is Lipschitz, it suffices to check that $|f'(t)|^2=1-\frac{1}{4\ln|t|}$ for all $t\neq 0$, so it is bounded for $t\in (-0.5,0.5)$.
\end{proof}

Finally, we prove the following proposition, see Proposition~\ref{MDT1}.

\begin{proposition}
\label{1:prop1}
Assume $f:\bbbr^n\to \bbbr^m$ is strongly metrically differentiable at ${p\in\bbbr^n}$. Assume moreover that all the partial derivatives $\partial_{x_i} f(p)$ exist. Then $f$ is Fr\'echet differentiable at $p$.
\end{proposition}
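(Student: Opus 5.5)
Let $L$ be the linear map with columns $\partial_{x_i}f(p)$, and let $\sigma=\md(f,p)$. The plan is to use the blow-up description from Lemma~\ref{lem:md l1.2} together with Proposition~\ref{lem:md l1.3}. By Remark~\ref{rem:md1}(b), Fr\'echet differentiability is equivalent to the convergence $F_r(v)=(f(p+rv)-f(p))/r\to Lv$ locally uniformly in $v$. It therefore suffices to show that every sequence $r_k\to0$ has a subsequence along which $F_{r_k}\to L$ locally uniformly. The limit is then independent of the subsequence, and the whole family converges.

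First, fix $r_k\to 0$. Since $\bbbr^m$ is finite dimensional and Euclidean, hence strictly convex, Proposition~\ref{lem:md l1.3} gives a subsequence along which $F_{r_k}\to F$ locally uniformly. Case a) of that proposition shows that $F$ is linear and satisfies $|F(v)-F(w)|=\sigma(v-w)$.

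Second, we identify $F$ on the basis vectors. Existence of the partial derivatives means that $F_r(e_i)=(f(p+re_i)-f(p))/r\to\partial_{x_i}f(p)$ as $r\to0$; here only $r\to0^+$ matters for the blow-ups. Hence $F(e_i)=\partial_{x_i}f(p)=Le_i$ for every $i$. Since both $F$ and $L$ are linear and agree on a basis, $F=L$.

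Therefore every sequence has a subsequence with locally uniform limit $L$. A routine contradiction argument then gives $F_r\to L$ locally uniformly as $r\to0^+$: if not, there would be $\eps>0$, $r_k\to0$ and a fixed compact set on which $\sup|F_{r_k}-L|\ge\eps$, which is impossible for the convergent subsequence. Remark~\ref{rem:md1}(b) then yields Fr\'echet differentiability with $Df(p)=L$.

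The substance of the proof lies in Proposition~\ref{lem:md l1.3}, namely compactness and the linearity of the limit via strict convexity, and that step may be assumed. The remaining step needing some care is matching the one-sided limits $r\to0^+$ used in the blow-ups with the two-sided partial derivatives; this poses no difficulty, since two-sided existence implies one-sided existence.
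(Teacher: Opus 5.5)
Your proposal is correct and follows essentially the same route as the paper: extract locally uniformly convergent subsequences of the blow-ups via Proposition~\ref{lem:md l1.3}(a), and show the linear limit is independent of the subsequence by evaluating it on the basis vectors through the partial derivatives. Fr\'echet differentiability then follows from Remark~\ref{rem:md1}(b). Your explicit subsequence-of-subsequence argument and your remark about one-sided limits $r\to0^+$ only spell out details the paper leaves implicit.
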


\begin{proof}
By Proposition~\ref{lem:md l1.3}, whenever the blow-up maps $F_r(\cdot)$ of $f$ at $p$ converge to some $F:\bbbr^n\to\bbbr^m$ on a sequence $r_j\to 0$, the limit map $F$ is linear -- and the convergence $F_{r_j}(v)\to F(v)$ is locally uniform in $v$. However,
$$
F_r(e_i)=\frac{f(p+re_i)-f(p)}{r}\xrightarrow{~~r\to 0~~}\partial_{x_i}f(p) \qquad \text{for }i=1,2,\ldots,n
$$
so $F$, regardless of the sequence $r_j$, is the same linear map, defined by $F(e_i)=\partial_{x_i}f(p)$. Since any convergent sequence $F_{r_j}(\cdot)$ has the same limit $F$, we have $\lim_{r\to 0} F_r(v)=F(v)$, and the convergence is locally uniform in $v$. Thus, by b), Remark~\ref{rem:md1}, $f$ is Fr\'echet differentiable at $p$.
\end{proof}

\section{Appendix}
\label{APPE}

\subsection{Palais extension theorem}
\label{PET}
The beautiful extension results presented in this section, Lemma~\ref{Pal1} and Theorem~\ref{Pal2}, are due to Palais~\cite{palais}. For further details, generalizations, and related references, we refer the reader to \cite{GGH}.

By a diffeomorphism $H:\bbbr^n\supset\Omega\to\bbbr^n$ we mean a diffeomorphism onto the image i.e.
diffeomorphism $H:\bbbr^n\supset\Omega\to H(\Omega)\subset\bbbr^n$. We say that $H$ is orientation preserving if $\det DH>0$ in $\Omega$.

When we say that
$H:\bar{B}(0,\rho)\to \bbbr^n$
is a $C^k$-diffeomorphism, we mean that $H$ is the restriction of a genuine
$C^k$-diffeomorphism defined on an open neighborhood of the closed ball.

\begin{lemma}
\label{Pal1}
Let $\bar{B}(0,r)\subset \mathbb R^n$, and suppose that
$H:\bar{B}(0,r)\to \mathbb R^n$ is an orientation preserving
$C^k$-diffeomorphism, $k\in\bbbn\cup\{\infty\}$, with $H(0)=0$. Then for every $\varepsilon>0$ there
is a $C^k$-diffeomorphism $\widetilde H:\mathbb R^n\to\mathbb R^n$ such that
\[
\widetilde H(x)=
\begin{cases}
H(x), & x\in \bar{B}(0,r),\\
x, & \operatorname{dist}(x,A)\geq \varepsilon,
\end{cases}
\qquad
\text{where }
A=\bar{B}(0,r)\cup H(\bar{B}(0,r)).
\]
\end{lemma}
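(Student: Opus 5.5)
The plan is the classical Palais isotopy argument: connect $H$ to the identity through a smooth family of diffeomorphisms, view this family as the flow of a time-dependent vector field, cut the field off outside a neighborhood of the region swept by the isotopy, and integrate the truncated field to obtain $\widetilde H$.

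First I reduce to the case $DH(0)=\mathrm{id}$. Since $\det DH(0)>0$, the matrix $DH(0)$ lies in the identity component of $GL^+(n)$. Choose a smooth path $L_s$ in $GL^+(n)$ from $\mathrm{id}$ to $DH(0)$; the linear isotopy $L_s$ can be handled the same way as the nonlinear part below. It is cleaner to build a single isotopy directly, as follows.

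\textbf{Isotopy.} For $t\in(0,1]$ set $H_t(x)=H(tx)/t$, and put $H_0=DH(0)$. Since $H\in C^k$ with $H(0)=0$, we can write $H(x)=\int_0^1 DH(sx)x\,ds$, which shows that $(t,x)\mapsto H_t(x)$ is $C^{k-1}$ in $(t,x)$ and $C^k$ in $x$. This loss of one derivative in $t$ is the main technical obstacle for the stated $C^k$ regularity; I would handle it as in Palais, or by noting that the flow of a vector field that is $C^k$ in $x$ and only continuous in $t$ is still $C^k$ in $x$. Each $H_t$ is a diffeomorphism of $\bar B(0,r)$ onto its image, because it is a conjugate of $H$ by a dilation, and $t=0$ gives a linear isomorphism. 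Concatenate this with the linear path $L_s$, so that the isotopy runs from $\mathrm{id}$ to $H_1=H$.

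\textbf{Vector field and truncation.} Define the time-dependent field $X_t(y)=\partial_t H_t(H_t^{-1}(y))$ on the set $\bigcup_t\{t\}\times H_t(\bar B(0,r))$. Multiply it by a cutoff $\psi$ that equals $1$ on this set and vanishes outside its $\varepsilon$-neighborhood, and integrate. The resulting time-one map $\widetilde H$ is a $C^k$ diffeomorphism of $\mathbb R^n$, because the truncated field has compact support. It agrees with $H$ on $\bar B(0,r)$, because there the truncated and untruncated fields coincide along the trajectories $H_t(x)$. It is the identity off the support of the field.

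\textbf{Support issue.} The cutoff region must lie within distance $\varepsilon$ of $A=\bar B(0,r)\cup H(\bar B(0,r))$, but the intermediate sets $H_t(\bar B(0,r))=t^{-1}H(\bar B(0,tr))$ and $L_s(\bar B(0,r))$ need not lie near $A$. To fix this, I would first perform the isotopy on a tiny ball $\bar B(0,\delta)$. On that ball, $H$ is $C^1$-close to its linear part, and the isotopy stays near $0$. Then I would transport the construction to the full ball by conjugating with a radial expansion isotopy supported near $A$. Alternatively, one can prove the lemma first for $r$ small and pass to general $r$ using the isotopy $H_t$ restricted to $t\in[\delta/r,1]$, since $H_t(\bar B(0,r))\subset t^{-1}H(\bar B(0,r))$. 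I expect this localization, keeping every intermediate image inside the $\varepsilon$-neighborhood of $A$, to be the hardest step, and it is where the precise form of $A$ in the statement must be used.
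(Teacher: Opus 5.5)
Your proposal has two genuine gaps.

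\textbf{Localization.} You leave this step open, and the one concrete fix you offer fails. For $t\in[\delta/r,1]$ the image $H_t(\bar B(0,r))=t^{-1}H(\bar B(0,tr))$ is a $1/t$-dilate of a piece of $H(\bar B(0,r))$. In general it leaves every small neighborhood of $A$; let $H(\bar B(0,r))$ be a thin bent tube issuing from $0$. Your other suggestion also does not work: conjugating an extension of $H|_{\bar B(0,\delta)}$ by a radial expansion $E$ extends $E\circ H\circ E^{-1}$, not $H$.

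The missing idea is to rescale only at a tiny scale and to use the \emph{unscaled} path elsewhere. Concatenate:
(a) $x\mapsto\tau x$, $\tau$ from $1$ to $\delta$;
(b) $x\mapsto\delta L_sx$, where $L_s$ is a path in $GL^+(n)$ from $I$ to $DH(0)$ (this is where orientation is used);
(c) $x\mapsto s^{-1}H(s\delta x)$, $s\in[0,1]$;
(d) $x\mapsto H(\tau x)$, $\tau$ from $\delta$ to $1$.
The images in (a) and (d) lie in $\bar B(0,r)$ and $H(\bar B(0,r))$. The images in (b) and (c) lie in $\bar B(0,C\delta r)\subset\bar B(0,r)$ once $\delta$ is small, with $C=\max\{\operatorname{Lip}(H),\max_s\|L_s\|\}$. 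So this isotopy $h_\theta$ stays inside $A$, every cutoff can be supported in the $\varepsilon$-neighborhood of $A$, and $\widetilde H(x)=x$ whenever $\operatorname{dist}(x,A)\ge\varepsilon$. With this isotopy your vector-field argument is complete for $k=\infty$.

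\textbf{Regularity for finite $k$.} Here your remedy misidentifies where the derivative is lost. The field $X_t=\partial_tH_t\circ H_t^{-1}$ is not $C^k$ in the space variable, because $\partial_tH_t(x)=t^{-1}\bigl(DH(tx)x-t^{-1}H(tx)\bigr)$ contains $DH$. The same happens in (d), where $\partial_\tau H(\tau x)=DH(\tau x)x$. So $X_t$ is only $C^{k-1}$ in $y$, the result on flows of fields that are $C^k$ in $x$ and continuous in $t$ does not apply, and the time-one map is in general only $C^{k-1}$ outside $\bar B(0,r)$.

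The case $k=1$ is exactly the one the paper needs: Theorem~\ref{Pal2} is applied to a $C^1$ map in Lemma~\ref{TFU}. There $X_t$ is in general merely continuous in $y$. Moreover $\partial_tH_t$ may fail to exist at $t=0$; take $n=1$, $H(x)=x+x|x|^{1/2}$. So integral curves need not be unique and there is no well-defined flow.

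The fix is to never differentiate in time:
\begin{itemize}
\item Subdivide $[0,1]$ so finely that each step $K_i=h_{\theta_{i+1}}\circ h_{\theta_i}^{-1}$ satisfies $\|DK_i-I\|\le\frac14$ and $|K_i-\mathrm{id}|\le\eta/(4c)$ on the $\eta$-neighborhood of $h_{\theta_i}(\bar B(0,r))$. This uses only continuity of $(\theta,x)\mapsto(h_\theta(x),Dh_\theta(x))$, which holds for $k=1$, including at $s=0$ in (c).
\item Set $\Lambda_i=\mathrm{id}+\phi_i(K_i-\mathrm{id})$, where the cutoff $\phi_i$ equals $1$ on $h_{\theta_i}(\bar B(0,r))$, is supported in that $\eta$-neighborhood, and satisfies $|\nabla\phi_i|\le c/\eta$.
\item Then $\Lambda_i-\mathrm{id}$ is a $\frac12$-contraction, so $\Lambda_i$ is a $C^k$ diffeomorphism of $\mathbb R^n$, and $\widetilde H=\Lambda_{N-1}\circ\cdots\circ\Lambda_0$ works.
\end{itemize}
The paper does not prove the lemma itself; it refers to \cite[Lemma~2.2]{GGH} for Palais' argument.
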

For a proof of this lemma, see \cite[Lemma~2.2]{GGH}. The proof is elementary, but it is based on a very beautiful argument of Palais \cite{palais}.
We will show here how to use it to prove the following result of Palais \cite{palais}. See \cite{GGH} for more general results. While Theorem~\ref{Pal2} is a corollary of \cite[Theorem~1.4]{GGH} we show here a direct argument how to conclude it from Lemma~\ref{Pal1}.
\begin{theorem}[Palais]
\label{Pal2}
Let $k\in\bbbn\cup\{\infty\}$ and let $H:\bar{B}(x,r)\to \mathbb R^n$ be a $C^k$-diffeomorphism. Then there is a $C^k$-diffeomorphism $\widetilde H:\mathbb R^n\to\mathbb R^n$ such that
$\widetilde H=H \quad \text{on } \bar{B}(x,r)$.
Moreover, if $H$ is orientation preserving, we can find $\widetilde H$ that is identity outside a compact set.
\end{theorem}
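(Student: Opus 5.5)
The plan is to reduce Theorem~\ref{Pal2} to Lemma~\ref{Pal1} by composing $H$ with affine maps, so that the normalized map fixes the origin and preserves orientation.

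\emph{Orientation-preserving case.} First I translate: let $T_a(z)=z+a$. Put
$$
H_0:=T_{-H(x)}\circ H\circ T_x:\bar{B}(0,r)\to\bbbr^n .
$$
Then $H_0$ is an orientation preserving $C^k$-diffeomorphism with $H_0(0)=0$. Lemma~\ref{Pal1}, with any $\eps>0$, gives a $C^k$-diffeomorphism $\widetilde H_0:\bbbr^n\to\bbbr^n$ with $\widetilde H_0=H_0$ on $\bar B(0,r)$. It is the identity outside the $\eps$-neighborhood of the compact set $A=\bar B(0,r)\cup H_0(\bar B(0,r))$, which is bounded.

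Next I undo the normalization:
$$
\widetilde H:=T_{H(x)}\circ\widetilde H_0\circ T_{-x}.
$$
This is a $C^k$-diffeomorphism of $\bbbr^n$ that agrees with $H$ on $\bar B(x,r)$. However, outside a compact set it equals the translation $z\mapsto z+(H(x)-x)$, not the identity. This is the main obstacle.

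To fix it, I apply Lemma~\ref{Pal1} to the translated map $\hat H:=T_{-x}\circ H\circ T_x$ instead. This map sends $\bar B(0,r)$ into $\bbbr^n$ but need not fix $0$. To get a fixed point I precompose with a compactly supported diffeomorphism $\Theta$ that moves $H(x)-x$ back to $0$. Such a $\Theta$ is the time-one flow of a compactly supported smooth vector field that equals the constant $-(H(x)-x)$ on a large ball containing $\hat H(\bar B(0,r))$ and $0$. This flow is a $C^\infty$ diffeomorphism, equal to the identity outside a compact set, and it acts as a pure translation on a neighborhood of $\hat H(\bar B(0,r))$ for time one, provided the ball is large enough.

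Then $\Theta\circ\hat H$ is orientation preserving and fixes $0$. Lemma~\ref{Pal1} gives an extension $K$ that is the identity off a compact set. Finally
$$
\widetilde H:=T_x\circ\Theta^{-1}\circ K\circ T_{-x}
$$
agrees with $H$ on $\bar B(x,r)$ and is the identity outside a compact set.

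\emph{Orientation-reversing case.} Since $\bar B(x,r)$ is connected, $\det DH$ has constant sign, so if $H$ is not orientation preserving then $\det DH<0$ everywhere. Let $R$ be the reflection $R(z_1,\ldots,z_n)=(-z_1,z_2,\ldots,z_n)$. Then $R\circ H$ is orientation preserving. Applying the first case to $R\circ H$ gives a global extension $E$, and $\widetilde H:=R\circ E$ is a global $C^k$-diffeomorphism equal to $H$ on $\bar B(x,r)$. No identity-at-infinity claim is needed in this case.
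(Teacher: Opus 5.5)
Your argument is correct and is essentially the paper's proof. Both reduce to Lemma~\ref{Pal1} applied to $u\mapsto H(x+u)-H(x)$, remove the leftover translation with the time-one flow of a vector field that is constant on one region and zero on another, and treat the orientation-reversing case by composing with a reflection. The only difference is bookkeeping: you apply the flow correction $\Theta$ before invoking the lemma (note that $\Theta\circ\hat H$ is a post-composition, not a precomposition), so your $T_x\circ\Theta^{-1}$ plays the role of the paper's $\Phi\circ T_{H(x)}$ — both agree with $T_{H(x)}$ near $H(\bar B(x,r))-H(x)$ and with $T_x$ near infinity.
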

\begin{proof}
First assume that $H$ is orientation preserving. Set
$h(u)=H(x+u)-H(x)$, for $u\in \bar B(0,r)$.
Then $h:\bar B(0,r)\to\mathbb R^n$ is an orientation preserving $C^k$-diffeomorphism and $h(0)=0$. By Lemma~\ref{Pal1}, there is a $C^k$-diffeomorphism $\widehat h:\mathbb R^n\to\mathbb R^n$ such that
$\widehat h=h$ on  $B(0,r)$,
and $\widehat h$ is the identity outside a compact set. Define
\[
F(z)=H(x)+\widehat h(z-x), \qquad z\in\mathbb R^n.
\]
Then $F$ is a $C^k$-diffeomorphism of $\mathbb R^n$ and $F=H$ on $\bar B(x,r)$. However, $F$ is not necessarily the identity outside a compact set. Indeed, if
$a=H(x)-x$,
then $F(z)=z+a$ for all $z$ outside a sufficiently large compact set. We now correct this translation at infinity without changing $F$ on $\bar B(x,r)$.

Let $K=H(\bar B(x,r))$. Choose $R>0$ so large that $K\subset B(0,R)$, and let $\eta:\mathbb R^n\to[0,1]$ be a smooth function such that
\[
\eta=0 \quad \text{on } B(0,R),
\qquad
\eta=1 \quad \text{on } \mathbb R^n\setminus B(0,R+1).
\]
Consider the smooth bounded vector field
\[
V(y)=-\eta(y)a.
\]
Let $\Phi_t$ be its flow. Since $V$ is bounded, the flow exists for all $t\in\mathbb R$, and $\Phi:=\Phi_1$ is a $C^\infty$-diffeomorphism of $\mathbb R^n$. Because $V=0$ on $B(0,R)$, we have
\[
\Phi(y)=y \quad \text{for } y\in B(0,R),
\]
and hence $\Phi$ is the identity on a neighborhood of $K$. On the other hand $V(y)=-a$ for $|y|>R+1$, and hence $\Phi(y)=y-a$ if $|y|>R+1+|a|$.
Now define
$\widetilde H=\Phi\circ F$.
Since $F(\bar B(x,r))=K$ and $\Phi$ is the identity on a neighborhood of $K$, we get
\[
\widetilde H=F=H \quad \text{on } \bar B(x,r),
\]
and for $z$ outside a sufficiently large compact set
\[
\widetilde H(z)=\Phi(F(z))=\Phi(z+a)=z+a-a=z.
\]
Thus, in the orientation preserving case, $\widetilde H$ can be chosen to be the identity outside a compact set.

It remains to treat the case when $H$ is orientation reversing. Let $R:\mathbb R^n\to\mathbb R^n$ be an orientation reversing linear isometry, for instance
\[
R(y_1,y_2,\ldots,y_n)=(-y_1,y_2,\ldots,y_n).
\]
Then $R\circ H$ is orientation preserving. By the orientation preserving case, there is a $C^k$-diffeomorphism $G:\mathbb R^n\to\mathbb R^n$ such that
$G=R\circ H \quad \text{on } \bar B(x,r)$.
Hence $\widetilde H=R^{-1}\circ G$ is a $C^k$-diffeomorphism of $\mathbb R^n$ and satisfies $\widetilde H=H$ on $\bar B(x,r)$. This proves the theorem.
\end{proof}

\subsection{Linear algebra}
We will identify linear maps $L:\mathbb{R}^{n}\to \mathbb{R}^{m}$ with
their matrix representations in the canonical bases of $\mathbb{R}^{n}$
and $\mathbb{R}^{m}$.

A linear map $U:\mathbb{R}^{n}\to \mathbb{R}^{m}$, $m\geq n$, is said
to be \emph{orthogonal} if it is an isometry onto the image
$U(\mathbb{R}^{n})\subset \mathbb{R}^{m}$, that is, if it preserves
lengths of vectors. This is equivalent to the condition that $U$ maps
the canonical orthonormal basis of $\mathbb{R}^{n}$ onto an orthonormal
set of vectors. Since the columns of the matrix $U$ are the images of
the canonical basis of $\mathbb{R}^{n}$, we get the following lemma.

\begin{lemma}
\label{AT9}
A linear map $U:\mathbb{R}^{n}\to \mathbb{R}^{m}$, $m\geq n$, is
orthogonal if and only if
$U^{T}U=I_{n}$.
\end{lemma}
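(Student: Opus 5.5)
The plan is to prove both implications by translating the isometry condition into a statement about the images of the standard basis vectors. The key observation is that, under the convention that $U$ is identified with its matrix in the canonical bases, the $j$-th column of $U$ is $Ue_j$. Hence the $(i,j)$ entry of $U^TU$ is the scalar product $Ue_i\cdot Ue_j$. So $U^TU=I_n$ holds exactly when $\{Ue_1,\ldots,Ue_n\}$ is an orthonormal set in $\bbbr^m$. This reduces everything to the equivalence, already noted in the paragraph preceding the lemma, between length preservation and mapping the canonical basis to an orthonormal set.

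For the direction ``$U^TU=I_n$ implies orthogonal'', compute for every $v\in\bbbr^n$
$$
|Uv|^2=Uv\cdot Uv=v\cdot U^TUv=v\cdot v=|v|^2 .
$$
So $U$ preserves lengths and is therefore an isometry onto its image.

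For the converse, assume $|Uv|=|v|$ for all $v$. The only step needing a small argument is recovering scalar products from norms; I would use the polarization identity
$$
Uv\cdot Uw=\tfrac14\big(|U(v+w)|^2-|U(v-w)|^2\big)=\tfrac14\big(|v+w|^2-|v-w|^2\big)=v\cdot w .
$$
Taking $v=e_i$ and $w=e_j$ gives $Ue_i\cdot Ue_j=\delta_{ij}$, that is, $(U^TU)_{ij}=\delta_{ij}$, so $U^TU=I_n$.

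The mild obstacle is this use of polarization, which replaces the informal equivalence stated before the lemma. Once it is in place, the entrywise identification of $U^TU$ with the Gram matrix of the columns finishes both directions.
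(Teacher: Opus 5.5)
Your proposal is correct and follows the same route as the paper: identify $(U^TU)_{ij}$ with $Ue_i\cdot Ue_j$, and reduce the lemma to the equivalence between length preservation and $U$ mapping the canonical basis onto an orthonormal set. The paper only asserts that equivalence; your two short verifications (the identity $|Uv|^2=v\cdot U^TUv$ and the polarization identity) are exactly what make it rigorous.
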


\begin{lemma}
\label{AT10}
If $U:\mathbb{R}^{n}\to\mathbb{R}^{m}$, $m\geq n$, is orthogonal, then
$UU^{T}:\mathbb{R}^{m}\to\mathbb{R}^{m}$
is the orthogonal projection onto $U(\mathbb{R}^{n})\subset\mathbb{R}^{m}$.
\end{lemma}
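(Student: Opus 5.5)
The statement to prove is Lemma~\ref{AT10}: if $U:\bbbr^n\to\bbbr^m$, $m\geq n$, is orthogonal, then $P:=UU^T$ is the orthogonal projection onto $V:=U(\bbbr^n)$. I will characterize the orthogonal projection onto $V$ as the unique linear map $P$ that satisfies $Pw=w$ for $w\in V$ and $Pw=0$ for $w\in V^\perp$. Since $\bbbr^m=V\oplus V^\perp$, checking these two properties for $UU^T$ suffices.

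The key input is Lemma~\ref{AT9}, which gives $U^TU=I_n$. For $w\in V$, write $w=Uv$ with $v\in\bbbr^n$. Then
$$
UU^Tw=U(U^TU)v=Uv=w.
$$
For $w\in V^\perp$ and any $v\in\bbbr^n$, we have
$$
(U^Tw)\cdot v=w\cdot Uv=0.
$$
Hence $U^Tw=0$, and therefore $UU^Tw=0$.

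It follows that $UU^T$ fixes $V$ pointwise and annihilates $V^\perp$, so it coincides with the orthogonal projection onto $V$. As an optional consistency check, one can also verify the algebraic identities $P^2=U(U^TU)U^T=P$ and $P^T=P$, which characterize orthogonal projections, and the range of $P$ lies in $V$ and contains $V$.

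There is no real obstacle here. The only point that requires care is using $U^TU=I_n$ (from Lemma~\ref{AT9}), and not $UU^T=I_m$, which fails when $m>n$.
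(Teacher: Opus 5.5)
Your proposal is correct and matches the paper's proof essentially verbatim: both use $U^TU=I_n$ from Lemma~\ref{AT9} to show that $UU^T$ fixes every vector of $U(\bbbr^n)$, and both use the adjoint identity $(U^Tw)\cdot v=w\cdot Uv$ to show that $UU^T$ vanishes on $U(\bbbr^n)^\perp$. The extra check $P^2=P$, $P^T=P$ is harmless but not needed.
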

\begin{proof}
It suffices to show that $UU^{T}$ fixes every vector in
$U(\mathbb{R}^{n})$ and vanishes on $U(\mathbb{R}^{n})^{\perp}$.

If $v\in U(\mathbb{R}^{n})$, then $v=Uu$ for some $u\in\mathbb{R}^{n}$.
Since $U$ is orthogonal, $U^{T}U=I_{n}$. Hence
\[
UU^{T}v=UU^{T}Uu=Uu=v.
\]

Now suppose that $v\in U(\mathbb{R}^{n})^{\perp}$. Then,
\[
\langle U^{T}v,u\rangle
=
\langle v,Uu\rangle
=
0
\qquad
\text{for every } u\in\bbbr^n.
\]
Therefore $U^{T}v=0$, and hence
$UU^{T}v=0$.
\end{proof}

We say that a linear map $L:\mathbb{R}^{n}\to \mathbb{R}^{n}$ is
\emph{symmetric} if $L=L^{T}$.

The next standard result in linear algebra describes the structure of any
linear map $L:\mathbb{R}^{n}\to \mathbb{R}^{m}$, $m\geq n$. For a proof
see, for example, \cite[Theorem~3.5]{evans}.

\begin{theorem}
\label{AT11}
Let $L:\mathbb{R}^{n}\to \mathbb{R}^{m}$, $m\geq n$, be linear. Then
there is a symmetric map $S:\mathbb{R}^{n}\to \mathbb{R}^{n}$ and an
orthogonal map $U:\mathbb{R}^{n}\to \mathbb{R}^{m}$ such that
$L=US$.
\end{theorem}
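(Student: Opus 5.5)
The statement to prove is Theorem~\ref{AT11}: every linear map $L:\bbbr^n\to\bbbr^m$ with $m\geq n$ can be written as $L=US$, with $S$ symmetric and $U$ orthogonal. I would construct $S$ as the square root of the symmetric positive semidefinite matrix $L^TL$, and then build $U$ from the spectral decomposition.

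First, since $L^TL$ is symmetric and $\langle L^TLv,v\rangle=|Lv|^2\geq 0$, the spectral theorem gives an orthonormal basis $e_1,\ldots,e_n$ of $\bbbr^n$ with $L^TLe_i=\lambda_i e_i$ and $\lambda_i\geq 0$. Set $\mu_i=\sqrt{\lambda_i}$ and define $S$ by $Se_i=\mu_i e_i$. Then $S$ is symmetric, since it is diagonal in an orthonormal basis, and $S^2=L^TL$.

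Next I build $U$. Let $I=\{i:\mu_i>0\}$ and put $f_i:=\mu_i^{-1}Le_i$ for $i\in I$. Using $\langle Le_i,Le_j\rangle=\langle L^TLe_i,e_j\rangle=\lambda_i\delta_{ij}$, the vectors $\{f_i\}_{i\in I}$ are orthonormal in $\bbbr^m$. Since $m\geq n$, this orthonormal set can be completed by vectors $f_j$, $j\notin I$, to an orthonormal family $f_1,\ldots,f_n$ in $\bbbr^m$. This completion is the only place the hypothesis $m\geq n$ is used. Define $U$ by $Ue_i=f_i$; it maps an orthonormal basis onto an orthonormal set, so it is orthogonal by Lemma~\ref{AT9}.

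Finally I check $L=US$ on the basis. For $i\in I$ we have $USe_i=\mu_i f_i=Le_i$. For $i\notin I$ we have $|Le_i|^2=\lambda_i=0$, so $Le_i=0=USe_i$.

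The main subtlety is the kernel, that is, the indices with $\mu_i=0$. There $U$ cannot be read off from $L$, and I handle this by the free completion to an orthonormal family described above.
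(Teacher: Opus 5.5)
Your proof is correct and is essentially the standard argument the paper relies on; the paper gives no proof of its own and cites Evans--Gariepy, Theorem~3.5, which proceeds the same way: diagonalize $L^TL$ in an orthonormal basis, take $S=\sqrt{L^TL}$, set $Ue_i=\mu_i^{-1}Le_i$ when $\mu_i>0$, and use $m\geq n$ to complete to an orthonormal family in the kernel directions. The checks you list are exactly the ones needed: orthonormality of the $f_i$, $Le_i=0$ when $\lambda_i=0$, and orthogonality of $U$ via $U^TU=I_n$.
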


Recall that the Lebesgue measure coincides with $\H^n$. For a proof see Corollary~\ref{SJT2}.
If $L:\mathbb{R}^{n}\to \mathbb{R}^{n}$ is linear, then for any
measurable set $A\subset \mathbb{R}^{n}$,
\[
 |L(A)|=|\det L|\, |A|,
\qquad
\text{or equivalently,}
\qquad
\mathcal{H}^{n}(L(A))=|\det L|\,\mathcal{H}^{n}(A).
\]

Since orthogonal maps, being isometries, preserve measure, if
$L=US:\mathbb{R}^{n}\to \mathbb{R}^{m}$, $m\geq n$, is as in the
theorem, then
\[
\mathcal{H}^{n}(L(A))
=
\mathcal{H}^{n}(S(A))
=
|\det S|\,\mathcal{H}^{n}(A).
\]
Observe that
\[
\det(L^{T}L)
=
\det(S^{T}U^{T}US)
=
\det(S^{T}S)
=
|\det S|^{2}.
\]
Hence we have the following lemma.

\begin{lemma}
\label{AT8}
If $L:\mathbb{R}^{n}\to \mathbb{R}^{m}$, $m\geq n$, is a linear map,
then for any Lebesgue
measurable set $A\subset \mathbb{R}^{n}$,
\[
\mathcal{H}^{n}(L(A))
=
\sqrt{\det(L^{T}L)}\,\mathcal{H}^{n}(A).
\]
\end{lemma}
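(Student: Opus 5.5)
The statement to prove is Lemma~\ref{AT8}: for linear $L:\bbbr^n\to\bbbr^m$ with $m\geq n$ and measurable $A$, we have $\H^n(L(A))=\sqrt{\det(L^TL)}\,\H^n(A)$.

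I would follow the polar-decomposition route already prepared in the paragraph before the statement. By Theorem~\ref{AT11}, write $L=US$ with $S:\bbbr^n\to\bbbr^n$ symmetric and $U:\bbbr^n\to\bbbr^m$ orthogonal. The proof then has three steps.

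First, reduce to $\bbbr^n$. Since $U$ preserves lengths, it is an isometry of $\bbbr^n$ onto the subspace $U(\bbbr^n)\subset\bbbr^m$. Hausdorff measure is defined purely through diameters, and $U$ preserves diameters of all sets, so $\H^n(U(B))=\H^n(B)$ for every $B\subset\bbbr^n$. Here the left side is computed in $\bbbr^m$; it does not matter whether coverings are taken in $\bbbr^m$ or in the subspace, because the Hausdorff measure is intrinsic to the metric. Applying this with $B=S(A)$ gives $\H^n(L(A))=\H^n(S(A))$.

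Second, use the volume scaling in $\bbbr^n$. Since $\H^n=\LL^n$ on $\bbbr^n$ (Corollary~\ref{SJT2}), the classical formula $|S(A)|=|\det S|\,|A|$ applies. If $S$ is singular, $S(A)$ lies in a proper subspace and has measure zero, so the formula still holds with both sides $0$. This yields $\H^n(L(A))=|\det S|\,\H^n(A)$.

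Third, identify the constant. By Lemma~\ref{AT9}, $U^TU=I_n$, so
\[
\det(L^TL)=\det(S^TU^TUS)=\det(S^TS)=(\det S)^2,
\]
and hence $|\det S|=\sqrt{\det(L^TL)}$.

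The main point needing care is the first step: justifying that an isometric linear embedding into a higher-dimensional space preserves $\H^n$. I would argue it directly from the definition of $\H^n_\delta$ via coverings. Any covering of $U(B)$ by sets in $\bbbr^m$ can be replaced by its orthogonal projection onto $U(\bbbr^n)$, which does not increase diameters. The resulting covering then pulls back through $U^{-1}$ with the same diameters, which gives the inequality in one direction; the reverse inequality is immediate by pushing coverings of $B$ forward through $U$.
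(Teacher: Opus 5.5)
Your proposal is correct and is essentially the paper's own argument: the polar decomposition $L=US$ from Theorem~\ref{AT11}, invariance of $\H^n$ under the isometric embedding $U$, the scaling $|S(A)|=|\det S|\,|A|$ combined with $\H^n=\LL^n$, and the identity $\det(L^TL)=\det(S^TS)=(\det S)^2$. Your justification that $U$ preserves $\H^n$, by projecting coverings onto $U(\bbbr^n)$, supplies a detail the paper takes for granted.
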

This result explains why, for mappings
$f:\mathbb{R}^{n}\to \mathbb{R}^{m}$, $m\geq n$, we define the
$n$-dimensional Jacobian as (cf.\ \eqref{INeq2})
\[
|J_{n}f(x)|
=
\sqrt{\det\big(Df(x)^{T}Df(x)\big)}.
\]
Indeed, Lemma~\ref{AT8} shows that
$|J_{n}f(x)|$ is the factor by which the linear map $Df(x)$ changes measure:
\[
\mathcal{H}^{n}\bigl(Df(x)(A)\bigr)
=
|J_{n}f(x)|\,\mathcal{H}^{n}(A).
\]

Now assume that $L:\mathbb{R}^{n}\to\mathbb{R}^{m}$, $m\leq n$, is
linear and $\operatorname{rank} L=m$. Then
$$
L|_{(\ker L)^{\perp}}:(\ker L)^{\perp}\to\mathbb{R}^{m}
$$
is an isomorphism, and we want to find out how this map changes measure between
these two $m$-dimensional spaces.

Let $U:\mathbb{R}^{m}\to(\ker L)^{\perp}$ be an orthogonal map, that is,
an isometry. According to Lemma~\ref{AT10},
$UU^{T}:\mathbb{R}^{n}\to\mathbb{R}^{n}$ is the orthogonal projection onto
$(\ker L)^{\perp}$. Since $L$ vanishes on $\ker L$, this implies
$L=LUU^{T}$. Therefore
$$
LL^{T}=LUU^{T}L^{T}=LU(LU)^{T},
\text{ and hence }
\det(LL^{T})=\det\bigl(LU(LU)^{T}\bigr)=|\det(LU)|^{2}.
$$

If $A\subset(\ker L)^{\perp}$ is measurable, then $A=U(B)$ for some
measurable set $B\subset\mathbb{R}^{m}$. Since $U$ is an isometry, it
preserves $m$-dimensional measure, and hence
$\mathcal{H}^{m}(A)=\mathcal{H}^{m}(B)$. Therefore
$$
\mathcal{H}^{m}(L(A))
=
\mathcal{H}^{m}(LU(B))
=
|\det(LU)|\,\mathcal{H}^{m}(B)
=
\sqrt{\det(LL^{T})}\,\mathcal{H}^{m}(A).
$$
We have proved the following lemma.

\begin{lemma}
If $L:\mathbb{R}^{n}\to\mathbb{R}^{m}$, $m\leq n$, is linear and
$\operatorname{rank} L=m$, then for any measurable set
$A\subset(\ker L)^{\perp}$ we have
$$
\mathcal{H}^{m}(L(A))
=
\sqrt{\det(LL^{T})}\,\mathcal{H}^{m}(A).
$$
\end{lemma}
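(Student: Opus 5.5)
Let $L:\bbbr^n\to\bbbr^m$, $m\leq n$, be linear with $\rank L=m$, and let $A\subset(\ker L)^\perp$ be measurable. The plan is to reduce the claim to the square case, where the formula $\H^m(T(B))=|\det T|\,\H^m(B)$ for linear $T:\bbbr^m\to\bbbr^m$ is already available. The reduction goes through an isometric parametrization of $(\ker L)^\perp$.

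First, since $\rank L=m$, the subspace $(\ker L)^\perp$ has dimension $m$. Choose an orthonormal basis of it and let $U:\bbbr^m\to(\ker L)^\perp\subset\bbbr^n$ be the linear map sending the canonical basis to that basis. Then $U$ is orthogonal in the sense of Lemma~\ref{AT9}, so $U^TU=I_m$. By Lemma~\ref{AT10}, $UU^T$ is the orthogonal projection onto $(\ker L)^\perp$. Every $v\in\bbbr^n$ splits as $v=UU^Tv+(v-UU^Tv)$ with the second summand in $\ker L$, which gives $L=LUU^T$. Consequently
$$LL^T=LUU^TL^T=(LU)(LU)^T,\qquad\text{so}\qquad \det(LL^T)=(\det LU)^2 .$$
Here $LU$ is an $m\times m$ matrix, and it is invertible because $L$ is injective on $(\ker L)^\perp$.

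Next, we transport $A$ to $\bbbr^m$. Set $B:=U^T(A)$. Since $U$ maps $\bbbr^m$ bijectively onto $(\ker L)^\perp$ with inverse $U^T$ there, we have $A=U(B)$. The set $B$ is measurable, being the image of $A$ under an isometry. Because $U$ is an isometry onto its image, it preserves diameters of all sets, and hence it preserves Hausdorff measures: $\H^m(A)=\H^m(B)$. This is the one point that needs care. The measure on $(\ker L)^\perp$ is the Hausdorff measure of the ambient $\bbbr^n$, but an isometric image carries the same $\H^m$ as the source, since coverings correspond diameter for diameter.

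Finally, apply the square case to $T=LU$:
$$\H^m(L(A))=\H^m(LU(B))=|\det LU|\,\H^m(B)=\sqrt{\det(LL^T)}\,\H^m(A).$$
None of the steps is a real obstacle. The only subtle points are the identity $L=LUU^T$ and the invariance of $\H^m$ under isometric embeddings, and both are immediate from the definitions.
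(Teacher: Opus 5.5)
Your proposal is correct and follows the paper's proof essentially step for step. You parametrize $(\ker L)^\perp$ by an orthogonal $U$, use Lemma~\ref{AT10} to get $L=LUU^T$ and hence $\det(LL^T)=(\det LU)^2$, transfer $A=U(B)$ with $\H^m(A)=\H^m(B)$ by isometry invariance, and then apply the square-matrix formula to $LU$.
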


This result also explains why for maps $f:\bbbr^n\to\bbbr^m$, $m\leq n$ we define the Jacobian
as (cf.\ \eqref{INeq3}):
$$
|J_mf|(x)=\sqrt{\det \big(Df(x)Df(x)^T\big)}\, .
$$
Namely, if $\rank Df(x)=m$, $Df(x)$ is an isomorphism of $\big(\ker Df(x)\big)^\perp$ onto $\bbbr^m$ and the above formula shows how $Df(x)$ changes the measure between sets in $\big(\ker Df(x)\big)^\perp$ and their images in $\bbbr^m$.
If $\rank Df(x)<m$ we simply set $|J_mf|(x)=0$.

\subsection{Measure theory and covering lemmata}
We will need the following less standard version of Lusin's theorem. Since this precise formulation is not easy to locate in the literature, we include a proof for the reader's convenience. The proof uses an argument from \cite{feldman}.
\begin{theorem}[Lusin]
\label{TH5}
Let $X$ be a metric space and $\mu$ a Borel measure on $X$ such that $X$ is a union of countably many open sets of finite measure. If $\Phi:X\to Y$ is a Borel measurable mapping into a separable metric space $Y$, then for any $\eps>0$, there is a closed set $F\subset X$ such that $\mu(X\setminus F)<\eps$ and $\Phi|_F:F\to Y$ is continuous.
\end{theorem}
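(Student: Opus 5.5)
The plan is the standard proof of Lusin's theorem for maps into separable metric spaces. First reduce to compact targets of open sets, then combine inner regularity of $\mu$ on Borel sets with a countable base of $Y$.

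\emph{Step 1 (regularity).} Since $X=\bigcup_k U_k$ with $U_k$ open and $\mu(U_k)<\infty$, every Borel set $B\subset X$ is approximable: for every $\eta>0$ there are a closed $C$ and an open $V$ with $C\subset B\subset V$ and $\mu(V\setminus C)<\eta$. I would prove this by the usual $\sigma$-algebra argument. Consider the family of Borel sets $B$ with this property. It contains all closed sets, because a closed $C$ is a countable intersection of the open sets $\{\dist(\cdot,C)<1/j\}$, and finiteness of $\mu$ on the $U_k$ allows us to pass to the limit locally. It is closed under complements, by symmetry of the definition. It is closed under countable unions, by taking $\bigcup V_i$ and a finite union of the $C_i$. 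The finite-union step needs local finiteness of $\mu$, handled piece by piece on the $U_k$ with errors $\eta 2^{-k-i}$. This is the step I expect to be the main obstacle. The trouble is producing a closed inner set rather than merely an $F_\sigma$ when $\mu(X)=\infty$. The device, following \cite{feldman}, is to write $X\setminus B$ as a union of pieces $(X\setminus B)\cap U_k$ of finite measure, choose open $W_k\supset (X\setminus B)\cap U_k$ with $\mu\big(W_k\setminus (X\setminus B)\big)<\eta 2^{-k}$, and take $C=X\setminus\bigcup_k W_k$. Then $C$ is closed, $C\subset B$, and $\mu(B\setminus C)<\eta$. So it suffices to obtain outer approximation by open sets for finite-measure pieces inside an open set of finite measure. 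There the family of sets approximable from outside by open sets is a $\sigma$-algebra containing the open sets, so this works.

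\emph{Step 2 (countable base).} Let $\{G_i\}_{i\in\bbbn}$ be a countable base of open sets of $Y$, which exists by separability. Each $\Phi^{-1}(G_i)$ and its complement are Borel. By Step 1, choose closed sets $C_i\subset\Phi^{-1}(G_i)$ and $D_i\subset X\setminus\Phi^{-1}(G_i)$ with
\[
\mu\big(X\setminus(C_i\cup D_i)\big)<\eps 2^{-i}.
\]
Set $F=\bigcap_i (C_i\cup D_i)$. This is closed and satisfies $\mu(X\setminus F)<\eps$.

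\emph{Step 3 (continuity).} On $F$ we have $\Phi^{-1}(G_i)\cap F=C_i\cap F$. It follows that $\Phi^{-1}(G_i)\cap F = F\setminus D_i$, which is relatively open in $F$. Since every open subset of $Y$ is a union of the $G_i$, every preimage under $\Phi|_F$ of an open set is relatively open, so $\Phi|_F$ is continuous.
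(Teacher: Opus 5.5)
Your proof is correct. Step 1 is the paper's regularity fact in dual form. The paper runs the two-sided (closed inside, open outside) $\sigma$-algebra argument only when $\mu(X)<\infty$. For $\mu(X)=\infty$ it approximates $A\cap\Omega_k$ from outside by sets $W_k$ that are relatively open in the finite-measure open set $\Omega_k$ (hence open in $X$), and then takes $\bigcup_k W_k$. You apply the same gluing to $X\setminus B$ to get a closed set inside $B$.

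The real difference is in Steps 2--3. The paper takes open $V_i\supset A_i=\Phi^{-1}(U_i)$ with small $\mu(V_i\setminus A_i)$ and removes $E=\bigcup_i(V_i\setminus A_i)$. It checks $(\Phi|_{X\setminus E})^{-1}(U_i)=V_i\setminus E$, and only at the end uses outer regularity of $E$ once more to pass to a closed $F=X\setminus G\subset X\setminus E$. You instead sandwich both $\Phi^{-1}(G_i)$ and its complement between closed sets. Then $F=\bigcap_i(C_i\cup D_i)$ is closed from the start, and continuity is the one-line identity $(\Phi|_F)^{-1}(G_i)=F\setminus D_i$. The trade-off is that the paper needs only the outer open approximation it states, while you need inner closed approximations of two sets per $i$; your gluing device supplies these.

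One sentence in your Step 1 should be fixed. Inside $U_k$, the family of sets that are merely approximable from outside by open sets is not evidently closed under complements. Approximating $U_k\setminus A$ from outside is equivalent to finding a relatively closed set inside $A$, which is the other half of the two-sided property. So the $\sigma$-algebra to use there is the two-sided family you defined at the start of Step 1. Its closure under countable unions is unproblematic because $\mu(U_k)<\infty$, and this is exactly the paper's finite case.
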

\begin{proof}
By $\mathcal{B}(X)$ we will denote the $\sigma$-algebra of Borel sets in $X$.
We first prove the regularity fact that will be used below: {\em If $A\in\mathcal{B}(X)$
and $\eta>0$, then there is an open set $V\subset X$ such that
$A\subset V$ and $\mu(V\setminus A)<\eta$.}

First suppose that $\mu(X)<\infty$. Let $\mathcal R$ be the family of all Borel
sets $A\subset X$ such that for every $\eta>0$ there are a closed set $C$ and an
open set $G$ satisfying $C\subset A\subset G$ and
\[
\mu(G\setminus C)<\eta.
\]
We prove that $\mathcal R=\mathcal B(X)$.
If $A$ is closed, set
$G_k=\{x\in X:\operatorname{dist}(x,A)<1/k\}$. Then $\bigcap_k G_k=A$.
Since $\mu(G_1)<\infty$, continuity of measure from above gives
$\mu(G_k\setminus A)\to 0$. Hence every closed set belongs to $\mathcal R$.
It is easy to check that the class $\mathcal R$ is closed under complements.
It remains to check that $\mathcal R$ is closed under countable unions. Let
$A_i\in\mathcal R$ and $A=\bigcup_i A_i$. Choose closed $C_i$ and open $G_i$
such that $C_i\subset A_i\subset G_i$ and
\[
\mu(G_i\setminus C_i)<\eta 2^{-i-3}.
\]
Put $G:=\bigcup_i G_i$. Since $\mu(X)<\infty$, by continuity from below we can
choose $N$ so that $B_N:=\bigcup_{i=1}^N A_i$ satisfies
$\mu(A\setminus B_N)<\eta/2$.
Let $C:=\bigcup_{i=1}^N C_i$. Then $C$ is closed, $G$ is open,
$C\subset A\subset G$,
\[
\mu(G\setminus A)<\eta/8,
\qquad
\text{and}
\qquad
\mu(A\setminus C)<5\eta/8.
\]
Thus $\mu(G\setminus C)<\eta$, so $A\in\mathcal R$. Therefore $\mathcal R$ is a
$\sigma$-algebra containing all closed sets, and hence it contains all Borel sets.

Now return to the general case. Write $X=\bigcup_{k=1}^\infty \Omega_k$, where
$\Omega_k$ are open, $\Omega_k\subset \Omega_{k+1}$, and $\mu(\Omega_k)<\infty$.
Applying the finite case in the metric space $\Omega_k$ to $A\cap\Omega_k$, we
find a relatively open set $W_k\subset\Omega_k$ such that
$A\cap\Omega_k\subset W_k$ and
\[
\mu(W_k\setminus A)<\eta 2^{-k}.
\]
Since $\Omega_k$ is open in $X$, each $W_k$ is open in $X$. Thus
$V=\bigcup_k W_k$ is open, $A\subset V$, and
\[
\mu(V\setminus A)\leq \sum_{k=1}^\infty \mu(W_k\setminus A)<\eta.
\]
This proves the regularity fact.

Since $Y$ is separable, choose a countable base $\{U_i\}_{i=1}^\infty$ for its
topology. For each $i$, the set $A_i=\Phi^{-1}(U_i)$ is Borel. By the regularity fact,
there is an open set $V_i\subset X$ such that
\[
A_i\subset V_i,\qquad \mu(V_i\setminus A_i)<\varepsilon 2^{-i-2}.
\]
Set
\[
E=\bigcup_{i=1}^\infty (V_i\setminus A_i).
\]
Then $E$ is Borel and $\mu(E)<\varepsilon/4$. We claim that $\Phi|_{X\setminus E}$
is continuous. Let $g=\Phi|_{X\setminus E}$. For every $i$,
\[
g^{-1}(U_i)=V_i\cap (X\setminus E).
\]
Indeed, if $x\in g^{-1}(U_i)$, then $x\in A_i\subset V_i$ and $x\in X\setminus E$.
Conversely, if $x\in V_i\cap (X\setminus E)$, then $x\notin V_i\setminus A_i$,
so $x\in A_i$, and hence $g(x)\in U_i$.

Now let $U\subset Y$ be open. Since $\{U_i\}$ is a base, $U=\bigcup_j U_{i_j}$.
Therefore
\[
g^{-1}(U)=\bigcup_j g^{-1}(U_{i_j})
=\left(\bigcup_j V_{i_j}\right)\cap (X\setminus E),
\]
which is open in the relative topology of $X\setminus E$. Thus $g$ is continuous.

Finally, apply the regularity fact once more to the Borel set $E$. Choose an open
set $G\subset X$ such that $E\subset G$ and $\mu(G\setminus E)<\varepsilon/2$.
Let $F=X\setminus G$. Then $F$ is closed, $F\subset X\setminus E$, and hence
$\Phi|_F$ is continuous. Also
\[
\mu(X\setminus F)=\mu(G)=\mu(E)+\mu(G\setminus E)
<\varepsilon/4+\varepsilon/2<\varepsilon.
\]
The proof is complete.
\end{proof}

The rest of this section is devoted to covering theorems.

Recall that if $B=B(x,r)$ is a ball in a metric space, then for $\lambda>0$ we define $\lambda B:=B(x,\lambda r)$.

For a proof of the next result, see for example, \cite[Theorem~1.2]{heinonen}.
\begin{theorem}[$5r$-covering lemma]
\label{AT7}
Let $(X,d)$ be a metric space, and let $\mathcal{F}$ be a family of balls (open or closed) in $X$ with uniformly bounded diameters, i.e.,
\[
\sup_{B\in\mathcal{F}} \operatorname{diam}(B)<\infty.
\]
Then there exists a subfamily $\mathcal{G}\subset \mathcal{F}$ of pairwise disjoint balls such that
\[
\bigcup_{B\in\mathcal{F}} B \subset \bigcup_{B\in\mathcal{G}} 5B.
\]
If $X$ is separable, the family $\mathcal{G}$ is countable.
\end{theorem}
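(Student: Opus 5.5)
The statement to prove is the $5r$-covering lemma (Theorem~\ref{AT7}). I would prove it by a dyadic-generation greedy selection followed by Zorn's lemma.

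Set $R:=\sup_{B\in\mathcal F}\diam B<\infty$. If $R=0$, every ball in $\mathcal F$ is a single point. Any maximal family of pairwise distinct balls is then pairwise disjoint and covers $\bigcup\mathcal F$, so we may assume $R>0$. Since a ball $B(x,r)$ may have diameter much smaller than $2r$, I will work with diameters rather than radii. For $j=1,2,\ldots$ let
\[
\mathcal F_j:=\{B\in\mathcal F:\ R/2^{j}<\diam B\leq R/2^{j-1}\},
\]
and put the degenerate (diameter zero) balls in a separate final class.

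The selection is inductive. Let $\mathcal G_1$ be a maximal pairwise disjoint subfamily of $\mathcal F_1$; it exists by Zorn's lemma, since the union of a chain of disjoint families is disjoint. Given $\mathcal G_1,\ldots,\mathcal G_{j-1}$, let $\mathcal G_j$ be a maximal pairwise disjoint subfamily of
\[
\{B\in\mathcal F_j:\ B\cap B'=\varnothing\ \text{for all } B'\in\mathcal G_1\cup\cdots\cup\mathcal G_{j-1}\}.
\]
The degenerate balls are handled the same way at the end. Set $\mathcal G=\bigcup_j\mathcal G_j$; it is pairwise disjoint by construction.

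For the covering claim, take $B=B(x,r)\in\mathcal F_j$. By maximality, $B$ meets some $B'\in\mathcal G_1\cup\dots\cup\mathcal G_j$. Hence $\diam B'>R/2^{j}\geq \tfrac12\diam B$, and $B\subset B'\cup\bar{B}'$-neighbourhood of width $\diam B<2\diam B'$. Writing $B'=B(x',r')$, every $y\in B$ satisfies
\[
d(y,x')\leq r'+\diam B< r'+2\diam B'\leq r'+4r'=5r',
\]
so $B\subset 5B'$.

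This inequality chain is the only delicate point, because it compares a diameter with a radius. It works since $\diam B'\leq 2r'$ always holds, while the case $\diam B=0$, i.e. $B=\{x\}$, is covered trivially. For closed balls the same computation gives $\le 5r'$, so the choice of open versus closed $5B'$ has to be made consistently with the ball type, which may require a small slack factor; I expect this bookkeeping to be the main nuisance.

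Finally, if $X$ is separable, each ball in $\mathcal G$ is either a nonempty open set, which contains a point of a fixed countable dense set and distinct disjoint balls contain distinct such points, or it has to be dealt with separately. For closed degenerate balls, countability can fail in general (for instance, all points of $\mathbb R$). I would therefore prove countability in the case the paper uses, namely open balls or balls of positive radius with nonempty interior, via the dense-set injection.
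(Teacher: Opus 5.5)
Your proof is correct, but it is not the route of the source the paper relies on: the paper does not prove this lemma and refers to Heinonen's lectures. As I recall, the argument there applies Zorn's lemma once, to disjoint subfamilies $\omega\subset\mathcal F$ with the invariant that any ball of $\mathcal F$ meeting a member of $\omega$ meets one of at least half its radius. Maximality is then contradicted by adjoining a ball of nearly maximal radius among those missing $\omega$.

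You instead sort $\mathcal F$ into dyadic diameter generations and choose a maximal disjoint family generation by generation. The comparability of sizes is thus built into the construction rather than carried as an invariant. This costs a countable induction, with Zorn used at each step. In return, it works directly with the stated hypothesis of bounded diameters, whereas the radius-based selection uses $\sup r<\infty$, which does not follow from bounded diameters. The strict threshold $\diam B'>R/2^j$ also gives you strict inequalities for free.

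As a consequence, the open/closed bookkeeping you anticipate never arises. Take $z\in B\cap B'$. Then $d(z,x')\le r'$ for either type of $B'$, and $\diam B\le R/2^{j-1}<2\diam B'\le 4r'$. Hence $d(y,x')\le \diam B+r'<5r'$, so $B\subset B(x',5r')$ in every case. Replace your garbled ``neighbourhood'' sentence by exactly this chain; no slack factor is needed.

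Likewise, in the countability step the proviso ``with nonempty interior'' is automatic. Every ball of positive radius, open or closed, contains the nonempty open ball $B(x,r)\ni x$. So distinct members of the disjoint family $\mathcal G$ contain distinct points of a countable dense set. This includes the singleton balls at isolated points in your degenerate class.

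Your example $\{\bar B(x,0):x\in\bbbr\}$ correctly shows that the countability clause fails if closed balls of radius zero are admitted. The statement must therefore be read with positive radii, which is how it is used in the paper.
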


\begin{definition}
A Borel measure $\mu$ on a metric space $(X,d)$ is called \emph{doubling} if there exists a constant $C_d\ge 1$ such that $0<\mu(2B)\le C_d\,\mu(B)<\infty$
for every ball $B$ in $X$.
\end{definition}

\begin{definition}
Let $A\subset X$. A family $\mathcal{F}$ of closed balls in $X$ is called a \emph{Vitali covering} of $A$ if for every $x\in A$ and every $\eps>0$ there exists a ball
$B\in\mathcal{F}$ such that $x\in B$ and $\diam(B)<\eps$.
\end{definition}

For a proof of the next result, see for example, \cite[Theorem~1.6]{heinonen}.
\begin{theorem}[Vitali covering theorem for doubling measures]
\label{AT6}
Let $(X,d)$ be a metric space, let $\mu$ be a doubling Borel measure on $X$, let $A\subset X$ be Borel, and let $\mathcal{F}$ be a Vitali covering of $A$ by closed balls. Then there exists a countable  subfamily $\mathcal{G}\subset \mathcal{F}$ of pairwise disjoint balls such that
\[
\mu\Big(A\setminus \bigcup_{B\in\mathcal{G}}B\Big)=0.
\]
\end{theorem}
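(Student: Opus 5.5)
The plan is to prove the statement by the classical Vitali-type covering argument. By the definition of a Vitali covering, only countability and disjointness together with full measure need to be arranged. First, reduce to the case where $A$ is contained in a bounded open set $U$ with $\mu(U)<\infty$. Since $\mu$ is doubling, balls have finite measure and $X$ is a countable union of balls about a fixed point. Split $A$ into the pieces $A\cap (B(x_0,k+1)\setminus \bar B(x_0,k))$ together with the pieces lying on the spheres. An easier route avoids this: work in $U=B(x_0,R)$, handle each $R$ separately, and discard from $\mathcal F$ every ball not contained in $U$ and of diameter at least $1$. Because $\mathcal F$ is a Vitali covering, what remains is still a Vitali covering of $A\cap B(x_0,R)$. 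Then let $R\to\infty$, taking care that the families chosen for different $R$ are still disjoint. To avoid that issue entirely, I would instead prove the stronger local statement and use the greedy construction directly on all of $A$, using only local finiteness of the measure.

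The greedy construction runs as follows. Fix $U$ open and bounded with $\mu(U)<\infty$, and assume all balls of $\mathcal F$ lie in $U$ and have radius at most $1$. Inductively choose disjoint balls $B_1,B_2,\ldots$. Given $B_1,\ldots,B_k$, let $s_k$ be the supremum of radii of those balls in $\mathcal F$ disjoint from $B_1\cup\cdots\cup B_k$. If there are none, stop. Otherwise pick such a ball $B_{k+1}$ with radius $r_{k+1}>s_k/2$. Since the chosen balls are disjoint and lie in $U$, we get $\sum\mu(B_k)\le\mu(U)<\infty$. The key claim is that for every $N$,
\[
A\setminus\bigcup_{k\le N}B_k\subset\bigcup_{k>N}5B_k .
\]
To see this, take $x$ in the left side. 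The set $\bigcup_{k\le N}B_k$ is a finite union of closed balls, hence closed, and $x\notin$ it. The Vitali property then gives a ball $B\in\mathcal F$ containing $x$ and disjoint from $B_1,\ldots,B_N$. This ball $B$ must meet some $B_k$: otherwise its radius would be at most $s_k<2r_{k+1}$ for all $k$, while $r_k\to0$ because $\sum\mu(B_k)<\infty$ and the measure is doubling. Let $k$ be the first index with $B\cap B_k\ne\varnothing$. Then $k>N$ and $\operatorname{rad}B\le s_{k-1}<2r_k$, so $B\subset 5B_k$. This is the standard argument.

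The measure estimate then uses doubling: $\mu(5B_k)\le C_d^3\mu(B_k)$. Hence
\[
\mu\Big(A\setminus\bigcup_k B_k\Big)\le C_d^3\sum_{k>N}\mu(B_k)\to0 .
\]
The main obstacle is the step showing $r_k\to0$ from $\sum\mu(B_k)<\infty$. Here I would use doubling relative to a fixed big ball: for $B_k\subset U\subset B(x_0,R)$, one has $\mu(B(x_0,R))\le\mu(B(y_k,2R))\le C_d^{\,m}\mu(B_k)$ with $m\approx\log_2(2R/r_k)$. A uniform lower bound on $r_k$ would therefore give a uniform lower bound on $\mu(B_k)$, contradicting summability. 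Finally, to pass from bounded $U$ to all of $X$, run the construction on the open sets $U_j=B(x_0,j+1)\setminus\bar B(x_0,j)$, using only balls inside $U_j$, which are automatically disjoint across $j$. The remaining set $A\cap\bigcup_j\partial B(x_0,j)$ must then be handled. I would cover it by a similar run inside thin annuli $B(x_0,j+\tfrac12)\setminus\bar B(x_0,j-\tfrac12)$ on a second shifted grid, and combine the two runs by taking one full greedy selection in a countable exhaustion of disjoint open sets. I expect this bookkeeping to be routine.
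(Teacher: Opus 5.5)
The bounded case is correct, and it is the standard argument. Here all balls of $\mathcal F$ lie in a bounded open $U$ with $\mu(U)<\infty$ and have radius at most $1$. The inclusion $A\setminus\bigcup_{k\le N}B_k\subset\bigcup_{k>N}5B_k$ holds, $r_k\to0$ follows from doubling relative to $B(x_0,R)$, and the tail estimate closes it. Two small points. First, ``not contained in $U$ and of diameter at least $1$'' should read ``or''. Second, the paper phrases Vitali coverings with diameters and non-centred balls; this is harmless because $\bar B(c,r)=\bar B(c,\min\{r,\diam\bar B(c,r)\})$. The paper gives no proof of this theorem and only refers to Heinonen's book, so there is no argument to compare against. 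The problem lies in your passage from bounded to arbitrary $A$.

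\textbf{The gap.} The open annuli $\{j<d(\cdot,x_0)<j+1\}$ miss the spheres $S_t=\{d(\cdot,x_0)=t\}$, and these can carry positive, even full, measure. On $\mathbb Z$ with counting measure, which is doubling, every point lies on an integer sphere, so your first run selects nothing.

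Your remedy does not work as stated, for three reasons.
\begin{itemize}
\item Balls chosen in the shifted annuli $\{j-\tfrac12<d(\cdot,x_0)<j+\tfrac12\}$ may intersect balls chosen in the first run, so the combined family need not be disjoint.
\item You cannot restrict the second run to balls avoiding the first selection. A countable union of selected closed balls is not closed, so at points of its closure the Vitali property yields no admissible ball.
\item ``One greedy selection in a countable exhaustion of disjoint open sets'' helps only if the complement of the union of those open sets is $\mu$-null. You never arrange this.
\end{itemize}

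\textbf{The missing observation.} For fixed $j$, the sets $S_t$ with $t\in(j,j+1)$ are pairwise disjoint subsets of $B(x_0,j+2)$, which has finite measure. Hence at most countably many of them have positive measure. Choose $t_j\in(j,j+1)$ with $\mu(S_{t_j})=0$. Then run your bounded argument separately in $B(x_0,t_1)$ and in each open annulus $\{t_j<d(\cdot,x_0)<t_{j+1}\}$. In each region use only the balls of $\mathcal F$ contained in it; these still form a Vitali covering of the corresponding piece of $A$, because the region is open. The union of the selected families is automatically pairwise disjoint. The uncovered part of $A$ lies in $\bigcup_j S_{t_j}$ together with countably many null sets.

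\textbf{An alternative.} You can avoid the decomposition altogether. The disjoint family produced in the proof of the $5r$-covering lemma (Theorem~\ref{AT7}), applied to the balls of radius at most $1$, has this property: every $B\in\mathcal F$ meets a selected ball whose radius is at least half that of $B$. Your tail estimate then runs inside each $B(x_0,R)$. Only the selected balls contained in $B(x_0,R+3)$ matter, and their measures are summable.
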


\subsection{Hausdorff measure}
\label{HAU}
In this section we will collect basic properties of the Hausdorff measure.

\begin{definition}
\label{HMD1}
Let $(X,d)$ be a metric space. We say that a family $\{A_i\}_{i=1}^\infty$ of  subsets of $X$ is a {\em $\delta$-covering} of $A\subset X$ if $A\subset\bigcup_i A_i$ and $\diam A_i\leq \delta$ for all $i$.
Fix $0 \leq s < \infty$. For a subset $A$ of $X$ and a $\delta \in (0,\infty]$, the {\em Hausdorff content} $\H^s_\delta$ is defined by
$$
\H^s_\delta (A)
= \inf \frac{\omega_s}{2^s}\sum_{i=1}^\infty (\diam A_i)^s,
\qquad
\text{where}
\qquad
\omega_s=\frac{\pi^{s/2}}{\Gamma(\tfrac{s}{2}+1)},
$$
and the infimum is taken over all $\delta$-coverings $A\subset\bigcup_{i=1}^\infty A_i$. By convention, when $s=0$ we set $(\diam A_i)^0=1$ if $A_i$ is nonempty and $(\diam \varnothing)^0=0$, so that every nonempty covering set contributes $1$ in dimension zero.

Note that $\omega_0=1$ and for $n\in\bbbn$, $\omega_n$ is the volume of the unit ball in $\bbbr^n$.
The {\em $s$-Hausdorff measure} of $A$ is then defined by
$\H^s(A)=\lim_{\delta\to 0^+} \H^s_\delta(A)$.
\end{definition}

Since the function $\delta\mapsto \H^s_\delta(A)$ is non-increasing, the limit in the definition of the Hausdorff measure $\H^s(A)$ exists and equals $\sup_{\delta>0} \H^s_\delta(A)$. Observe also that we can always assume that the sets $A_i$ are closed, since taking the closure of a set does not increase its diameter. One can also assume that the sets $A_i$ are open -- see Lemma below.

If $s=0$, the Hausdorff measure $\H^0$ is simply the counting measure.

\begin{lemma}\label{lem:hsop}
    Define by $$
\H^s_{\delta,op} (A)
= \inf \frac{\omega_s}{2^s}\sum_{i=1}^\infty (\diam A_i)^s,
\qquad
\text{where}
\qquad
\omega_s=\frac{\pi^{s/2}}{\Gamma(\tfrac{s}{2}+1)},
$$
and the infimum is taken over all $\delta$-coverings $A\subset\bigcup_{i=1}^\infty A_i$ by \emph{open} sets $A_i$. Then $\lim_{\delta\to 0^+} \H^s_{\delta,op}(A)=\H^s(A)$.
\end{lemma}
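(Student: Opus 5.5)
Write $\H^s_{\delta,op}$ for the open-cover content. The easy inequality comes first. Every open $\delta$-covering is in particular a $\delta$-covering, so $\H^s_\delta(A)\le \H^s_{\delta,op}(A)$ for every $\delta$. Hence
$$\H^s(A)\le \liminf_{\delta\to0^+}\H^s_{\delta,op}(A).$$
The content $\delta\mapsto\H^s_{\delta,op}(A)$ is non-increasing, so this limit exists in $[0,\infty]$. It therefore remains to prove $\H^s_{\delta',op}(A)\le \H^s_\delta(A)+\eps$ for suitable $\delta'$ comparable to $\delta$.

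The plan is to fatten the sets of an arbitrary covering. Fix $\delta>0$ and $\eps>0$, and take a $\delta$-covering $\{A_i\}$ of $A$ with
$$\frac{\omega_s}{2^s}\sum_i(\diam A_i)^s<\H^s_\delta(A)+\eps.$$
We may discard empty sets.

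\emph{Case $s>0$.} For each $i$ choose $\eta_i>0$ so small that
$$(\diam A_i+2\eta_i)^s<(\diam A_i)^s+\eps\,2^{-i}.$$
This is possible by continuity of $t\mapsto t^s$, and it covers the case $\diam A_i=0$ because $s>0$. Put $U_i=\{x:\dist(x,A_i)<\eta_i\}$. These sets are open, contain $A_i$, and satisfy $\diam U_i\le \diam A_i+2\eta_i\le\delta+2\eta_i$. Also requiring $\eta_i<\delta/2$ makes $\{U_i\}$ a $2\delta$-covering by open sets. Then
$$\H^s_{2\delta,op}(A)\le \frac{\omega_s}{2^s}\sum_i\big((\diam A_i)^s+\eps2^{-i}\big)<\H^s_\delta(A)+C\eps.$$
Letting $\eps\to0$ gives $\H^s_{2\delta,op}(A)\le\H^s_\delta(A)$. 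Letting $\delta\to0$ yields $\lim\H^s_{\delta,op}(A)\le\H^s(A)$.

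\emph{Case $s=0$.} The convention counts each nonempty set as $1$, so fattening costs nothing: $U_i$ is nonempty exactly when $A_i$ is. The same argument then gives $\H^0_{2\delta,op}(A)\le\H^0_\delta(A)$.

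The only real subtlety is making the per-set enlargement summable when $s>0$ and there are infinitely many sets. The bound $\eps2^{-i}$, together with uniform continuity of $t^s$ on $[0,\delta+1]$, handles this. When the covering sum is infinite there is nothing to prove.
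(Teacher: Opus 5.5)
Your proof is correct and follows essentially the paper's argument: take a near-optimal covering, replace each $A_i$ by an open $r_i$-neighbourhood whose diameter exceeds $\diam A_i$ by at most $2r_i$ and whose cost $(\diam U_i)^s$ is increased by a summable error, then use monotonicity in $\delta$. The only difference is cosmetic. The paper stays at scale $\delta$ by starting from a $(\delta-\eps)$-covering and proving $\H^s_{\delta,op}(A)\le\H^s_{\delta-\eps}(A)$, whereas you double the scale and prove $\H^s_{2\delta,op}(A)\le\H^s_\delta(A)$, which works equally well.
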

\begin{proof}
    Obviously, any $\delta$-covering by open sets is a $\delta$-covering, so $\H^s_{\delta,op} (A) \geq \H^s_{\delta} (A)$.

    To prove the lemma it suffices now to show that for every $0<\varepsilon<\delta$,
    \begin{equation}\label{eq:lem:hsop}
        \mathcal H^{s}_{\delta,op}(A) \leq \mathcal H^s_{\delta-\varepsilon}(A),
    \end{equation}

    For any $\eta\in(0,\delta)$ let $\{A_i\}$ be a $(\delta-\eps)$-covering of $A$ such that
    $$
    \frac{\omega_s}{2^s} \sum_{i=1}^\infty(\diam A_i)^s\leq \H^s_{\delta-\eps}(A)+\eta.
    $$

    Let $U_i=\{x\in X~:~d(x,A_i)<r_i\}$ for $i=1,2,\ldots$, with $r_i\in(0,\frac{\eps}{2})$ chosen sufficiently small to have $(\diam U_i)^s\leq (\diam A_i)^s+2^{s-i}\eta (\omega_s)^{-1}$. Then $U_i$ are open and  ${\diam U_i\leq \diam A_i+2r_i\leq \delta}$, so $\{U_i\}$ form an open $\delta$-cover of $A$. Also,
    $$
\H^s_{\delta,op}(A)\leq \frac{\omega_s}{2^s}\sum_{i=1}^\infty (\diam U_i)^s\leq \frac{\omega_s}{2^s}\sum_{i=1}^\infty (\diam A_i)^s+\eta\leq \H^s_{\delta-\eps}(A)+2\eta.
    $$
Since the above holds for all $\eta\in(0,\delta)$, we can take $\eta\to 0^+$, which proves \eqref{eq:lem:hsop} and concludes the proof of the lemma.
\end{proof}

\begin{proposition}
\label{AT2}
The Hausdorff measure is an outer measure defined on all subsets of $X$ and all
Borel sets are $\H^s$-measurable. Moreover, $\H^s$ is Borel regular meaning that for any $E\subset X$, there is a Borel set $F\subset X$ such that $E\subset F$ and $\H^s(E)=\H^s(F)$.
\end{proposition}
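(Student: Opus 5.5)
We need to prove Proposition AT2: $\H^s$ is an outer measure, Borel sets are measurable, and $\H^s$ is Borel regular.

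The plan has three parts: outer-measure properties, then metric outer measure plus Carathéodory's criterion, then regularity through closed coverings.

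\emph{Outer measure.} First I check that each $\H^s_\delta$ is an outer measure. Clearly $\H^s_\delta(\varnothing)=0$, using the convention for $s=0$. Monotonicity holds because any $\delta$-covering of a larger set also covers a smaller one. For countable subadditivity, given $A=\bigcup_j A^j$, I pick for each $j$ a $\delta$-covering of $A^j$ whose sum is within $\eps 2^{-j}$ of $\H^s_\delta(A^j)$. The union of these countably many coverings is a $\delta$-covering of $A$, so $\H^s_\delta(A)\le\sum_j\H^s_\delta(A^j)+\eps$. Since $\H^s=\sup_\delta\H^s_\delta$ is an increasing limit, monotonicity passes to the limit, and subadditivity follows by taking $\delta\to0$ on both sides.

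\emph{Borel measurability.} I will show that $\H^s$ is a metric outer measure: $\H^s(A\cup B)=\H^s(A)+\H^s(B)$ whenever $\dist(A,B)>0$. Take $\delta<\dist(A,B)$. Then no set of a $\delta$-covering of $A\cup B$ can meet both $A$ and $B$, so the covering splits into a $\delta$-covering of $A$ and one of $B$. This gives $\H^s_\delta(A\cup B)\ge\H^s_\delta(A)+\H^s_\delta(B)$, and letting $\delta\to0$ gives the claim.

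Next I prove Carathéodory's criterion directly, which I expect to be the main technical step. Let $C$ be closed and $E$ arbitrary with $\H^s(E)<\infty$; I must show $\H^s(E)\ge\H^s(E\cap C)+\H^s(E\setminus C)$. Set
\[
E_k=\{x\in E\setminus C:\dist(x,C)\ge 1/k\}
\qquad\text{and}\qquad
R_k=E_{k+1}\setminus E_k .
\]
By the metric property, $\H^s(E)\ge\H^s(E\cap C)+\H^s(E_k)$. It remains to show $\H^s(E_k)\to\H^s(E\setminus C)$. Sets $R_k$ whose indices differ by at least $2$ are at positive distance from each other. Applying the metric property to finite unions of the odd-indexed and of the even-indexed $R_k$ gives
\[
\sum_{k\ \text{odd}}\H^s(R_k)\le\H^s(E)<\infty
\qquad\text{and}\qquad
\sum_{k\ \text{even}}\H^s(R_k)\le\H^s(E)<\infty .
\]
Because $C$ is closed, $E\setminus C=E_k\cup\bigcup_{j\ge k}R_j$. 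Subadditivity then gives $\H^s(E\setminus C)\le\H^s(E_k)+\sum_{j\ge k}\H^s(R_j)$, and the tail of this convergent series tends to $0$. Hence closed sets are measurable. Since the measurable sets form a $\sigma$-algebra, every Borel set is $\H^s$-measurable.

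\emph{Borel regularity.} If $\H^s(E)=\infty$, I take $F=X$. Otherwise, for each $k$ I choose a $1/k$-covering $\{A_{ik}\}_i$ of $E$ with
\[
\frac{\omega_s}{2^s}\sum_i(\diam A_{ik})^s\le\H^s_{1/k}(E)+\frac1k .
\]
By the earlier observation, I may replace each $A_{ik}$ by its closure without changing its diameter, so I assume the $A_{ik}$ are closed. Then I set $F=\bigcap_k\bigcup_i A_{ik}$. This $F$ is Borel and contains $E$. Moreover $\H^s_{1/k}(F)\le\H^s_{1/k}(E)+1/k$ for every $k$, and letting $k\to\infty$ gives $\H^s(F)\le\H^s(E)$. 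Monotonicity gives the reverse inequality, so $\H^s(E)=\H^s(F)$.
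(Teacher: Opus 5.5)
Your proof is correct: the outer-measure properties, the metric-outer-measure property for sets at positive distance, the direct check of Carath\'eodory's criterion for closed $C$ via the layers $R_k$ (using $\H^s(E)<\infty$ to get convergent tails), and Borel regularity via $F=\bigcap_k\bigcup_i \overline{A_{ik}}$ all go through as written. The paper does not prove this proposition itself and only cites \cite[Lemma~2.10]{EH1}; your argument is the standard textbook proof of this fact.
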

For a proof see e.g. \cite[Lemma~2.10]{EH1}.

Recall that the Lebesgue measure of a set $A$ is denoted by $\LL^n(A)$ or $|A|$.

The classical isodiametric inequality states that among all compact subsets $A\subset \bbbr^n$ with a given diameter, the ball has the largest volume. More precisely, the Lebesgue measure satisfies
$$
|A|\leq \omega_n\Big(\frac{\diam A}{2}\Big)^n=|B^n(0,\tfrac{1}{2}\diam A)|.
$$
This inequality plays a crucial role in the proof that the Hausdorff measure $\H^n$ coincides with the Lebesgue measure on $\bbbr^n$, see Corollary~\ref{SJT2}.

In fact, we will prove a more general isodiametric inequality in any finite dimensional normed space.

If $\sigma$ is a norm on $\bbbr^n$, we write $\bbbr^n_\sigma$ for $\bbbr^n$ equipped with the norm metric
$d(x,y)=\sigma(x-y)$.
Similarly, $B^n_\sigma(x,r)$ and $\bar{B}^n_\sigma(x,r)$ denote the open and closed balls in $\bbbr^n_\sigma$, respectively. For any set $A\subset \bbbr^n_\sigma$, we define
$\diam_\sigma(A)=\sup\{\sigma(x-y):x,y\in A\}$.
\begin{theorem}[Isodiametric inequality]
\label{AT3}
For any compact set $A\subset\bbbr^n_\sigma$
\begin{equation}
\label{eq:isod}
|A|\leq \big|\bar{B}^n_\sigma(0,\tfrac{1}{2}\diam_\sigma A)\big|.
\end{equation}
\end{theorem}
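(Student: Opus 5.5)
The plan is to prove \eqref{eq:isod} by Steiner symmetrization adapted to the norm $\sigma$. We may assume $|A|>0$; otherwise there is nothing to prove. The key point is that the right-hand side equals $\omega_n^\sigma(\tfrac12\diam_\sigma A)^n$ with $\omega^\sigma_n:=|\bar B^n_\sigma(0,1)|$, since Lebesgue measure scales by $t^n$ under dilations. The Euclidean Steiner argument does not transfer directly, because symmetrization does not preserve $\sigma$-diameter for a general norm. I will therefore use the Brunn--Minkowski route instead, which works for any norm.

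\textbf{Step 1: reduce to the difference set.} For any set $A$ we have $A-A\subset\bar B^n_\sigma(0,\diam_\sigma A)$, since every $x-y$ with $x,y\in A$ satisfies $\sigma(x-y)\le\diam_\sigma A$. Moreover $-A$ has the same measure as $A$.

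\textbf{Step 2: apply Brunn--Minkowski.} For compact sets the Brunn--Minkowski inequality gives
\[
|A-A|^{1/n}=|A+(-A)|^{1/n}\ge |A|^{1/n}+|-A|^{1/n}=2|A|^{1/n}.
\]
Hence
\[
|A|\le 2^{-n}|A-A|\le 2^{-n}\big|\bar B^n_\sigma(0,\diam_\sigma A)\big|=\big|\bar B^n_\sigma(0,\tfrac12\diam_\sigma A)\big|,
\]
where the last equality is the dilation scaling. Compactness of $A$ guarantees that $A-A$ is compact, so it is measurable.

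\textbf{Step 3: supply Brunn--Minkowski.} This is the main obstacle, since the paper has not stated the inequality. I would prove it in the standard elementary way.

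First treat boxes. If $A=\prod[0,a_i]$ and $B=\prod[0,b_i]$, then AM--GM gives $\prod\frac{a_i}{a_i+b_i}+\prod\frac{b_i}{a_i+b_i}\le 1$, which is exactly $|A|^{1/n}+|B|^{1/n}\le|A+B|^{1/n}$.

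Next treat finite unions of boxes with disjoint interiors, by induction on the total number of boxes. Cut $A$ by a coordinate hyperplane that separates two of its boxes. Translate $B$ so that the same hyperplane splits $B$ in the same volume proportions. Each half then has fewer boxes, and combining the inductive inequalities for the two halves gives the result.

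Finally pass to compact sets. Approximate each compact set from outside by finite unions of boxes $A_k\downarrow A$ and $B_k\downarrow B$. Then $A_k+B_k\downarrow A+B$ in the sense that $\bigcap_k(A_k+B_k)=A+B$ for compact sets. By continuity of measure from above, the inequality passes to the limit.

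With Brunn--Minkowski established, Steps 1--2 complete the proof.
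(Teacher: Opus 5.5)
Your argument is the paper's argument. Both apply Brunn--Minkowski to $A$ and $-A$; the paper does not prove that inequality but states and cites it as Theorem~\ref{AT4}. Your set $A-A$ is just $2F$ for the paper's $F=\tfrac12(A+(-A))$. Your direct containment $A-A\subset\bar B^n_\sigma(0,\diam_\sigma A)$ replaces the paper's two-step detour through $\diam_\sigma F\le\diam_\sigma A$ and the central symmetry of $F$.

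Step~3 is therefore optional. If you keep it, note that in the box case the inequality equivalent to Brunn--Minkowski is $\bigl(\prod_i\frac{a_i}{a_i+b_i}\bigr)^{1/n}+\bigl(\prod_i\frac{b_i}{a_i+b_i}\bigr)^{1/n}\le 1$, which is what AM--GM gives. The version without $n$-th roots that you wrote is true but weaker, and it is not equivalent to Brunn--Minkowski.
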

The proof presented below is standard and almost identical with that in the case of the Euclidean norm, cf.~\cite[Theorem 11.2.1]{buragoz}.

We will need the classical Brunn-Minkowski inequality, see for example, \cite[Theorem~8.3.1]{buragoz} or \cite[Theorem~7.1.1 and p.\ 371]{schneider}.
\begin{theorem}[Brunn-Minkowski inequality]
\label{AT4}
For any nonempty compact sets $A,B\subset\bbbr^n$ we have
$$
|A+B|^{1/n}\geq |A|^{1/n}+|B|^{1/n},
\quad
\text{where}
\quad
A+B=\{x+y:\, x\in A,\ y\in B\}.
$$
\end{theorem}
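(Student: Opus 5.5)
We prove the inequality first for boxes, then for finite unions of boxes with disjoint interiors by induction on the number of boxes (the Hadwiger--Ohmann argument), and finally for arbitrary nonempty compact sets by approximation. Throughout, $|\cdot|$ is Lebesgue measure, and we may assume $|A|,|B|>0$. Indeed, if say $|A|=0$, then $A+B\supset a+B$ for any $a\in A$, so $|A+B|\ge|B|$ and the inequality is trivial.

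\emph{Step 1: boxes.} Let $A=\prod_i[0,a_i]$ and $B=\prod_i[0,b_i]$, after translation, which changes neither side. Then $A+B=\prod_i[0,a_i+b_i]$, and the claim reads $\prod(a_i+b_i)^{1/n}\ge\prod a_i^{1/n}+\prod b_i^{1/n}$. Dividing by the left side, it suffices to check
$$
\prod_i\Big(\frac{a_i}{a_i+b_i}\Big)^{1/n}+\prod_i\Big(\frac{b_i}{a_i+b_i}\Big)^{1/n}\le \frac1n\sum_i\frac{a_i}{a_i+b_i}+\frac1n\sum_i\frac{b_i}{a_i+b_i}=1,
$$
which is the AM--GM inequality applied to each product.

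\emph{Step 2: finite unions of boxes.} Let $A$ and $B$ be finite unions of closed boxes with pairwise disjoint interiors, with $k$ and $l$ boxes respectively. We induct on $k+l$; the base case $k+l=2$ is Step~1. Assume $k\ge2$. Two boxes of $A$ with disjoint interiors are separated by some coordinate hyperplane $\{x_j=c\}$. Set $A^\pm=A\cap\{\pm(x_j-c)\ge0\}$. Each of $A^+$ and $A^-$ is a union of at most $k-1$ boxes, and $|A^+|/|A|=\theta\in(0,1)$. Since $t\mapsto|B\cap\{x_j\ge t\}|$ is continuous and monotone, we can choose $d$ with $B^\pm=B\cap\{\pm(x_j-d)\ge0\}$ satisfying $|B^+|/|B|=\theta$. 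The boxes of $B$ cut by $\{x_j=d\}$ split into two, so $B^\pm$ each have at most $l$ boxes, and the total box count for each pair $(A^\pm,B^\pm)$ is at most $k+l-1$. Moreover, $A^++B^+\subset\{x_j\ge c+d\}$ and $A^-+B^-\subset\{x_j\le c+d\}$, so these two sets overlap only in a null set. Applying the induction hypothesis to each pair gives
$$
|A+B|\ge|A^++B^+|+|A^-+B^-|\ge\theta(|A|^{1/n}+|B|^{1/n})^n+(1-\theta)(|A|^{1/n}+|B|^{1/n})^n.
$$
The equality $|A^\pm|^{1/n}+|B^\pm|^{1/n}=\theta_\pm^{1/n}(|A|^{1/n}+|B|^{1/n})$, with $\theta_+=\theta$ and $\theta_-=1-\theta$, holds exactly because of the matched proportions, and it is what produces the common factor above. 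The bookkeeping in this step, namely the proportion-matching choice of $d$, strict decrease of the box count, and the null overlap, is the main point requiring care.

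\emph{Step 3: compact sets.} For $\eps>0$ let $A_\eps$ be the union of the closed dyadic cubes of side $2^{-j}$ meeting $A$, and define $B_\eps$ in the same way. These are finite unions of boxes with disjoint interiors, they contain $A$ and $B$ respectively, and $A_\eps\subset A+\bar B(0,\eps)$ once $2^{-j}\sqrt n<\eps$. By compactness, $\bigcap_\eps(A+\bar B(0,\eps))=A$, and similarly for $B$, so continuity of measure from above gives $|A_\eps|\to|A|$ and $|B_\eps|\to|B|$. Also $A_\eps+B_\eps\subset A+B+\bar B(0,2\eps)$, and $A+B$ is compact, so $|A_\eps+B_\eps|\le|A+B+\bar B(0,2\eps)|\to|A+B|$. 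Applying Step~2 to $A_\eps,B_\eps$ and letting $\eps\to0$ gives the theorem.
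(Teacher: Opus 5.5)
The paper does not prove this statement. It quotes it from \cite{buragoz} and \cite{schneider} and uses it only once, in the proof of the isodiametric inequality (Theorem~\ref{AT3}), applied to the compact and generally nonconvex sets $\tfrac12A$ and $-\tfrac12A$. Your argument is the classical Hadwiger--Ohmann proof: AM--GM for boxes, induction on the number of boxes with a proportion-matched parallel cut of $B$, and outer approximation by dyadic cubes. It is correct up to one repair in Step~2, described below. The citation keeps the appendix short. Your route makes that point self-contained and gives directly the compact-set version that Theorem~\ref{AT3} needs, without passing through convex bodies.

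The step to fix in Step~2 is this. If $A^{\pm}$ is literally $A\cap\{\pm(x_j-c)\ge 0\}$, then a box of $A$ lying in $\{x_j\le c\}$ with a face on $\{x_j=c\}$ leaves that face in $A^+$. For example, take $A=([0,1]\times[0,2])\cup([1,2]\times[1,3])$. The only separating coordinate hyperplane is $x_1=1$, and $A^+=([1,2]\times[1,3])\cup(\{1\}\times[0,1])$. This is not a single box, although $k-1=1$. Counting the segment as a degenerate box does not help: the count then does not drop, and your separation claim can fail for degenerate boxes.

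The standard remedy is to let $A^+$ be the union of those pieces $Q\cap\{x_j\ge c\}$, with $Q$ a box of $A$, that have nonempty interior. Equivalently, take the closure of $A\cap\{x_j>c\}$, and do the same for $A^-$ and $B^{\pm}$. This discards only null sets, so $|A^{\pm}|$, $|B^{\pm}|$ and hence $\theta$ are unchanged. The new sets still lie in the corresponding closed half-spaces, so $A^{\pm}+B^{\pm}\subset A+B$ and your null-overlap argument go through verbatim. The box counts are now genuinely at most $k-1$ and $l$.

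A minor simplification in Step~3: you never need $|A_\eps|\to|A|$. The inclusion $A\subset A_\eps$ already gives $|A_\eps|\ge|A|$, which is the only direction used.
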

\begin{proof}[Proof of Theorem~\ref{AT3}]
Let
\[
A'=-A
\qquad
\text{and}
\qquad
F=\tfrac{1}{2}(A+A').
\]
Then $F$ is compact and centrally symmetric with respect to $0$. We claim that
\begin{equation}
\label{SJeq2}
|F|\ge  |A|
\qquad
\text{and}
\qquad
\diam_\sigma F\le \diam_\sigma A.
\end{equation}
The Brunn--Minkowski inequality gives
\[
|F|=
\big|\tfrac{1}{2}(A+A')\big|
\ge
\big(\,|\tfrac{1}{2}A|^{1/n}
+
|\tfrac{1}{2}A'|^{1/n}\, \big)^n
=|A|.
\]

Now let $x,y\in F$. Then there are points $x',y'\in A$ and $x'',y''\in A'$ such that
$x=\tfrac{1}{2}(x'+x'')$ and $y=\tfrac{1}{2} (y'+y'')$. Hence
\begin{align*}
\sigma(x-y)
=\tfrac{1}{2}\,\sigma(x'-y'+x''-y'')
\le \tfrac{1}{2}\bigl(\sigma(x'-y')+\sigma(x''-y'')\bigr)
\le  \diam_\sigma A,
\end{align*}
because $\diam_\sigma A'=\diam_\sigma A$. Thus
$\diam_\sigma F\le \diam_\sigma A$. The proof of \eqref{SJeq2} is complete.

Since $F$ is centrally symmetric, if $x\in F$ then also $-x\in F$, so
\[
2\sigma(x)=\sigma(x-(-x))\le \diam_\sigma F\le \diam_\sigma A.
\]
Therefore,
$F\subset \bar{B}_\sigma^n\big(0,\tfrac{1}{2}\diam_\sigma A\big)$ which together with $|A|\leq |F|$ proves the theorem.
\end{proof}

Since the proof of the isodiametric inequality is Euclidean in its nature, the next result is rather surprising.
It seems to follow from Federer’s general density estimates for Hausdorff measures; see Theorems 2.10.17 and 2.10.18 in \cite{federer}. However, Federer’s arguments are not easy to understand. For an elementary and self-contained proof, see \cite[Lemma~2.14]{EH1}.
\begin{theorem}[Asymptotic isodiametric inequality]
\label{AT1}
Suppose $\H^s(X)<\infty$, for some real $s\geq 0$. Fix $\eps>0$.
Then there is a set $Z\subset X$ of measure zero, $\H^s(Z)=0$, such that for every $x\in X\setminus Z$ there is $\delta_x>0$ with the property that for every set $E\subset X$, measurable or not, the following holds:
$$
x\in E\subset \bar{B}(x,\delta_x)
\quad
\Longrightarrow
\quad
\H^s(E)\leq (1+\eps)\omega_s\Big(\frac{\diam E}{2}\Big)^s.
$$
\end{theorem}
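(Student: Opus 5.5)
The plan is to argue by contradiction through a Vitali-type covering argument for arbitrary sets, of the kind behind Federer's density theorems. Fix $\eps>0$ and let $Z$ be the set of points $x\in X$ for which the conclusion fails. Thus $x\in Z$ exactly when for every $\delta>0$ there is a set $E$ with $x\in E\subset \bar B(x,\delta)$ and
\[
\H^s(E)>(1+\eps)\omega_s\Big(\frac{\diam E}{2}\Big)^s .
\]
Replacing $E$ by $\overline{E}$ does not change the diameter and does not decrease $\H^s$, so these ``bad'' sets may be taken closed, hence Borel. Note also that $\diam E>0$ for a bad set unless $s=0$; the case $s=0$, where $\H^0$ is counting measure, can be disposed of directly. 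The goal is to show $\H^s(Z)=0$.

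First I would localize $Z$ from outside.
\begin{itemize}
\item By Borel regularity (Proposition~\ref{AT2}) there is a Borel $B\supset Z$ with $\H^s(B)=\H^s(Z)$.
\item Since $\H^s$ is a finite Borel measure on $X$, the regularity fact proved inside Theorem~\ref{TH5} gives, for any $\eta>0$, an open $V\supset B$ with $\H^s(V)\le \H^s(Z)+\eta$.
\item Fix $\delta>0$. For each $x\in Z$ take only bad closed sets $E\ni x$ with $\diam E<\delta$ that are contained in $V$. This is possible because $V$ is open, so $\bar B(x,\delta')\subset V$ for small $\delta'$. Call this family $\mathcal F$; it is a fine cover of $Z$.
\end{itemize}

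The main step, and the expected obstacle, is a disjoint selection from $\mathcal F$ that works in a general metric space for sets that need not be balls. I would select greedily as in the $5r$-lemma.
\begin{itemize}
\item At each stage, choose $E_{k+1}\in\mathcal F$ disjoint from $E_1,\dots,E_k$ with $\diam E_{k+1}>\tfrac12\sup\{\diam E: E\in\mathcal F \text{ disjoint from } E_1,\dots,E_k\}$.
\item The chosen $E_i$ are pairwise disjoint Borel subsets of $V$, so $\sum_i \H^s(E_i)\le \H^s(V)<\infty$. By badness $\sum_i(\diam E_i)^s<\infty$, hence $\diam E_i\to 0$.
\item Every $E\in\mathcal F$ meets some $E_i$ with $\diam E\le 2\diam E_i$. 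Hence $E\subset \hat E_i:=\{y:\dist(y,E_i)\le 2\diam E_i\}$, and $\diam\hat E_i\le 5\diam E_i$.
\item Now take $x\in Z\setminus\bigcup_{i\le N}E_i$. Since the $E_i$ are closed, a small enough set $E\in\mathcal F$ containing $x$ avoids $E_1,\dots,E_N$. It therefore meets some $E_i$ with $i>N$, and so $Z\setminus\bigcup_{i\le N}E_i\subset\bigcup_{i>N}\hat E_i$.
\end{itemize}

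To conclude, estimate the $5\delta$-content of $Z$ using the cover $\{E_i\}_{i\le N}\cup\{\hat E_i\}_{i>N}$ and the badness of each $E_i$:
\[
\H^s_{5\delta}(Z)\le \frac{\omega_s}{2^s}\sum_{i\le N}(\diam E_i)^s+\frac{5^s\omega_s}{2^s}\sum_{i>N}(\diam E_i)^s
\le \frac{\H^s(V)}{1+\eps}+o_N(1).
\]
Let $N\to\infty$, then $\delta\to0$, then $\eta\to0$. This gives $\H^s(Z)\le(1+\eps)^{-1}\H^s(Z)$. Since $\H^s(Z)\le\H^s(X)<\infty$, this forces $\H^s(Z)=0$. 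The finiteness of $\H^s(X)$ enters twice: in the outer regularity and in the summability of $\sum_i \H^s(E_i)$.
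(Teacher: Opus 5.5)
Your proposal is correct. Note that the paper does not prove Theorem~\ref{AT1} itself: it cites \cite[Lemma~2.14]{EH1} and Federer's density theorems 2.10.17--2.10.18 in \cite{federer}.

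Your argument is the standard mechanism behind those results. A Vitali-type disjoint selection for arbitrary closed sets, using the enlargements $\hat E_i=\{y:\dist(y,E_i)\le 2\diam E_i\}$ as in Federer's 2.8.4--2.8.6, gives $(1+\eps)\H^s(Z)\le \H^s(V)$ for every open $V\supset Z$. Outer regularity of the finite Borel measure $\H^s$ (Proposition~\ref{AT2} and the regularity step in the proof of Theorem~\ref{TH5}) then absorbs this into $\H^s(Z)=0$.

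The delicate points are all handled correctly:
\begin{itemize}
\item Bad sets may be replaced by their closures.
\item For $s>0$, bad sets have positive diameter. This is what forces every member of $\mathcal F$ to meet some selected $E_i$, including when the selection is infinite.
\item Finiteness of $\H^s(X)$ supplies both the summability that gives $\diam E_i\to 0$ and the finiteness of $\H^s(Z)$ needed in the final step.
\end{itemize}
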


It is easy to see that $\H^n([0,1]^n)<\infty$. Indeed, it suffices to consider coverings of $[0,1]^n$ by $2^{nk}$ cubes of edge length $2^{-k}$. Since $\H^n$ is translation invariant, it follows that there is $c\geq 0$ such that $\H^n(E)=c\LL^n(E)$ for any Borel set $E$. Since both measures $\H^n$ and $\LL^n$ are Borel regular, it follows that $\H^n(E)=c\LL^n(E)$ for all Lebesgue measurable sets $E$. We will prove that in fact $c=1$ i.e., $\H^n(E)=\LL^n(E)$ for all Lebesgue measurable sets $E$, see Corollary~\ref{SJT2}. To this end it suffices to show that $\H^n(B^n(0,1))=\omega_n$. This is a consequence of a more general result, Proposition~\ref{AT5}.

We write $\H^s_\sigma$ for the Hausdorff measure on the metric space $\bbbr^n_\sigma$, while $\H^s$ denotes the Hausdorff measure with respect to the Euclidean metric. Similarly, $\H^s_{\sigma,\delta}$ denotes the $\delta$-Hausdorff content with respect to $\sigma$.

By the same argument as the one described above, if we can show that $0<\H^n_\sigma([0,1]^n)<\infty$, then $\H^n_\sigma=c\LL^n$ for some $c\in (0,\infty)$.

\begin{proposition}
\label{AT5}
For any norm $\sigma$ on $\bbbr^n$ we have $\H^n_\sigma(\bar{B}^n_\sigma(0,1))=\omega_n$.
\end{proposition}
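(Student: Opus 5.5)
We need to prove $\H^n_\sigma(\bar B^n_\sigma(0,1))=\omega_n$ for any norm $\sigma$ on $\bbbr^n$. Write $B=\bar B^n_\sigma(0,1)$. The two inequalities come from two different tools: the upper bound from a Vitali covering argument, and the lower bound from the isodiametric inequality Theorem~\ref{AT3}.

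First I record the scaling facts. Every norm on $\bbbr^n$ is comparable to the Euclidean one, so $\H^n_\sigma$ and $\H^n$ are comparable. The cube $[0,1]^n$ has finite positive $\H^n_\sigma$ measure, and hence, by translation invariance and the remark preceding the proposition, $\H^n_\sigma=c\LL^n$ for some $c\in(0,\infty)$. Thus it suffices to show $c\,|B|=\omega_n$.

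\emph{Lower bound.} Take any $\delta$-covering $\{A_i\}$ of $B$. We may assume each $A_i$ is closed, hence compact since it is bounded. Theorem~\ref{AT3} gives
\[
|A_i|\leq \big|\bar B_\sigma(0,\tfrac12\diam_\sigma A_i)\big|=|B|\Big(\frac{\diam_\sigma A_i}{2}\Big)^n .
\]
Therefore
\[
\frac{\omega_n}{2^n}\sum_i(\diam_\sigma A_i)^n\geq \frac{\omega_n}{|B|}\sum_i|A_i|\geq \frac{\omega_n}{|B|}|B|=\omega_n ,
\]
so $\H^n_\sigma(B)\geq\omega_n$. This inequality holds for any covering, with no scaling argument needed.

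\emph{Upper bound.} Fix $\delta>0$ and $\eta>0$. Let $U\supset B$ be open with $|U|<|B|+\eta$. Consider the family of closed $\sigma$-balls $\bar B_\sigma(x,r)\subset U$ with $x\in B$ and $2r<\delta$. This is a Vitali covering of $B$, and Lebesgue measure is doubling, so Theorem~\ref{AT6} yields disjoint balls $\bar B_\sigma(x_i,r_i)$ covering $B$ up to a Lebesgue null set $N$. Then
\[
\H^n_{\sigma,\delta}(B)\leq \sum_i\frac{\omega_n}{2^n}(2r_i)^n+\H^n_{\sigma,\delta}(N)
=\frac{\omega_n}{|B|}\sum_i|\bar B_\sigma(x_i,r_i)|+\H^n_{\sigma,\delta}(N)
\leq \frac{\omega_n}{|B|}(|B|+\eta)+\H^n_{\sigma,\delta}(N).
\]
For the null set $N$, the comparability $\H^n_\sigma=c\LL^n$ gives $\H^n_\sigma(N)=0$, so $\H^n_{\sigma,\delta}(N)=0$. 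Letting $\eta\to0$ and then $\delta\to0$ gives $\H^n_\sigma(B)\leq\omega_n$.

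The main point needing care is that Theorem~\ref{AT6} is stated for balls with respect to the metric of the ambient space. I apply it in the metric space $\bbbr^n_\sigma$ with the measure $\LL^n$. This measure is doubling there, since $|\bar B_\sigma(x,2r)|=2^n|\bar B_\sigma(x,r)|$. Theorem~\ref{AT3} also applies, because closed bounded sets in $\bbbr^n_\sigma$ are compact. Combining both bounds yields the claim, and taking $\sigma$ Euclidean then gives $\H^n=\LL^n$.
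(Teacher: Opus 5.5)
Your proof is correct and follows the paper's route: the isodiametric inequality (Theorem~\ref{AT3}) applied to the Lebesgue measure of the closed covering sets gives the lower bound, and a Vitali covering by small $\sigma$-balls gives the upper bound. The differences are minor. You apply Theorem~\ref{AT6} to Lebesgue measure on $\bbbr^n_\sigma$, with balls inside an open $U\supset B$, and argue directly, whereas the paper applies it to $\H^n_\sigma$ and argues by contradiction. As a result you only need that Lebesgue-null sets are $\H^n_\sigma$-null. The positivity $c>0$ you assert at the start is not yet available at that point, but it follows from your lower bound and is never actually used.
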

\begin{corollary}
\label{SJT2}
$\LL^n=\H^n$ on $\bbbr^n$ on Lebesgue measurable sets.
\end{corollary}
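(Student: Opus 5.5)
The plan is to deduce the corollary from Proposition~\ref{AT5} with $\sigma$ the Euclidean norm, combined with the translation-invariance argument sketched just before that proposition. For the Euclidean norm we have $\H^n_\sigma=\H^n$ and $\bar{B}^n_\sigma(0,1)=\bar{\bbbb}^n$, so Proposition~\ref{AT5} gives
$$
\H^n(\bar{\bbbb}^n)=\omega_n=\LL^n(\bar{\bbbb}^n).
$$

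First I will make precise that $\H^n=c\,\LL^n$ on Borel sets for some constant $c\in[0,\infty)$.

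1. Covering $[0,1]^n$ by $2^{nk}$ dyadic cubes of edge $2^{-k}$ gives $\H^n([0,1]^n)\le \omega_n 2^{-n}n^{n/2}<\infty$. Hence $\H^n$ is finite on bounded sets.
2. $\H^n$ is translation invariant, since translations are isometries, and it is a Borel measure by Proposition~\ref{AT2}.
3. Set $c:=\H^n([0,1)^n)$. By translation invariance and additivity, every half-open dyadic cube $Q$ of edge $2^{-k}$ satisfies $\H^n(Q)=2^{-nk}c=c\,\LL^n(Q)$, because $[0,1)^n$ is a disjoint union of $2^{nk}$ translates of $Q$.
4. Open sets are countable disjoint unions of such cubes, so $\H^n(U)=c\,\LL^n(U)$ for every open $U$.
5. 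Both measures are finite on bounded sets, so they are outer regular on bounded Borel sets. For $\LL^n$ this is standard; for $\H^n$ it follows from the regularity fact proved inside Theorem~\ref{TH5}, applied to $\mu=\H^n$ on $\bbbr^n$. Hence $\H^n(E)=c\,\LL^n(E)$ for all Borel $E$, first bounded and then arbitrary by $\sigma$-additivity.

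Next, I evaluate both sides on $E=\bar{\bbbb}^n$. This gives $\omega_n=c\,\omega_n$, so $c=1$, and therefore $\H^n=\LL^n$ on Borel sets.

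Finally, I extend to Lebesgue measurable sets. Such an $E$ can be written as $E=B\cup N$ with $B$ Borel, $N\subset N'$, $N'$ Borel and $\LL^n(N')=0$. Then $\H^n(N)\le\H^n(N')=\LL^n(N')=0$, which gives $\H^n(B)\le \H^n(E)\le \H^n(B)+\H^n(N)=\LL^n(B)=\LL^n(E)$.

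The only real content is Proposition~\ref{AT5}, which is assumed here; within this proof the delicate point is the regularity step 5, used to pass from open sets to Borel sets.
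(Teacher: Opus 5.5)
Your proposal is correct and follows the paper's own route. Translation invariance and finiteness of $\H^n([0,1]^n)$ give $\H^n=c\,\LL^n$, and Proposition~\ref{AT5} with the Euclidean norm forces $c=1$; you supply the details the paper only sketches, namely the dyadic-cube argument, outer regularity via the regularity fact in the proof of Theorem~\ref{TH5}, and the null-set extension to Lebesgue measurable sets.
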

\begin{corollary}
\label{AT12}
If $\sigma$ is a norm on $\bbbr^n$, then there is $c(\sigma)\in (0,\infty)$ such that $\H^n_\sigma(A)=c(\sigma)\H^n(A)$ for all sets $A\subset\bbbr^n$.
\end{corollary}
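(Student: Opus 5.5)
The plan is to compare $\H^n_\sigma$ with the Euclidean $\H^n$ by norm equivalence, and then use translation invariance to force proportionality with $\LL^n=\H^n$ (Corollary~\ref{SJT2}).

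\emph{Step 1: comparability.} Since all norms on $\bbbr^n$ are equivalent, there are $0<a\leq b<\infty$ with $a|v|\leq\sigma(v)\leq b|v|$. Hence $a\diam E\leq\diam_\sigma E\leq b\diam E$ for every $E\subset\bbbr^n$. Every Euclidean $\delta$-covering is a $\sigma$-$b\delta$-covering, and every $\sigma$-$\delta$-covering is a Euclidean $\delta/a$-covering. Passing to the limit in Definition~\ref{HMD1} gives
$$
a^n\H^n(A)\leq\H^n_\sigma(A)\leq b^n\H^n(A)\qquad\text{for all }A\subset\bbbr^n.
$$
In particular $c:=\H^n_\sigma([0,1]^n)\in(0,\infty)$. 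Moreover, $\H^n_\sigma$ vanishes on every set that is $\LL^n$-null, such as hyperplanes and boundaries of cubes, because $\H^n=\LL^n$ by Corollary~\ref{SJT2}.

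\emph{Step 2: proportionality on Borel sets.} Translations are isometries of $\bbbr^n_\sigma$, so $\H^n_\sigma$ is translation invariant. It is a Borel measure by Proposition~\ref{AT2}, and it is finite on bounded sets by Step~1. Cover $[0,1]^n$ by $2^{nk}$ translates of a half-open dyadic cube of side $2^{-k}$. These translates are disjoint, they all have the same $\H^n_\sigma$-measure, and the boundaries are null, so each half-open dyadic cube of side $2^{-k}$ has $\H^n_\sigma$-measure $c\,2^{-nk}$. Every open set is a countable disjoint union of half-open dyadic cubes, so $\H^n_\sigma(U)=c\LL^n(U)$ for all open $U$. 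Two locally finite Borel measures on $\bbbr^n$ that agree on open sets agree on all Borel sets. To see this, restrict to bounded pieces and use the outer regularity established at the beginning of the proof of Theorem~\ref{TH5}; alternatively, note that open sets form a $\pi$-system generating the Borel sets and apply the $\pi$-$\lambda$ theorem on bounded balls. Hence $\H^n_\sigma=c\LL^n=c\H^n$ on Borel sets.

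\emph{Step 3: arbitrary sets.} Fix $A\subset\bbbr^n$. By the Borel regularity in Proposition~\ref{AT2}, choose a Borel set $F_1\supset A$ with $\H^n_\sigma(F_1)=\H^n_\sigma(A)$, and a Borel set $F_2\supset A$ with $\H^n(F_2)=\H^n(A)$. The set $F=F_1\cap F_2$ is Borel, contains $A$, and by monotonicity it is a measure-preserving hull for both measures. Therefore
$$
\H^n_\sigma(A)=\H^n_\sigma(F)=c\H^n(F)=c\H^n(A),
$$
and we may take $c(\sigma)=c$. The only delicate point is the null-boundary fact needed for the dyadic decomposition in Step~2, and the comparability inequality of Step~1 settles it directly.
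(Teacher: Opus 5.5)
Your proof is correct and follows essentially the same route as the paper: norm equivalence gives $0<\H^n_\sigma([0,1]^n)<\infty$, translation invariance gives $\H^n_\sigma=c\LL^n=c\H^n$ on Borel sets, and Borel regularity of both Hausdorff measures extends the identity to all sets. You spell out details the paper only sketches, namely the dyadic-cube uniqueness argument (with null boundaries coming from the comparability $\H^n_\sigma\le b^n\H^n$) and the common Borel hull $F_1\cap F_2$.
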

Indeed, $\H^n_\sigma(A)=c\LL^n(A)=c\H^n(A)$ on Lebesgue measurable sets and since both measures $\H^n_\sigma$ and $\H^n$ are Borel regular, $\H^n_\sigma=c\H^n$ on all sets, see also Proposition~\ref{SJT1}.
\begin{proof}[Proof of Proposition~\ref{AT5}]
Since any two norms in $\bbbr^n$ are equivalent, $\operatorname{id}:\bbbr^n\to\bbbr^n_\sigma$ is bi-Lipschitz, and hence the Hausdorff measures $\H^n$ and $\H^n_\sigma$ are comparable. Since $\H^n_\sigma([0,1]^n)\approx\H^n([0,1]^n)<\infty$, $\H^n_\sigma$ is finite on compact sets and hence
$$
\eta_n:=\H^n_\sigma(\bar{B}^n_\sigma(0,1))<\infty.
$$
Note that $\H^n_\sigma(\bar{B}^n_\sigma(x,r))=\H^n_\sigma(B^n_\sigma(x,r))=\eta_n r^n$ for any $x\in\bbbr^n$ and $r>0$.
Our aim is to prove that $\eta_n=\omega_n$.

First, let us show that $\eta_n\leq\omega_n$.

Assume otherwise, that for some $\beta>0$ we have $\eta_n\geq(1+\beta)\omega_n$. Fix $\delta, \eps>0$.  Since the measure $\H^n_\sigma$ is doubling, by Vitali's Covering Theorem~\ref{AT6} we can find pairwise disjoint balls $\bar{B}^n_\sigma(x_i,r_i)$, with $r_i<\delta$, that fill $B^n_\sigma(0,1)$ up to a set of measure zero, i.e.,
$$
B^n_\sigma(0,1)=Z\cup\bigcup_i \bar{B}^n_\sigma(x_i,r_i), \qquad \H^n_\sigma(Z)=0.
$$

Since $\H^n_\sigma$ is invariant under translations, and finite on compact sets, $\H^n_\sigma=c\LL^n$ for some $c\geq 0$ (on Lebesgue measurable sets). Since $\eta_n\geq (1+\beta)\omega_n>0$, we have that $c>0$. Note also that $B^n(x,r)\subset B_\sigma^n(x,c'r)$ for some $c'>0$ depending on $\sigma$ only.
Therefore, $|Z|=c^{-1}\H^n_\sigma(Z)=0$, and hence
the null set $Z$ can be covered by a countable family of balls $B^n_\sigma(y_j,s_j)$ such that $s_j<\delta$ and $\sum_j\omega_n s_j^n<\eps$. Then the balls $\bar{B}^n_\sigma(x_i,r_i)$ and $B^n_\sigma(y_j,s_j)$, $i,j=1,\ldots$, form a $2\delta$-covering of $B^n_\sigma(0,1)$ and thus
\begin{equation*}
\begin{split}
\eta_n
&\geq
\sum_i \H^n_\sigma( \bar{B}^n_\sigma(x_i,r_i))
=\sum_i \eta_n r_i^n\\
&\geq(1+\beta)\Big(\sum_i \omega_n r_i^n+\sum_j \omega_n s_j^n\Big)-(1+\beta)\sum_j \omega_n s_j^n\\
&\geq (1+\beta)\H^n_{\sigma,2\delta}(\bar{B}^n_\sigma(0,1))-(1+\beta)\eps.
\end{split}
\end{equation*}
The above holds for all $\delta>0$; taking $\delta\to 0$ we get
$
\eta_n\geq (1+\beta)\eta_n-(1+\beta)\eps.
$
Next, taking $\eps\to 0$ we get $\eta_n\geq (1+\beta)\eta_n$, which is a contradiction, since $\eta_n$ is positive and finite.

It remains to show that $\eta_n\geq\omega_n$.

Since $\H^n_\sigma=c\LL^n$, \eqref{eq:isod} yields
$$
\H^n_\sigma(A)\leq\H^n_\sigma\big(\bar{B}_\sigma^n(0,\tfrac{1}{2}\diam_\sigma A)\big)
\qquad
\text{for any bounded set } A\subset\bbbr^n.
$$
Indeed, if $A$ is compact, it is \eqref{eq:isod}. If $A$ is not compact, we replace it by its closure and observe that taking the closure does not increase the diameter of the set.

This implies that whenever $\{A_i\}_i$ is a family of bounded sets covering $\bar{B}^n_\sigma(0,1)$, we have
\begin{equation*}
\begin{split}
0<
\H^n(\bar{B}^n_\sigma(0,1))&\leq \sum_i \H^n(A_i)\leq
\sum_i\H^n(\bar{B}^n_\sigma(0,\tfrac{1}{2}\diam_\sigma A_i))
\\
&=\H^n(\bar{B}^n_\sigma(0,1)) \sum_i(\tfrac{1}{2}\diam_\sigma A_i)^n,
\end{split}
\end{equation*}
so $2^{-n}\sum_i(\diam_\sigma A_i)^n\geq 1$, hence $2^{-n}\omega_n\sum_i(\diam_\sigma A_i)^n\geq \omega_n$,
and upon taking the infimum over all $\delta$-coverings of $\bar{B}^n_\sigma(0,1)$ we get
$$
\H^n_{\sigma,\delta}(\bar{B}^n_\sigma(0,1)) \geq\omega_n,
$$
thus
$\eta_n=\sup_{\delta>0} \H^n_{\sigma,\delta}(\bar{B}^n_\sigma(0,1))\geq \omega_n$, which concludes the proof.
\end{proof}
We conclude with a standard observation concerning Borel representatives of measurable functions.
\begin{lemma}
\label{app:measborel}
Assume $X$ is a $\H^n$-$\sigma$-finite metric space. Then every $\H^n$ measurable function $u:X\to[0,\infty]$ has a Borel measurable representative $v$, i.e., there exists a Borel function $v:X\to [0,\infty]$ such that $u=v$ $\H^n$-a.e.
\end{lemma}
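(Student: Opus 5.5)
The plan is the standard reduction to Borel representatives via $\sigma$-finiteness and Borel regularity of $\H^n$ (Proposition~\ref{AT2}). Since $u\ge 0$ is $\H^n$-measurable, we first write it as the increasing pointwise limit of simple functions
\[
u_k=\sum_{j} a_{kj}\chi_{E_{kj}},
\]
with $a_{kj}\ge 0$ and the sets $E_{kj}$ $\H^n$-measurable. More concretely, we may take $u=\sum_{i=1}^\infty c_i\chi_{E_i}$ with $c_i>0$ and $E_i$ measurable, using the usual dyadic expansion; this also allows the value $+\infty$. It therefore suffices to show that every $\H^n$-measurable set $E\subset X$ differs from a Borel set by an $\H^n$-null set. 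Indeed, if $E_i=B_i\triangle N_i$ with $B_i$ Borel and $\H^n(N_i)=0$, then
\[
v:=\sum_i c_i\chi_{B_i}
\]
is Borel, and $v=u$ off the null set $\bigcup_i N_i$.

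For the set statement, write $X=\bigcup_k X_k$ with $\H^n(X_k)<\infty$, and set $E_k=E\cap X_k$. By Borel regularity, choose a Borel set $F_k\supset E_k$ with $\H^n(F_k)=\H^n(E_k)<\infty$. Since $E_k$ is measurable and the measure is finite, we get $\H^n(F_k\setminus E_k)=0$. Then $F:=\bigcup_k F_k$ is Borel, contains $E$, and satisfies
\[
\H^n(F\setminus E)\le\sum_k\H^n(F_k\setminus E_k)=0.
\]
We then enlarge $F\setminus E$ to a Borel null set $N$ (again by Borel regularity) and put $B:=F\setminus N$. This $B$ is Borel, $B\subset E$, and $E\setminus B\subset N$ is null. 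This also gives a Borel set inside $E$, which is convenient but not strictly necessary.

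The only delicate point is the step $\H^n(F_k\setminus E_k)=0$. It uses both the measurability of $E_k$ (so that $\H^n(F_k)=\H^n(E_k)+\H^n(F_k\setminus E_k)$) and the finiteness of $\H^n(E_k)$ (so that the subtraction is legitimate). This is exactly where $\sigma$-finiteness enters, and I expect no further obstacle. The measurability of $v$ follows because it is a countable sum of nonnegative Borel functions.
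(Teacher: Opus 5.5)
Your proof is correct and follows essentially the same route as the paper: both reduce to the fact that every $\H^n$-measurable set differs from a Borel set by a null set, applied to countably many sets that determine $u$. The paper uses the superlevel sets $\{u>q\}$, $q\in\qp$, with $v(x)=\sup\{q: x\in B_q\}$, where you use a series $\sum_i c_i\chi_{E_i}$. The paper only asserts the set-level claim, and your proof of it is fine with one small fix: either take the pieces $X_k$ Borel (possible by Borel regularity), or split $F_k$ by the measurable set $E$ itself, which gives $\H^n(F_k\setminus E)=0$ without needing $E_k$ to be measurable.
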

\begin{proof}
By $\sigma$-finiteness of $X$ and Borel regularity of $\H^n$, every
$\H^n$-measurable set differs from a Borel set by an $\H^n$-null set.

For every $q\in\qp$, let $E_q=\{x\in X:u(x)>q\}$.
Choose a Borel set $B_q\subset X$ such that $\H^n(E_q\triangle B_q)=0$.
Set
$$
N=\bigcup_{q\in\qp}(E_q\triangle B_q),
$$
so that $\H^n(N)=0$, and define
$$
v(x)=\sup\{q\in\qp~:~x\in B_q\},
$$
(by convention,  $\sup\varnothing=0$). The function $v$ is Borel measurable, since for every $t\geq 0$,
$$
\{v>t\}=\bigcup_{q\in\qp,\,q>t}B_q.
$$
If $x\not\in N$, then for every $q\in\qp$,
$$
x\in B_q\quad\Longleftrightarrow\quad u(x)>q,
$$
and hence $v(x)=u(x)$. Thus $u=v$ $\H^n$-a.e.
\end{proof}

\end{document}